\newtheorem{theorem}{Theorem}[section]
\newtheorem{lemma}[theorem]{Lemma}
\newtheorem{proposition}[theorem]{Proposition}
\newtheorem{definition}[theorem]{Definition}
\newtheorem{corollary}[theorem]{Corollary}
\theoremstyle{remark}
\newtheorem{rk}[theorem]{Remark}
\newcommand{\quot}{\ensuremath{/ \hspace{-1.2mm}/}}
\def\Ad{\mathop{\rm Ad}\nolimits}
\def\ad{\mathop{\rm ad}\nolimits}
\def\Int{\mathop{\rm Int}\nolimits}
\def\Aut{\mathop{\rm Aut}\nolimits}
\def\Lie{\mathop{\rm Lie}\nolimits}
\def\GL{\mathop{\rm GL}\nolimits}
\def\SL{\mathop{\rm SL}\nolimits}
\def\SO{\mathop{\rm SO}\nolimits}
\def\Sp{\mathop{\rm Sp}\nolimits}
\def\charac{\mathop{\rm char}\nolimits}
\def\dim{\mathop{\rm dim}\nolimits}
\def\Hom{\mathop{\rm Hom}\nolimits}
\def\im{\mathop{\rm im}\nolimits}
\def\ker{\mathop{\rm ker}\nolimits}
\def\End{\mathop{\rm End}\nolimits}
\def\rank{\mathop{\rm rk}\nolimits}
\def\min{\mathop{\rm min}\nolimits}
\def\rank{\mathop{\rm rk}\nolimits}
\def\Spec{\mathop{\rm Spec}\nolimits}
\def\Spin{\mathop{\rm Spin}\nolimits}
\def\diag{\mathop{\rm diag}\nolimits}
\def\Frac{\mathop{\rm Frac}\nolimits}
\def\dom{\mathop{\rm dom}\nolimits}
\title{Vinberg's $\theta$-groups in positive characteristic and Kostant-Weierstrass slices}
\author{Paul Levy \\
paul.levy@epfl.ch}
\begin{document}

\bibliographystyle{plain}

\maketitle{}{}

\begin{abstract}
We generalize the basic results of Vinberg's $\theta$-groups, or periodically graded reductive Lie algebras, to fields of good positive characteristic.
To this end we clarify the relationship between the little Weyl group and the (standard) Weyl group.
We deduce that the ring of invariants associated to the grading is a polynomial ring.
This approach allows us to prove the existence of a KW-section for a classical graded Lie algebra (in zero or good characteristic), confirming a conjecture of Popov in this case.
\end{abstract}

\section{Introduction}

Classical results of invariant theory relate the geometry of the adjoint representation of a reductive group to familiar properties of elements of the Lie algebra.
In particular, Cartan subalgebras, Weyl groups and semisimple and nilpotent elements appear naturally in the description of invariants, closed orbits and fibres of the quotient map.
On the other hand, there are many circumstances in which the concepts of Cartan subalgebra, Weyl group and nilpotent cone have analogues with similar properties.
In \cite{vin}, Vinberg studied such generalizations for representations arising from periodic gradings of complex reductive Lie algebras.
Specifically, let $G$ be a complex reductive group, let ${\mathfrak g}=\Lie(G)$, let $\theta$ be an automorphism of $G$ of order $m$ and let $\zeta=e^{2\pi i/m}$.
There is a grading of ${\mathfrak g}$ induced by $d\theta$:
$${\mathfrak g}=\bigoplus_{i\in{\mathbb Z}/m{\mathbb Z}}{\mathfrak g}(i),\;\;\mbox{where }{\mathfrak g}(i)=\{ x\in{\mathfrak g}\,|\,d\theta(x)=\zeta^ix\}$$
Clearly $[{\mathfrak g}(i),{\mathfrak g}(j)]\subset{\mathfrak g}(i+j)$ for any $i,j\in{\mathbb Z}/m{\mathbb Z}$.
Let $G(0)=(G^\theta)^\circ$.
Then $\Lie(G(0))={\mathfrak g}(0)$ and $G(0)$ normalizes ${\mathfrak g}(1)$.
A {\it Cartan subspace} of ${\mathfrak g}(1)$ is a maximal commutative subspace consisting of semisimple elements.
The principal results of \cite{vin} are:

 - any two Cartan subspaces of ${\mathfrak g}(1)$ are $G(0)$-conjugate and any semisimple element is contained in a Cartan subspace,
 
 - the $G(0)$-orbit through $x\in{\mathfrak g}(1)$ is closed if and only if $x$ is semisimple,

 - the embedding ${\mathfrak c}\hookrightarrow{\mathfrak g}(1)$ induces an isomorphism $k[{\mathfrak g}(1)]^{G(0)}\rightarrow k[{\mathfrak c}]^{W_{\mathfrak c}}$, where ${\mathfrak c}$ is any Cartan subspace of ${\mathfrak g}(1)$ and $W_{\mathfrak c}=N_{G(0)}({\mathfrak c})/Z_{G(0)}({\mathfrak c})$,
 
 - the little Weyl group $W_{\mathfrak c}$ is generated by pseudoreflections and hence $k[{\mathfrak c}]^{W_{\mathfrak c}}$ is a polynomial ring. 

In the case of an involution, the grading ${\mathfrak g}={\mathfrak g}(0)\oplus{\mathfrak g}(1)$ is known as the symmetric space decomposition and has been studied extensively, especially since the seminal work of Kostant and Rallis \cite{kostrall}.
In this case the little Weyl group is itself a Weyl group (for a root system which can be related in a natural way to the root system of $G$).
Hence, while the geometric properties of symmetric spaces are quite close to those of the adjoint representation, the $m>2$ case is more interesting from the point of view of reflection groups.
Most of the results of Kostant-Rallis are now known to hold in good positive characteristic by work of the author \cite{invs}.
However, with the exception of \cite{kawanaka} (and perhaps \cite{panorbits}), there has been little work on (general) $\theta$-groups in positive characteristic.
The first main task of this paper will be to extend Vinberg's above-mentioned results to the case where $G$ is a reductive group over a field of good positive characteristic $p$ not dividing $m$.
The major obstacles concern separability of the quotient morphism ${\mathfrak g}(1)\rightarrow{\mathfrak g}(1)\quot G(0):=\Spec (k[{\mathfrak g}(1)]^{G(0)})$ and the failure of the Shephard-Todd theorem in positive characteristic.
The former problem can be resolved by a careful analysis of the centralizer of a Cartan subspace.
To show that the little Weyl group is generated by pseudoreflections and that its ring of invariants is polynomial, we prove directly that Vinberg's description \cite[\S 7]{vin} of $W_{\mathfrak c}$ for $G$ of classical type holds in good positive characteristic, and apply a result of Panyushev and an inspection of orders of centralizers in Weyl groups for the exceptional types.
While our approach requires somewhat more work than that of \cite{vin}, it makes the relationship between the little Weyl group and the Weyl group of $G$ clear.
This allows us to prove for classical graded Lie algebras a long-standing conjecture in this field, the existence of a slice in ${\mathfrak g}(1)$ analogous to Kostant's slice to the regular orbits in ${\mathfrak g}$.

We provide the following criterion for a Cartan subspace to be contained in the centre of ${\mathfrak g}$.
An automorphism $\theta$ is of zero rank if any element of ${\mathfrak g}(1)$ is nilpotent.

\begin{lemma}\label{zsintro}
Suppose $p>2$.
Then the following are equivalent: (i) ${\mathfrak g}(1)$ contains no non-central semsimple elements, (ii) $\theta|_{G^{(1)}}$ is either of order less than $m$ or is of zero rank, (iii) ${\mathfrak g}(1)={\mathfrak s}\oplus{\mathfrak n}$, where ${\mathfrak s}$ (resp. ${\mathfrak n}$) is the set of semisimple (resp. nilpotent) elements of ${\mathfrak g}(1)$ and ${\mathfrak s}\subseteq{\mathfrak z}({\mathfrak g})$.
\end{lemma}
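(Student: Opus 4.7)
My strategy will combine two ingredients. First, the Chevalley decomposition ${\mathfrak g}={\mathfrak z}({\mathfrak g})\oplus[{\mathfrak g},{\mathfrak g}]$, valid in good characteristic, with both summands $\theta$-stable and hence inducing a splitting ${\mathfrak g}(1)=({\mathfrak z}({\mathfrak g})\cap{\mathfrak g}(1))\oplus([{\mathfrak g},{\mathfrak g}]\cap{\mathfrak g}(1))$. Second, compatibility of the Jordan decomposition with the grading: because $\theta$ is a group automorphism, $d\theta$ preserves Jordan decompositions, and applying this to $d\theta(x_s)+d\theta(x_n)=\zeta x_s+\zeta x_n$ together with uniqueness yields $d\theta(x_s)=\zeta x_s$, $d\theta(x_n)=\zeta x_n$; hence $x_s,x_n\in{\mathfrak g}(1)$ whenever $x\in{\mathfrak g}(1)$, and each lies in the same Chevalley summand as $x$.

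With these two ingredients in hand I would first settle (i)$\Leftrightarrow$(iii). The direction (iii)$\Rightarrow$(i) is immediate, since a semisimple $x\in{\mathfrak g}(1)$ is its own semisimple part and hence lies in ${\mathfrak s}\subseteq{\mathfrak z}({\mathfrak g})$. For (i)$\Rightarrow$(iii) I would decompose $x\in{\mathfrak g}(1)$ as $z+y$ with $z\in{\mathfrak z}({\mathfrak g})\cap{\mathfrak g}(1)$ and $y\in[{\mathfrak g},{\mathfrak g}]\cap{\mathfrak g}(1)$; since $z$ commutes with $y$, one has $x_s=z+y_s$ and $x_n=y_n$. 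Hypothesis (i) forces $x_s\in{\mathfrak z}({\mathfrak g})$, so $y_s\in{\mathfrak z}({\mathfrak g})\cap[{\mathfrak g},{\mathfrak g}]=0$ and $y$ is nilpotent. This identifies the set of semisimple elements of ${\mathfrak g}(1)$ with ${\mathfrak z}({\mathfrak g})\cap{\mathfrak g}(1)$ and the set of nilpotent elements with ${\mathfrak g}(1)\cap[{\mathfrak g},{\mathfrak g}]$, delivering (iii).

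Finally, for (i)$\Leftrightarrow$(ii) I would analyse ${\mathfrak g}(1)\cap[{\mathfrak g},{\mathfrak g}]$ directly. For (ii)$\Rightarrow$(i): if $\theta|_{G^{(1)}}$ has order $m'<m$, then $d\theta|_{[{\mathfrak g},{\mathfrak g}]}$ has eigenvalues among the $m'$-th roots of unity so $\zeta$ is not among them, whence ${\mathfrak g}(1)\cap[{\mathfrak g},{\mathfrak g}]=0$ and ${\mathfrak g}(1)\subseteq{\mathfrak z}({\mathfrak g})$; if instead $\theta|_{G^{(1)}}$ has zero rank, the decomposition argument above shows any semisimple element of ${\mathfrak g}(1)$ must lie in ${\mathfrak z}({\mathfrak g})$, since otherwise its $[{\mathfrak g},{\mathfrak g}]$-component would be both semisimple and nilpotent. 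The converse (i)$\Rightarrow$(ii) is the contrapositive: if $\theta|_{G^{(1)}}$ has order exactly $m$ and positive rank, any Cartan subspace of ${\mathfrak g}(1)\cap[{\mathfrak g},{\mathfrak g}]$ is nonzero and consists of semisimple elements, producing a nonzero semisimple element in $[{\mathfrak g},{\mathfrak g}]$ which is necessarily non-central in ${\mathfrak g}$. I expect the main obstacle to be a clean statement and verification of the two positive-characteristic foundations---Chevalley as internal direct sum, and Jordan-decomposition compatibility with $\theta$---which is exactly where the $p>2$ hypothesis (together with the standing good-characteristic conditions and $p\nmid m$) intervenes.
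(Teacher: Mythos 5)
Your proof hinges on the internal direct sum ${\mathfrak g}={\mathfrak z}({\mathfrak g})\oplus[{\mathfrak g},{\mathfrak g}]$ being valid for reductive $G$ in good characteristic, and this is false in exactly the cases where the lemma has any content. Take $G=\SL(n,k)$ with $p\mid n$ (all primes are good for type $A$, so this is allowed): then ${\mathfrak z}({\mathfrak g})=kI$ while $[{\mathfrak g},{\mathfrak g}]={\mathfrak g}$, so ${\mathfrak z}({\mathfrak g})\subset[{\mathfrak g},{\mathfrak g}]$ and the sum is not direct. Likewise for $G=\GL(n,k)$ with $p\mid n$ one has $kI\subset\mathfrak{sl}(n)$, and ${\mathfrak z}({\mathfrak g})+[{\mathfrak g},{\mathfrak g}]=\mathfrak{sl}(n)\subsetneq\mathfrak{gl}(n)$ — neither direct nor spanning. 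Your (i)$\Rightarrow$(iii) step ``$y_s\in{\mathfrak z}({\mathfrak g})\cap[{\mathfrak g},{\mathfrak g}]=0$'' and your (i)$\Rightarrow$(ii) step ``a nonzero semisimple element in $[{\mathfrak g},{\mathfrak g}]$ is necessarily non-central'' both break at precisely this point, and with them the whole scheme. It is no accident: the paper singles out type $A_{ip-1}$ in the proof and even remarks that the analogous error (assuming the $h_\alpha$ span $\Lie(T\cap G^{(1)})$) crept into earlier work.

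The paper's argument is structured to avoid this obstruction entirely. For (ii)$\Rightarrow$(iii) it does not decompose ${\mathfrak g}$ against ${\mathfrak z}({\mathfrak g})$; instead it invokes Lemma \ref{redcase} to produce a $d\theta$-stable \emph{toral algebra} ${\mathfrak h}$ giving a mere vector-space complement ${\mathfrak g}={\mathfrak g}'\oplus{\mathfrak h}$ to ${\mathfrak g}'=\Lie(G^{(1)})$, and then shows ${\mathfrak h}(1)\subseteq{\mathfrak z}({\mathfrak g})$ via the torus decomposition of Lemma \ref{stabletori} (the subtorus $T_1$ must land in $Z(G)^\circ$). For (i)$\Rightarrow$(ii) it reduces to $G$ semisimple and simply connected (Lemma \ref{cover}), then to $G$ simple, where ${\mathfrak z}({\mathfrak g})\neq 0$ forces type $A_{ip-1}$, and there analyzes how $\theta$ acts on ${\mathfrak z}({\mathfrak g})$ directly (it acts by $\pm 1$, so for $m\geq 3$ the $\zeta$-eigenspace of ${\mathfrak z}({\mathfrak g})$ is zero; $m\leq 2$ is handled separately, using $p>2$ and \cite{vust}, \cite{invs}). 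Your Jordan-compatibility ingredient is fine and is indeed Lemma \ref{first}(a), but it cannot carry the argument without a correct replacement for the Chevalley splitting.
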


We remark that the above result fails if $p=2$.
We prove the following Lemma by some simple geometric arguments.

\begin{lemma}\label{csintro}
Let ${\mathfrak c}$ be a Cartan subspace of ${\mathfrak g}(1)$.

(i) The morphism $G(0)\times {\mathfrak z}_{\mathfrak g}({\mathfrak c})\rightarrow{\mathfrak g}(1)$ is dominant and separable.

(ii) Any two Cartan subspaces of ${\mathfrak g}(1)$ are conjugate by an element of $G(0)$.
\end{lemma}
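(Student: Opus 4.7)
The plan is to derive both parts from a single differential computation at a generic point of $\mathfrak{c}$, combined with the observation that the semisimple elements of $\mathfrak{z}_{\mathfrak{g}}(\mathfrak{c})\cap\mathfrak{g}(1)$ coincide with $\mathfrak{c}$ itself.

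For (i), consider the orbit map $\mu\colon(g,y)\mapsto\Ad(g)y$ and pick $x\in\mathfrak{c}$ with $\mathfrak{z}_{\mathfrak{g}}(x)=\mathfrak{z}_{\mathfrak{g}}(\mathfrak{c})$; this is a dense open condition on $\mathfrak{c}$ by upper-semicontinuity of centralizer dimension. The differential of $\mu$ at $(e,x)$ is $(X,v)\mapsto[X,x]+v$. Since $x$ is semisimple, $\ad(x)$ is a semisimple endomorphism of $\mathfrak{g}$, giving $\mathfrak{g}=\mathfrak{z}_{\mathfrak{g}}(x)\oplus\im\ad(x)$. Both summands are graded---the kernel obviously, the image because $\im\ad(x)\cap\mathfrak{g}(i+1)=\ad(x)(\mathfrak{g}(i))$---so in degree $1$ we get $\mathfrak{g}(1)=\mathfrak{z}_{\mathfrak{g}(1)}(x)\oplus[\mathfrak{g}(0),x]=(\mathfrak{z}_{\mathfrak{g}}(\mathfrak{c})\cap\mathfrak{g}(1))\oplus[\mathfrak{g}(0),x]$, which is exactly the image of the differential. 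Dominance and separability follow.

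For (ii), I first establish the auxiliary observation: a semisimple $y\in\mathfrak{z}_{\mathfrak{g}}(\mathfrak{c})\cap\mathfrak{g}(1)$ must lie in $\mathfrak{c}$, because $\mathfrak{c}+ky$ is then a commutative subspace of semisimples (commuting semisimples sum to semisimples) and $\mathfrak{c}$ is maximal such. Given a second Cartan subspace $\mathfrak{c}'$, apply (i) to both $\mathfrak{c}$ and $\mathfrak{c}'$: the orbit $G(0)\cdot(\mathfrak{z}_{\mathfrak{g}}(\mathfrak{c})\cap\mathfrak{g}(1))$ is dense in $\mathfrak{g}(1)$, and similarly for $\mathfrak{c}'$. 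Using that Jordan decomposition respects the grading and centralizers in good characteristic, pick $y\in\mathfrak{z}_{\mathfrak{g}}(\mathfrak{c})\cap\mathfrak{g}(1)$ whose semisimple part $y_s\in\mathfrak{c}$ is regular (i.e.\ $\mathfrak{z}_{\mathfrak{g}}(y_s)=\mathfrak{z}_{\mathfrak{g}}(\mathfrak{c})$) and $g\in G(0)$ with $y':=\Ad(g^{-1})y\in\mathfrak{z}_{\mathfrak{g}}(\mathfrak{c}')\cap\mathfrak{g}(1)$. Then $y'_s=\Ad(g^{-1})y_s$, so $\mathfrak{z}_{\mathfrak{g}}(y'_s)=\Ad(g^{-1})\mathfrak{z}_{\mathfrak{g}}(\mathfrak{c})$. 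The auxiliary observation applied to $\mathfrak{c}'$ gives $y'_s\in\mathfrak{c}'$, hence $\mathfrak{c}'\subseteq\mathfrak{z}_{\mathfrak{g}}(y'_s)$, i.e.\ $\Ad(g)\mathfrak{c}'\subseteq\mathfrak{z}_{\mathfrak{g}}(\mathfrak{c})\cap\mathfrak{g}(1)$. Since $\Ad(g)\mathfrak{c}'$ consists of semisimples, the auxiliary observation forces $\Ad(g)\mathfrak{c}'\subseteq\mathfrak{c}$, and maximality of $\Ad(g)\mathfrak{c}'$ yields equality.

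The main obstacle is really just the positive-characteristic sanity checks underlying both parts---semisimplicity of $\ad(x)$ for semisimple $x$, the graded direct sum $\mathfrak{g}=\mathfrak{z}_{\mathfrak{g}}(x)\oplus\im\ad(x)$, and compatibility of Jordan decomposition with the grading (which uses $p\nmid m$ so that the grading automorphism is itself semisimple) and with centralizers. All follow from standard reductive Lie algebra structure theory in good characteristic.
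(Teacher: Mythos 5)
Your proof is correct and follows essentially the same outline as the paper (this is Lemma~\ref{sep1}/Cor.~\ref{sep} for part~(i) and Theorem~\ref{carts} for part~(ii)), with one genuine technical variation worth noting in part~(i). The paper first proves dominance and separability of $G(0)\times\mathfrak{g}^0(\mathfrak{h})(1)\to\mathfrak{g}(1)$ for an \emph{arbitrary} commutative subspace $\mathfrak{h}\subseteq\mathfrak{g}(1)$ (Lemma~\ref{sep1}), using the Fitting decomposition $\mathfrak{g}=\mathfrak{g}^0(\mathfrak{h})\oplus\mathfrak{g}^1(\mathfrak{h})$ and a generic $h\in\mathfrak{h}$ for which $(\ad h)^m$ acts invertibly on $\mathfrak{g}^1(\mathfrak{h})$; it then specializes to $\mathfrak{h}=\mathfrak{c}$ via complete reducibility of $\mathfrak{g}$ as an $\ad\mathfrak{c}$-module. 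You instead exploit semisimplicity of a generic $x\in\mathfrak{c}$ directly, using the graded splitting $\mathfrak{g}=\mathfrak{z}_{\mathfrak{g}}(x)\oplus\im\ad(x)$, which is cleaner for a Cartan subspace but does not yield the more general statement the paper records and reuses. Your part~(ii) matches the paper's argument in Theorem~\ref{carts}; your ``auxiliary observation'' (a semisimple element of $\mathfrak{z}_{\mathfrak{g}(1)}(\mathfrak{c})$ lies in $\mathfrak{c}$) is exactly what the paper invokes when it says that for $x\in R(\mathfrak{c}_2)$, $\mathfrak{c}_2$ is the set of semisimple elements of $\mathfrak{z}_{\mathfrak{g}(1)}(x)$. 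One small gap in your write-up of (ii) is the simultaneous choice of $y$ with $y_s$ in general position \emph{and} $g\in G(0)$ sending $y$ into $\mathfrak{z}_{\mathfrak{g}(1)}(\mathfrak{c}')$; the paper gets this by observing that $(G(0)\cdot\mathfrak{z}_{\mathfrak{g}(1)}(\mathfrak{c}'))\cap\mathfrak{z}_{\mathfrak{g}(1)}(\mathfrak{c})$ contains a nonempty open subset of $\mathfrak{z}_{\mathfrak{g}(1)}(\mathfrak{c})$ (which follows from dominance of \emph{both} morphisms and $G(0)$-stability of the image), and then intersecting with the dense open set $R(\mathfrak{c})$. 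You assert the choice but should justify this intersection is nonempty.
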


If $\theta$ is an involution and $T\subset G$ is a $\theta$-stable torus, then it is not difficult to see that $T=T_+\cdot T_-$, where $T_+=(T^\theta)^\circ$ and $T_-=\{ t\in T|\theta(t)=t^{-1}\}^\circ$.
Moreover, the Lie algebras of $T_+$ and $T_-$ are, respectively, the $(+1)$ and $(-1)$ eigenspace for the differential of $\theta$ on $\Lie(T)$ \cite[p.290]{richinvs}.
An important tool in our analysis will be a generalization of this decomposition to arbitrary $m$.
Roughly speaking, one decomposes $T$ as a product of subtori $T_d$, $d|m$, such that `the minimal polynomial of $e^{2\pi di/m}$ applied to $\theta$' acts trivially on $T_d$.

\begin{lemma}
Let $T$ be a $\theta$-stable torus and let ${\mathfrak t}=\Lie(T)$.
We have $T=\prod_{d|m} T_d$ (see Lemma \ref{stabletori} for definitions) and ${\mathfrak t}=\oplus_{d|m}\Lie(T_d)$.
Moreover, $\Lie(T_d)=\oplus_{(i,m)=d}{\mathfrak t}\cap{\mathfrak g}(i)$.
In particular $T_m=(T^\theta)^\circ$.
\end{lemma}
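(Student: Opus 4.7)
My plan is to exploit the cyclotomic factorization $x^m - 1 = \prod_{d \mid m} \Phi_d(x)$, whose factors remain pairwise coprime and separable in $k[x]$ thanks to the hypothesis $p \nmid m$. Since $T$ is abelian and $\theta^m = \mathrm{id}$ on $T$, any $f \in \mathbb{Z}[x]$ gives a well-defined algebraic endomorphism $f(\theta) \colon T \to T$, namely $t \mapsto \prod_j \theta^j(t)^{a_j}$ if $f(x) = \sum a_j x^j$. I will take $T_d := (\ker \Phi_{m/d}(\theta))^\circ$ as the natural definition (consistent with Lemma \ref{stabletori}).

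The central computation is to identify $\Lie(T_d)$. Since $p \nmid m$, the operator $d\theta$ is semisimple on $\mathfrak{t}$ with $\zeta^i$-eigenspace exactly $\mathfrak{t} \cap {\mathfrak g}(i)$. The differential of $\Phi_{m/d}(\theta)$ is $\Phi_{m/d}(d\theta)$, whose kernel is the sum of those $\mathfrak{t} \cap {\mathfrak g}(i)$ for which $\zeta^i$ is a primitive $(m/d)$-th root of unity --- i.e., the $i$ with $(i, m) = d$. Because this kernel is a direct summand, $\Phi_{m/d}(\theta)$ has surjective differential and is therefore separable, giving $\Lie(T_d) = \bigoplus_{(i,m) = d} \mathfrak{t} \cap {\mathfrak g}(i)$. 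Partitioning $\mathbb{Z}/m\mathbb{Z}$ by $\gcd$-with-$m$ immediately yields $\mathfrak{t} = \bigoplus_{d \mid m} \Lie(T_d)$. For $d = m$ only $i = 0$ contributes, so $\Lie(T_m) = \mathfrak{t} \cap {\mathfrak g}(0) = \Lie((T^\theta)^\circ)$, and since both $T_m$ and $(T^\theta)^\circ$ are connected subgroups of the torus $T$ with this Lie algebra they must coincide.

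For the group-level statement I will use the multiplication map $\mu \colon \prod_{d \mid m} T_d \to T$: its differential at the identity is the direct-sum inclusion $\bigoplus_d \Lie(T_d) \hookrightarrow \mathfrak{t}$, an isomorphism by the above. Hence $\mu$ is étale at the identity, and its image is a closed connected subgroup of $T$ of full dimension, i.e.~all of $T$. This surjectivity of $\mu$ is what the equation $T = \prod_{d \mid m} T_d$ records, in parallel with the usage $T = T_+ \cdot T_-$ for involutions recalled earlier; the finite kernel of $\mu$ need not be trivial in general, so one should not expect $\mu$ itself to be an isomorphism. The delicate point throughout is that the Lie-algebra arguments --- semisimplicity of $d\theta$, the direct-summand property of the kernel, and separability of $\Phi_{m/d}(\theta)$ --- all rest on the separability of $x^m - 1$ in $k[x]$, which is exactly $p \nmid m$; under this standing hypothesis no positive-characteristic obstruction arises.
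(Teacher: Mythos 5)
The decomposition and the group-level argument (the multiplication map $\mu$ being étale because its differential is an isomorphism) are fine and are essentially the same strategy as the paper's Lemma \ref{stabletori}. However, there is a genuine gap in the middle: the inference ``the kernel of $\Phi_{m/d}(d\theta)$ is a direct summand, therefore $\Phi_{m/d}(\theta)$ has surjective differential and is separable.'' This does not follow. Any linear subspace of a vector space is a direct summand, so the hypothesis is vacuous; and a semisimple (even zero) differential does not force separability of a torus endomorphism --- e.g.\ $t\mapsto t^p$ on $k^\times$ has $d\phi = 0$, whose kernel is trivially a direct summand, yet $\phi$ is inseparable. Separability is a statement about the morphism (equivalently, that $\ker\Phi_{m/d}(\theta)$ has dimension as large as $\ker\Phi_{m/d}(d\theta)$), and this requires an argument. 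You do know $\Lie(T_d) \subseteq \ker\Phi_{m/d}(d\theta)$, but you never establish the reverse inclusion.

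The missing idea --- which is exactly what the paper's proof supplies --- is to introduce the complementary polynomial $p'_{m/d}(x) = (x^m-1)/\Phi_{m/d}(x)$ and look at the connected subgroup $\overline{p'_{m/d}}(\theta)(T)$. Since $\Phi_{m/d}\cdot p'_{m/d} = x^m - 1$ kills $T$, this image lies in $\ker\Phi_{m/d}(\theta)$, so $\dim T_d \geq \dim\overline{p'_{m/d}}(\theta)(T) \geq \operatorname{rank}\bigl(p'_{m/d}(d\theta)\bigr)$. Your own eigenspace computation shows $p'_{m/d}(d\theta)$ has image $\bigoplus_{(i,m)=d}\mathfrak t(i) = \ker\Phi_{m/d}(d\theta)$, so the rank on the right is $\dim\ker\Phi_{m/d}(d\theta)$, giving the reverse inequality and hence equality $\Lie(T_d) = \bigoplus_{(i,m)=d}\mathfrak t\cap\mathfrak g(i)$. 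After inserting this step, the rest of your argument goes through unchanged.
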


We turn next to consideration of the quotient morphism $\pi:{\mathfrak g}(1)\rightarrow{\mathfrak g}(1)\quot G(0)$.
Recall that each fibre of $\pi$ contains a unique closed orbit (which is also the unique orbit of minimal dimension) and for $x\in{\mathfrak g}(1)$, $\pi(x)=\pi(0)$ if and only if $0$ is contained in the closure of $G(0)\cdot x$.
Arguing in a similar manner to \cite{vin} we obtain:

\begin{lemma}
If $x\in {\mathfrak g}(1)$ then $G(0)\cdot x$ is closed if and only if $x$ is semisimple.
On the other hand, $0$ is contained in the closure of $G(0)\cdot x$ if and only if $x$ is nilpotent.
\end{lemma}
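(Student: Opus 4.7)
The plan is to adapt Vinberg's classical argument. Since $p$ is good and coprime to $m$, the Jordan decomposition in ${\mathfrak g}$ commutes with $d\theta$, so writing $x=x_s+x_n$ in Jordan form, both summands lie in ${\mathfrak g}(1)$ and $[x_s,x_n]=0$. The centraliser $L:=Z_G(x_s)^\circ$ is reductive and $\theta$-stable (since $d\theta(x_s)=\zeta x_s$ forces $\theta(Z_G(x_s))=Z_G(x_s)$), so it inherits the grading, with $L(0):=(L^\theta)^\circ\subseteq G(0)$.

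The heart of the argument is to construct a cocharacter $\lambda\colon\mathbb{G}_m\to L(0)$ satisfying $\Ad(\lambda(t))x_n=t^2 x_n$. I would obtain this from a graded $\mathfrak{sl}_2$-triple $(x_n,h,f)$ with $h\in{\mathfrak l}(0)$ and $f\in{\mathfrak l}(-1)$, whose existence in good positive characteristic is provided by the Pommerening extension of Jacobson--Morozov; then $h$ integrates to $\lambda$. Since $\lambda$ centralises $x_s$, we have $\lambda(t)\cdot x=x_s+t^2 x_n$, so letting $t\to 0$ yields $x_s\in\overline{G(0)\cdot x}$.

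From this the two characterisations follow. For the nilpotent half: $0\in\overline{G(0)\cdot x}\subseteq\overline{G\cdot x}$ forces $f(x)=f(0)$ for every $f\in k[{\mathfrak g}]^G$, so $x$ is nilpotent; conversely, if $x$ is nilpotent then $x_s=0$ and the cocharacter contracts $x$ to $0$. For the closed-orbit half: if $G(0)\cdot x$ is closed then $x_s\in G(0)\cdot x$ forces $x=x_s$ (by uniqueness of the Jordan decomposition), so $x$ is semisimple. Conversely, if $x$ is semisimple then $G\cdot x$ is closed in ${\mathfrak g}$, hence $G\cdot x\cap{\mathfrak g}(1)$ is closed in ${\mathfrak g}(1)$. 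A direct computation shows $g\cdot x\in{\mathfrak g}(1)$ if and only if $gZ_G(x)$ is $\theta$-fixed in $G/Z_G(x)$, and the fixed-point set $(G/Z_G(x))^\theta$ decomposes into finitely many $G(0)$-orbits; hence $G\cdot x\cap{\mathfrak g}(1)$ is a finite disjoint union of locally closed $G(0)$-orbits, each of which is therefore also open in the union, and in particular $G(0)\cdot x$ is closed in ${\mathfrak g}(1)$.

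The main obstacle is the construction of $\lambda$ inside $L(0)$ rather than merely in $L$. In characteristic zero, Jacobson--Morozov combined with an averaging argument suffices, but in positive characteristic one has to invoke the graded refinement of the $\mathfrak{sl}_2$-triple theorem to guarantee that $h\in{\mathfrak l}(0)$ and $f\in{\mathfrak l}(-1)$ lie in the correct graded pieces; this is the essential place where the hypothesis that $p$ is good and coprime to $m$ enters.
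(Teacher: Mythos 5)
Your high-level strategy — construct a contracting cocharacter in $L(0)$ to push $x$ to $x_s$, then handle the closed/semisimple equivalence by analysing $G\cdot x\cap{\mathfrak g}(1)$ — is the same as the paper's. But the key technical step, producing the cocharacter, is exactly where your proposal diverges from what the paper does, and your version has a real gap.

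The paper does \emph{not} use graded $\mathfrak{sl}_2$-triples. For the unstable/nilpotent claim (Lemma \ref{unstable}) it invokes Kawanaka's theorem: given nilpotent $x\in{\mathfrak g}(1)$, there is a fundamental pair $(B,T)$ and a $\theta$-stable weighted Dynkin diagram $h$ with $x\in{\mathfrak g}(2;h)$, and one then takes $\lambda\in Y(T)$ with $\langle\lambda,\alpha\rangle= l\,h(\alpha)$. Remark \ref{associated} then upgrades this to an honest associated cocharacter $\lambda:k^\times\to G(0)$ via the Bala–Carter–Pommerening theorem and \cite[Cor.\ 5.4]{invs}. Your plan — take a graded $\mathfrak{sl}_2$-triple $(x_n,h,f)$ with $h\in{\mathfrak l}(0)$, $f\in{\mathfrak l}(-1)$, then ``integrate $h$ to $\lambda$'' — has two problems in positive characteristic: (i) the existence of a graded $\mathfrak{sl}_2$-triple with the $h$ and $f$ in the correct eigenspaces is not supplied by the Pommerening/Premet results and is precisely the sort of argument the paper explicitly avoids (see the discussion before Prop.\ \ref{kwprop}, where the author notes that Cor.\ \ref{dimcor} is used ``to avoid a potentially troublesome argument \cite[3.3]{panslice} involving $\mathfrak{sl}(2)$-triples''); and (ii) even granting such a triple, an element $h\in{\mathfrak g}(0)$ need not integrate to a one-parameter subgroup of $G(0)$ in characteristic $p$ — that is a genuine extra step, not an application of the Lie functor. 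You should instead work with associated cocharacters lying in $G(0)$ from the start.

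For the closed-orbit half your route via $(G/Z_G(x))^\theta$ is a legitimate alternative presentation to the paper's (which compares $T_x(G\cdot x\cap{\mathfrak g}(1))\subseteq[{\mathfrak g},x]\cap{\mathfrak g}(1)=[{\mathfrak g}(0),x]$ with $T_x(G(0)\cdot x)=[{\mathfrak g}(0),x]$ and deduces that $x$ is a smooth point whose $G(0)$-orbit is a full irreducible component of the closed set $G\cdot x\cap{\mathfrak g}(1)$). Note, however, that your claim ``$(G/Z_G(x))^\theta$ decomposes into finitely many $G(0)$-orbits'' is not free: it requires either the smoothness of the fixed-point locus of a semisimple automorphism together with the separability identity $\Lie(Z_G(x)^\theta)={\mathfrak z}_{{\mathfrak g}(0)}(x)$, or equivalently the very tangent space computation the paper performs — so the two routes rely on the same underlying calculation, and you should make that dependence explicit rather than asserting finiteness.
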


In general the quotient morphism for the action of a reductive group on an affine variety need not be separable.
Here we face a certain difficulty because a separability criterion established by  Richardson \cite[9.3]{richinvs} does not in general hold.
However, Lemmas \ref{zsintro} and \ref{csintro} allow us (after a little work) to adapt Richardson's arguments to the present circumstances.

\begin{lemma}
Assume $p>2$.
Then $k({\mathfrak g}(1))^{G(0)}$ is the fraction field of $k[{\mathfrak g}(1)]^{G(0)}$ and hence $\pi:{\mathfrak g}(1)\rightarrow{\mathfrak g}(1)\quot G(0)$ is a separable morphism.
\end{lemma}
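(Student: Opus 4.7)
The plan is to adapt Richardson's argument from \cite[9.3]{richinvs} to the present graded setting. Lacking Richardson's direct separability criterion, the substitute will be Lemma \ref{csintro}(i), which gives a dominant separable map $\phi\colon G(0)\times\mathfrak{z}_\mathfrak{g}(\mathfrak{c})\to\mathfrak{g}(1)$, $(g,z)\mapsto\Ad(g)z$. Pulling back via $\phi$ produces an injection $\phi^*\colon k(\mathfrak{g}(1))\hookrightarrow k(G(0)\times\mathfrak{z}_\mathfrak{g}(\mathfrak{c}))$ with the latter separable over the image of the former, and equivariance with respect to left translation of $G(0)$ on the first factor sends $k(\mathfrak{g}(1))^{G(0)}$ into $k(\mathfrak{z}_\mathfrak{g}(\mathfrak{c}))$, landing in the fixed field of the residual action of $W_\mathfrak{c}=N_{G(0)}(\mathfrak{c})/Z_{G(0)}(\mathfrak{c})$.

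The first technical step is to analyse $\mathfrak{z}_\mathfrak{g}(\mathfrak{c})$. Let $L=Z_G(\mathfrak{c})$ and $\mathfrak{l}=\Lie(L)$: since $\mathfrak{c}$ consists of commuting semisimple elements, $L$ is a $\theta$-stable reductive (indeed Levi) subgroup of $G$, inheriting a grading $\mathfrak{l}=\bigoplus\mathfrak{l}(i)$ with $\mathfrak{l}(1)=\mathfrak{z}_\mathfrak{g}(\mathfrak{c})\cap\mathfrak{g}(1)\supseteq\mathfrak{c}$. Maximality of $\mathfrak{c}$ as a Cartan subspace of $\mathfrak{g}(1)$ forces every semisimple element of $\mathfrak{l}(1)$ to lie in $\mathfrak{c}\subseteq\mathfrak{z}(\mathfrak{l})$. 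Applying Lemma \ref{zsintro} to $L$ (which requires $p>2$) yields $\mathfrak{l}(1)=\mathfrak{c}\oplus\mathfrak{n}$ with $\mathfrak{n}$ nilpotent, and an analogous decomposition $\mathfrak{z}_\mathfrak{g}(\mathfrak{c})=\mathfrak{c}\oplus\mathfrak{m}$, with $\mathfrak{m}$ contained in the nilpotent parts of the various $\mathfrak{l}(i)$.

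Given $f\in k(\mathfrak{g}(1))^{G(0)}$, the pullback $\phi^*f\in k(\mathfrak{z}_\mathfrak{g}(\mathfrak{c}))$ restricts along the projection $\mathfrak{z}_\mathfrak{g}(\mathfrak{c})=\mathfrak{c}\oplus\mathfrak{m}\to\mathfrak{c}$ to a $W_\mathfrak{c}$-invariant rational function on $\mathfrak{c}$. Finiteness of $W_\mathfrak{c}$ gives $k(\mathfrak{c})^{W_\mathfrak{c}}=\Frac(k[\mathfrak{c}]^{W_\mathfrak{c}})$; a Chevalley-type comparison, using dominance in Lemma \ref{csintro}(i), then shows the restriction map $k[\mathfrak{g}(1)]^{G(0)}\to k[\mathfrak{c}]^{W_\mathfrak{c}}$ is injective and shares its fraction field with the rational restriction, whence $k(\mathfrak{g}(1))^{G(0)}=\Frac(k[\mathfrak{g}(1)]^{G(0)})$. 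Separability of $\pi$ then follows because $k(\mathfrak{g}(1))$ is separable over $k(\mathfrak{g}(1))^{G(0)}$: for generic $c\in\mathfrak{c}$ one has $Z_{G(0)}(c)=Z_{G(0)}(\mathfrak{c})$, which is smooth with Lie algebra $\mathfrak{z}_{\mathfrak{g}(0)}(\mathfrak{c})$ in good characteristic, so the orbit map is generically separable.

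The main obstacle is the Chevalley-type lifting to \emph{rational} invariants, i.e., showing that the ratio of any two $G(0)$-invariant polynomials already exhausts $k(\mathfrak{g}(1))^{G(0)}$. The key to overcoming it is the decomposition $\mathfrak{z}_\mathfrak{g}(\mathfrak{c})=\mathfrak{c}\oplus\mathfrak{m}$ provided by Lemma \ref{zsintro}, which ensures that the ``extra'' nilpotent directions in $\mathfrak{z}_\mathfrak{g}(\mathfrak{c})$ do not contribute additional invariant rational functions; this is precisely where the hypothesis $p>2$ enters, and where the argument would break down at $p=2$ in analogy with the failure of Lemma \ref{zsintro}.
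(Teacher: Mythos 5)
Your overall shape is right — reduce to the Levi $L=Z_G(\mathfrak{c})$, use the zero-semisimple-rank decomposition $\mathfrak{l}(1)=\mathfrak{c}\oplus\mathfrak{u}$, and adapt Richardson's argument — and you have correctly located where $p>2$ enters. But there are two substantive gaps, and they are precisely where the real work happens. First, the claim that the nilpotent directions ``do not contribute additional invariant rational functions'' is not a consequence of the decomposition $\mathfrak{l}(1)=\mathfrak{c}\oplus\mathfrak{u}$ alone. You need the further input that $\mathfrak{u}$ has a dense $L(0)$-orbit (from Lemma \ref{unstable}: nilpotent $L(0)$-orbits in $\mathfrak{u}$ are finite in number, so there is a dense one), which forces every $L(0)$-invariant rational function on $\mathfrak{u}$ to be constant. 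The paper then exploits that $k[\mathfrak{c}\oplus\mathfrak{u}]\cong k[\mathfrak{c}]\otimes k[\mathfrak{u}]$ is a UFD: writing $h=f/g$ with $f,g$ coprime gives a character $\rho$ with $x\cdot f=\rho(x)f$, $x\cdot g=\rho(x)g$; the dense-orbit observation shows $k[\mathfrak{u}]_\rho$ has dimension at most $1$, and a minimality argument on a tensor expansion of $f$ and $g$ then forces $f,g\in k[\mathfrak{c}]$. Without this chain of reasoning, the descent to $\mathfrak{c}$ that you invoke is just an assertion. (You also write $\mathfrak{z}_\mathfrak{g}(\mathfrak{c})=\mathfrak{c}\oplus\mathfrak{m}$; the relevant space is $\mathfrak{z}_{\mathfrak{g}(1)}(\mathfrak{c})=\mathfrak{l}(1)$, not all of $\mathfrak{l}$, which contains $\mathfrak{l}(0)$.)

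Second, and more seriously, the passage from ``$k(\mathfrak{g}(1))^{G(0)}$ embeds into $\Frac\, k[\mathfrak{c}]^{W_\mathfrak{c}}$'' to ``$k(\mathfrak{g}(1))^{G(0)}=\Frac\, k[\mathfrak{g}(1)]^{G(0)}$'' is missing. Your ``Chevalley-type comparison'' showing that the restriction $k[\mathfrak{g}(1)]^{G(0)}\to k[\mathfrak{c}]^{W_\mathfrak{c}}$ ``shares its fraction field with the rational restriction'' is essentially equivalent to the graded Chevalley Restriction Theorem (Theorem \ref{chev}), and that theorem is in the paper \emph{derived from} the present lemma via Corollary \ref{sepquot} — so the argument as stated is close to circular. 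What the paper actually does is turn the descent result into a statement about domains: for $h\in k(\mathfrak{g}(1))^{G(0)}$ and $c+u\in R(\mathfrak{c})$ with $c\in\mathfrak{c}$, $u\in\mathfrak{u}$, one shows $c+u\in\dom h$ if and only if $c\in\dom h$. Choosing $c$ in general position with $c\in\dom h$, the fibre $U=\pi^{-1}(\pi(c))=G(0)\cdot(c+\mathfrak{u})$ is closed and disjoint from the closed set $Y=X_1\cup(X\setminus\dom h)$ (where $X_1$ is the complement of $G(0)\cdot R(\mathfrak{c})$); the standard separation property of the categorical quotient produces $g\in k[\mathfrak{g}(1)]^{G(0)}$ vanishing on $Y$ and identically $1$ on $U$, so $\dom h\supseteq X_g$ and hence $h=f/g^r$ for some $f\in k[\mathfrak{g}(1)]$ which is automatically $G(0)$-invariant. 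That Richardson-style localization is the actual mechanism producing $h\in\Frac\, k[\mathfrak{g}(1)]^{G(0)}$, and it is absent from your proposal. Incidentally, once this is known, separability of $\pi$ follows from the general fact \cite[AG.~2.4]{borel} that $k(\mathfrak{g}(1))$ is separable over $k(\mathfrak{g}(1))^{G(0)}$; the generic smoothness argument about stabilizers that you sketch is not needed for this last step.
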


We can then employ some fairly standard invariant theoretic arguments to generalize Vinberg's version \cite[Thm. 7]{vin} of the Chevalley Restriction Theorem.
Let ${\mathfrak c}$ be a Cartan subspace of ${\mathfrak g}(1)$.
We denote by $W_{\mathfrak c}$ the little Weyl group $N_{G(0)}({\mathfrak c})/Z_{G(0)}({\mathfrak c})$.

\begin{theorem}
Suppose $p>2$.
Then the embedding ${\mathfrak c}\hookrightarrow{\mathfrak g}(1)$ induces an isomorphism ${\mathfrak c}/W_{\mathfrak c}\rightarrow{\mathfrak g}(1)\quot G(0)$.
\end{theorem}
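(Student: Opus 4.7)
The restriction $\phi^{*}: k[\mathfrak{g}(1)]^{G(0)} \to k[\mathfrak{c}]^{W_{\mathfrak{c}}}$ is well-defined and induces a morphism $\phi: \mathfrak{c}/W_{\mathfrak{c}} \to \mathfrak{g}(1) \quot G(0)$. The plan is to show $\phi$ is bijective on closed points and birational, with normal target; then Zariski's Main Theorem (quasi-finite plus birational plus normal target gives an open immersion) combined with surjectivity on closed points delivers the desired isomorphism.

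First I verify bijectivity on closed points. Surjectivity is immediate: the closed points of $\mathfrak{g}(1) \quot G(0)$ biject with closed $G(0)$-orbits in $\mathfrak{g}(1)$, which by the preceding lemma are the semisimple orbits, and every such orbit meets $\mathfrak{c}$ by Lemma \ref{csintro}(ii). For injectivity, suppose $x, y \in \mathfrak{c}$ and $y = \Ad(g) x$ for some $g \in G(0)$. Then $\mathfrak{c}$ and $\Ad(g)\mathfrak{c}$ both lie inside the $\theta$-stable reductive subalgebra $\mathfrak{z}_{\mathfrak{g}}(y)$, where (by maximality in $\mathfrak{g}(1)$) they are Cartan subspaces of the induced grading. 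Applying Lemma \ref{csintro}(ii) inside the graded reductive subgroup $Z_G(y)$ yields $h \in Z_{G(0)}(y)^{\circ}$ with $\Ad(hg)\mathfrak{c} = \mathfrak{c}$; then $hg \in N_{G(0)}(\mathfrak{c})$ sends $x$ to $y$, exhibiting $x$ and $y$ as $W_{\mathfrak{c}}$-conjugate. Quasi-finiteness is immediate from the finiteness of closed-point fibers by upper semicontinuity.

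Next I address birationality and normality. The preceding separability lemma gives $\Frac(k[\mathfrak{g}(1)]^{G(0)}) = k(\mathfrak{g}(1))^{G(0)}$, so $\phi^{*}$ extends to a field embedding $k(\mathfrak{g}(1))^{G(0)} \hookrightarrow k(\mathfrak{c})^{W_{\mathfrak{c}}}$. To see this is an equality, take any $f \in k(\mathfrak{c})^{W_{\mathfrak{c}}}$ and extend it to a $G(0)$-invariant rational function $\tilde{f}$ on the dense open locus $G(0) \cdot \mathfrak{c}^{\mathrm{reg}} \subseteq \mathfrak{g}(1)$ by $\tilde{f}(\Ad(g) x) := f(x)$; well-definedness is precisely the closed-point injectivity above applied to regular elements, and the separability of $G(0) \times \mathfrak{c}^{\mathrm{reg}} \to G(0) \cdot \mathfrak{c}^{\mathrm{reg}}$ (inherited from Lemma \ref{csintro}(i)) guarantees $\tilde{f}$ is algebraic. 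The resulting rational function extends uniquely to an element of $k(\mathfrak{g}(1))^{G(0)}$. Normality of the target is immediate: $k[\mathfrak{g}(1)]^{G(0)}$ is integrally closed in its fraction field, since any element integral over it is also integral over the polynomial ring $k[\mathfrak{g}(1)]$ and hence lies there.

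The step I expect to be the main obstacle is the closed-point injectivity. One must verify that $\Ad(g)\mathfrak{c}$ restricts to a \emph{maximal} commutative semisimple subspace of $\mathfrak{z}_{\mathfrak{g}}(y)(1)$ (not merely a commutative semisimple one), and that the hypotheses under which Lemma \ref{csintro}(ii) was established transfer to the possibly smaller $\theta$-stable reductive group $Z_G(y)$ with its induced grading. In positive characteristic, this demands fine control of the centralizer, in particular the reductivity and grading-compatibility of $Z_G(y)$, which require $p > 2$ and good characteristic in a manner that mirrors the proof of Lemma \ref{csintro} itself.
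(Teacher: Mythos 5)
Your high-level strategy — establish bijectivity on closed points, then birationality, then invoke normality and Zariski's Main Theorem — is the same skeleton the paper uses (there the birationality is phrased as separability of $j'$, which is equivalent given bijectivity). The closed-point bijectivity part is correct: surjectivity is Corollary~\ref{semicart} and your injectivity argument is exactly the paper's Lemma~\ref{conj}. However, your last paragraph misjudges where the difficulty lies: the closed-point injectivity is the routine part, while the birationality step is where the real work is, and your sketch of it has a genuine gap.

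The gap is in the claim that $G(0)\cdot{\mathfrak c}^{\rm reg}$ is a dense open subset of ${\mathfrak g}(1)$. When ${\mathfrak z}_{{\mathfrak g}(1)}({\mathfrak c})={\mathfrak c}\oplus{\mathfrak u}$ with ${\mathfrak u}\neq 0$ (i.e.\ when non-zero nilpotent elements of ${\mathfrak g}(1)$ commute with all of ${\mathfrak c}$ — the generic situation for $m>2$), the constructible set $G(0)\cdot{\mathfrak c}$ has dimension at most $\dim{\mathfrak g}(1)-\dim{\mathfrak u}$ and hence is neither dense nor open. The correct dense open set, as in the paper, is $G(0)\cdot R({\mathfrak c})$ where $R({\mathfrak c})\subset{\mathfrak c}\oplus{\mathfrak u}$ is the set of elements whose semisimple part is in general position. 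Once one passes to $R({\mathfrak c})$, a $W_{\mathfrak c}$-invariant rational function on ${\mathfrak c}$ must be promoted across the nilpotent factor ${\mathfrak u}$ before it can be spread out to ${\mathfrak g}(1)$, and this is precisely the delicate content of the paper's Lemma~\ref{rationals} (showing that a $G(0)$-invariant rational function on ${\mathfrak c}\oplus{\mathfrak u}$ in the zero-semisimple-rank case factors through ${\mathfrak c}$, via a unique-factorization and density-of-nilpotent-orbit argument) together with the factorization through ${\mathfrak l}(1)\quot L(0)\cong{\mathfrak c}$ in the proof of Theorem~\ref{chev}. Your phrase "separability $\ldots$ guarantees $\tilde f$ is algebraic" hides this step rather than addressing it: the separability of $G(0)\times{\mathfrak z}_{\mathfrak g}({\mathfrak c})\to{\mathfrak g}(1)$ from Lemma~\ref{csintro}(i) is an input to that argument, not a substitute for it. Also note — and this is the point of the remark after Lemma~\ref{rationals} — that for $G=\SL(2)$, $m=1$, $p=2$ the inclusion $k({\mathfrak g}(1))^{G(0)}\hookrightarrow k({\mathfrak c})^{W_{\mathfrak c}}$ is \emph{strict} even though bijectivity on closed points holds, so the birationality is genuinely not automatic.
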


Next we turn to the consideration of the group $W_{\mathfrak c}$.
For $G$ of classical type Vinberg gave a precise description of the little Weyl group.
The basic approach of \cite[\S 7]{vin} is to classify inner automorphisms $\Int g$ of $G$ by considering the eigenvalues of $g$, and similarly for outer automorphisms.
In essence, this perspective fixes a maximal torus of ($G$ containing a maximal torus of) $G(0)$.
Here we follow a different approach more in common with the classification of involutions (see \cite{springerinvs} or \cite{helminck}): we fix a (suitable) $\theta$-stable maximal torus $T$ whose Lie algebra contains a Cartan subspace.
Hence we describe an inner automorphism as $\Int n_w$, where $n_w\in N_G(T)$ and $w=n_wT\in W$ is an element of order $m$ (and similarly for outer automorphisms).
This allows us to relate $W_{\mathfrak c}$ to the centralizer of $w$ in $W$.

 - If $G$ is of classical type then $W_{\mathfrak c}$ is of the form $G(m',1,r)$ or $G(m',2,r)$ where $m'\in\{ m/2,m,2m\}$ (cf. \cite{vin}).
 
 - If $G$ is of exceptional type and $m>2$ or if $G$ is of type $D_4$ and $\charac k=p>3$ then the order of $W_{\mathfrak c}$ is coprime to $p$.

This, along with a reduction theorem to the almost simple case (\S 3) and application of a result of Panyushev \cite{panorbits} gives us the following result for any $G$ satisfying the `standard hypotheses' (see \S 3).

\begin{theorem}
The group $W_{\mathfrak c}$ is generated by pseudoreflections and ${\mathfrak c}/W_{\mathfrak c}$ is isomorphic to a vector space of dimension $r=\dim{\mathfrak c}$.
\end{theorem}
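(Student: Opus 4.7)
The plan is to combine the three ingredients collected in the introduction: the reduction of \S 3 to the almost simple case; the explicit structure of $W_{\mathfrak c}$ for classical $G$; and the $p$-coprimality of $|W_{\mathfrak c}|$ in the exceptional cases together with Panyushev's result \cite{panorbits}.

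First, by the reduction theorem of \S 3, I may assume $G$ is almost simple, since the centre contributes only a trivial direct factor to ${\mathfrak c}$ and $W_{\mathfrak c}$. If $G$ is of classical type, the first bullet point identifies $W_{\mathfrak c}$ with an imprimitive complex reflection group $G(m',1,r)$ or $G(m',2,r)$, with $m' \in \{m/2,m,2m\}$. These are generated by pseudoreflections by construction (diagonal matrices with $m'$-th roots of unity together with coordinate permutations). Moreover, their invariant rings on the natural representation are polynomial in any characteristic: they are generated by the elementary symmetric polynomials in $x_i^{m'}$ together, in the $G(m',2,r)$ case, with $(x_1 \cdots x_r)^{m'/2}$, and this is a straightforward consequence of the characteristic-free fundamental theorem of symmetric polynomials.

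If $G$ is of exceptional type with $m>2$ (or of type $D_4$ with $p>3$), the second bullet point gives $p \nmid |W_{\mathfrak c}|$. Panyushev's result \cite{panorbits} then lets the characteristic-zero conclusion --- Vinberg's theorem that $W_{\mathfrak c}$ is generated by pseudoreflections --- be transferred to the present setting, and the Shephard-Todd-Chevalley theorem (valid since $p \nmid |W_{\mathfrak c}|$) furnishes the polynomial invariant ring. The remaining case $m=2$ for $G$ exceptional (or $D_4$ in small characteristic) is the symmetric space decomposition, where \cite{invs} shows that $W_{\mathfrak c}$ is a genuine Weyl group of the restricted root system and hence generated by reflections.

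Combining these structural results with the preceding theorem ${\mathfrak c}/W_{\mathfrak c} \cong {\mathfrak g}(1)\quot G(0)$ yields both assertions: $W_{\mathfrak c}$ is generated by pseudoreflections, and ${\mathfrak g}(1)\quot G(0) \cong \Spec k[{\mathfrak c}]^{W_{\mathfrak c}} \cong k^r$. The principal obstacle is the classical-type identification summarized in the first bullet point. Following \cite[\S 7]{vin} but in positive characteristic, I would fix a $\theta$-stable maximal torus $T$ containing a maximal torus of $G(0)$ (using the decomposition $T = \prod_{d|m} T_d$ from the preceding lemma to place ${\mathfrak c}$ inside $\Lie(T_m)$), write $\theta = \Int n_w$ with $w \in W$ of order $m$, and identify $W_{\mathfrak c}$ as a suitable quotient of the centralizer $C_W(w)$. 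The delicate checking is to verify that this identification goes through in positive characteristic; this is ensured by the good characteristic and $p \nmid m$ hypotheses, which prevent $p$-torsion phenomena in the relevant centralizer and normalizer computations.
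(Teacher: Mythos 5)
Your proposal follows exactly the three-pronged strategy of the paper: reduce to the almost simple case via \S 3, identify $W_{\mathfrak c}$ explicitly as $G(m',1,r)$ or $G(m',2,r)$ for classical types, and apply coprimality of $|W_{\mathfrak c}|$ to $p$ together with Panyushev's theorem in the exceptional cases. Two inaccuracies in your sketch should be corrected, though neither alters the strategy. First, you place ${\mathfrak c}$ inside $\Lie(T_m)$, but $T_m=(T^\theta)^\circ$ has Lie algebra ${\mathfrak t}(0)\subset{\mathfrak g}(0)$; it is $\Lie(T_1)$, the minimal subtorus whose Lie algebra contains ${\mathfrak t}(1)$, that contains ${\mathfrak c}$ (the indices are transposed). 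Second, Panyushev's result in \cite{panorbits} is not a transfer of Vinberg's characteristic-zero theorem: it is a direct statement that a finite group $W\subset\GL(U)$ of order coprime to $\charac k$ with $U/W\cong V\quot G$ for a connected reductive $G\subset\GL(V)$ is generated by pseudoreflections. Its application here hinges on the nontrivial verification, via Carter's tables of centralizers in the exceptional Weyl groups, that $p\nmid |W_{\mathfrak c}|$, which your sketch takes on faith; that inspection is the real content of the exceptional-type half of the argument.
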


Recall that a Kostant-Weierstrass slice or KW-section for $(G,\theta)$ is a linear subvariety ${\mathfrak v}$ of ${\mathfrak g}(1)$ for which the restriction of functions $k[{\mathfrak g}(1)]^{G(0)}\rightarrow k[{\mathfrak v}]$ is an isomorphism.
The existence of KW-sections for $\theta$-groups is a long-standing conjecture of Popov in characteristic zero \cite{popov}.
In \cite[Cor. 5]{pansemislice} Panyushev proved that a KW-section exists if ${\mathfrak g}(0)$ is semisimple.
More recently, Panyushev proved in \cite[Thm. 3.5]{panslice} that KW-sections exist for `N-regular' gradings, that is, those such that ${\mathfrak g}(1)$ contains a regular nilpotent element of ${\mathfrak g}$.
Here we prove existence of a KW-section for a classical graded Lie algebra in zero or good positive characteristic.
Our approach to describing the little Weyl group makes it clear that if $G$ is of classical type then there is an N-regular minimal $\theta$-stable semisimple subgroup $L$ of $G$ whose Lie algebra contains ${\mathfrak c}$ and such that all elements of $W_{\mathfrak c}$ have representatives in $L(0)$.
The proof of Popov's conjecture for classical graded Lie algebras can therefore be reduced to the subgroups $L$ constructed in this way.
The solution in characteristic zero is then immediate due to Panyushev \cite{panslice}; in positive characteristic we generalize Panyushev's result by a similar reasoning.

\begin{theorem}
Let $\charac k=0$ or $p>2$ and let $G$ be of classical type, that is, one of $\GL(n,k)$, $\SL(n,k)$, $\SO(n,k)$, $\Sp(2n,k)$.
Then the grading of ${\mathfrak g}$ induced by $\theta$ admits a KW-section.
\end{theorem}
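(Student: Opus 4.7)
The plan is to reduce to an N-regular case and then invoke (or, in positive characteristic, generalize) Panyushev's slice construction from \cite{panslice}. Concretely, the structural description of $W_{\mathfrak c}$ for classical $G$ sketched earlier in the introduction produces a $\theta$-stable semisimple subgroup $L \subseteq G$ such that (i) ${\mathfrak c} \subseteq \Lie(L) = {\mathfrak l}$, (ii) $(L,\theta|_L)$ is N-regular, i.e. ${\mathfrak l}(1)$ contains a regular nilpotent of ${\mathfrak l}$, and (iii) every element of $W_{\mathfrak c}$ is represented by an element of $L(0)$. Granting the existence of such an $L$, property (iii) implies that the little Weyl group of the graded Lie algebra $(L,\theta|_L)$ coincides (as a subgroup of $\GL({\mathfrak c})$) with $W_{\mathfrak c}$. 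Applying the Chevalley-type theorem stated above to both $(G,\theta)$ and $(L,\theta|_L)$ yields a commutative diagram of isomorphisms
$$k[{\mathfrak g}(1)]^{G(0)} \xrightarrow{\sim} k[{\mathfrak c}]^{W_{\mathfrak c}} \xleftarrow{\sim} k[{\mathfrak l}(1)]^{L(0)},$$
so any KW-section for $(L,\theta|_L)$ is automatically a KW-section for $(G,\theta)$.

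The problem thus reduces to constructing a KW-section in the N-regular situation. Here I would follow Panyushev's strategy. Pick a regular nilpotent $e \in {\mathfrak l}(1)$ and, using Jacobson--Morozov in good characteristic, embed it in an $\mathfrak{sl}_2$-triple $(e,h,f)$; by averaging $h$ under the $L(0)$-action one may arrange $h \in {\mathfrak l}(0)$ and consequently $f \in {\mathfrak l}(-1)$. Take as candidate slice the affine subspace
$${\mathfrak v} := e + {\mathfrak z}_{\mathfrak l}(f) \cap {\mathfrak l}(1).$$
The restriction map $k[{\mathfrak l}(1)]^{L(0)} \to k[{\mathfrak v}]$ is to be shown to be an isomorphism. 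One verifies the dimensions agree: $\dim {\mathfrak v} = \dim ({\mathfrak z}_{\mathfrak l}(f) \cap {\mathfrak l}(1)) = \dim {\mathfrak c}$, using that $e$ is regular so that ${\mathfrak z}_{\mathfrak l}(f)$ has the expected dimension (equal to $\rank {\mathfrak l}$) and that, by the $\mathfrak{sl}_2$-representation theory of ${\mathfrak l}$ split by the $\Ad h$-grading, the degree-$1$ piece of ${\mathfrak z}_{\mathfrak l}(f)$ has dimension $\dim {\mathfrak c}$. Next, the Kazhdan-type $\mathbb{G}_m$-action $t\cdot x = t^2\Ad(\rho(t^{-1}))x$, where $\rho:\mathbb{G}_m \to L(0)$ is the cocharacter with $d\rho(1)=h$, stabilizes ${\mathfrak v}$, contracts ${\mathfrak v}$ onto $e$, and makes the restriction map a morphism of graded rings. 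Combined with the polynomiality of $k[{\mathfrak l}(1)]^{L(0)} \cong k[{\mathfrak c}]^{W_{\mathfrak c}}$ (the theorem above) and the separability of the quotient morphism (the lemma above), a standard degree-count together with the fact that ${\mathfrak v}$ meets the closed orbit $L(0)\cdot e$ only at $e$ will force the restriction to be an isomorphism.

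The main obstacle is the third step in positive characteristic: in \cite{panslice} Panyushev uses characteristic-zero tools (completeness of the $\mathfrak{sl}_2$-decomposition, transversality arguments) whose analogues in good characteristic need to be invoked carefully. In particular, one must check Jacobson--Morozov and the Kostant-style transversality $T_e{\mathfrak l}(1) = [f,{\mathfrak l}(1)] \oplus ({\mathfrak z}_{\mathfrak l}(f) \cap {\mathfrak l}(1))$ in good characteristic (both known under our hypotheses), and one must use the separability result of the earlier lemma in place of characteristic-zero smoothness arguments to pass from ``the restriction map is a finite graded morphism of the correct degrees'' to ``it is an isomorphism.'' The construction and verification of the auxiliary subgroup $L$ (which is carried out earlier in the paper in the case-by-case analysis of classical $W_{\mathfrak c}$) is what makes the whole reduction possible; without it, one would have to attack the general classical case directly, which is significantly harder.
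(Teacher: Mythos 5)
Your top-level reduction strategy --- construct a $\theta$-stable semisimple subgroup $L\subseteq G$ with ${\mathfrak c}\subset\Lie(L)$, $\theta|_L$ $N$-regular, and every element of $W_{\mathfrak c}$ represented in $L(0)$, then reduce to the $N$-regular case --- is exactly the paper's architecture (Remarks \ref{ark}--\ref{rka}, Table 1, Prop. \ref{nreg}). But the second half, the generalization of Panyushev's slice argument to positive characteristic, is where you and the paper diverge, and where your proposal has a real gap.

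You propose to embed $e$ in an $\mathfrak{sl}_2$-triple via Jacobson--Morozov, arrange $h\in{\mathfrak l}(0)$, $f\in{\mathfrak l}(-1)$ by averaging, and then compute $\dim\bigl({\mathfrak z}_{\mathfrak l}(f)\cap{\mathfrak l}(1)\bigr)$ via ``the $\mathfrak{sl}_2$-representation theory of ${\mathfrak l}$ split by the $\Ad h$-grading.'' This last step is precisely what fails in good positive characteristic: the adjoint module need not be a completely reducible $\mathfrak{sl}_2$-module (one generally needs $p$ larger than roughly $2h-1$, not merely $p$ good), so the string decomposition used to identify the degree-one piece of ${\mathfrak z}_{\mathfrak l}(f)$ with a space of dimension $\dim{\mathfrak c}$ is unavailable. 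You flag this as something ``known under our hypotheses,'' but it is not; the paper explicitly notes (just before Prop. \ref{kwprop}) that Cor. \ref{dimcor} is invoked precisely to \emph{avoid} the $\mathfrak{sl}_2$-triple argument of \cite[3.3]{panslice}. Similarly, the ``averaging'' to get $h\in{\mathfrak l}(0)$ and the claim $f\in{\mathfrak l}(-1)$ need justification beyond what you give.

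The paper's substitute is more robust: take an associated cocharacter $\lambda:k^\times\to G(0)$ for $e$ (guaranteed in good characteristic by Bala--Carter--Pommerening; see Rk. \ref{associated}), let ${\mathfrak u}$ be any $\lambda$-stable complement to $[{\mathfrak g}(0),e]$ in ${\mathfrak g}(1)$, and obtain $\dim{\mathfrak u}=r$ from equidimensionality of quotient fibres (Cor. \ref{dimcor}) plus separability of orbit maps, with no $\mathfrak{sl}_2$-theory at all. The isomorphism $k[{\mathfrak g}(1)]^{G(0)}\to k[e+{\mathfrak u}]$ is then proved not by a transversality/degree count but by directly restricting homogeneous generators $F_1,\dots,F_n$ of $k[{\mathfrak g}]^G$ to ${\mathfrak g}(1)$, splitting $F_i|_{{\mathfrak g}(1)}=f_i+g_i$ into a part linear in the generators of $k[{\mathfrak g}(1)]^{G(0)}$ and a higher-order part, and observing that $(dF_i|_{{\mathfrak g}(1)})_e=(df_i)_e$ because $g_i$ vanishes to order two on the nilcone. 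If you want to salvage your approach, the cleanest fix is to abandon the $\mathfrak{sl}_2$-triple entirely and reformulate the slice as $e+{\mathfrak u}$ with $\lambda$ an associated cocharacter.
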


{\it Notation.}
For $G$ an affine algebraic group, we denote by $\Int g$ the corresponding inner automorphism of $G$, by $G^\circ$ the connected component of $G$ and by $G^{(1)}$ the derived subgroup of $G$.
If $\theta$ an automorphism of $G$ then denote by $G^\theta$ the isotropy subgroup of $G$.
Write $x=x_sx_u$ (resp. $x=x_s+x_n$) for the Jordan-Chevalley decomposition of $x\in G$ (resp. $x\in\Lie(G)$).
We denote by $[n/m]$ the integer part of the fraction $n/m$.

{\it Acknowledgement.}
I would like to thank Alexander Premet for many helpful remarks and conversations, and Ross Lawther for crucial advice concerning conjugacy classes in exceptional type Weyl groups.
I would also like to express appreciation for the helpful comments of Dmitri Panyushev.

\section{Preliminaries}

Let $\Phi$ be an irreducible root system with basis $\Delta=\{ \alpha_1,\ldots ,\alpha_r\}$.
Recall that $p$ is {\it good} for $\Phi$ if for any $\alpha=\sum_{i=1}^r m_i\alpha_i\in\Phi$, $p>|m_i|$ for all $i$.
Specifically, 2 is a bad prime for all irreducible root systems other than type $A$, 3 is bad for all exceptional type root systems and 5 is bad for type $E_8$; otherwise $p$ is good.
More generally, $p$ is good for a root system $\Phi$ if it is good for each irreducible component of $\Phi$, and is good for a reductive algebraic group if it is good for its root system.

Let $G$ be a reductive affine algebraic group over the algebraically closed field $k$ of characteristic $p>0$ and let ${\mathfrak g}=\Lie(G)$.
We assume throughout that $p$ is good for $G$.
It is well-known that $p$ is good for any Levi subgroup of $G$.
In fact it is straightforward to see that $p$ is good for any {\it pseudo}-Levi subgroup of $G$.
(A pseudo-Levi subgroup of $G$ is a subgroup of the form $Z_G(s)^\circ$, where $s\in G$ is semisimple.
The possible root systems for such subgroups are given by proper subsets of the extended Dynkin diagram of $G$, see \cite[Prop. 2]{sommers} in characteristic zero, \cite[Prop. 20]{mcninchsommers} in positive characteristic.)

Recall that the Lie algebra of any affine algebraic group over $k$ is {\it restricted}.
Hence there is a map $[p]:{\mathfrak g}\rightarrow{\mathfrak g}$, $x\mapsto x^{[p]}$ such that:

 - $\ad x^{[p]}=(\ad x)^p$ for all $x\in{\mathfrak g}$,
 
 - the map $\xi:{\mathfrak g}\rightarrow U({\mathfrak g})$, $x\mapsto x^p-x^{[p]}$ is semilinear, that is $\xi(\lambda x+y)=\lambda^p\xi(x)+\xi(y)$ for all $x,y\in{\mathfrak g}$, $\lambda\in k$.
 
We denote by $x\mapsto x^{[p^i]}$ the $i$-th iteration of ${[p]}$.
Recall also that $x\in{\mathfrak g}$ is semisimple if and only if $x\in\sum_{i\geq 1}kx^{[p^i]}$, and is nilpotent if and only if $x^{[p^N]}=0$ for large enough $N$.

Let $\theta:G\rightarrow G$ be an automorphism of order $m$, $p\nmid m$ and let $d\theta:{\mathfrak g}\rightarrow{\mathfrak g}$ be the corresponding restricted Lie algebra automorphism of ${\mathfrak g}$.
Fix once and for all a primitive $m$-th root of unity $\zeta$ in $k$.
Then there is a direct sum decomposition ${\mathfrak g}={\mathfrak g}(0)\oplus{\mathfrak g}(1)\oplus\ldots\oplus{\mathfrak g}(m-1)$, where ${\mathfrak g}(i)=\{ x\in{\mathfrak g}\, |\, d\theta(x)=\zeta^ix\}$.
In fact, this is a ${\mathbb Z}/m{\mathbb Z}$-grading of ${\mathfrak g}$: if $x\in{\mathfrak g}(i)$, $y\in{\mathfrak g}(j)$ then $[x,y]\in {\mathfrak g}(i+j)$ ($i,j\in{\mathbb Z}/m{\mathbb Z}$).
Let $G(0)=(G^\theta)^\circ$.
Then $G(0)$ is reductive \cite[8.1]{steinberg} and $\Lie(G(0))={\mathfrak g}(0)$ \cite[9.1]{borel}.
Clearly the adjoint action of $G(0)$ stabilises each of the subspaces ${\mathfrak g}(i)$.

We are interested in the properties of the $G(0)$-representation ${\mathfrak g}(1)$.
Note that the action of $G(0)$ on any ${\mathfrak g}(i)$ ($i\neq 0$) can be reduced to this case.
Indeed, if $0<i<m$ then let $\psi=\theta^{(m,i)}$, let $\overline{G}=(G^\psi)^\circ$ and let $\overline{\mathfrak g}:=\Lie(\overline{G})$ (cf. \cite[\S 2.1]{vin}).
Then $\overline{G}$ is $\theta$-stable, reductive and contains $G(0)$, and $\overline{\mathfrak g}=\sum_{0\leq j<m/(m,i)}\overline{\mathfrak g}(j)$, where $\overline{\mathfrak g}(j)={\mathfrak g}(ij)$.
In particular, $\overline{\mathfrak g}(1)={\mathfrak g}(i)$.
It can be easily checked that the condition $p\nmid m$ implies that $p$ is good for $\overline{G}$ and $G(0)$.

\begin{lemma}\label{first}
(a) Let $x\in{\mathfrak g}$, and let $x=x_s+x_n$ be the Jordan-Chevalley decomposition of $x$.
Then $x\in{\mathfrak g}(i)$ if and only if $x_s,x_n\in{\mathfrak g}(i)$.

(b) If $x\in{\mathfrak g}(i)$ then $x^{[p]}\in{\mathfrak g}(ip)$.
\end{lemma}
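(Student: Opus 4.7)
The plan is to exploit two structural facts about the differential $d\theta$: (a) as the differential of an automorphism of the algebraic group $G$, it is an automorphism of the restricted Lie algebra ${\mathfrak g}$, and in particular commutes with the $[p]$-operation; and (b) as such, it preserves the Jordan-Chevalley decomposition (it sends semisimple elements to semisimple elements and nilpotent elements to nilpotent elements, and is additive and bracket-preserving, so it carries the unique JC decomposition of $x$ to that of $d\theta(x)$).

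For part (a), suppose $x \in {\mathfrak g}(i)$ with JC decomposition $x = x_s + x_n$. Then $\zeta^i x = d\theta(x) = d\theta(x_s) + d\theta(x_n)$, and by the preservation of the JC decomposition the right-hand side is the JC decomposition of $d\theta(x)$; since $\zeta^i x_s$ is semisimple, $\zeta^i x_n$ is nilpotent, and they commute, uniqueness of the JC decomposition of $\zeta^i x$ forces $d\theta(x_s) = \zeta^i x_s$ and $d\theta(x_n) = \zeta^i x_n$, i.e.\ $x_s, x_n \in {\mathfrak g}(i)$. The converse is trivial since ${\mathfrak g}(i)$ is a subspace.

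For part (b), since $d\theta$ commutes with the $[p]$-operation and since $[p]$ is semilinear (as recalled just above the statement of the lemma), we compute
\[
d\theta(x^{[p]}) = (d\theta(x))^{[p]} = (\zeta^i x)^{[p]} = (\zeta^i)^p\, x^{[p]} = \zeta^{ip}\, x^{[p]},
\]
so $x^{[p]} \in {\mathfrak g}(ip)$, where the index is of course read modulo $m$.

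There is no genuine obstacle here; the only point requiring a little care is justifying that $d\theta$ preserves the JC decomposition in positive characteristic. This follows from the standard fact that the differential of an algebraic group automorphism is a restricted Lie algebra automorphism, and that restricted Lie algebra automorphisms preserve the semisimple/nilpotent dichotomy defined via the $[p]$-operation (semisimple iff $x \in \sum_{i \geq 1} k x^{[p^i]}$, nilpotent iff $x^{[p^N]} = 0$ for $N \gg 0$, both conditions manifestly preserved by any map commuting with $[p]$).
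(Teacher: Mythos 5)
Your proof is correct and follows essentially the same route as the paper: for (a) you use that $d\theta$ is a restricted Lie algebra automorphism, hence preserves semisimplicity and nilpotency and so carries the Jordan--Chevalley decomposition of $x$ to that of $d\theta(x)$, then invoke uniqueness; for (b) you use that $d\theta$ commutes with $[p]$ together with the $p$-semilinearity $(\lambda x)^{[p]}=\lambda^p x^{[p]}$. The paper's proof is terser (it says "(b) is immediate" and leaves the uniqueness step in (a) implicit), but the underlying argument is identical.
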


\begin{proof}
For any (rational) automorphism $\theta$ of $G$, $d\theta(x^{[p]})=d\theta(x)^{[p]}$, hence (b) is immediate.
Since any restricted Lie algebra automorphism of ${\mathfrak g}$ preserves semisimplicity and nilpotency, $d\theta(x_s)$ (resp. $d\theta(x_n)$) is semisimple (resp. nilpotent) and $[d\theta(x_s),d\theta(x_n)]=0$.
Hence $d\theta(x)=d\theta(x_s)+d\theta(x_n)$ is the Jordan-Chevalley decomposition of $d\theta(x)$.
This proves (a).
\end{proof}

The following result of Steinberg \cite[7.5]{steinberg} is essential to any discussion of automorphisms of $G$.
(This was earlier proved for connected $H$ by Winter \cite{winter}.)

{\it - For any rational automorphism $\sigma$ of a linear algebraic group $H$ there exists a $\sigma$-stable Borel subgroup of $H$.
If $\sigma$ is semisimple then there is a $\sigma$-stable maximal torus of $H$ contained in a $\sigma$-stable Borel subgroup.}

Following Springer for the case $m=2$, we call a pair $(B,T)$, $B$ a $\theta$-stable Borel subgroup of $G$ and $T$ a $\theta$-stable maximal torus of $B$ a {\it fundamental pair}.
Let $\Phi=\Phi(G,T)$ be the roots of $G$ relative to $T$, let $\Phi^+$ be the positive system in $\Phi$ associated to $B$ and let $\Delta$ be the corresponding basis for $\Phi$.
For each $\alpha\in\Phi$, denote by $\alpha^\vee$ the corresponding coroot.
Let $X(T):=\Hom (T,k^\times)$ and let $Y(T):=\Hom(k^\times,T)$.
Consider the coroots as elements of $Y(T)$ via the perfect pairing $\langle .\, ,.\rangle:X(T)\times Y(T)\rightarrow {\mathbb Z}$.
Let $\gamma$ be the graph automorphism of $\Phi$ induced by $\theta$ (that is, such that $d\theta({\mathfrak g}_\alpha)={\mathfrak g}_{\gamma(\alpha)}$).
Then $\gamma$ permutes the elements of $\Delta$.
Let $\{ h_\alpha,e_\beta:\alpha\in\Delta,\beta\in\Phi\}$ be a Chevalley basis for $[{\mathfrak g},{\mathfrak g}]$.
(In fact the $h_\alpha=[e_\alpha,e_{-\alpha}]=d\alpha^\vee(1)$ need not be linearly independent (or even non-zero!), but this problem can be solved by removing some of the $h_\alpha$.
We remark that the error of assuming that the $h_\alpha$ are linearly independent and span $\Lie(T\cap G^{(1)})$ appears in the work of the author on involutions \cite[p. 512]{invs}.
This error can easily be remedied by applying Lemma \ref{redcase}(b) below to pass from $[{\mathfrak g},{\mathfrak g}]$ to all of ${\mathfrak g}$.)

There exist constants $c(\alpha)\in k^\times$, $\alpha\in\Phi$, such that:

 - $d\theta(e_\alpha)=c(\alpha)e_{\gamma(\alpha)}$, $\alpha\in\Phi$,
 
 - $d\theta(h_\alpha)=h_{\gamma(\alpha)}$, $\alpha\in\Delta$,
 
 - $c(\alpha)c(-\alpha)=1$, $\alpha\in\Phi$,
 
 - $c(\alpha)c(\gamma(\alpha))\ldots c(\gamma^{m-1}(\alpha))=1$.
 
The second statement follows immediately from the fact that $h_\alpha=d\alpha^\vee(1)$.
But $h_\alpha=[e_\alpha,e_{-\alpha}]$, hence the third statement also follows.

Note that after conjugation by $\Ad t$ for some $t\in T$, we may assume that $c(\alpha)=1$ for any $\alpha\in\Delta$ such that $\gamma(\alpha)\neq \alpha$.
Following Kawanaka \cite{kawanaka} let $l(\alpha)$ denote the cardinality of the set $(\alpha)=\{ \alpha,\gamma(\alpha),\ldots ,\gamma^{m-1}(\alpha)\}$ and let $C(\alpha)=c(\alpha)c(\gamma(\alpha))\ldots c(\gamma^{l(\alpha)-1}(\alpha))$.
Then $C(\alpha)^{m/l(\alpha)}=1$.
Let $n(\alpha)$ denote the order of $C(\alpha)$ (as a root of unity) and let ${\mathfrak g}_{(\alpha)}=\sum_{\beta\in(\alpha)}{\mathfrak g}_\alpha$.
It is easy to verify that:

 - $\dim {\mathfrak g}_{(\alpha)}\cap{\mathfrak g}(1)= \left\{ \begin{array}{ll} 1 & \mbox{if $n(\alpha)=m/l(\alpha)$}, \\ 0 & \mbox{otherwise.}\end{array}\right.$

The following lemma first appeared in \cite[2.2.5]{kawanaka}, with the slight error that case (ii) for $l(\alpha)>2$ was omitted.
The results of \cite{kawanaka} remain valid simply by modifying the definition \cite[p. 582]{kawanaka} of $w_{(\alpha)}$: in case (ii) $w_{(\alpha)}=\prod_{i=1}^{l(\alpha)/2-1} w_{\gamma^i(\alpha)} w_{\gamma^{i+l(\alpha)/2}(\alpha)}w_{\gamma^i(\alpha)}$.
This is consistent with \cite{kawanaka} in the case $l(\alpha)=2$.
 
\begin{lemma}\label{kawanaka}
For $\alpha\in\Phi$, one of the following two cases occurs:

(i) $l(\alpha)=1$, or $l(\alpha)\geq 2$ and any two roots in $(\alpha)$ are orthogonal,
 
(ii) $l(\alpha)$ is even, the elements of $(\alpha)$ generate a subsystem of $\Phi$ of type $A_2^{l(\alpha)/2}$, and $\langle\alpha,\gamma^{l(\alpha)/2}(\alpha)\rangle=-1$.
 \end{lemma}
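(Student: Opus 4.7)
The approach is to analyze the root subsystem $\Psi \subseteq \Phi$ generated by the $\gamma$-orbit $(\alpha)$, exploiting the fact that $\gamma$ preserves both the inner product on $X(T) \otimes {\mathbb R}$ and the positive system $\Phi^+$ (since it arises from the fundamental pair $(B,T)$). First I would record the basic constraints: all elements of $(\alpha)$ have the same length, and since $\gamma$ preserves $\Phi^+$ they all lie on the same side of $0$; in particular $\gamma^i(\alpha) \neq -\alpha$ for any $i$. Hence for $0 < i < l := l(\alpha)$ the roots $\alpha$ and $\gamma^i(\alpha)$ are distinct, non-opposite and of equal length, so their Cartan integer $a_i := \langle \alpha, \gamma^i(\alpha)^\vee\rangle$ lies in $\{-1, 0, 1\}$, and they span either an $A_1 \times A_1$- or an $A_2$-subsystem accordingly.

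Next I would decompose $\Psi$ into irreducible components $\Psi_1 \sqcup \cdots \sqcup \Psi_s$. Since $(\alpha)$ is a single $\gamma$-orbit generating $\Psi$, every component meets $(\alpha)$ and $\gamma$ cyclically permutes the components; so $s$ divides $l$, the subgroup $\langle \gamma^s\rangle$ stabilizes each $\Psi_j$, and $(\alpha) \cap \Psi_j$ is a single orbit of $\gamma^s|_{\Psi_j}$ of size $l/s$ that generates $\Psi_j$ as a root subsystem. Because $\gamma^s$ preserves $\Psi_j \cap \Phi^+$, its restriction to $\Psi_j$ must be a diagram automorphism of $\Psi_j$. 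The outer automorphism groups of irreducible root systems contain elements only of orders $1$, $2$, or $3$, with $3$ occurring solely for $D_4$ (triality), so $l/s \in \{1, 2, 3\}$.

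The proof then splits into three subcases. If $l/s = 1$, each $\Psi_j$ is generated by a single root and is thus of type $A_1$; this yields $\Psi = A_1^l$ and case (i). If $l/s = 2$, then $\Psi_j$ is irreducible of rank at most $2$, generated by two distinct non-opposite equal-length roots, so $\Psi_j = A_2$; the non-trivial diagram automorphism of $A_2$ then identifies $\{\alpha, \gamma^{l/2}(\alpha)\}$ (up to a common sign) with the pair of simple roots of $A_2$, whose Cartan pairing is $-1$, giving case (ii). The remaining possibility $l/s = 3$ would force $\Psi_j = D_4$ with $(\alpha) \cap \Psi_j$ a triality orbit of size $3$ generating $D_4$; but a direct inspection shows every such orbit in $D_4$ consists of pairwise orthogonal roots, generating only an $A_1^3$-subsystem and contradicting the irreducibility of $\Psi_j$.

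I expect the main technical step to be the verification in the third subcase: one must check directly that each size-$3$ triality orbit on the roots of $D_4$ is pairwise orthogonal. This is a finite, elementary computation, but it is precisely what forces the dichotomy into exactly the two stated cases and excludes any exceptional behaviour coming from triality.
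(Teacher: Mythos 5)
Your proposal is correct and follows essentially the same route as the paper, which merely gestures at "the classification of root systems and the fact that $\gamma$ induces an automorphism of the subsystem generated by $(\alpha)$." You have filled in exactly what that sketch defers to: decomposing the subsystem into irreducible components cyclically permuted by $\gamma$, noting that the residual automorphism of each component preserves a positive system (hence is a diagram automorphism of order $l/s \in\{1,2,3\}$), identifying the components as $A_1$ or $A_2$ in the first two cases, and ruling out $D_4$ with triality by checking that its size-three orbits on roots are pairwise orthogonal and so cannot generate an irreducible component.
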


\begin{proof}
This follows from the classification of root systems and the fact that $\gamma$ induces an automorphism of the subsystem of $\Phi$ spanned by the roots in $(\alpha)$.
\end{proof}

We deduce that:

\begin{lemma}\label{reg0}
Let $S$ be a maximal torus of $G(0)$.
Then $S$ is regular in $G$.
\end{lemma}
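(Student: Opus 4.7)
My plan is to prove $S$ is regular by identifying $L := Z_G(S)^\circ$ with a maximal torus of $G$. First, $L$ is a connected reductive $\theta$-stable subgroup of $G$ with $S \subseteq Z(L)^\circ$. By Steinberg's theorem there is a $\theta$-stable maximal torus $T$ of $L$; since $L$ contains a maximal torus of $G$ (being the centralizer of a torus), $T$ is itself a maximal torus of $G$, and $S \subseteq T$. The decomposition $T = \prod_{d \mid m} T_d$ stated in the introduction gives $T_m = (T^\theta)^\circ$. Connectedness of $S$ together with $S \subseteq T^\theta$ forces $S \subseteq T_m$; conversely $T_m$ is a torus contained in $G(0)$ and centralising $S$, so maximality of $S$ in $G(0)$ yields $S = T_m$.

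Since $\Lie(L) = {\mathfrak t} \oplus \bigoplus_{\alpha|_S = 1} {\mathfrak g}_\alpha$, to conclude $L = T$ it suffices to show that no root $\alpha \in \Phi(G,T)$ restricts trivially to $S = (T^\theta)^\circ$. The inclusion $Y(S) = Y(T)^\theta \hookrightarrow Y(T)$ is dual to the restriction $X(T) \to X(S)$, so $\alpha|_S = 1$ iff $\alpha$ pairs to zero with every $\theta$-fixed cocharacter. By $\theta$-invariance of the pairing, $\langle \alpha, \mu \rangle = \langle P\alpha, \mu \rangle$ for $\mu \in Y(T)^\theta$, where $P = \frac{1}{m}\sum_{i=0}^{m-1} \theta^i$ is the averaging projection on $X(T) \otimes {\mathbb Q}$; since $\theta$ has finite order in characteristic $0$, the restricted pairing $(X(T) \otimes {\mathbb Q})^\theta \times (Y(T) \otimes {\mathbb Q})^\theta \to {\mathbb Q}$ is non-degenerate, so $\alpha|_S = 1$ if and only if $P\alpha = 0$, i.e.\ $\sum_{\beta \in (\alpha)} \beta = 0$ in $X(T) \otimes {\mathbb Q}$.

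It then remains to verify that this sum is nonzero for every $\gamma$-orbit, which is immediate from Lemma \ref{kawanaka}. In case (i) the summands are pairwise orthogonal (with respect to any $\gamma$-invariant inner product), so the sum has strictly positive norm. In case (ii) the orbit splits into the $l(\alpha)/2$ pairs $\{\gamma^j(\alpha), \gamma^{j+l(\alpha)/2}(\alpha)\}$, one pair inside each of the mutually orthogonal $A_2$-components; within each $A_2$ the two roots pair to $-1$ and therefore sum to a nonzero root of that $A_2$, and these contributions are orthogonal. The main subtle point is the equivalence in paragraph two: a naive Lie-algebra variant, evaluating $d\alpha$ on the averaged coroot $\sum_i h_{\gamma^i(\alpha)} \in \Lie(S)$, fails in characteristic $2$ because nontrivial characters on a torus (such as $t \mapsto t^2$) can have zero differential, so the bookkeeping must be done on character lattices rather than on their differentials.
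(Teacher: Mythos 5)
Your argument is correct, and it takes a genuinely different route from the paper's. The paper proves regularity directly from the Chevalley-basis setup: it exhibits, for each root $\alpha$, the $\theta$-fixed cocharacter $\lambda_\alpha = \alpha^\vee+\gamma(\alpha)^\vee+\ldots+\gamma^{l(\alpha)-1}(\alpha)^\vee$, observes that after conjugating $S$ may be assumed to contain all the $\lambda_\alpha(k^\times)$, and then concludes ${\mathfrak g}^S={\mathfrak t}$ because $\langle\lambda_\alpha,\alpha\rangle\neq 0$ over $\mathbb Z$ (by Lemma \ref{kawanaka}). You instead first identify $S$ with $(T^\theta)^\circ$ for a suitable $\theta$-stable maximal torus $T$ of $Z_G(S)^\circ$, then pass to $X(T)\otimes{\mathbb Q}$ and use the rational idempotent $P$ to reduce the question to whether $\sum_{\beta\in(\alpha)}\beta=0$, which Lemma \ref{kawanaka} rules out. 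Both proofs hinge on the same root-combinatorial fact from Lemma \ref{kawanaka} and both correctly sidestep the characteristic-$2$ pitfall you identify — the paper by pairing an explicit integral cocharacter against $\alpha$ over ${\mathbb Z}$, you by working in the rational character lattice. The paper's version is slightly more self-contained (it uses only material already established before Lemma \ref{reg0}, whereas your $T=\prod T_d$ citation points forward to Lemma \ref{stabletori}, though you only use $T_m=(T^\theta)^\circ$, which is immediate and does not create a circular dependency); your version makes the underlying duality between $X(T)^\theta$ and $Y(T)^\theta$ explicit, which is arguably more structural. One small point worth stating explicitly in your write-up: in passing from ``$\langle P\alpha,\mu\rangle=0$ for all $\mu\in Y(T)^\theta$'' to ``$P\alpha=0$'' you need $Y(T)^\theta$ to span $(Y(T)\otimes{\mathbb Q})^\theta$, which holds because $Y(T)$ is free and $\theta$ has finite order, so fixed points commute with $-\otimes{\mathbb Q}$.
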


\begin{proof}
With the above description of $\theta$, $\alpha^\vee(t)\gamma(\alpha)^\vee(t)\dots\gamma^{l(\alpha)-1}(\alpha)^\vee(t)\in G(0)$ for all $\alpha\in\Phi^+$, $t\in k^\times$.
But then we may assume that $S$ contains the torus generated by all $(\alpha^\vee+\gamma(\alpha)^\vee+\ldots+\gamma^{l(\alpha)-1}(\alpha)^\vee)(k^\times)$, $\alpha\in\Phi$.
Since $\langle \alpha^\vee+\gamma(\alpha)^\vee+\ldots+\gamma^{l(\alpha)-1}(\alpha)^\vee,\alpha\rangle\neq 0$ by Lemma \ref{kawanaka}, ${\mathfrak g}^S={\mathfrak h}$ and hence $S$ is regular in $G$.
\end{proof}

We make the following slight modification to \cite[Lemma 1.1]{invs}.
The only difference is the final statement of (a) (which is immediate since $\mu_m$ is a group of order prime to $p$) and the inclusion of (b), which is proved in exactly the same way as (a).

\begin{lemma} \label{redcase}
(a) Let $\theta$ be an automorphism of $G$ of order $m$, $p\nmid m$, let $T$ be a $\theta$-stable maximal torus of $G$ and let ${\mathfrak t}=\Lie(T)$, ${\mathfrak t}'=\Lie(T\cap G^{(1)})$.
There exists a $\theta$-stable toral algebra ${\mathfrak s}$ such that ${\mathfrak t}={\mathfrak t}'\oplus{\mathfrak s}$, and hence ${\mathfrak g}={\mathfrak g}'\oplus{\mathfrak s}$ (vector space direct sum).

If $m|(p-1)$, then we can choose a toral basis for ${\mathfrak s}$ consisting of eigenvectors for $d\theta$.
More generally, ${\mathfrak s}^{tor}$ decomposes as a sum of irreducible ${\mathbb F}_p[\mu_m]$-modules (where $\mu_m$ here denotes the cyclic group of order $m$).

(b) The above statements all hold if one replaces ${\mathfrak t}'$ by ${\mathfrak t}''={\mathfrak t}\cap[{\mathfrak g},{\mathfrak g}]$.
\end{lemma}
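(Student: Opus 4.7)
The plan is to reduce the problem to semisimple module theory over ${\mathbb F}_p$ by exploiting the canonical ${\mathbb F}_p$-form on toral Lie algebras. Write ${\mathfrak t}^{tor}=\{x\in{\mathfrak t}\mid x^{[p]}=x\}$ for the ${\mathbb F}_p$-subspace of toral elements; since ${\mathfrak t}$ is the Lie algebra of a torus one has ${\mathfrak t}=k\otimes_{{\mathbb F}_p}{\mathfrak t}^{tor}$. The identity component of $T\cap G^{(1)}$ is again a torus, so ${\mathfrak t}'$ is toral and ${\mathfrak t}'=k\otimes_{{\mathbb F}_p}({\mathfrak t}'\cap{\mathfrak t}^{tor})$. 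The analogous statement for ${\mathfrak t}''={\mathfrak t}\cap[{\mathfrak g},{\mathfrak g}]$ needed in (b) follows from the fact that $[{\mathfrak g},{\mathfrak g}]$ is a $[p]$-stable ideal, whence ${\mathfrak t}''$ is a restricted subalgebra of the toral algebra ${\mathfrak t}$ and is therefore itself toral.

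Next, since $d\theta$ commutes with the $[p]$-operation, it restricts to an ${\mathbb F}_p$-linear automorphism of ${\mathfrak t}^{tor}$ of order dividing $m$, endowing ${\mathfrak t}^{tor}$ with the structure of a module over the group algebra ${\mathbb F}_p[\mu_m]$. Because $p\nmid m$, Maschke's theorem makes ${\mathbb F}_p[\mu_m]$ semisimple, so the $\theta$-stable submodule ${\mathfrak t}'\cap{\mathfrak t}^{tor}$ (respectively ${\mathfrak t}''\cap{\mathfrak t}^{tor}$) admits a $\theta$-stable ${\mathbb F}_p$-complement ${\mathfrak s}^{tor}$ which is itself a direct sum of irreducible ${\mathbb F}_p[\mu_m]$-modules. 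Extending scalars yields ${\mathfrak s}:=k\otimes_{{\mathbb F}_p}{\mathfrak s}^{tor}$, a $\theta$-stable toral subalgebra with ${\mathfrak t}={\mathfrak t}'\oplus{\mathfrak s}$ (resp.\ ${\mathfrak t}={\mathfrak t}''\oplus{\mathfrak s}$). The identity ${\mathfrak g}={\mathfrak g}'\oplus{\mathfrak s}$ then follows from the $T$-root space decomposition, since all root spaces lie in ${\mathfrak g}'$ and so ${\mathfrak g}'+{\mathfrak t}={\mathfrak g}'\oplus{\mathfrak s}$.

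For the eigenvector refinement of (a), when $m\mid(p-1)$ the field ${\mathbb F}_p$ already contains a primitive $m$-th root of unity, so every irreducible ${\mathbb F}_p[\mu_m]$-module is one-dimensional. Thus $d\theta$ acts diagonalisably on ${\mathfrak s}^{tor}$, producing a toral ${\mathbb F}_p$-basis of $d\theta$-eigenvectors which remains a basis after scalar extension to $k$. I do not anticipate any real obstacle: the author signals that the $\mu_m$-module decomposition is immediate given Lemma 1.1 of \cite{invs}, and the only point requiring care is the verification that ${\mathfrak t}''$ is toral in (b), which as noted above comes down to the fact that a restricted subalgebra of a toral Lie algebra is toral.
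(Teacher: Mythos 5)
Your proposal is correct and follows exactly the route the paper has in mind: the paper defers to \cite[Lemma 1.1]{invs} and explicitly notes that the $\mu_m$-module decomposition is immediate from Maschke's theorem applied to the ${\mathbb F}_p$-form ${\mathfrak s}^{tor}$, and that (b) is proved the same way. Your fleshed-out argument — passing to the toral ${\mathbb F}_p$-form ${\mathfrak t}^{tor}$, noting $d\theta$ preserves it since it commutes with $[p]$, applying Maschke over ${\mathbb F}_p[\mu_m]$, extending scalars, and observing that ${\mathfrak t}''$ is a $[p]$-subalgebra of a toral algebra hence toral — is precisely the intended content, and the final step ${\mathfrak g}={\mathfrak g}'\oplus{\mathfrak s}$ via the root-space decomposition and dimension count is the standard one.
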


\begin{rk}
It is perhaps instructive to give an explicit description of the irreducible ${\mathbb F}_p[C_m]$-modules: let $\sigma$ be a generator for $\mu_m$.
Then $V$ is irreducible if and only if it has a basis $v_1,v_2,\ldots ,v_r$ ($r\mid m$) such that $\sigma (v_i)=v_{i+1}$ ($1\leq i<r$) and $\sigma(v_r)= lv_1$ for some $l\in{\mathbb F}^\times_p$ of order $m/r$.
\end{rk}

We will also need the following result of Steinberg.

\begin{lemma}\label{cover}
Suppose $G$ is semisimple and $\pi:\hat{G}\rightarrow G$ is the universal covering of $G$.
There exists a unique automorphism $\hat\theta$ of $\hat{G}$ such that the following diagram commutes:
\begin{diagram} \hat{G} & \rTo^{\hat\theta} & \hat{G} \\ \dTo^\pi & & \dTo^\pi \\
G & \rTo^\theta & G \end{diagram}
Moreover, $\hat\theta$ is of order $m$.
\end{lemma}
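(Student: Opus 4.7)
The plan is to exploit the universal property of the simply connected cover $\pi:\hat{G}\rightarrow G$: any central isogeny $f:H\rightarrow G$ from a connected semisimple group $H$ factors uniquely through $\pi$, in the sense that there exists a unique morphism $\phi:\hat{G}\rightarrow H$ with $f\circ\phi=\pi$.

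First, for existence of $\hat{\theta}$, I would apply this property to the central isogeny $f=\theta^{-1}\circ\pi:\hat{G}\rightarrow G$, which has the same kernel $\ker\pi$.  The resulting morphism $\hat{\theta}:\hat{G}\rightarrow\hat{G}$ satisfies $\theta^{-1}\circ\pi\circ\hat{\theta}=\pi$, equivalently $\pi\circ\hat{\theta}=\theta\circ\pi$.  Repeating the construction with $\theta$ replaced by $\theta^{-1}$ produces $\psi:\hat{G}\rightarrow\hat{G}$ with $\pi\circ\psi=\theta^{-1}\circ\pi$, and then both $\hat{\theta}\circ\psi$ and $\psi\circ\hat{\theta}$ are lifts of $\Id_G$, hence equal to $\Id_{\hat{G}}$ by the uniqueness step below; so $\hat{\theta}$ is an automorphism.

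For uniqueness of the lift, which is really the heart of the argument, I would use a standard connectedness trick: given two lifts $\hat{\theta}_1,\hat{\theta}_2$ of $\theta$, the map $x\mapsto\hat{\theta}_1(x)\hat{\theta}_2(x)^{-1}$ is a morphism from the connected group $\hat{G}$ to the finite (discrete) central subgroup $\ker\pi$, hence constant; evaluating at $x=1$ gives value $1$, so $\hat{\theta}_1=\hat{\theta}_2$.

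Finally, $\hat{\theta}^m$ and $\Id_{\hat{G}}$ both lift $\theta^m=\Id_G$, so by uniqueness $\hat{\theta}^m=\Id_{\hat{G}}$; conversely, any $k$ with $\hat{\theta}^k=\Id$ gives $\theta^k=\Id_G$ by surjectivity of $\pi$, forcing $m\mid k$.  Hence the order of $\hat{\theta}$ equals $m$ exactly.  I expect no genuine obstacle here: the entire argument reduces to a short diagram chase together with the connectedness argument for uniqueness, once one invokes the universal property of the simply connected cover in good positive characteristic (due to Steinberg).
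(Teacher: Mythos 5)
Your proposal is correct. The paper's own proof is essentially a citation: existence and uniqueness of the lift are attributed to Steinberg \cite[9.16]{steinberg}, and the order claim is dispatched with the same one-line observation you make (both $\hat\theta^m$ and $\Id_{\hat G}$ lift $\Id_G$, and conversely $\pi$ surjective forces $m \mid \text{ord}(\hat\theta)$). What you have done is re-derive the cited result directly from the universal property of the simply connected cover, plus the standard connectedness trick for uniqueness; this is precisely the argument underlying Steinberg's statement, so the mathematical content is the same even though the paper presents it as a reference rather than a proof.

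One small point worth being careful about in positive characteristic: $\ker\pi$ need not be a reduced (``discrete'') subgroup scheme --- for instance $\ker(\SL(p)\to\PGL(p)) = \mu_p$ is infinitesimal when $\charac k = p$. Your uniqueness argument survives because you only need the underlying topological space of $\ker\pi$, i.e.\ its set of $k$-points, which is a single point in that example and in general a finite set; a morphism of varieties from the irreducible $\hat G$ into that finite set is constant. So the argument is fine, but the phrase ``finite (discrete) central subgroup'' should really be read as ``finite set of $k$-points of a central subgroup scheme''. Likewise, your invocation of the universal property is the correct one: since $\theta^{-1}\circ\pi$ is again a central isogeny with the same kernel scheme as $\pi$, the lifting $\phi$ exists and is unique, and your construction of the two-sided inverse $\psi$ shows $\hat\theta$ is an honest automorphism of algebraic groups rather than merely a bijective isogeny.
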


\begin{proof}
Existence and uniqueness are proved in \cite[9.16]{steinberg}.
It follows immediately that $\hat\theta$ has the same order as $\theta$.
\end{proof}

\begin{lemma}\label{triv}
Suppose the order of $\theta|_{G^{(1)}}$ is strictly less than $m$.
Then ${\mathfrak g}(1)\subset{\mathfrak z}({\mathfrak g})$.
\end{lemma}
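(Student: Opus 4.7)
The plan is to show first that ${\mathfrak g}(1)\cap\Lie(G^{(1)})=0$ by an eigenvalue count, and then to use the reductive decomposition $G=Z(G)^\circ\cdot G^{(1)}$ to place ${\mathfrak g}(1)$ inside the centre. Let $m'$ denote the order of $\theta|_{G^{(1)}}$, so by hypothesis $m'\mid m$ and $m'<m$. Since $\theta^{m'}$ is trivial on $G^{(1)}$, $(d\theta)^{m'}$ acts as the identity on $\Lie(G^{(1)})$; hence every eigenvalue of $d\theta|_{\Lie(G^{(1)})}$ is an $m'$-th root of unity. Because $\zeta$ has order exactly $m>m'$, $\zeta^{m'}\neq 1$, so the $\zeta$-eigenspace of $d\theta$ in $\Lie(G^{(1)})$ is zero: ${\mathfrak g}(1)\cap\Lie(G^{(1)})=0$.

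Next, since $G$ is reductive, $G=Z(G)^\circ\cdot G^{(1)}$, and passage to Lie algebras gives ${\mathfrak g}=\Lie(Z(G)^\circ)+\Lie(G^{(1)})$ (not necessarily direct). Both summands are $d\theta$-stable. Because $p\nmid m$ and $(d\theta)^m=\Id$, $d\theta$ is semisimple on ${\mathfrak g}$, and
\[ \pi_1:=\frac{1}{m}\sum_{k=0}^{m-1}\zeta^{-k}(d\theta)^k \]
is the projection onto the $\zeta$-eigenspace ${\mathfrak g}(1)$. Being a polynomial in $d\theta$, it preserves every $d\theta$-stable subspace $X$ and restricts on such $X$ to the projection onto $X\cap{\mathfrak g}(1)$. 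Applying $\pi_1$ to the decomposition above yields
\[ {\mathfrak g}(1)=\pi_1(\Lie(Z(G)^\circ))+\pi_1(\Lie(G^{(1)}))=\bigl(\Lie(Z(G)^\circ)\cap{\mathfrak g}(1)\bigr)+0, \]
the second equality by the previous paragraph. Since $\Lie(Z(G)^\circ)\subseteq{\mathfrak z}({\mathfrak g})$, this gives ${\mathfrak g}(1)\subseteq{\mathfrak z}({\mathfrak g})$, as claimed.

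The one subtlety worth flagging is that one should avoid trying to extract the conclusion directly from Lemma \ref{redcase}: the complementary toral subalgebra ${\mathfrak s}$ provided there is a vector-space complement of $\Lie(T\cap G^{(1)})$ (or of $\Lie(T)\cap[{\mathfrak g},{\mathfrak g}]$) inside $\Lie(T)$, and there is no reason for such an ${\mathfrak s}$ to be central in ${\mathfrak g}$ in general (think of pseudo-reductive-like phenomena where roots fail to vanish on a naively chosen complement). The clean way around this is to work instead with the non-direct sum coming from reductivity and the averaging projector $\pi_1$, which automatically lands inside $\Lie(Z(G)^\circ)$; no other step poses any difficulty.
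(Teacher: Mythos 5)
Your first step — the eigenvalue count showing ${\mathfrak g}(1)\cap\Lie(G^{(1)})=0$ — is correct and is, in effect, also what the paper uses (in the form ${\mathfrak g}'\subset\sum_i{\mathfrak g}(im/m')$). But the second step contains a genuine gap. You assert that for reductive $G$ the equality $\Lie(G)=\Lie(Z(G)^\circ)+\Lie(G^{(1)})$ holds. This is equivalent to separability of the central isogeny $Z(G)^\circ\times G^{(1)}\to G$, which can fail in characteristic $p$. The standard example is $G=\GL(n,k)$ with $p\mid n$: there $\Lie(Z(G)^\circ)=kI_n$, and since $\tr(I_n)=n\equiv 0\pmod p$ we have $kI_n\subset\mathfrak{sl}(n,k)=\Lie(G^{(1)})$, so $\Lie(Z(G)^\circ)+\Lie(G^{(1)})=\mathfrak{sl}(n,k)\subsetneq\mathfrak{gl}(n,k)$. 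In such situations applying $\pi_1$ to the (strictly smaller) sum only shows that $\pi_1(\Lie(Z(G)^\circ)+{\mathfrak g}')\subseteq{\mathfrak g}(1)$, not the containment you need. You correctly flag a subtlety with Lemma \ref{redcase}, but the replacement you propose — ``the non-direct sum coming from reductivity'' — runs into exactly the same obstruction: both reduce to requiring a separable isogeny between the centre-times-derived-group and $G$, and this is precisely one of the pathologies the paper is at pains to handle (it is the reason Section 3 enlarges $\SL(V)$ to $\GL(V)$, and the reason the remark before Lemma \ref{redcase} warns that the $h_\alpha$ need not span $\Lie(T\cap G^{(1)})$).

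The paper's proof sidesteps the issue entirely by never decomposing ${\mathfrak g}$ as centre plus derived algebra. It takes a fundamental pair $(B,T)$, writes ${\mathfrak g}={\mathfrak n}^-\oplus{\mathfrak t}\oplus{\mathfrak n}$ as a sum of $d\theta$-stable subspaces, observes ${\mathfrak n},{\mathfrak n}^-\subset{\mathfrak g}'\subset\sum_i{\mathfrak g}(im/m')$ so that ${\mathfrak g}(1)\subset{\mathfrak t}$, and then shows $d\alpha(h)=0$ for every $h\in{\mathfrak g}(1)$ and every root $\alpha$ by comparing gradings (since $[h,e_\alpha]=d\alpha(h)e_\alpha$ would have to lie in the trivial intersection $\sum_i{\mathfrak g}(im/m')\cap\sum_j{\mathfrak g}(jm/m'+1)$). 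A toral element annihilated by all $d\alpha$ is central. To repair your proof you would either need to establish separately that ${\mathfrak g}(1)\subset\Lie(Z(G)^\circ)+{\mathfrak g}'$ — i.e.\ that $\zeta$ is not an eigenvalue of $d\theta$ on the quotient ${\mathfrak g}/(\Lie(Z(G)^\circ)+{\mathfrak g}')$ — or switch to the root-space argument.
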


\begin{proof}
Recall that any nilpotent element of ${\mathfrak g}$ is contained in ${\mathfrak g}'=\Lie(G^{(1)})$.
(This follows from, for example \cite[14.26 \& 11.3(2)]{borel}.)
But therefore if $\theta|_{G^{(1)}}$ is of order $m'<m$ then there are no nilpotent elements in ${\mathfrak g}(1)$.
In fact, let ${\mathfrak n}$ (resp. ${\mathfrak n}^-$) be the Lie algebra of the unipotent radical of $B$ (resp. its opposite Borel subgroup); then ${\mathfrak g}={\mathfrak n}^-\oplus{\mathfrak t}\oplus{\mathfrak n}$.
Moreover ${\mathfrak n},{\mathfrak n}^-\subset{\mathfrak g}'\subset\sum_{i\in{\mathbb Z}}{\mathfrak g}(im/m')$, hence ${\mathfrak g}(1)\subset{\mathfrak t}$.
Suppose $h\in{\mathfrak g}(1)$.
Let $\alpha\in\Phi$: then $e_\alpha\in\sum_{i\in{\mathbb Z}}{\mathfrak g}(im/m')$.
But hence $[h,e_\alpha]=d\alpha(h)e_\alpha\in\sum_{i\in{\mathbb Z}}{\mathfrak g}(im/m')\cap \sum_{i\in{\mathbb Z}}{\mathfrak g}(im/m'+1)$.
Thus $d\alpha(h)=0$.
Since this is true for all $\alpha\in\Phi$, $h\in{\mathfrak z}({\mathfrak g})$.
\end{proof}

If $m=2$ and $T$ is a $\theta$-stable torus in $G$, then it is not difficult to see that there is a decomposition $T=T_+\cdot T_-$, where $T_+=\{ t\in T\,|\, \theta(t)=t\}^\circ$, $T_-=\{ t\in T\,|\, \theta(t)=t^{-1}\}^\circ$, and that the intersection is finite.
In fact, one also has a direct sum decomposition $\Lie(T)=\Lie(T_+)\oplus\Lie(T_-)$, hence the product map $T_+\times T_-\rightarrow T$ is a separable isogeny (see \cite[p. 290]{richinvs}).
Here we formulate a generalization of this result to arbitrary $m$.
For $d\geq 1$, denote by $p_d(x)$ the minimal polynomial over ${\mathbb Q}$ of a primitive $d$-th root of unity.
Since $p_d(x)$ has integer coefficients for each $d$, we can (and will) also consider $p_d(x)$ as a polynomial in ${\mathbb F}_p[x]$ or $k[x]$.
If $p\nmid d$, then $p_d(x)$ has no repeated roots in $k$.
If $T$ is a $\theta$-stable torus and $q(x)=\sum_{i=0}^n a_ix^i\in{\mathbb Z}[x]$ then we write $\overline{q}(\theta)$ for the rational endomorphism of $T$ defined by $t\mapsto\prod_{i=0}^n \theta^i(t)^{a_i}$.
The correspondence $q\mapsto \overline{q}(\theta)$ defines a homomorphism of rings ${\mathbb Z}[x]\rightarrow \End(T)$, $x\mapsto\theta$.
(Here the addition in $\End(T)$ is the pointwise product, and the multiplication is composition of endomorphisms.)

\begin{lemma}\label{stabletori}
Let $T$ be a $\theta$-stable torus in $G$ and  let ${\mathfrak t}=\Lie(T)$.
For each positive $d|m$ let $T_d=\{ t\in T\mid \overline{p_{m/d}}(\theta)(t)=e\}$.
(We count 1 as a divisor of $m$.)
Then $T_d$ is a subtorus of $T$, the intersection $T_{d_1}\cap T_{d_2}$ is finite for any distinct $d_1,d_2|m$, and $T=\prod_{d\mid m}T_d$ (almost direct product).

Moreover, ${\mathfrak t}=\sum^m_{i=1}{\mathfrak t}(i)$ (where ${\mathfrak t}(i)={\mathfrak t}\cap{\mathfrak g}(i)$) and $\sum_{(i,m)=d}{\mathfrak t}(i)=\Lie(T_d)$.
In particular, $T_1$ is the minimal subtorus of $T$ whose Lie algebra contains ${\mathfrak t}(1)$ and $\Lie(T_m)={\mathfrak t}(0)$.
\end{lemma}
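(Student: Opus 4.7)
The plan is to exploit the factorization $x^m-1=\prod_{d\mid m}p_d(x)$ together with the hypothesis $p\nmid m$. Over $k$ this polynomial splits into $m$ distinct linear factors, so since $d\theta$ has order dividing $m$ on the restricted Lie algebra ${\mathfrak t}$, it acts semisimply with eigenvalues among the $m$-th roots of unity, giving ${\mathfrak t}=\bigoplus_{i=0}^{m-1}{\mathfrak t}(i)$. The eigenvalue $\zeta^i$ has multiplicative order exactly $m/(i,m)$, so $p_{m/d}(\zeta^i)=0$ precisely when $(i,m)=d$; hence
$$\ker p_{m/d}(d\theta)=\sum_{(i,m)=d}{\mathfrak t}(i)=\im q_d(d\theta),$$
where $q_d(x):=(x^m-1)/p_{m/d}(x)\in{\mathbb Z}[x]$.

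Rather than analyzing $T_d$ directly (which need not literally be a torus because of possible finite torsion), I would introduce candidate subtori via images: set $S_d:=\im\overline{q_d}(\theta)\subseteq T$. As the image of a morphism from the connected group $T$, each $S_d$ is a subtorus of $T$, with Lie algebra $\im q_d(d\theta)=\sum_{(i,m)=d}{\mathfrak t}(i)$. The composition $\overline{p_{m/d}}(\theta)\circ\overline{q_d}(\theta)=\overline{x^m-1}(\theta)$ sends $t$ to $\theta^m(t)t^{-1}=e$, so $S_d\subseteq T_d$.

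The remaining task is to show that $T$ is the almost direct product of the $S_d$. Since $\sum_d\Lie(S_d)=\sum_i{\mathfrak t}(i)={\mathfrak t}=\Lie(T)$, the differential at the identity of the multiplication map $\mu:\prod_{d\mid m}S_d\rightarrow T$ is surjective; as the image of $\mu$ is a closed subgroup of $T$, $\mu$ must be surjective. The identity $\sum_d\dim S_d=\dim T$ then forces $\mu$ to be an isogeny, so its kernel is finite, whence each pairwise intersection $S_{d_1}\cap S_{d_2}$ ($d_1\neq d_2$) is finite. Finally, $S_d$ is a connected closed subgroup of $T_d$, so $S_d\subseteq T_d^\circ$, and $\dim T_d^\circ\leq\dim\Lie(T_d)=\dim\ker p_{m/d}(d\theta)=\dim S_d$ forces $T_d^\circ=S_d$. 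The asserted equalities $\Lie(T_d)=\sum_{(i,m)=d}{\mathfrak t}(i)$, $T_m=(T^\theta)^\circ$, and the minimality statement for $T_1$ then follow immediately.

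The main obstacle is the distinction between $T_d$ as defined and its identity component: working with the image subtori $S_d$ rather than kernels sidesteps this by producing connected subtori with the correct Lie algebras from the start, whereas a direct analysis of $\ker\overline{p_{m/d}}(\theta)$ would require careful bookkeeping of torsion cokernels on $X(T)$.
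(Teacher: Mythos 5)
Your argument is correct and follows essentially the same route as the paper's: both hinge on the factorization $x^m-1=\prod_{d\mid m}p_d$, introduce the image subtori $\overline{p'_{m/d}}(\theta)(T)$ (your $S_d$), compute their Lie algebras as $\ker p_{m/d}(d\theta)$, and close with a dimension count to get the almost direct product. The paper gets finiteness of the pairwise intersections directly from coprimality of the cyclotomic polynomials in $k[t]$, whereas you deduce it from the isogeny $\prod S_d\to T$, and you are a bit more explicit about the gap between $T_d$ and its identity component; these are cosmetic variations on the same proof.
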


\begin{proof}
Clearly $\Lie(T_d)\subseteq\{ t\in{\mathfrak t}\,|\, p_{m/d}(d\theta)(t)=0\}$ and hence $\Lie(T_{d_1}\cap T_{d_2})\subseteq\Lie(T_{d_1})\cap\Lie(T_{d_2})=\{ 0\}$ for $d_1\neq d_2$ since there exist $f,g\in k[t]$ such that $fp_{m/d_1}+gp_{m/d_2}=1$.
Thus $\Lie(T)\supset\oplus_{d|m}\Lie(T_d)$, $T_{d_1}\cap T_{d_2}$ is finite for $d_1\neq d_2$ and $T$ contains the almost direct product of the $T_{d}$.
For $d|m$, let $p'_{m/d}(x)=(x^m-1)/p_{m/d}(x)$.
Then $\overline{p_{m/d}}(\theta)\circ\overline{p'_{m/d}}(\theta)$ is trivial on $T$, hence $\overline{p'_{m/d}}(\theta)(T)\subseteq T_d$.
Moreover, $p'_{m/d}(d\theta)$ is bijective on $\Lie(T_d)$.
By dimensional considerations, $T_d=\overline{p'_d}(\theta)(T)$, $T=\prod_{d|m} T_d$ and ${\mathfrak t}=\oplus_{d|m}\Lie(T_d)$.
\end{proof}

Recall that if $\theta$ is an involution then a ($\theta$-stable) torus is called $\theta$-split or $\theta$-anisotropic if $\theta(t)=t^{-1}$ for all $t\in T$.
For $m>2$ we wish to distinguish two different cases:

\begin{definition}\label{thetasplitdef}
We say that a $\theta$-stable torus $S$ is {\bf $\theta$-split} if $S=S_1$, and is {\bf $\theta$-anisotropic} if $S_m=(S^\theta)^\circ$ is trivial.
(Hence any $\theta$-split torus is $\theta$-anisotropic.)
\end{definition}

We say that $\theta$ is {\bf of zero rank} if ${\mathfrak g}(1)$ contains no non-zero semisimple elements.

\begin{lemma}\label{zerorank}
If $p>2$, then the following are equivalent:

(i) ${\mathfrak g}(1)$ contains no non-central semisimple elements,

(ii) $\theta |_{G^{(1)}}$ is either of order less than $m$, or is of zero rank,

(iii) ${\mathfrak g}(1)={\mathfrak s}\oplus{\mathfrak n}$, where ${\mathfrak s}$ (resp. ${\mathfrak n}$) is the set of semisimple (resp. nilpotent) elements of ${\mathfrak g}(1)$ and ${\mathfrak s}\subseteq{\mathfrak z}({\mathfrak g})$.
\end{lemma}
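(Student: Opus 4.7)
The plan is to establish the cycle (iii) $\Rightarrow$ (i) $\Rightarrow$ (ii) $\Rightarrow$ (iii); the first implication is immediate, since ${\mathfrak s}\subseteq{\mathfrak z}({\mathfrak g})$ directly says that every semisimple element of ${\mathfrak g}(1)$ is central.

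For (ii) $\Rightarrow$ (iii), I treat the two disjuncts separately. If $\theta|_{G^{(1)}}$ has order less than $m$, then Lemma \ref{triv} gives ${\mathfrak g}(1)\subseteq{\mathfrak z}({\mathfrak g})$; in good characteristic ${\mathfrak z}({\mathfrak g})=\Lie(Z(G))$ is toral ($Z(G)$ being of multiplicative type), so every element of ${\mathfrak g}(1)$ is already semisimple and we may set ${\mathfrak s}={\mathfrak g}(1)$, ${\mathfrak n}=\{0\}$. If $\theta|_{G^{(1)}}$ is of zero rank, then ${\mathfrak g}'(1)$ consists of nilpotents. Using Lemma \ref{redcase}(a) with a $\theta$-stable maximal torus, decompose ${\mathfrak g}={\mathfrak g}'\oplus{\mathfrak u}$ with ${\mathfrak u}$ a $\theta$-stable toral complement; this restricts to ${\mathfrak g}(1)={\mathfrak g}'(1)\oplus{\mathfrak u}(1)$, where ${\mathfrak u}(1)$ is automatically semisimple. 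The key step is to verify ${\mathfrak u}(1)\subseteq{\mathfrak z}({\mathfrak g})$: a non-central $u\in{\mathfrak u}(1)$ would pair nontrivially with some root $\alpha$ by Lemma \ref{kawanaka}, and combining $u$ with appropriate root vectors in ${\mathfrak g}'$ (via the toral decomposition of Lemma \ref{stabletori}) would force the existence of a nonzero semisimple element of ${\mathfrak g}'(1)$, contradicting zero rank. Then ${\mathfrak s}={\mathfrak u}(1)$ and ${\mathfrak n}={\mathfrak g}'(1)$ give the required decomposition.

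For (i) $\Rightarrow$ (ii), assume (i); if $\theta|_{G^{(1)}}$ has order less than $m$ we are done, so assume the order is exactly $m$ and argue for zero rank by contradiction. Let $s\in{\mathfrak g}'(1)$ be nonzero semisimple; by (i), $s\in{\mathfrak z}({\mathfrak g})\cap{\mathfrak g}'$, and since elements of ${\mathfrak z}({\mathfrak g})$ centralise ${\mathfrak g}'$ we have $s\in{\mathfrak z}({\mathfrak g}')$. Using Lemma \ref{cover}, lift to the simply connected cover $\hat{G}$ of the semisimple group $G^{(1),\circ}$, with $\hat\theta$ still of order $m$; then $s$ produces a nonzero $\hat s\in\Lie(Z(\hat{G}))\cap\hat{\mathfrak g}(1)$. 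A case-by-case analysis of the $\hat\theta$-action on the finite multiplicative-type group $Z(\hat{G})$ in each simple type, using $p\nmid m$ and $p>2$, produces a non-central semisimple element in $\hat{\mathfrak g}(1)$, whose image in ${\mathfrak g}(1)$ contradicts (i). The main obstacle is precisely this case-by-case analysis: the hypothesis $p>2$ is indispensable, as the paper itself notes that the equivalence fails for $p=2$.
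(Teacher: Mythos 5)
Your cycle of implications matches the paper's, and (iii) $\Rightarrow$ (i) is indeed immediate, but the two nontrivial arrows both have real problems as sketched.

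For (ii) $\Rightarrow$ (iii) in the zero rank case, the claim that ${\mathfrak u}(1)\subseteq{\mathfrak z}({\mathfrak g})$ is the whole content, and your justification does not work. Lemma \ref{kawanaka} says nothing about a toral element pairing with roots; that $d\alpha(u)\neq 0$ for some $\alpha$ when $u$ is non-central is elementary and uses no lemma. More seriously, ``combining $u$ with appropriate root vectors in ${\mathfrak g}'$'' cannot produce a nonzero \emph{semisimple} element of ${\mathfrak g}'(1)$: root vectors are nilpotent, and the coroots $h_\alpha$ need not even be nonzero, as the paper warns in Section 1. The paper's route is different and is the one you actually need: let $T$ be a $\theta$-stable maximal torus with ${\mathfrak u}\subset{\mathfrak t}$, use $T=(T\cap G^{(1)})\cdot Z(G)^\circ$, observe that zero rank of $\theta|_{G^{(1)}}$ forces $\overline{p'_1}(\theta)$ to kill $T\cap G^{(1)}$ (a nontrivial $\theta$-split subtorus of $G^{(1)}$ would contribute to ${\mathfrak g}'(1)$), so $T_1\subseteq Z(G)^\circ$ and hence ${\mathfrak t}(1)\subseteq\Lie(Z(G)^\circ)\subseteq{\mathfrak z}({\mathfrak g})$, which contains ${\mathfrak u}(1)$. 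Your ``via the toral decomposition of Lemma \ref{stabletori}'' gestures at this but the actual mechanism (torus-level, not root-level) is missing.

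For (i) $\Rightarrow$ (ii), asserting that $s\in{\mathfrak z}({\mathfrak g}')\cap{\mathfrak g}'(1)$ ``produces a nonzero $\hat s\in\Lie(Z(\hat G))\cap\hat{\mathfrak g}(1)$'' is not automatic: $d\pi:\hat{\mathfrak g}\rightarrow{\mathfrak g}'$ is not surjective when the isogeny $\pi$ is inseparable (e.g. $\SL(p)\rightarrow\PGL(p)$), so $s$ need not lift, and even when a preimage exists there is no guarantee it lies in ${\mathfrak z}(\hat{\mathfrak g})\cap\hat{\mathfrak g}(1)$. The paper avoids this by running the transfer in the opposite direction: a non-central semisimple $h\in\hat{\mathfrak g}(1)$ gives a non-central semisimple $d\pi(h)\in{\mathfrak g}(1)$ because $d\alpha\circ d\pi=d\alpha$, and the zero-rank equivalence between $\theta$ and $\hat\theta$ is established at the level of tori ($\pi(\hat T_1)=T_1$) rather than by lifting Lie algebra elements. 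Your deferred case-by-case analysis is morally the same ingredient the paper uses (${\mathfrak z}({\mathfrak g})$ nontrivial only for $\SL(ip)$, automorphisms act by $\pm 1$ on it, and $m=1,2$ are ruled out using $p>2$), but the surrounding reduction needs to be reorganised along the paper's lines for the argument to close.
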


\begin{proof}
We show first of all that (i) implies (ii).
If $\theta|_{G^{(1)}}$ is of order less than $m$ then all three conditions hold by Lemma \ref{triv}.
Hence suppose $\theta|_{G^{(1)}}$ is of order $m$.
Assume $G$ is semisimple; we will show that if ${\mathfrak g}(1)$ contains no non-central semisimple elements of ${\mathfrak g}$ then it contains no non-zero semisimple elements.
Let $\pi:\hat{G}\rightarrow G$ be the universal covering of $G$.
By Lemma \ref{cover}, there exists a unique lift $\hat\theta$ of $\theta$ to $\hat{G}$.
We claim that $\theta$ is of zero rank if and only if $\hat\theta$ is of zero rank.
Indeed, suppose ${\mathfrak c}\subseteq{\mathfrak g}(1)$ is a commutative subspace consisting of semisimple elements.
Let $T$ be a $\theta$-stable maximal torus of $L=Z_G({\mathfrak c})$.
Then ${\mathfrak c}\subseteq{\mathfrak z}({\mathfrak l})\subseteq{\mathfrak t}=\Lie(T)$.
(See \cite[Lemma 2.2]{comms} for the second inclusion.)
Let $T=\prod_{i|m}T_i$ be the decomposition of $T$ into subtori given by Lemma \ref{stabletori}.
Then ${\mathfrak c}\subseteq{\mathfrak t}(1)$ and hence $T_1$ is non-trivial.
Let $\hat{T}$ be the unique maximal torus of $\hat{G}$ such that $\pi(\hat{T})=T$.
Then $\hat{T}$ is $\hat\theta$-stable by uniqueness and there is a decomposition $\hat{T}=\prod_{i|d}\hat{T}_i$ into subtori $\hat{T}_i$ analogous to the $T_i$.
Moreover, it is easy to see from the proof of Lemma \ref{stabletori} that $\pi(\hat{T}_i)=T_i$.
Hence $\theta$ is of zero rank if and only if $\hat\theta$ is of zero rank.
Furthermore, it is well-known that $\ker d\pi\subseteq{\mathfrak z}(\hat{\mathfrak g})$.
Since $d\alpha(d\pi(h))=d\alpha(h)$ for any $h\in\Lie(\hat{T})$, it follows that if ${\mathfrak g}(1)$ contains no non-central semisimple elements of ${\mathfrak g}$ then $\hat{\mathfrak g}(1)$ contains no non-central semisimple elements of $\hat{\mathfrak g}$.
To prove that (i) implies (ii), we may therefore assume that $G$ is (semisimple and) simply-connected.

Since $G$ is the direct product of its minimal $\theta$-stable connected normal subgroups, we may assume $G$ is $\theta$-simple, that is, it has no non-trivial proper connected $\theta$-stable normal subgroups.
In this case $G=G_1\times G_2\times \ldots\times G_r$, where the $G_i$ are isomorphic almost simple (semisimple) groups, $\theta(G_i)=G_{i+1}$ ($1\leq i<r$) and $\theta(G_r)=G_1$.
(Thus $r\mid m$.)
It clearly changes nothing to replace $G$, $\theta$ and $m$ by $G_1$, $\theta^r$ and $m/r$: hence we may assume $G$ is simple.
(We may of course have $r=m$.
This reduces to `the $m=1$ case', which is just the adjoint action of $G$ on ${\mathfrak g}$.)
But now ${\mathfrak z}({\mathfrak g})$ is trivial unless $G$ is of type $A_{ip-1}$ for some $i$.
If $m=1$, then ${\mathfrak g}$ has some non-central semisimple elements by the assumption $p>2$.
If $m=2$ then there exists some non-trivial $\theta$-split torus $A\subset G$ by \cite[Prop. 1]{vust}.
Moreover, $Z_G(A)=Z_G(\Lie(A))$ by \cite[Lemma 2.4]{invs}.
Hence the assumption that there are no non-central semisimple elements in ${\mathfrak g}(1)$ implies that $m\geq 3$.
But now any automorphism of $\SL(ip)$ acts as either $(+1)$ or $(-1)$ on ${\mathfrak z}({\mathfrak g})$. Therefore ${\mathfrak g}(1)$ contains no non-zero semisimple elements.
Thus (i) implies that $\theta|_{G^{(1)}}$ is of zero rank.

To prove that (ii) implies (iii), we may assume once more that $\theta|_{G^{(1)}}$ has order $m$ by Lemma \ref{triv}.
By Lemma \ref{redcase} there is a $d\theta$-stable toral algebra ${\mathfrak h}\subset{\mathfrak g}$ such that ${\mathfrak g}={\mathfrak g}'\oplus{\mathfrak h}$ (where ${\mathfrak g}'=\Lie(G^{(1)})$).
Thus clearly ${\mathfrak g}(1)={\mathfrak g}'(1)\oplus{\mathfrak h}(1)$.
But ${\mathfrak g}'(1)$ consists of nilpotent elements, hence it remains only to show that ${\mathfrak h}(1)\subseteq{\mathfrak z}({\mathfrak g})$.
For this, let $T$ be a $\theta$-stable maximal torus of $G$ such that ${\mathfrak h}\subset{\mathfrak t}=\Lie(T)$, let $T'=T\cap G^{(1)}$ and let $Z=Z(G)^\circ$.
Since $T=T'\cdot Z$ and $\theta|_{G^{(1)}}$ is of zero rank, the kernel of the map $\overline{p'_1}(\theta):T\rightarrow T$ (see Lemma \ref{stabletori}) contains $T'$.
Hence $T_1$ is contained in $Z$.
It follows that ${\mathfrak t}(1)={\mathfrak s}\subseteq\Lie(Z)\subseteq{\mathfrak z}({\mathfrak g})$.
Since (iii) trivially implies (i), the proof of the lemma is complete.
\end{proof}

\begin{rk}\label{p=2}
The case of $m=1$, $G=\SL(2)$ gives a counter-example to Lemma \ref{zerorank} in characteristic 2.
However, from Sect. \ref{stand} onwards we will assume the standard hypotheses hold for $G$ (that $G^{(1)}$ is simply-connected and that there exists a non-degenerate $G$-equivariant symmetric bilinear form $\kappa:{\mathfrak g}\times{\mathfrak g}\rightarrow k$).
In these circumstances Lemma \ref{zerorank} then holds in characteristic 2.
This can be seen from the reduction theorem \ref{reduction} which allows us to restrict attention to the case $G=\tilde{G}$, hence to the cases $G=\SL(2m+1,k)$ or $G=\GL(2m,k)$.
The proof of Lemma \ref{zerorank} only falls down in characteristic 2 due to the possibility that all semisimple elements of ${\mathfrak g}$ are contained in the centre.
Hence this problem no longer occurs under the above assumptions.
On the other hand, we gain very little with this observation, since $G^{(1)}$ is then isomorphic to a product of groups of the form $\SL(V_i)$ and $\theta$ is inner.
\end{rk}

\begin{definition}
We say that $\theta$ is of {\bf zero semisimple rank} if the conditions of Lemma \ref{zerorank} hold.
\end{definition}

Finally, we state the following slightly modified version of \cite[Lemma 1.4(v)]{invs} for use in Sect. 3.

\begin{lemma}\label{GLnautos}
Let $G=\GL(n,k)$, ${\mathfrak g}=\Lie(G)$, ${\mathfrak g}'=\Lie(G^{(1)})$, where $p|n$.
Denote by $\Aut G$ (resp. $\Aut{\mathfrak g}$) the (abstract) group of algebraic (resp. restricted Lie algebra) automorphisms of $G$ (resp. ${\mathfrak g}$).
%
%
%
%
%
If $\eta$ is an automorphism of ${\mathfrak g}'$ of order $m$, $p\nmid m$ then there is a unique $\theta\in\Aut G$ (resp. $\psi\in\Aut{\mathfrak g}$) of order $m$ such that $d\theta |_{{\mathfrak g}'}=\eta$ (resp. $\psi |_{{\mathfrak g}'}=\eta$).
\end{lemma}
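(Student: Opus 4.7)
The plan is to exploit that $p\mid n$ forces $Z({\mathfrak g})=kI\subseteq{\mathfrak g}'$ and that ${\mathfrak g}/{\mathfrak g}'\cong k$ is one-dimensional, so an extension of $\eta$ across the central quotient is controlled by a single scalar parameter. I would treat the group and Lie algebra statements in parallel, passing between them via the differential.

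For existence, since all primes are good for type $A_{n-1}$, the standard bijection $\Aut\SL(n,k)\leftrightarrow\Aut\mathfrak{sl}(n,k)$ shows that $\eta$ integrates to a unique $\eta'\in\Aut G^{(1)}$ of order $m$. By the classical description $\Aut\SL(n,k)=\PGL(n,k)\rtimes\langle\tau\rangle$, pick $g\in G$ and $\epsilon\in\{0,1\}$ such that $\eta'(x)=gxg^{-1}$ or $g(x^T)^{-1}g^{-1}$; the same formula defines $\theta\in\Aut G$, and $\psi:=d\theta$ gives the Lie algebra version. That $\theta$ has order exactly $m$ follows because $\eta'^m=\Id$ forces $g^m\in Z_G(G^{(1)})=Z(G)$, so $\theta^m(x)=g^m x g^{-m}=x$.

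For uniqueness, if $\theta_1,\theta_2\in\Aut G$ both restrict to $\eta'$ on $G^{(1)}$, then $x\mapsto\theta_1(x)\theta_2(x)^{-1}$ is a homomorphism $G\to Z(G)$ (one checks it is multiplicative using centrality of $Z(G)$), necessarily of the form $\det^j$, so $\theta_1=\theta_2\cdot\det^j$ for some $j\in\mathbb{Z}$. A direct iterative computation, using $\det(\theta(x))=\det(x)^{\pm 1}$ and $\det(\det(x)^j I)=\det(x)^{nj}$, yields $\theta_1^m=\theta_2^m\cdot\det^{e_m(j)}$ with $e_m(j)=j(1+q+\cdots+q^{m-1})$, $q=1+nj$, in the inner case (analogous in the graph case). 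Requiring $\theta_1^m=\Id$ forces $e_m(j)=0$ in $\mathbb{Z}$, which happens only for $j=0$ or $q=-1$ with $m$ even; the latter exception requires $nj=-2$, so $n=2$, hence $p\mid n$ yields $p=2$, contradicting $p\nmid m$ (which then makes $m$ odd). Hence $j=0$, and uniqueness holds. The Lie algebra version follows from the injectivity of $d\colon\Aut G\to\Aut{\mathfrak g}$; alternatively one parametrizes $\psi(E_{11})=\alpha E_{11}+Z$ directly, using that $Z$ is well-defined modulo $Z_{{\mathfrak g}}({\mathfrak g}')=kI$ and that the iterated scalar ambiguity in $\psi^m(E_{11})$ carries a factor of $m$ that is nonzero in $k$.

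The main obstacle is the iterative character computation and the case analysis showing $e_m(j)=0$ has no nontrivial integer solutions; the delicate point is ruling out $j=-1$ in the $p=n=2$ situation, which is handled by invoking $p\nmid m$ to force $m$ odd. A secondary point to justify carefully is that two extensions must differ merely by a character of $G$, which relies on $G^{(1)}=\SL(n,k)$ having trivial character group and on $Z_G(G^{(1)})=Z(G)$.
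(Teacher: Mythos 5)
Your proof of uniqueness at the Lie algebra level contains a gap at the claim that the iterated scalar ambiguity in $\psi^m(E_{11})$ ``carries a factor of $m$''. Writing $\psi(E_{11})=\alpha E_{11}+Z$ with $Z\in{\mathfrak g}'$ (so $\alpha=\tr\psi(E_{11})$) and $\psi(I)=\gamma I$, a short induction gives that replacing $Z$ by $Z+\beta I$ changes $\psi^m(E_{11})$ by $\beta\bigl(\sum_{i=0}^{m-1}\alpha^{m-1-i}\gamma^{i}\bigr)I$, not by $m\beta I$: the two scalars $\alpha$ (action on ${\mathfrak g}/{\mathfrak g}'$) and $\gamma$ (action on $Z({\mathfrak g})=kI$) enter separately. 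Both lie in $\mathbb{F}_p^\times$, and if $\alpha\neq\gamma$ the constraint $\psi^m=\Id$ forces $\alpha^m=\gamma^m=1$, whence the geometric sum vanishes in $\mathbb{F}_p$ and $\beta$ is completely unconstrained --- your argument would then establish nothing. So the entire point is to prove $\alpha=\gamma$, i.e.\ that the order-$p$ kernel of $\Aut{\mathfrak g}\to\Aut{\mathfrak g}'$ is \emph{central}, and you never address this. Your other suggested route (``injectivity of $d\colon\Aut G\to\Aut{\mathfrak g}$'') is also insufficient: uniqueness is a statement about \emph{all} restricted automorphisms of ${\mathfrak g}$ extending $\eta$, and $p-1$ out of every $p$ of them are not differentials of algebraic automorphisms, so injectivity of $d$ says nothing about them.

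The paper avoids this by quoting from \cite[Lemma 1.4]{invs} the structure $\Aut{\mathfrak g}\cong\Aut{\mathfrak g}'\times\mu_p$ (a \emph{direct}, hence central, product), from which the lemma is immediate. If you want to keep a self-contained argument, the missing equality $\alpha=\gamma$ can be extracted from your own existence step: since $\eta=d\eta'$ and any lift $\psi$ satisfies $\psi(E_{11})\equiv d\theta(E_{11})\pmod{kI}$, and $\tr$ kills $kI$ because $p\mid n$, one gets $\alpha=\tr(d\theta(E_{11}))=\pm1$ and $\gamma=\pm1$ with the same sign according as $\eta'$ is inner or outer. Your group-level computation of $e_m(j)$, and the exclusion of $j\neq 0$ via $p\nmid m$ when $n=p=2$, is correct; it would still be worth writing out the outer case explicitly, since there $\det\circ\theta_2^{k}=\det^{(-1)^k}$ alternates and the recursion for $e_m(j)$ is genuinely different from the inner one.
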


\begin{proof}
Although one assumes $p\neq 2$ in \cite{invs} this is not used in the proof of \cite[Lemma 1.4]{invs}.
In particular, $\Aut {\mathfrak g}\cong\Aut{\mathfrak g}'\times \mu_p$ and hence there exists a unique automorphism of ${\mathfrak g}$ of order $m$ whose restriction to ${\mathfrak g}'$ is $\eta$ \cite[Lemma 1.4(iv)]{invs}.
On the other hand, $\Aut G\cong\Aut G^{(1)}$ (by restriction) unless $n=2$, in which case the kernel of the natural map $\Aut G\rightarrow\Aut G^{(1)}$ is of order 2 \cite[Lemma 1.4(ii)]{invs}.
Since differentiation $d:\Aut G^{(1)}\rightarrow\Aut{\mathfrak g}'$ is bijective \cite[Lemma 1.4(iii)]{invs} this completes the proof.
\end{proof}

\section{Cartan subspaces}

\begin{definition}
A subspace ${\mathfrak c}$ of ${\mathfrak g}(1)$ is a {\bf Cartan subspace} if it is maximal among the commutative subspaces of ${\mathfrak g}(1)$ consisting of semisimple elements.
\end{definition}

If $m=2$ then a Cartan subspace ${\mathfrak c}$ of ${\mathfrak g}(1)$ satisfies ${\mathfrak z}_{{\mathfrak g}(1)}({\mathfrak c})={\mathfrak c}$.
For $m>2$ this may no longer hold.
(This can already be seen in the zero rank case.)
Recall that if ${\mathfrak h}\subset{\mathfrak g}$ is a nilpotent subalgebra then there is a {\bf Fitting decomposition} ${\mathfrak g}={\mathfrak g}^0({\mathfrak h})\oplus{\mathfrak g}^1({\mathfrak h})$, where $\ad{\mathfrak h}$ acts nilpotently on ${\mathfrak g}^0({\mathfrak h})$ and all weights of ${\mathfrak h}$ on ${\mathfrak g}^1({\mathfrak h})$ are non-zero.
There is an open subset $U$ of ${\mathfrak h}$ such that for $x\in U$, $\ad x$ is nilpotent on ${\mathfrak g}^0({\mathfrak h})$ and invertible on ${\mathfrak g}^1({\mathfrak h})$, that is, ${\mathfrak g}^i({\mathfrak h})={\mathfrak g}^i(kx)$ for $i=0,1$.
The following lemma is a slight modification of \cite[Lemma 1]{kostrall}.

\begin{lemma}\label{fitting}
Let ${\mathfrak h}\subseteq{\mathfrak g}(1)$ be a commutative subspace.
Then ${\mathfrak g}(1)={\mathfrak g}^0({\mathfrak h})\cap{\mathfrak g}(1)\oplus{\mathfrak g}^1({\mathfrak h})\cap{\mathfrak g}(1)$.
\end{lemma}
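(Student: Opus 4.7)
The plan is to reduce the Fitting decomposition of $\mathfrak{h}$ to that of a single generic element, and then show that the Fitting decomposition of one homogeneous element is compatible with the grading. If $\mathfrak{h}=0$ the statement is trivial, so I may assume $\mathfrak{h}\neq 0$. By the remark just before the lemma (on the existence of a non-empty open $U\subseteq\mathfrak{h}$ where the Fitting decomposition of $\mathfrak{h}$ coincides with that of a single element), it suffices to show that for any $x\in\mathfrak{g}(1)$ the subspaces $\mathfrak{g}^0(kx)$ and $\mathfrak{g}^1(kx)$ are graded, i.e.\ that
$$\mathfrak{g}^i(kx)=\bigoplus_{j\in\mathbb{Z}/m\mathbb{Z}}\bigl(\mathfrak{g}^i(kx)\cap\mathfrak{g}(j)\bigr),\quad i=0,1.$$
Intersecting the resulting decomposition $\mathfrak{g}=\mathfrak{g}^0(kx)\oplus\mathfrak{g}^1(kx)$ with $\mathfrak{g}(1)$ then yields the conclusion.

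The key observation is that although $\ad x$ shifts the grading by $+1$, the $m$-th iterate $T:=(\ad x)^m$ does preserve each homogeneous component $\mathfrak{g}(j)$, since $m$ is the order of the grading group. Hence $T$ is an endomorphism of the graded vector space $\mathfrak{g}$ which respects the grading, and its generalized eigenspace decomposition refines the grading. In particular, writing $\mathfrak{g}_0(T)$ for the generalized $0$-eigenspace of $T$ and $\mathfrak{g}_{\neq 0}(T)$ for the sum of the remaining generalized eigenspaces, both are graded subspaces of $\mathfrak{g}$.

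It remains only to identify these with $\mathfrak{g}^0(kx)$ and $\mathfrak{g}^1(kx)$. For the $0$-part this is immediate: for $y\in\mathfrak{g}$, $(\ad x)^Ny=0$ for some $N$ if and only if $T^Ny=0$ for some $N$, so $\mathfrak{g}^0(kx)=\mathfrak{g}_0(T)$. For the complementary part, $\mathfrak{g}^1(kx)$ is stable under $\ad x$ (by definition of the Fitting decomposition) and $\ad x$ acts invertibly on it; thus $T=(\ad x)^m$ acts invertibly on $\mathfrak{g}^1(kx)$, which forces $\mathfrak{g}^1(kx)\subseteq\mathfrak{g}_{\neq 0}(T)$. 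Comparing dimensions via $\mathfrak{g}=\mathfrak{g}^0(kx)\oplus\mathfrak{g}^1(kx)=\mathfrak{g}_0(T)\oplus\mathfrak{g}_{\neq 0}(T)$ and the already-established equality $\mathfrak{g}^0(kx)=\mathfrak{g}_0(T)$ gives $\mathfrak{g}^1(kx)=\mathfrak{g}_{\neq 0}(T)$.

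The only slightly delicate step is the last paragraph, where one must be careful to note that $\ad x$ does not commute with the grading and so the passage to $T=(\ad x)^m$ is essential; but once $T$ is in place the argument is purely a matter of linear algebra and offers no serious obstacle.
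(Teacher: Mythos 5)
Your proof is correct and follows essentially the same route as the paper's: pick a generic $x\in U\subseteq{\mathfrak h}$, observe that $T=(\ad x)^m$ preserves each graded piece ${\mathfrak g}(i)$ while being nilpotent on ${\mathfrak g}^0({\mathfrak h})$ and invertible on ${\mathfrak g}^1({\mathfrak h})$, and intersect the resulting $T$-Fitting decomposition with ${\mathfrak g}(1)$. Your writeup merely spells out the identification of ${\mathfrak g}^0(kx)$ and ${\mathfrak g}^1(kx)$ with the generalized eigenspace decomposition of $T$ more explicitly than the paper does.
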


\begin{proof}
Let $x\in U$, where $U$ is the set defined in the paragraph above.
Since $(\ad x)$ acts invertibly (resp. nilpotently) on ${\mathfrak g}^1({\mathfrak h})$ (resp. ${\mathfrak g}^0({\mathfrak h})$), so does $(\ad x)^m$.
But $(\ad x)^m({\mathfrak g}(i))\subset{\mathfrak g}(i)$ for each $i\in{\mathbb Z}/m{\mathbb Z}$.
\end{proof}

If ${\mathfrak h}\subseteq{\mathfrak g}(1)$ is a commutative subspace then write ${\mathfrak g}^i({\mathfrak h})(1)$ for ${\mathfrak g}^i({\mathfrak h})\cap{\mathfrak g}(1)$.
Lemma \ref{fitting} allows us to prove the following lemma by a standard argument.

\begin{lemma}\label{sep1}
Let ${\mathfrak h}\subset{\mathfrak g}(1)$ be commutative.
Then the morphism $\phi:G(0)\times{\mathfrak g}^0({\mathfrak h})(1)\rightarrow{\mathfrak g}(1)$, $(g,x)\rightarrow\Ad g(x)$ is dominant and separable.
\end{lemma}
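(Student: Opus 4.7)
The plan is to show that the differential $d\phi$ is surjective at a well-chosen point $(e,x)$; dominance and separability will then follow formally. First I would write down the differential at $(e,x) \in G(0) \times {\mathfrak g}^0({\mathfrak h})(1)$:
\[
d\phi_{(e,x)} : {\mathfrak g}(0) \oplus {\mathfrak g}^0({\mathfrak h})(1) \longrightarrow {\mathfrak g}(1), \qquad (y,z) \longmapsto [y,x] + z,
\]
so that $\im d\phi_{(e,x)} = [{\mathfrak g}(0), x] + {\mathfrak g}^0({\mathfrak h})(1)$. By Lemma \ref{fitting}, the target decomposes as ${\mathfrak g}(1) = {\mathfrak g}^0({\mathfrak h})(1) \oplus {\mathfrak g}^1({\mathfrak h})(1)$, so surjectivity of $d\phi_{(e,x)}$ is equivalent to the inclusion $[{\mathfrak g}(0), x] \supseteq {\mathfrak g}^1({\mathfrak h})(1)$.

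Next I would specialize $x$ to lie in the non-empty Zariski-open set $U \subseteq {\mathfrak h}$ introduced in the paragraph preceding Lemma \ref{fitting}, so that $\ad x$ is invertible on ${\mathfrak g}^1({\mathfrak h})$. Since ${\mathfrak h}$ is commutative and contained in ${\mathfrak g}(1)$, we have $U \subseteq {\mathfrak h} \subseteq {\mathfrak g}^0({\mathfrak h})(1)$, and ${\mathfrak g}^1({\mathfrak h})$ is an $\ad {\mathfrak g}^0({\mathfrak h})$-submodule of ${\mathfrak g}$ (this is standard from the generalized weight-space decomposition for the commuting family $\ad{\mathfrak h}$). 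Because $x$ has grade $1$, the operator $\ad x$ shifts the ${\mathbb Z}/m{\mathbb Z}$-grading by one, and its invertibility on ${\mathfrak g}^1({\mathfrak h})$ forces the restriction $\ad x : {\mathfrak g}^1({\mathfrak h})(0) \to {\mathfrak g}^1({\mathfrak h})(1)$ to be a bijection. Since ${\mathfrak g}^1({\mathfrak h})(0) \subseteq {\mathfrak g}(0)$, this yields $[{\mathfrak g}(0), x] \supseteq {\mathfrak g}^1({\mathfrak h})(1)$, and hence $d\phi_{(e,x)}$ is surjective.

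By $G(0)$-equivariance of $\phi$ (with respect to left translation on the first factor), surjectivity at $(e,x)$ propagates to surjectivity at every $(g,x)$ with $x \in U$, so $d\phi$ has full rank on the non-empty open set $G(0) \times U$ of the source. Since source and target are smooth and irreducible, this at once gives that $\phi$ is dominant and generically smooth, hence separable. The only real content in the argument is the observation that, because $x \in {\mathfrak h}$ rather than a random element of ${\mathfrak g}^0({\mathfrak h})(1)$, the operator $\ad x$ genuinely preserves each Fitting component of $\ad {\mathfrak h}$ and acts invertibly on ${\mathfrak g}^1({\mathfrak h})$; this is precisely what is packaged into the definition of $U$, after which the surjectivity reduces to a one-line check on graded pieces.
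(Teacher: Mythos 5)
Your argument is correct and follows the paper's proof essentially verbatim: pick $x$ in the Zariski-open set $U\subseteq{\mathfrak h}$, compute $d\phi_{(e,x)}(y,z)=[y,x]+z$, and use invertibility of $\ad x$ on ${\mathfrak g}^1({\mathfrak h})$ together with the degree-one shift of $\ad x$ to get $[{\mathfrak g}(0),x]\supseteq{\mathfrak g}^1({\mathfrak h})(1)$, then conclude via the standard dominant-and-separable criterion. The paper phrases the last inclusion slightly differently, via $[{\mathfrak g}(0),h]=[{\mathfrak g},h]\cap{\mathfrak g}(1)\supset{\mathfrak g}^1({\mathfrak h})(1)$, but this is the same observation.
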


\begin{proof}
Let $h\in{\mathfrak h}$ be such that ${\mathfrak g}^0(kh)={\mathfrak g}^0({\mathfrak h})$ and ${\mathfrak g}^1(kh)={\mathfrak g}^1({\mathfrak h})$.
We claim that $d\phi_{(e,h)}$ is surjective.
Indeed, identifying $T_{(e,h)}(G(0)\times{\mathfrak g}^0({\mathfrak h})(1))$ with ${\mathfrak g}(0)\oplus{\mathfrak g}^0({\mathfrak h})(1)$ in the natural way, it can easily be seen that $d\phi_{(e,h)}(x,y)=[x,h]+y$.
Hence $\im d\phi_{(e,h)}=[{\mathfrak g}(0),h]+{\mathfrak g}^0({\mathfrak h})(1)$.
But $[{\mathfrak g}(0),h]=[{\mathfrak g},h]\cap{\mathfrak g}(1)\supset{\mathfrak g}^1({\mathfrak h})(1)$, thus $d\phi_{(e,h)}$ is surjective.
By \cite[AG. 17.3]{borel}, $\phi$ is dominant and separable.
\end{proof}

\begin{corollary}\label{sep}
Let ${\mathfrak c}$ be a Cartan subspace of ${\mathfrak g}(1)$.
Then the morphism $G(0)\times{\mathfrak z}_{{\mathfrak g}(1)}({\mathfrak c})\rightarrow{\mathfrak g}(1)$, $(g,x)\rightarrow\Ad g(x)$ is dominant and separable.
\end{corollary}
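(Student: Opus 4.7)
The plan is to deduce the corollary from Lemma \ref{sep1} by taking ${\mathfrak h}={\mathfrak c}$ and identifying the Fitting 0-component of ${\mathfrak c}$ intersected with ${\mathfrak g}(1)$ with ${\mathfrak z}_{{\mathfrak g}(1)}({\mathfrak c})$. Since a Cartan subspace is, by definition, a commutative subspace of ${\mathfrak g}(1)$, the hypothesis of Lemma \ref{sep1} is satisfied, and the conclusion there is dominance and separability of the map $G(0)\times{\mathfrak g}^0({\mathfrak c})(1)\to{\mathfrak g}(1)$. So the only thing that really needs verification is the equality ${\mathfrak g}^0({\mathfrak c})(1)={\mathfrak z}_{{\mathfrak g}(1)}({\mathfrak c})$.

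For this, I would argue as follows. The elements of ${\mathfrak c}$ are pairwise commuting and each is semisimple in ${\mathfrak g}$, so the operators $\{\ad x:x\in{\mathfrak c}\}$ are a commuting family of semisimple endomorphisms of ${\mathfrak g}$, hence simultaneously diagonalizable. Consequently ${\mathfrak g}$ decomposes as a direct sum of joint ${\mathfrak c}$-weight spaces, and the Fitting 0-component ${\mathfrak g}^0({\mathfrak c})$ coincides with the joint zero-weight space, which is precisely ${\mathfrak z}_{\mathfrak g}({\mathfrak c})$. Intersecting with the ${\mathfrak g}(1)$-part of the grading gives ${\mathfrak g}^0({\mathfrak c})(1)={\mathfrak z}_{\mathfrak g}({\mathfrak c})\cap{\mathfrak g}(1)={\mathfrak z}_{{\mathfrak g}(1)}({\mathfrak c})$, as required. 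Substituting this identification into the conclusion of Lemma \ref{sep1} yields exactly the claim.

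I do not expect any real obstacle here: the corollary is essentially the specialization of Lemma \ref{sep1} to a Cartan subspace, with the only content being the (standard) observation that for commuting semisimple elements the Fitting 0-space equals the centralizer. One should just take a moment to note that although ${\mathfrak c}$ need not be a subalgebra of ${\mathfrak g}$ (since $[{\mathfrak g}(1),{\mathfrak g}(1)]\subset{\mathfrak g}(2)$), the Fitting decomposition used in Lemma \ref{sep1} only requires a commutative subspace, and semisimplicity of the elements of ${\mathfrak c}$ guarantees a generic $h\in{\mathfrak c}$ with ${\mathfrak g}^i(kh)={\mathfrak g}^i({\mathfrak c})$ for $i=0,1$.
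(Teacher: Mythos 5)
Your argument is correct and matches the paper's proof: the paper also reduces to Lemma \ref{sep1} by observing that $\mathfrak{g}$ is completely reducible as an $\ad\mathfrak{c}$-module (which is precisely simultaneous diagonalizability of the commuting semisimple operators $\ad x$, $x\in\mathfrak{c}$), so that $\mathfrak{g}^0(\mathfrak{c})(1)=\mathfrak{z}_{\mathfrak{g}(1)}(\mathfrak{c})$. You have just spelled out the standard linear-algebra reason behind that identification.
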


\begin{proof}
Since ${\mathfrak g}$ is a completely reducible $\ad{\mathfrak c}$-module, ${\mathfrak z}_{{\mathfrak g}(1)}({\mathfrak c})={\mathfrak g}^0({\mathfrak c})(1)$.
Hence we can apply Lemma \ref{sep1}.
\end{proof}

Recall \cite[\S 3]{vin} that $c\in{\mathfrak c}$ is {\it an element in general position} if ${\mathfrak z}_{\mathfrak g}(c)={\mathfrak z}_{\mathfrak g}({\mathfrak c})$.
In common with \cite{vin}, denote by $R({\mathfrak c})$ the set of $x\in{\mathfrak z}_{{\mathfrak g}(1)}({\mathfrak c})$ such that the semisimple part of $x$ (necessarily in ${\mathfrak c}$) is an element in general position.

\begin{theorem}\label{carts}
Any two Cartan subspaces of ${\mathfrak g}(1)$ are conjugate by an element of $G(0)$.
\end{theorem}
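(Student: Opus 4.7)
The plan is to use Corollary~\ref{sep} to find an element $x\in\mathfrak{g}(1)$ lying in both $\Ad G(0)\cdot\mathfrak{z}_{\mathfrak{g}(1)}(\mathfrak{c}_1)$ and $\Ad G(0)\cdot\mathfrak{z}_{\mathfrak{g}(1)}(\mathfrak{c}_2)$, with its semisimple part in general position with respect to both Cartan subspaces, and then to recover each $\Ad g_i(\mathfrak{c}_i)$ as the canonical subspace $\mathfrak{z}(\Lie L)\cap\mathfrak{g}(1)$ attached to the Levi subgroup $L:=Z_G(x_s)^\circ$.

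First I would observe that for any $y\in\mathfrak{z}_{\mathfrak{g}(1)}(\mathfrak{c}_i)$ the semisimple part $y_s$ lies in $\mathfrak{c}_i$: by Lemma~\ref{first}(a) we have $y_s\in\mathfrak{g}(1)$, and as $y_s$ commutes with $\mathfrak{c}_i$ the subspace $\mathfrak{c}_i+ky_s$ is commutative and semisimple, so maximality of $\mathfrak{c}_i$ forces $y_s\in\mathfrak{c}_i$. Consequently the set $R(\mathfrak{c}_i)\subseteq\mathfrak{z}_{\mathfrak{g}(1)}(\mathfrak{c}_i)$ of elements whose semisimple part is in general position in $\mathfrak{c}_i$ is cut out by the condition that $\dim Z_G(y_s)$ attains its minimum, and a standard constructibility argument shows that it contains an open dense subset of $\mathfrak{z}_{\mathfrak{g}(1)}(\mathfrak{c}_i)$. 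By Corollary~\ref{sep} each $\phi_i:G(0)\times\mathfrak{z}_{\mathfrak{g}(1)}(\mathfrak{c}_i)\to\mathfrak{g}(1)$ is dominant, so $\phi_i(G(0)\times R(\mathfrak{c}_i))$ contains an open dense subset $U_i$ of $\mathfrak{g}(1)$. Since $\mathfrak{g}(1)$ is irreducible the intersection $U_1\cap U_2$ is non-empty, and so there exist $g_i\in G(0)$ and $y_i\in R(\mathfrak{c}_i)$ with $x:=\Ad g_1(y_1)=\Ad g_2(y_2)$.

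Then $x_s=\Ad g_1((y_1)_s)=\Ad g_2((y_2)_s)$, where each $(y_i)_s\in\mathfrak{c}_i$ is in general position, so $Z_G((y_i)_s)^\circ=Z_G(\mathfrak{c}_i)^\circ$. Setting $L:=Z_G(x_s)^\circ$ this gives $L=\Ad g_i(Z_G(\mathfrak{c}_i)^\circ)$; in particular $L$ is a Levi subgroup and $Z(L)^\circ$ is a torus. Then $\mathfrak{z}(\Lie L)\cap\mathfrak{g}(1)$ is a commutative subspace of $\mathfrak{g}(1)$ consisting of semisimple elements, and contains the Cartan subspace $\Ad g_i(\mathfrak{c}_i)$, so by maximality the two coincide. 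Hence $\Ad g_1(\mathfrak{c}_1)=\mathfrak{z}(\Lie L)\cap\mathfrak{g}(1)=\Ad g_2(\mathfrak{c}_2)$, and $g_2^{-1}g_1\in G(0)$ conjugates $\mathfrak{c}_1$ onto $\mathfrak{c}_2$.

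The main obstacle will be the density of $R(\mathfrak{c}_i)$ in $\mathfrak{z}_{\mathfrak{g}(1)}(\mathfrak{c}_i)$, since the Jordan projection $y\mapsto y_s$ is not a morphism; one must verify that the preimage of an open dense subset of $\mathfrak{c}_i$ under this constructible map remains open dense, which requires a careful inspection of the semisimple-rank stratification on $\mathfrak{z}_{\mathfrak{g}(1)}(\mathfrak{c}_i)$. A secondary point, depending on the good characteristic hypothesis, is the identification $Z_G((y_i)_s)^\circ=Z_G(\mathfrak{c}_i)^\circ$ for $(y_i)_s$ in general position in $\mathfrak{c}_i$, which uses the standard smoothness and good behaviour of centralizers of semisimple elements in $G$.
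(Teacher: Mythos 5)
Your proof is correct in outline and uses the same main ingredients as the paper --- dominance from Corollary~\ref{sep}, density of $R(\mathfrak c_i)$, and an intersection argument --- but the final recovery step takes a genuinely different route. The paper's argument is asymmetric and more economical: it intersects $G(0)\cdot\mathfrak z_{\mathfrak g(1)}(\mathfrak c_1)$ with $\mathfrak z_{\mathfrak g(1)}(\mathfrak c_2)$ to produce a single $x\in R(\mathfrak c_2)$ with $\Ad g^{-1}(x)\in\mathfrak z_{\mathfrak g(1)}(\mathfrak c_1)$, and then simply observes that for $x\in R(\mathfrak c_2)$ the subspace $\mathfrak c_2$ is exactly the set of semisimple elements of $\mathfrak z_{\mathfrak g(1)}(x)$; combined with maximality of $\mathfrak c_1$ this gives $\Ad g^{-1}(\mathfrak c_2)=\mathfrak c_1$ directly. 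You instead intersect two dense images, form $L=Z_G(x_s)^\circ$, identify $\Ad g_i(Z_G(\mathfrak c_i)^\circ)=L$, and recover both $\Ad g_i(\mathfrak c_i)$ as $\mathfrak z(\Lie L)\cap\mathfrak g(1)$. This works and is pleasantly symmetric, but it requires two additional facts that the paper's route avoids: that $Z_G(\mathfrak c_i)$ is smooth (so that $\mathfrak z_{\mathfrak g}(c)=\mathfrak z_{\mathfrak g}(\mathfrak c_i)$ really does force $Z_G(c)^\circ=Z_G(\mathfrak c_i)^\circ$), and that $\mathfrak z(\Lie L)$ is a toral subalgebra, i.e.\ consists of semisimple elements (the paper uses this elsewhere, quoting [comms, Lemma 2.2]). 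You flag the first, but not the second.

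Your ``main obstacle'' is not actually an obstacle, and worrying about a ``semisimple-rank stratification'' overcomplicates things. By Lemma~\ref{zerorank} (or by the decomposition noted just after Theorem~\ref{carts}), $\mathfrak z_{\mathfrak g(1)}(\mathfrak c_i)=\mathfrak c_i\oplus\mathfrak u_i$ with $\mathfrak u_i$ a subspace of nilpotent elements commuting with $\mathfrak c_i$, so for $y=c+u$ in this decomposition the Jordan components are simply $y_s=c$, $y_n=u$: the map $y\mapsto y_s$ is the \emph{linear} projection onto $\mathfrak c_i$, and $R(\mathfrak c_i)$ is the preimage of the open dense subset of $\mathfrak c_i$ cut out by the non-vanishing of the $d\alpha|_{\mathfrak c_i}$ that are not identically zero. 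Hence $R(\mathfrak c_i)$ is open and dense; no constructible-map stratification is needed. (The paper also remarks that Theorem~\ref{carts} itself does not require $p>2$, but in the regime where you lack the $\mathfrak c\oplus\mathfrak u$ decomposition the density of $R$ and the toral nature of $\mathfrak z(\Lie L)$ would both need a separate justification, and your version is more exposed to that issue than the paper's.)
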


\begin{proof}
Let ${\mathfrak c}_1$ and ${\mathfrak c}_2$ be two Cartan subspaces.
By Lemma \ref{sep}, $G(0)\cdot{\mathfrak z}_{{\mathfrak g}(1)}({\mathfrak c}_1)$ and $G(0)\cdot{\mathfrak z}_{{\mathfrak g}(1)}({\mathfrak c}_2)$ are dense constructible (that is, unions of locally closed) subsets of ${\mathfrak g}(1)$.
But hence $(G(0)\cdot{\mathfrak z}_{{\mathfrak g}(1)}({\mathfrak c}_1))\cap{\mathfrak z}_{{\mathfrak g}(1)}({\mathfrak c}_2)$ contains a non-empty open subset of ${\mathfrak z}_{{\mathfrak g}(1)}({\mathfrak c}_2)$.
Since $R({\mathfrak c}_2)$ is dense in ${\mathfrak z}_{{\mathfrak g}(1)}({\mathfrak c}_2)$, it intersects non-trivially with $G(0)\cdot{\mathfrak z}_{{\mathfrak g}(1)}({\mathfrak c}_1)$.
But for any $x\in R({\mathfrak c}_2)$, ${\mathfrak c}_2$ is the set of semisimple elements of ${\mathfrak z}_{{\mathfrak g}(1)}(x)$.
Hence if $\Ad g^{-1}(x)\in {\mathfrak z}_{{\mathfrak g}(1)}({\mathfrak c}_1)$ then clearly $\Ad g^{-1}({\mathfrak c}_2)={\mathfrak c}_1$.
\end{proof}

\begin{corollary}\label{semicart}
Let ${\mathfrak c}$ be a Cartan subspace of ${\mathfrak g}(1)$.
Then any semisimple element of ${\mathfrak g}(1)$ is conjugate to an element of ${\mathfrak c}$.
\end{corollary}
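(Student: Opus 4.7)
The plan is to reduce the statement to Theorem \ref{carts} by showing that any semisimple element of ${\mathfrak g}(1)$ is contained in \emph{some} Cartan subspace of ${\mathfrak g}(1)$, and then conjugating that Cartan subspace to ${\mathfrak c}$.

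First, let $x \in {\mathfrak g}(1)$ be semisimple. I would consider the one-dimensional subspace $kx \subseteq {\mathfrak g}(1)$: it is commutative, and every element (being a scalar multiple of a semisimple element) is semisimple. The key observation is that $kx$ can be extended to a \emph{maximal} commutative subspace of ${\mathfrak g}(1)$ consisting of semisimple elements. This is immediate by a Zorn's lemma argument on the poset of commutative subspaces of ${\mathfrak g}(1)$ consisting of semisimple elements (ordered by inclusion), since this poset is non-empty (it contains $kx$) and chains are bounded above by their union, which is itself commutative and finite-dimensional, hence a subspace.

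The point one needs to verify along the way is that such a maximal subspace is genuinely a subspace — that is, that a linear combination of commuting semisimple elements in ${\mathfrak g}$ is still semisimple. This follows from the fact that any family of pairwise commuting semisimple elements of ${\mathfrak g}$ lies in the Lie algebra of a common maximal torus of $G$ (since the centralizer of such a family is a reductive subgroup containing all of them in its toral part); linear combinations inside $\Lie(T)$ are then toral and hence semisimple.

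Calling the resulting maximal subspace ${\mathfrak c}'$, it is by definition a Cartan subspace of ${\mathfrak g}(1)$ containing $x$. By Theorem \ref{carts}, there exists $g \in G(0)$ such that $\Ad g({\mathfrak c}') = {\mathfrak c}$, whence $\Ad g(x) \in {\mathfrak c}$, as required. I do not anticipate any serious obstacle here; the only subtle point is the verification that commuting semisimple elements span a subspace of semisimple elements, and this is a standard fact about reductive Lie algebras in good characteristic.
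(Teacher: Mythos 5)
Your proof is correct and is precisely the argument the paper intends (the corollary appears without a written proof, because it is an immediate consequence of Theorem~\ref{carts}): extend $kx$ to a maximal commutative subspace of semisimple elements, which is by definition a Cartan subspace, and then apply the conjugacy theorem. The extra verification that commuting semisimple elements lie in a common $\Lie(T)$ is sound (and is indeed used elsewhere in the paper, e.g.\ in the proof of Lemma~\ref{zerorank}), though strictly speaking the Zorn/maximal-dimension argument already produces a maximal element of the poset of commutative subspaces of semisimple elements containing $kx$, so that verification is not actually needed to close the argument.
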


Note that if $p>2$ or $G$ satisfies the standard hypotheses then ${\mathfrak z}_{{\mathfrak g}(1)}({\mathfrak c})={\mathfrak c}\oplus{\mathfrak u}$ for a subspace ${\mathfrak u}$ consisting of nilpotent elements by Lemma \ref{zerorank}.
The assumption $p>2$ is not required for Thm. \ref{carts} to hold.
There is a natural relationship between Cartan subspaces of ${\mathfrak g}(1)$ and maximal $\theta$-split tori in $G$.
Denote by $\varphi(m)$ the Euler number of $m$.

\begin{lemma}\label{thetasplit}
Let ${\mathfrak c}$ be a Cartan subspace of ${\mathfrak g}(1)$.
Then there exists a maximal $\theta$-split torus $T_1$ such that $\Lie(T_1)\supset{\mathfrak c}$.
Moreover, $\dim T_1=\dim{\mathfrak c}\cdot\varphi(m)$ and $T_1$ is a minimal torus in $G$ such that ${\mathfrak c}\subset\Lie(T_1)$.
If $p>2$ or if $G$ satisfies the standard hypotheses, then $T_1$ is unique.
\end{lemma}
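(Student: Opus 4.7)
The plan is to build $T_1$ directly from a suitable $\theta$-stable maximal torus and verify each claim in turn. Since $d\theta$ acts on $\mathfrak{c}$ by the scalar $\zeta$, the subspace $\mathfrak{c}$ is $d\theta$-stable as a set, so the connected centralizer $L := Z_G(\mathfrak{c})^\circ$ is a $\theta$-stable reductive subgroup of $G$. By Steinberg's theorem there is a $\theta$-stable maximal torus $T$ of $L$; since $\mathfrak{c}$ is a commutative subspace of semisimple elements we have $\mathfrak{c} \subset \mathfrak{z}(\mathfrak{l}) \subset \mathfrak{t}$. Decomposing $T = \prod_{d\mid m} T_d$ via Lemma \ref{stabletori} and taking $T_1$ to be the $\theta$-split factor gives $\mathfrak{c} \subset \mathfrak{t}(1) \subset \Lie(T_1)$.

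For the dimension formula I would first upgrade $\mathfrak{c} \subset \mathfrak{t}(1)$ to an equality by maximality of the Cartan subspace ($\mathfrak{t}(1) \subset \mathfrak{g}(1)$ is commutative, consisting of semisimple elements, and contains $\mathfrak{c}$). Then, since $T_1$ is $\theta$-split, the rational character lattice $X(T_1) \otimes_\mathbb{Z} \mathbb{Q}$ is naturally a module over $\mathbb{Q}[\theta]/(p_m(\theta)) \cong \mathbb{Q}(\zeta_m)$, giving $\dim T_1 = \varphi(m) r$ with $r$ the common dimension of each $d\theta$-eigenspace $\Lie(T_1) \cap \mathfrak{g}(i)$ for $(i,m) = 1$; taking $i = 1$ gives $r = \dim \mathfrak{c}$. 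Maximality among $\theta$-split tori now follows at once: any $\theta$-split $S \supseteq T_1$ has $\Lie(S) \cap \mathfrak{g}(1) \supseteq \mathfrak{c}$ commutative semisimple, hence equal to $\mathfrak{c}$ by maximality, and the dimension formula forces $S = T_1$.

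For minimality and uniqueness (the latter under $p > 2$ or the standard hypotheses), I would identify $T_1$ with the torus $A$ generated by the minimal subtori $A(c) \subset G$ whose Lie algebras contain $c \in \mathfrak{c}$. Each $A(c)$ is $\theta$-stable: the torus $\theta(A(c))$ has the same dimension and contains $d\theta(c) = \zeta c$, hence the whole line $kc$, in its Lie algebra, and so equals $A(c)$ by uniqueness. Hence $A$ is $\theta$-stable; decomposing it by Lemma \ref{stabletori} and applying the Cartan-maximality argument as above forces $A$ to be $\theta$-split with $\dim A = \varphi(m) \dim \mathfrak{c} = \dim T_1$, and combined with $A \subseteq T_1$ (by minimality of $A$) this gives $A = T_1$. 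The main obstacle I foresee is precisely the well-definedness of $A(c)$ in positive characteristic: it is the hypothesis $p > 2$ or the standard hypotheses, together with the decomposition $\mathfrak{z}_{\mathfrak{g}(1)}(\mathfrak{c}) = \mathfrak{c} \oplus \mathfrak{u}$ from Lemma \ref{zerorank}, that ensures semisimple elements generate unique minimal tori — and this is what ultimately upgrades existence and minimality to uniqueness of $T_1$.
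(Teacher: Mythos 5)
Your first two paragraphs (existence, the equality $\mathfrak{c} = \mathfrak{t}(1)$, the dimension formula via Lemma \ref{stabletori}, and maximality of $T_1$ among $\theta$-split tori) follow essentially the same route as the paper and are fine, modulo the small point that the dimension statement is already packaged in Lemma \ref{stabletori} so the detour through $X(T_1)\otimes\mathbb{Q}$ is optional rather than necessary.

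The third paragraph, however, has a genuine gap that you yourself flag but do not resolve. You want to define $A(c)$ as \emph{the} minimal torus whose Lie algebra contains the semisimple element $c$, and the whole argument (the $\theta$-stability of $A(c)$, the inclusion $A = \langle A(c)\rangle \subseteq T_1$, and the conclusion $A = T_1$) rests on this being well-posed. But in positive characteristic there is no general reason for a unique minimal such torus to exist: the intersection of two tori containing $c$ in their Lie algebras is a diagonalizable group whose identity component is a torus, but $\Lie$ of that intersection may be strictly smaller than the intersection of the Lie algebras, so it need not contain $c$ at all. Equivalently, the toral subalgebra $\sum_{i\ge 1}k\,c^{[p^i]}$ generated by $c$ need not be algebraic. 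Your claim that the hypothesis $p>2$ (or the standard hypotheses), via the decomposition $\mathfrak{z}_{\mathfrak{g}(1)}(\mathfrak{c})=\mathfrak{c}\oplus\mathfrak{u}$ from Lemma \ref{zerorank}, ``ensures semisimple elements generate unique minimal tori'' is an assertion, not an argument, and Lemma \ref{zerorank} does not say this: it concerns the global structure of $\mathfrak{g}(1)$ in the zero-semisimple-rank case, not the algebraicity of the toral closure of an individual semisimple element.

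The paper avoids this entirely. Because $\mathfrak{c}$ is maximal, any semisimple element of $\mathfrak{l}(1)$ where $\mathfrak{l}=\mathfrak{z}_{\mathfrak{g}}(\mathfrak{c})$ is already central in $\mathfrak{l}$, so $\theta|_{L^{(1)}}$ has zero semisimple rank. That is precisely where Lemma \ref{zerorank} (hence the hypothesis $p>2$ or the standard hypotheses) is invoked: it gives $T_1\subseteq Z(L)^\circ$, so $T_1 = \bigl(Z(L)^\circ\bigr)_1$ is the $\theta$-split part of a \emph{canonical} torus depending only on $\mathfrak{c}$, and uniqueness falls out. You should replace the $A(c)$ construction by this zero-rank observation; the rest of your proof can stay as is.
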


\begin{proof}
Let $L=Z_G({\mathfrak c})$, and let $T$ be any $\theta$-stable maximal torus of $L$.
Then ${\mathfrak c}\subseteq{\mathfrak z}({\mathfrak l})\subseteq\Lie(T)$.
Let $T_d$, $d|m$ be the subtori of $T$ given by Lemma \ref{stabletori}.
Then ${\mathfrak c}\subset\Lie(T_1)$ and hence by maximality ${\mathfrak c}=\Lie(T_1)(1)$.
But if $T_1$ is properly contained in a $\theta$-split torus of $G$ then ${\mathfrak c}$ cannot be a Cartan subspace, hence $T_1$ is maximal.
This proves the first statement of the lemma.
The second follows from Lemma \ref{stabletori}.
For the final assertion, $\theta|_{L^{(1)}}$ is of zero rank, hence $T_1\subseteq Z(L)^\circ$ by Lemma \ref{zerorank} and Remark \ref{p=2}.
But therefore $T_1$ is the unique maximal $\theta$-split torus of $L$.
\end{proof}

The following is an analogue of \cite[11.1]{richinvs}.
The proof is essentially identical; we include it for the reader's convenience.

\begin{lemma}\label{conj}
Let ${\mathfrak c}$ be a Cartan subspace of ${\mathfrak g}(1)$ and let $Y\subset{\mathfrak c}$ be a subset of ${\mathfrak c}$.
If $g\in G(0)$ is such that $\Ad g(Y)\subset{\mathfrak c}$ then there exists $n\in N_{G(0)}({\mathfrak c})$ such that $\Ad n(y)=\Ad g(y)$ for all $y\in Y$.
\end{lemma}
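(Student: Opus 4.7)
The idea is to replace $G$ by the centralizer of $Y$ and apply the conjugacy theorem (Theorem \ref{carts}) there, following Richardson's approach. Concretely, set $L = Z_G(Y)$. Since $Y \subset {\mathfrak c}$ consists of pairwise commuting semisimple elements of ${\mathfrak g}$, the subgroup $L$ is reductive (being an intersection of pseudo-Levi subgroups, for each of which $p$ is good). Moreover $L$ is $\theta$-stable: if $h \in L$ and $y \in Y \subset {\mathfrak g}(1)$, then $\Ad\theta(h)(y) = \zeta^{-1}\Ad\theta(h)(\zeta y) = \zeta^{-1} d\theta(\Ad h(d\theta^{-1}(y))) = \zeta^{-1}d\theta(\Ad h(\zeta^{-1} y)) = \zeta^{-1} d\theta(\zeta^{-1}y) = y$. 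Hence $L^\circ$ is a $\theta$-stable connected reductive subgroup of $G$, with induced grading $\Lie(L^\circ) = \bigoplus_i {\mathfrak l}(i)$ where ${\mathfrak l}(i) = {\mathfrak z}_{\mathfrak g}(Y) \cap {\mathfrak g}(i)$, and $(L^\circ)(0) \subseteq G(0)$ centralizes $Y$ pointwise.

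Next I would observe that both ${\mathfrak c}$ and ${\mathfrak c}' := \Ad g^{-1}({\mathfrak c})$ are Cartan subspaces of ${\mathfrak l}(1)$. Indeed, ${\mathfrak c}$ centralizes $Y$ (since $Y \subset {\mathfrak c}$ and ${\mathfrak c}$ is commutative), so ${\mathfrak c} \subset {\mathfrak l}(1)$; and its maximality in ${\mathfrak g}(1)$ forces maximality in the smaller space ${\mathfrak l}(1)$. The hypothesis $\Ad g(Y) \subset {\mathfrak c}$ means $Y \subset {\mathfrak c}'$, and ${\mathfrak c}'$ is again a Cartan subspace of ${\mathfrak g}(1)$ (since $g \in G(0)$), so by the same reasoning ${\mathfrak c}' \subset {\mathfrak l}(1)$ and is maximal there.

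Applying Theorem \ref{carts} to the pair $(L^\circ, \theta|_{L^\circ})$, there exists $h \in (L^\circ)(0)$ such that $\Ad h({\mathfrak c}) = {\mathfrak c}' = \Ad g^{-1}({\mathfrak c})$. Setting $n := gh$, we have $n \in G(0)$ and $\Ad n({\mathfrak c}) = \Ad g \Ad h({\mathfrak c}) = {\mathfrak c}$, so $n \in N_{G(0)}({\mathfrak c})$. Finally, for any $y \in Y$, since $h \in L = Z_G(Y)$ centralizes $y$, we obtain $\Ad n(y) = \Ad g (\Ad h(y)) = \Ad g(y)$, as required.

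The main point requiring care is verifying that Theorem \ref{carts} is legitimately applicable to $L^\circ$; this relies on $L^\circ$ being reductive, $\theta$-stable, and $p$ being good for $L^\circ$, all of which follow from the remarks on pseudo-Levi subgroups in Section 1 together with the fact that $p \nmid m$ implies $p$ is good for $\theta$-stable reductive subgroups. Everything else is bookkeeping.
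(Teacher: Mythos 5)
Your proof is correct and follows essentially the same route as the paper: the paper sets $L=Z_G(\Ad g(Y))$ and conjugates the Cartan subspaces ${\mathfrak c}$ and $\Ad g({\mathfrak c})$ of ${\mathfrak l}(1)$ by Theorem \ref{carts}, whereas you centralize $Y$ itself and conjugate ${\mathfrak c}$ to $\Ad g^{-1}({\mathfrak c})$ — a purely cosmetic difference. (The intermediate $\zeta$-factors in your $\theta$-stability computation are garbled, but the conclusion $\Ad\theta(h)(y)=y$ is right, and more simply $\theta(Z_G(Y))=Z_G(d\theta(Y))=Z_G(\zeta Y)=Z_G(Y)$.)
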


\begin{proof}
Let $L=Z_G(\Ad g(Y))$ and ${\mathfrak l}=\Lie(L)={\mathfrak z}_{\mathfrak g}(\Ad g(Y))$.
Then $L$ is a $\theta$-stable Levi subgroup of $G$ and ${\mathfrak c},\Ad g({\mathfrak c})$ are Cartan subspaces of ${\mathfrak l}(1)$.
Therefore we can apply Thm. \ref{carts}.
Thus there is $h\in L(0)\subseteq L\cap G(0)$ such that $\Ad hg({\mathfrak c})={\mathfrak c}$.
But $\Ad hg(y)=\Ad g(y)$ for all $y\in Y$.
\end{proof}

\begin{rk}
It is clear from the above proof that the Lemma is valid on replacing $G(0)$ and $N_{G(0)}({\mathfrak c})$ by $G^\theta$ and $N_{G^\theta}({\mathfrak c})$ (resp. $G_Z^\theta=\{ g\in G\mid g^{-1}\theta(g)\in Z(G)\}$ and $N_{G_Z^\theta}({\mathfrak c})$).
\end{rk}

We now consider properties of $G(0)$-orbits in ${\mathfrak g}(1)$.
Let $V$ be a finite-dimensional $k$-vector space and let $H$ be an affine algebraic group which acts linearly (and rationally) on $V$.
Denote by $h\cdot v$ the action of $h\in H$ on $v\in V$, and similarly $H\cdot v$ the $H$-orbit through $v$.
For a subset $Y$ of an affine variety $X$, let $\overline{Y}$ denote the Zariski closure of $Y$ in $X$.
(The variety $X$ will always be clear from the context.
In particular, all closures will be in ${\mathfrak g}(1)$ unless otherwise specified.)
Recall that $v\in V$ is {\it unstable} if $0\in\overline{H\cdot v}$.

\begin{lemma}\label{unstable}
Let $x\in{\mathfrak g}(1)$ be nilpotent.
Then $x$ is $G(0)$-unstable.
\end{lemma}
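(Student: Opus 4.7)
The strategy is to produce a one-parameter subgroup $\lambda\colon k^\times\to G(0)$ with $\lim_{t\to 0}\Ad\lambda(t)(x)=0$, since this witnesses $0\in\overline{G(0)\cdot x}$ directly. The natural candidate is the cocharacter associated to a homogeneous $\mathfrak{sl}_2$-triple through $x$, in the spirit of Vinberg's original argument.

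First I would invoke the Jacobson-Morozov theorem in good positive characteristic (Pommerening, refined by Premet) to embed $x$ in some $\mathfrak{sl}_2$-triple $(x,h_0,y_0)$ in $\mathfrak{g}$. The next step is to upgrade this to a \emph{homogeneous} triple $(x,h,y)$ with $h\in\mathfrak{g}(0)$ and $y\in\mathfrak{g}(-1)$. Matching graded components in $[h_0,x]=2x$ already forces the degree-zero component $h:=h_0^{(0)}$ to satisfy $[h,x]=2x$; the remaining task is to find $y\in\mathfrak{g}(-1)$ with $[x,y]=h$ and $[h,y]=-2y$. This I would accomplish by an adjustment procedure: start with the degree $(-1)$ component of $y_0$ and correct it by elements of $\mathfrak{z}_{\mathfrak{g}}(x)\cap\mathfrak{g}(-1)$ lying in the appropriate $\mathrm{ad}\,h$-weight space.

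Once a homogeneous triple is in hand, the cocharacter $\lambda\colon k^\times\to G$ of the $\mathrm{SL}_2$-subgroup generated by $(x,h,y)$ factors through $G(0)$, since $h\in\mathfrak{g}(0)$ forces $\theta\circ\lambda=\lambda$. The identity $[h,x]=2x$ yields $\Ad\lambda(t)(x)=t^2 x$, which tends to $0$ as $t\to 0$, completing the proof.

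The main obstacle is the equivariant refinement of Jacobson-Morozov in positive characteristic. In characteristic zero the reductivity of $Z_G(x)^\circ$ together with standard $\mathfrak{sl}_2$-representation theory handles this cleanly, but in good positive characteristic one must argue more carefully with the centraliser (whose unipotent radical need not be trivial) and verify that the adjustment of $y_0$ can be performed inside $\mathfrak{g}(-1)$. The good-characteristic hypothesis on $G$, combined with $p\nmid m$, should be sufficient, but this is the step demanding the most care.
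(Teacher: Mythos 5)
Your high-level idea is the same as the paper's: produce a cocharacter $\lambda:k^\times\to G(0)$ with $x\in\mathfrak{g}(2;\lambda)$, so that $\Ad\lambda(t)(x)=t^2x\to 0$. But the route you take — Jacobson--Morozov plus homogenization of an $\mathfrak{sl}_2$-triple — is precisely what the paper avoids, and it is not correct as stated in good positive characteristic. The paper instead cites Kawanaka's theorem on nilpotent $G(0)$-orbits in $\mathfrak{g}(1)$, which supplies a $\theta$-stable weighted Dynkin diagram $h$ over a $\theta$-stable torus $T$ with $x\in\mathfrak{g}(2;h)$, and then reads off a cocharacter into $T\subset G(0)$. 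Remark~\ref{associated} records an alternative in the same spirit: by Bala--Carter--Pommerening there is an associated cocharacter for $x$, and the argument of \cite[Cor.~5.4]{invs} shows it can be chosen to land in $G(0)$. Both of these are the modular substitutes for the homogeneous $\mathfrak{sl}_2$-triple.

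Two concrete gaps in your proposal. First, the homogenization step: your adjustment of $y_0$ tacitly uses the integral $\ad h$-weight decomposition of $\mathfrak{g}$ as in characteristic zero, but for a Jacobson--Morozov $h$ in good positive characteristic the $\ad h$-eigenvalues need not be bounded by $p$, so ``the weight $-2$ component'' is not well-defined without further work; this is exactly why the Bala--Carter--Pommerening framework (associated cocharacters, not $\mathfrak{sl}_2$-triples) is the right language here, and why you correctly identify this as the dangerous step — but you only assert, rather than show, that it can be overcome. Second, the claim ``$h\in\mathfrak{g}(0)$ forces $\theta\circ\lambda=\lambda$'' is false in characteristic $p$: a cocharacter is not determined by its differential (two cocharacters can differ by a $p$-th multiple and have the same differential), so $d\theta(h)=h$ does not give $\theta\circ\lambda=\lambda$. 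What one actually does (as in \cite[Cor.~5.4]{invs}) is observe that $\theta\circ\lambda$ is again associated to $x$ (since $\theta(x)=\zeta x$), invoke conjugacy of associated cocharacters under $Z_G(x)^\circ$, and then run a twisting/averaging argument using $p\nmid m$ to replace $\lambda$ by a $\theta$-fixed conjugate. Without that argument the cocharacter you build need not land in $G(0)$, and the proof does not close.
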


\begin{proof}
We apply Kawanaka's theorem \cite{kawanaka} on nilpotent $G(0)$-orbits in ${\mathfrak g}(1)$: for any nilpotent element $x\in{\mathfrak g}(1)$ there is a fundamental pair $(B,T)$ for $\theta$ and a $W(G,T)$-conjugate $h$ of some weighted Dynkin diagram $h_+$ over $T$ (in the Bala-Carter classification) such that $x$ is in the nilpotent $G$-conjugacy class corresponding to $h_+$ and $x\in{\mathfrak g}(2;h)$.
(See \cite{kawanaka} for further details.)
Moreover \cite[Def. 3.1.1]{kawanaka} $h$ is $\theta$-stable.
But for any such weighted Dynkin diagram there is some positive integer $l$ and a cocharacter $\lambda:k^\times\rightarrow T$ such that $\langle\lambda,\alpha\rangle= lh(\alpha)$ for all $\alpha\in\Phi(G,T)$.
Then $0\in\overline{\{ \Ad \lambda(t)(x):t\in k^\times\}}$.
\end{proof}

\begin{rk}\label{associated}
Let $x\in{\mathfrak g}$ be any nilpotent element.
Then a cocharacter $\lambda:k^\times\rightarrow G$ is an {\it associated cocharacter} for $x$ if:

 - $\Ad\lambda(t)(x)=t^2x$ for all $t\in k^\times$, that is, $x\in{\mathfrak g}(2;\lambda)$,
 
 - ${\mathfrak z}_{\mathfrak g}(x)\subseteq\sum_{i\geq 0}{\mathfrak g}(i;\lambda)$,
 
 - There exists a Levi subgroup $L$ of $G$ such that $\lambda(k^\times)\subset L^{(1)}$ and $e$ is a distinguished nilpotent element of $\Lie(L)$.

According to the Bala-Carter-Pommerening theorem (see \cite{premorbits} for a recent proof) any nilpotent orbit in ${\mathfrak g}$ has an associated cocharacter.
If $e\in{\mathfrak g}(1)$ then the argument in \cite[Cor. 5.4]{invs} shows that there exists a cocharacter $\lambda:k^\times\rightarrow G(0)$ which is associated to $e$.
(Moreover, any two such are conjugate by an element of $Z_{G(0)}(e)^\circ$.)
Thus in the proof above we can choose $l$ to be equal to 1.
\end{rk}

For the following lemma we essentially follow Vinberg's proof in characteristic zero \cite[1.3-4]{vin}.

\begin{lemma}\label{closed}
Let $x\in{\mathfrak g}(1)$ be semisimple.
Then each irreducible component of $G\cdot x\cap{\mathfrak g}(1)$ is a single $G(0)$-orbit and, conversely, each $G(0)$-orbit in $G\cdot x\cap{\mathfrak g}(1)$ is an irreducible component.
Hence all semisimple $G(0)$-orbits in ${\mathfrak g}(1)$ are closed.
\end{lemma}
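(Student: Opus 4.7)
The plan is to reduce everything to a tangent-space computation at an arbitrary point of $G\cdot x\cap{\mathfrak g}(1)$. First I would invoke the standard fact that in good characteristic the $G$-orbit through a semisimple element $x\in{\mathfrak g}$ is closed in ${\mathfrak g}$ (a consequence of the description of fibres of the adjoint quotient under our hypotheses: the unique closed orbit in a fibre is the semisimple one). Therefore $G\cdot x\cap{\mathfrak g}(1)$ is a closed subvariety of ${\mathfrak g}(1)$, and since every $G(0)$-orbit through a point of this intersection is contained in it, it suffices to show that each such orbit coincides with an irreducible component.

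Next I would compute, for an arbitrary $y\in G\cdot x\cap{\mathfrak g}(1)$, the tangent space $T_y(G\cdot x\cap{\mathfrak g}(1))\subseteq T_y(G\cdot x)\cap{\mathfrak g}(1)=[{\mathfrak g},y]\cap{\mathfrak g}(1)$. Writing $z=\sum z_i$ with $z_i\in{\mathfrak g}(i)$, one has $[z,y]=\sum[z_i,y]$ with $[z_i,y]\in{\mathfrak g}(i+1)$; since the grading is direct, $[z,y]\in{\mathfrak g}(1)$ forces $[z_i,y]=0$ for $i\ne 0$. Hence $[{\mathfrak g},y]\cap{\mathfrak g}(1)=[{\mathfrak g}(0),y]=T_y(G(0)\cdot y)$. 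Combined with the trivial inclusion $G(0)\cdot y\subseteq G\cdot x\cap{\mathfrak g}(1)$, this gives
\[
\dim G(0)\cdot y\;\leq\;\dim_y(G\cdot x\cap{\mathfrak g}(1))\;\leq\;\dim T_y(G\cdot x\cap{\mathfrak g}(1))\;\leq\;\dim G(0)\cdot y,
\]
forcing all inequalities to be equalities. It follows that $G\cdot x\cap{\mathfrak g}(1)$ is smooth at $y$, so $y$ lies in a unique irreducible component $C$, and the orbit $G(0)\cdot y$ is an open subset of $C$ of the same dimension as $C$.

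To conclude, if two distinct $G(0)$-orbits lay in the same component $C$, both would be non-empty open subsets of the irreducible variety $C$ and would therefore intersect, contradicting disjointness of orbits. Hence each irreducible component of $G\cdot x\cap{\mathfrak g}(1)$ is a single $G(0)$-orbit, and conversely every $G(0)$-orbit through a point of $G\cdot x\cap{\mathfrak g}(1)$ is a full irreducible component. Since $G\cdot x\cap{\mathfrak g}(1)$ is closed, its irreducible components are closed in ${\mathfrak g}(1)$; in particular $G(0)\cdot x$ is closed.

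The main obstacle I anticipate is justifying the closedness of $G\cdot x$ in ${\mathfrak g}$ in positive characteristic; this requires invoking the good-characteristic hypothesis together with the standard description of fibres of the adjoint quotient morphism (equivalently, the reductivity of $Z_G(x)$ and the fact that the semisimple orbit is the unique closed orbit in its fibre). Once this ingredient is in hand, everything else is a formal tangent-space and irreducibility computation exploiting the ${\mathbb Z}/m{\mathbb Z}$-grading.
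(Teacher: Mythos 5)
Your proof is correct and follows essentially the same tangent-space strategy as the paper: both start from closedness of $G\cdot x$, then show $T_y(G\cdot x\cap{\mathfrak g}(1))$ has dimension at most $\dim G(0)\cdot y$, so the intersection is smooth at $y$ and the orbit fills out its irreducible component. One small imprecision worth flagging: you assert $[{\mathfrak g}(0),y]=T_y(G(0)\cdot y)$, which is exactly separability of the $G(0)$-orbit map and is not automatic; the paper justifies it by computing $\Lie\bigl(Z_{G(0)}(x)\bigr)={\mathfrak z}_{\mathfrak g}(x)\cap{\mathfrak g}(0)=\Lie\bigl((Z_G(x)^\theta)^\circ\bigr)$ using that $Z_G(x)$ is reductive and $\theta$-stable. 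That said, your inequality chain only needs the always-true inclusion $[{\mathfrak g}(0),y]\subseteq T_y(G(0)\cdot y)$ (giving $\dim[{\mathfrak g}(0),y]\leq\dim G(0)\cdot y$), so the argument goes through and separability of the $G(0)$-orbit map falls out at the end as a byproduct. Your explicit grading computation establishing $[{\mathfrak g},y]\cap{\mathfrak g}(1)=[{\mathfrak g}(0),y]$ makes precise a step the paper treats as evident.
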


\begin{proof}
It is well-known that $G\cdot x$ is closed for any semisimple $x$, hence $G\cdot x\cap{\mathfrak g}(1)$ is closed for $x\in{\mathfrak g}(1)$ semisimple.
Since $x$ is semisimple, $T_x(G\cdot x)=[{\mathfrak g},x]$ (making the obvious identifications) and hence $T_x(G\cdot x\cap{\mathfrak g}(1))\subseteq[{\mathfrak g},x]\cap{\mathfrak g}(1)=[{\mathfrak g}(0),x]$.
On the other hand, ${\mathfrak z}_{{\mathfrak g}(0)}(x)={\mathfrak z}_{\mathfrak g}(x)\cap{\mathfrak g}(0)=\Lie((Z_G(x)^\theta)$ and hence $T_x(G(0)\cdot x)=[{\mathfrak g}(0),x]$ by equality of dimensions.
But clearly $T_x(G\cdot x\cap{\mathfrak g}(1))\supseteq T_x(G(0)\cdot x)$, hence equality holds.
It follows that the dimension of any irreducible component of $G\cdot x\cap {\mathfrak g}(1)$ containing $x$ is at most $\dim G(0)\cdot x$.
Thus $x$ is a smooth point of $G\cdot x\cap{\mathfrak g}(1)$ and (therefore) $G(0)\cdot x$ is the unique irreducible component of $G\cdot x\cap {\mathfrak g}(1)$ containing $x$.
\end{proof}

\begin{corollary}\label{closedcor}
Let $x\in{\mathfrak g}(1)$.
Then $G(0)\cdot x_s$ is the unique closed orbit in $\overline{G(0)\cdot x}$.
\end{corollary}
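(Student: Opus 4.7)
The plan is to exploit the Jordan-Chevalley decomposition $x = x_s + x_n$. By Lemma \ref{first}(a) both summands lie in ${\mathfrak g}(1)$, and by Lemma \ref{closed} the orbit $G(0)\cdot x_s$ is already closed. It therefore remains to prove two things: (a) $x_s \in \overline{G(0)\cdot x}$, and (b) no other closed $G(0)$-orbit is contained in $\overline{G(0)\cdot x}$.

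For (a), set $H = Z_G(x_s)^\circ$ with Lie algebra ${\mathfrak h} = \Lie(H) = {\mathfrak z}_{\mathfrak g}(x_s)$. Since $x_s$ is semisimple, $H$ is a pseudo-Levi subgroup of $G$, hence reductive and with $p$ good for it. Moreover $H$ is $\theta$-stable: for $g \in Z_G(x_s)$, applying $d\theta$ to $\Ad g(x_s) = x_s$ and using $d\theta(x_s) = \zeta x_s$ gives $\Ad \theta(g)(x_s) = x_s$. The nilpotent element $x_n$ clearly lies in ${\mathfrak h}(1)$, so I would apply Lemma \ref{unstable} to the pair $(H,\theta|_H)$ to obtain a cocharacter $\lambda : k^\times \rightarrow H(0)$ with $\lim_{t \to 0} \Ad\lambda(t)(x_n) = 0$. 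Since $\lambda(k^\times) \subseteq H \subseteq Z_G(x_s)$, we have $\Ad\lambda(t)(x_s) = x_s$ for all $t$, and therefore $\Ad\lambda(t)(x) = x_s + \Ad\lambda(t)(x_n) \to x_s$ as $t \to 0$. This shows $x_s \in \overline{G(0)\cdot x}$.

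Part (b) is the standard geometric consequence of reductivity of $G(0)$: the categorical quotient $\pi : {\mathfrak g}(1) \rightarrow {\mathfrak g}(1) \quot G(0)$ separates closed orbits, and every orbit contained in $\overline{G(0)\cdot x}$ maps to the single point $\pi(x)$, so the fibre $\pi^{-1}(\pi(x))$ contains a unique closed $G(0)$-orbit, which must be $G(0)\cdot x_s$. The only step requiring any attention is the verification that the hypotheses of Lemma \ref{unstable} (reductivity of $H$, $p$ good for $H$, $\theta|_H$ of order dividing $m$ with $p \nmid m$) are inherited by the pseudo-Levi $H$; but these are all immediate from the discussion in Section 1.
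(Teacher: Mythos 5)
Your argument is correct and follows essentially the same route as the paper's: Jordan decomposition, unstability of $x_n$ inside $Z_G(x_s)$ via Lemma \ref{unstable}, closedness of $G(0)\cdot x_s$ from Lemma \ref{closed}, and uniqueness of the closed orbit in a fibre. One minor terminological slip: since $x_s$ is a semisimple element of $\mathfrak{g}$ rather than of $G$, the centralizer $Z_G(x_s)^\circ$ is an honest Levi subgroup rather than merely a pseudo-Levi, though this only strengthens your claim that $p$ remains good for $H$.
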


\begin{proof}
Let $L=Z_G(x_s)$.
Then $x_n\in\Lie(L)\cap {\mathfrak g}(1)$ and hence by Lemma \ref{unstable}, the closure $\overline{(L\cap G(0))\cdot x_n}$ contains $0$.
It follows that $x_s\in\overline{(L\cap G(0))\cdot x}\subseteq \overline{G(0)\cdot x}$.
Moreover, $G(0)\cdot x_s$ is closed by Lemma \ref{closed}.
But it is well-known that $\overline{G(0)\cdot x}$ contains a unique closed $G(0)$-orbit (see for example \cite[8.3]{hum}).
\end{proof}

We briefly recall the basic definition and properties of the categorical quotient.
Let $H$ be an affine algebraic group such that $H^\circ$ is reductive (possibly trivial) and let $X$ be an affine variety.
We say that $H$ acts {\it morphically} on $X$ if $H$ acts on $X$, and the corresponding map $H\times X\rightarrow X$ is a morphism of varieties.
The ring of invariants $k[X]^H$ is finitely generated.
The corresponding affine variety $\Spec (k[X]^H)$ is the {\it categorical quotient} of $X$ by $H$ and the morphism $\pi=\pi_{X,H}:X\rightarrow X\quot H$ induced by the algebra embedding $k[X]^H\hookrightarrow k[X]$ is the {\it quotient morphism}.
We have the following well-known properties:

 - $\pi$ is surjective,
 
 - If $U_1,U_2$ are disjoint $H$-stable closed subsets of $X$ then there exists $f\in k[X]^H$ such that $f(x)=0$ for all $x\in U_1$ and $f(x)=1$ for all $x\in U_2$.

 - Each fibre $\pi^{-1}(\xi)$ is a finite union of $H$-orbits and contains a unique closed $H$-orbit, which we denote $T(\xi)$, and which is also the unique orbit of minimal dimension.
 
 - For $x\in X$ and $\xi\in X\quot H$, $\pi(x)=\xi$ if and only if $\overline{H\cdot x}\supseteq T(\xi)$.
 
 - If $X$ is normal, then so is $X\quot G$.

In the present circumstances we are interested in the quotient ${\mathfrak g}(1)\quot G(0)$.
The closed orbits in ${\mathfrak g}(1)$ are precisely the semisimple orbits (Lemma \ref{closed}) and each semisimple orbit meets ${\mathfrak c}$ (Cor. \ref{semicart}).
Furthermore, two elements of ${\mathfrak c}$ are conjugate by an element of $G(0)$ if and only if they are conjugate by an element of $N_{G(0)}({\mathfrak c})$ by Lemma \ref{conj}.
Let $W_{\mathfrak c}=N_{G(0)}({\mathfrak c})/Z_{G(0)}({\mathfrak c})$.
Hence the embedding $j:{\mathfrak c}\hookrightarrow{\mathfrak g}(1)$ induces a bijective morphism $j':{\mathfrak c}/W_{\mathfrak c}\rightarrow{\mathfrak g}(1)\quot G(0)$ such that the following diagram is commutative:
\begin{diagram} {\mathfrak c} & \rTo^j & {\mathfrak g}(1) \\
\dTo & & \dTo \\
{\mathfrak c}/W_{\mathfrak c} & \rTo^{j'} & {\mathfrak g}(1)\quot G(0) \end{diagram}
Note that $\pi_{\mathfrak c}:{\mathfrak c}\rightarrow{\mathfrak c}\quot W_{\mathfrak c}$ and $j'$ are finite morphisms and hence their composition maps open sets to open sets \cite[4.2]{hum}.
Since the set of elements of ${\mathfrak c}$ in general position is clearly open, its image $\pi_{{\mathfrak g}(1)}(R({\mathfrak c}))$ is also open in ${\mathfrak g}(1)\quot G(0)$.
Thus we have proved:

\begin{lemma}
$G(0)\cdot R({\mathfrak c})$ is open in ${\mathfrak g}(1)$.
\end{lemma}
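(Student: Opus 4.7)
The plan is to identify $G(0)\cdot R({\mathfrak c})$ with the preimage under the quotient morphism $\pi:=\pi_{{\mathfrak g}(1)}:{\mathfrak g}(1)\to{\mathfrak g}(1)\quot G(0)$ of the set $\pi(R({\mathfrak c}))$, which has already been shown to be open in ${\mathfrak g}(1)\quot G(0)$ in the paragraph preceding the lemma. Since $\pi$ is a morphism hence continuous, this will immediately give openness of $G(0)\cdot R({\mathfrak c})$.

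The main content, and only real step, is therefore to establish the equality $G(0)\cdot R({\mathfrak c})=\pi^{-1}(\pi(R({\mathfrak c})))$. The inclusion $\subseteq$ is automatic, since $\pi$ is $G(0)$-invariant. For $\supseteq$, I will take $x\in\pi^{-1}(\pi(R({\mathfrak c})))$, so that $\pi(x)=\pi(y)$ for some $y\in R({\mathfrak c})$, and show that some $G(0)$-translate of $x$ lies in $R({\mathfrak c})$. By Corollary \ref{closedcor}, the unique closed orbit in $\overline{G(0)\cdot x}$ (resp. $\overline{G(0)\cdot y}$) is $G(0)\cdot x_s$ (resp. $G(0)\cdot y_s$); as these fibres coincide, the uniqueness of the closed orbit in a fibre forces $G(0)\cdot x_s=G(0)\cdot y_s$. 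Choose $g\in G(0)$ with $\Ad g^{-1}(x_s)=y_s\in{\mathfrak c}$; note $y_s$ is an element of ${\mathfrak c}$ in general position by definition of $R({\mathfrak c})$.

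It remains to check that $\Ad g^{-1}(x)\in {\mathfrak z}_{{\mathfrak g}(1)}({\mathfrak c})$. By Lemma \ref{first}(a) the Jordan-Chevalley parts of $x$ lie in ${\mathfrak g}(1)$, and by uniqueness of the Jordan-Chevalley decomposition $\Ad g^{-1}(x_n)$ is the nilpotent part of $\Ad g^{-1}(x)$ and commutes with $\Ad g^{-1}(x_s)=y_s$. Since $y_s$ is in general position, ${\mathfrak z}_{\mathfrak g}(y_s)={\mathfrak z}_{\mathfrak g}({\mathfrak c})$, so $\Ad g^{-1}(x_n)\in{\mathfrak z}_{\mathfrak g}({\mathfrak c})\cap{\mathfrak g}(1)={\mathfrak z}_{{\mathfrak g}(1)}({\mathfrak c})$; combined with $y_s\in{\mathfrak c}\subseteq{\mathfrak z}_{{\mathfrak g}(1)}({\mathfrak c})$, this yields $\Ad g^{-1}(x)\in{\mathfrak z}_{{\mathfrak g}(1)}({\mathfrak c})$ with semisimple part in general position, i.e.\ $\Ad g^{-1}(x)\in R({\mathfrak c})$.

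I do not anticipate a genuine obstacle here: the non-trivial inputs (Cor. \ref{closedcor} on the closed orbit in a fibre, the openness of $\pi(R({\mathfrak c}))$, and the general-position property $\mathfrak{z}_{\mathfrak g}(y_s)=\mathfrak{z}_{\mathfrak g}({\mathfrak c})$) are all already in place. The only point to keep straight is that the argument uses $x\mapsto x_s$ essentially through the identification of fibres of $\pi$, rather than any subtler geometric property.
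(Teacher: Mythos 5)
Your proof is correct and follows the route the paper has in mind: the paper states the lemma with ``Thus we have proved'' immediately after establishing that $\pi_{{\mathfrak g}(1)}(R({\mathfrak c}))$ is open, leaving implicit exactly the identity $G(0)\cdot R({\mathfrak c})=\pi^{-1}(\pi(R({\mathfrak c})))$ that you supply. Your verification of this identity via Corollary~\ref{closedcor} and the general-position property ${\mathfrak z}_{\mathfrak g}(y_s)={\mathfrak z}_{\mathfrak g}({\mathfrak c})$ is accurate and is the natural way to fill that gap.
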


We recall that the quotient morphism is not in general separable, even if $X$ is a vector space \cite{martin-neeman}.
The present case poses some difficulties, since a commonly used criterion for separability \cite[9.3]{richinvs} does not apply.
The following result, which appeared in \cite{vin} in the case of characteristic zero, provides the solution.

\begin{lemma}\label{rationals}
Assume $p>2$ or that $G$ satisfies the standard hypotheses.
Then $k({\mathfrak g}(1))^{G(0)}=\Frac (k[{\mathfrak g}(1)]^{G(0)})$.
\end{lemma}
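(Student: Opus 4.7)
The inclusion $\Frac(k[{\mathfrak g}(1)]^{G(0)}) \subseteq k({\mathfrak g}(1))^{G(0)}$ is trivial. For the reverse, given $\phi \in K := k({\mathfrak g}(1))^{G(0)}$, my plan is to produce a non-zero $G(0)$-invariant in the denominator ideal $I := \{h \in k[{\mathfrak g}(1)] : h\phi \in k[{\mathfrak g}(1)]\}$. Any such $h$ yields $h\phi \in k[{\mathfrak g}(1)]^{G(0)}$ (as a product of two invariants), whence $\phi = (h\phi)/h \in F := \Frac(k[{\mathfrak g}(1)]^{G(0)})$. By Haboush's theorem on geometric reductivity, invariants separate disjoint closed $G(0)$-stable subsets in positive characteristic, so a non-zero $G(0)$-invariant vanishing on $V(I)$ exists (and some power lies in $I^{G(0)}$) provided $V(I)$ is disjoint from some closed $G(0)$-orbit. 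By Lemma~\ref{closed} and Corollary~\ref{semicart}, closed orbits are precisely $G(0) \cdot c$ for $c \in {\mathfrak c}$, so the plan reduces to establishing ${\mathfrak c} \not\subseteq V(I)$.

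To show ${\mathfrak c} \not\subseteq V(I)$, I exploit the decomposition ${\mathfrak z}_{{\mathfrak g}(1)}({\mathfrak c}) = {\mathfrak c} \oplus {\mathfrak u}$ (with ${\mathfrak u}$ nilpotent) furnished by Lemma~\ref{zerorank} applied to $L = Z_G({\mathfrak c})$ (whose derived subgroup has zero-rank $\theta$-action, cf.\ the proof of Lemma~\ref{thetasplit}) or by Remark~\ref{p=2} under the standard hypotheses. Using Corollary~\ref{sep}, pull $\phi$ back along the dominant separable morphism $\psi: G(0) \times {\mathfrak z}_{{\mathfrak g}(1)}({\mathfrak c}) \to {\mathfrak g}(1)$; by $G(0)$-invariance under left translation, $\psi^*\phi$ descends to an $L(0)$-invariant rational function $\tilde\phi$ on ${\mathfrak z}_{{\mathfrak g}(1)}({\mathfrak c})$, with $L(0)$ acting trivially on ${\mathfrak c} \subseteq {\mathfrak z}({\mathfrak l})$. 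Choose $c_0 + u_0$ in the (nonempty) domain of $\tilde\phi$ and an associated cocharacter $\lambda: k^\times \to L(0)$ for $u_0$ (Remark~\ref{associated}); then the curve $t \mapsto c_0 + \Ad\lambda(t)(u_0)$ joins $c_0 + u_0$ to $c_0$ as $t \to 0$, and by $L(0)$-invariance $\tilde\phi$ is constant along it. Applying the lemma inductively to the auxiliary $\theta$-group $(L, \theta|_L)$, the $L(0)$-invariant rational functions on ${\mathfrak u}$ reduce to constants, so $\tilde\phi|_{c_0 + {\mathfrak u}}$ is constant; this promotes the curve-wise constancy to regularity of $\tilde\phi$ at $c_0$, giving $c_0 \notin V(I)$.

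The main obstacle is the base case of the induction: the zero-rank situation in which $L = G$ and ${\mathfrak c}$ contains only central semisimple elements, where one must show that rational $G(0)$-invariants on ${\mathfrak g}(1)$ reduce to functions on the central part. Here the Hilbert--Mumford instability cocharacters at each nilpotent $x \in {\mathfrak g}(1)$ contract $x$ toward the center along curves on which any $\phi \in K$ is constant, and combining this with the decomposition of Lemma~\ref{zerorank} forces the required reduction. The positive-characteristic subtlety — and the reason the hypothesis $p > 2$ (or the standard hypotheses) is needed — is precisely in ensuring this decomposition and that the Hilbert--Mumford-type limits combine consistently to give regularity rather than mere one-parameter constancy.
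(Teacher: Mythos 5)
Your overall framework — exhibit a nonzero $G(0)$-invariant in the denominator ideal $I$ of $\phi$ by separating a closed orbit $G(0)\cdot c_0$ from the closed $G(0)$-stable set $V(I)$ — is essentially Richardson's argument \cite[9.3]{richinvs}, which is exactly what the paper also uses in its final paragraph, so the skeleton is sound. You also correctly identify the decisive reduction: pass to $L=Z_G({\mathfrak c})$, use the decomposition ${\mathfrak l}(1)={\mathfrak c}\oplus{\mathfrak u}$ from Lemma~\ref{zerorank}, and try to show that the restriction of $\phi$ to ${\mathfrak l}(1)$ lies in $k({\mathfrak c})$, so that the denominator can be taken in $k[{\mathfrak c}]$ and therefore does not vanish identically on ${\mathfrak c}$.

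The gap is precisely at the step you flag as the ``base case.'' Your argument that $\tilde\phi|_{c_0+{\mathfrak u}}$ is constant rests on the observation that $\tilde\phi$ is constant on the curve $t\mapsto c_0+\Ad\lambda(t)(u_0)$ and that this curve limits to $c_0$; you then ``promote the curve-wise constancy to regularity at $c_0$.'' That promotion is circular: to pass from the value on the punctured curve to a value at the limit point $c_0$ you need $\tilde\phi$ to be continuous at $c_0$, which is exactly what you are trying to establish. (A $G$-invariant rational function can perfectly well be constant on every orbit-closure curve approaching a point and still fail to be regular there.) Appealing to Hilbert--Mumford one-parameter subgroups and asserting that ``the limits combine consistently'' does not supply the missing ingredient. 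The paper's actual proof of this step is genuinely different and carries real content: it writes $h|_{{\mathfrak l}(1)}=f/g$ with $f,g$ coprime in the UFD $k[{\mathfrak c}]\otimes k[{\mathfrak u}]$, observes that $G(0)$-invariance of $h$ forces $f$ and $g$ to be $\rho$-semi-invariants for a common character $\rho$, and then uses the fact that ${\mathfrak u}$ has only finitely many $L(0)$-orbits (a consequence of Kawanaka's classification, via Lemma~\ref{unstable}) to conclude that $\dim k[{\mathfrak u}]_\rho\le 1$. This forces $f=f^{(1)}\otimes f^{(2)}$ and $g=g^{(1)}\otimes g^{(2)}$ with $f^{(2)},g^{(2)}$ proportional, whence by coprimality $f,g\in k[{\mathfrak c}]$. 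That finiteness-of-orbits/semi-invariant argument is the heart of the lemma and is absent from your proposal; similarly, the ``induction on $(L,\theta|_L)$'' you invoke is not set up (there is no induction parameter — $\theta|_L$ is always of zero semisimple rank in one step) and in any case only relocates the problem to the zero-rank base case you have not proved.
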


\begin{proof}
As above, let $R({\mathfrak c})$ denote the set of $x\in{\mathfrak z}_{{\mathfrak g}(1)}(c)$ such that the semisimple part of $x$ is an element in general position in ${\mathfrak c}$.
Let $L=Z_G({\mathfrak c})$, a $\theta$-stable Levi subgroup of $G$, and let ${\mathfrak l}=\Lie(L)={\mathfrak z}_{\mathfrak g}({\mathfrak c})$, $L(0)=(L^\theta)^\circ$.
By Lemma \ref{zerorank}, ${\mathfrak l}(1)={\mathfrak c}\oplus{\mathfrak u}$, where ${\mathfrak u}$ is the set of nilpotent elements of ${\mathfrak l}(1)$.
Let $h\in k({\mathfrak g}(1))^{G(0)}$.
Then the domain $\dom h$ of $h$ is open in ${\mathfrak g}(1)$ and hence intersects non-trivially with $G(0)\cdot R({\mathfrak c})$.
By $G(0)$-invariance, it intersects non-trivially with $R({\mathfrak c})$, hence $h$ restricts to a rational function $h|_{{\mathfrak l}(1)}\in k({\mathfrak l}(1))^{L(0)}$.
Let $c+u\in R({\mathfrak c})$, where $c\in{\mathfrak c},u\in{\mathfrak u}$.
We claim that $c+u\in\dom h$ if and only if $c\in\dom h$.
Indeed, it will suffice to show that the equivalence holds in ${\mathfrak l}(1)$: for if $x\in\dom h|_{{\mathfrak l}(1)}$ then $h|_{{\mathfrak l}(1)}$ is defined on an open neighbourhood of $x$ in ${\mathfrak l}(1)$, and hence by Cor. \ref{sep} and $G(0)$-invariance, $h$ is defined on an open neighbourhood of $x$ in ${\mathfrak g}(1)$.

Thus, after replacing $G$ by $L$, we may assume that $\theta$ is of zero semisimple rank.
Since $k[{\mathfrak c}\oplus{\mathfrak u}]\cong k[{\mathfrak c}]\otimes k[{\mathfrak u}]$ is a unique factorization domain, we can write $h$ as $f/g$, where $f$ and $g$ are coprime polynomials.
This expression is unique up to non-zero scalar multiplication of $f$ and $g$, and $h$ is $G(0)$-invariant, thus for each $x\in G(0)$, $x\cdot f=\xi f$ and $x\cdot g=\xi g$ for some $\xi\in k^\times$.
Hence there is a rational homomorphism $\rho:G(0)\rightarrow k^\times$ such that $x\cdot f=\rho(x)f$ and $x\cdot g=\rho(x)g$ for all $x\in G(0)$.
It follows from Lemma \ref{unstable} that there are finitely many $G(0)$-orbits in ${\mathfrak u}$, hence that there is a unique dense orbit ${\cal O}$.
But therefore any $G(0)$-invariant rational function on ${\mathfrak u}$ is constant on ${\cal O}$, and therefore constant on all of ${\mathfrak u}$.
Thus $k[{\mathfrak u}]_\rho=\{ p\in k[{\mathfrak u}]\mid x\cdot p=\rho(x) p\mbox{ for all }x\in G(0)\}$ is of dimension (at most) 1.
(If not, then we can find $p,q\in k[{\mathfrak u}]_\rho$ such that $p/q$ is non-constant.)
Let $\sum_{i=1}^n f_i^{(1)}\otimes f_i^{(2)}$ be an expression for $f$ with $n$ minimal, where $f_i^{(1)}\in k[{\mathfrak c}]$ and $f_i^{(2)}\in k[{\mathfrak u}]$.
Then $x\cdot f=\sum_{i=1}^n f_i^{(1)}\otimes (x\cdot f_i^{(2)})=\rho(x) f$.
Since the $f_i^{(1)}$ are linearly independent, it follows that $(x\cdot f_i^{(2)})(u)=\rho(x) f_i^{(2)}(u)$ for each $u\in {\mathfrak u}$ and each $i$, $1\leq i\leq n$, hence $f_i^{(2)}\in k[{\mathfrak u}]_\rho$.
By minimality, $f=f^{(1)}\otimes f^{(2)}$ for some $f^{(1)}\in k[{\mathfrak c}]$, $f^{(2)}\in k[{\mathfrak u}]_\rho$.
But now by the same argument $g=g^{(1)}\otimes g^{(2)}$ with $g^{(1)}\in k[{\mathfrak c}]$ and $g^{(2)}\in k[{\mathfrak u}]_\rho$, thus $g^{(2)}$ is a scalar multiple of $f^{(2)}$.
By coprimeness, $f,g\in k[{\mathfrak c}]$.
It follows immediately that $c+u\in\dom h$ if and only if $c\in\dom h$.

Returning to the general case (where $\theta$ is not no longer of zero semisimple rank), we can now apply the argument of \cite[9.3]{richinvs}.
Hence let $X_1={\mathfrak g}(1)\setminus (G(0)\cdot R({\mathfrak c}))$.
Thus $Y=X_1\cup X\setminus \dom h$ is closed in $X$.
Let $x\in X\setminus Y=(G(0)\cdot R({\mathfrak c}))\cap\dom h$.
Then the semisimple part of $x$ is $G(0)$-conjugate to an element $c$ of ${\mathfrak c}$ in general position.
Let $U=G(0)\cdot (c+{\mathfrak u})$.
Since $U=\pi^{-1}(\pi(c))$ by Cor. \ref{closedcor}, $U$ is closed (and $G(0)$-stable) in ${\mathfrak g}(1)$.
Moreover, $h$ is defined at each point of $U$ and hence $U\cap Y=\emptyset$.
Thus there exists $g\in k[{\mathfrak g}(1)]^{G(0)}$ such that $g(u)=1$ for all $u\in U$ and $g(y)=0$ for all $y\in Y$.
In particular, $\dom h$ contains $X_g=\{Êx\in X\mid g(x)\neq 0\}$.
It follows that $h=f/g^r$ for some $r\geq 0$ and some $f\in k[{\mathfrak g}(1)]$.
Hence $h\in \Frac k[{\mathfrak g}(1)]^{G(0)}$.
\end{proof}

\begin{rk}
In the proof above we showed that restriction $k({\mathfrak g}(1))^{G(0)}\rightarrow k({\mathfrak z}_{\mathfrak g}({\mathfrak c}))$ induces a well-defined injective homomorphism from $k({\mathfrak g}(1))^{G(0)}$ to $k({\mathfrak c})$, hence that $k({\mathfrak g}(1))^{G(0)}$ is a subfield of $k({\mathfrak c})^W=\Frac k[{\mathfrak c}]^W$.
One can check that this also holds if $m=1$, $G=\SL(2)$ and $\charac k=2$.
However, the Chevalley restriction theorem (Thm. \ref{chev} below) does not hold in this case.
In fact, in this case the quotient morphism is separable, but the induced morphism ${\mathfrak c}={\mathfrak c}/W\rightarrow{\mathfrak g}(1)\quot G(0)$ is purely inseparable.
This comes down to the fact that the natural morphism ${\mathfrak c}\times{\cal N}({\mathfrak z}_{{\mathfrak g}(1)}({\mathfrak c}))\rightarrow{\mathfrak z}_{{\mathfrak g}(1)}({\mathfrak c})$ is inseparable.
\end{rk}

\begin{corollary}\label{sepquot}
If $p>2$ or if $G$ satisfies the standard hypotheses then the quotient morphism ${\mathfrak g}(1)\rightarrow {\mathfrak g}(1)\quot G(0)$ is separable.
\end{corollary}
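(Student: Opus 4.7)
The plan is to deduce separability of $\pi$ from Lemma \ref{rationals} by a short field-theoretic argument. Recall that a dominant morphism between irreducible varieties is separable precisely when the induced extension of function fields is separable. Since ${\mathfrak g}(1)\quot G(0) = \Spec(k[{\mathfrak g}(1)]^{G(0)})$, its function field is $\Frac(k[{\mathfrak g}(1)]^{G(0)})$, and by Lemma \ref{rationals} this coincides with $k({\mathfrak g}(1))^{G(0)}$. Hence it suffices to prove that the extension $k({\mathfrak g}(1))/k({\mathfrak g}(1))^{G(0)}$ is separable.

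For this I would argue directly, using only that $G(0)$ acts on $k({\mathfrak g}(1))$ by field automorphisms. Suppose $f \in k({\mathfrak g}(1))$ satisfies $f^p \in k({\mathfrak g}(1))^{G(0)}$. Then for any $h \in G(0)$ we have $(h\cdot f)^p = h\cdot(f^p) = f^p$, so $(h\cdot f - f)^p = 0$ in the field $k({\mathfrak g}(1))$, forcing $h\cdot f = f$. Hence $f \in k({\mathfrak g}(1))^{G(0)}$, and by iterating the same argument, any element of $k({\mathfrak g}(1))$ which is purely inseparable over $k({\mathfrak g}(1))^{G(0)}$ already lies in $k({\mathfrak g}(1))^{G(0)}$. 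This gives the required separability of the extension, and hence of $\pi$.

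The main obstacle has already been overcome in the proof of Lemma \ref{rationals}, which is where the hypotheses ($p>2$ or the standard hypotheses) are genuinely needed in order to identify $k({\mathfrak g}(1))^{G(0)}$ with $\Frac(k[{\mathfrak g}(1)]^{G(0)})$. The remaining step used here amounts only to the injectivity of Frobenius on fields, and in particular requires neither connectedness of $G(0)$ nor any further geometric input. As the remark following Lemma \ref{rationals} makes clear, separability of $\pi$ alone does not imply that $j':{\mathfrak c}/W_{\mathfrak c}\rightarrow{\mathfrak g}(1)\quot G(0)$ is an isomorphism, so this corollary is only one ingredient of the eventual Chevalley restriction theorem.
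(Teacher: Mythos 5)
Your reduction to separability of the field extension $k(\mathfrak{g}(1))/k(\mathfrak{g}(1))^{G(0)}$, using Lemma \ref{rationals} to identify $k(\mathfrak{g}(1))^{G(0)}$ with the function field of $\mathfrak{g}(1)\quot G(0)$, is exactly what the paper does. The problem is in the ``direct argument'' for separability of that extension.

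What your Frobenius argument actually proves is that $k(\mathfrak{g}(1))\cap \bigl(k(\mathfrak{g}(1))^{G(0)}\bigr)^{1/p^n} = k(\mathfrak{g}(1))^{G(0)}$ for all $n$, i.e.\ that no element of $k(\mathfrak{g}(1))$ is purely inseparable over the invariant field. For a finite extension this would indeed give separability, by Artin's theorem (and Artin would give it directly). But here the extension is finitely generated of transcendence degree $\dim\mathfrak{g}(1)-r$ (by Corollary \ref{dimcor}), and for transcendental extensions in characteristic $p$ the relevant notion of separability is \emph{linear disjointness from $F^{1/p}$ over $F$}, equivalently the existence of a separating transcendence basis. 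This is strictly stronger than the absence of purely inseparable elements. For instance, with $k$ algebraically closed of characteristic $p$, take $E=k(u,z)$ and $F=k(u,v)$ where $v=-(z^{p^2}+uz^p)$; then one checks (via the derivation $\partial/\partial u$ on $k(u,z^p)$) that $E\cap F^{1/p}=F$, yet $z$ has minimal polynomial $T^{p^2}+uT^p+v$ over $F$ with vanishing derivative, so $E/F$ is inseparable. Thus the implication ``no purely inseparable elements $\Rightarrow$ separable extension'' fails in general, and the remark that the argument ``requires neither connectedness of $G(0)$ nor any further geometric input'' is precisely the warning sign: separability of $k(X)/k(X)^G$ for a connected algebraic group $G$ acting on an irreducible variety $X$ is a genuine theorem, and it is exactly what the paper invokes via \cite[\S AG.~2.4]{borel}. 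Your argument, by contrast, would apply to any abstract group of field automorphisms and so cannot by itself deliver the conclusion. To fix the proof, replace the second paragraph by the citation to Borel (or an equivalent statement establishing linear disjointness from $F^{1/p}$, not merely the triviality of $E\cap F^{1/p^\infty}$).
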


\begin{proof}
By \cite[\S AG. 2.4]{borel} the field extension $k({\mathfrak g}(1))\supset k({\mathfrak g}(1))^{G(0)}$ is separable.
Hence we apply Lemma \ref{rationals}.
\end{proof}

This preparation allows us to prove the following form of the Chevalley Restriction Theorem.
Our proof follows \cite[11.3]{richinvs}.

\begin{theorem}\label{chev}
Suppose $p>2$ or $G$ satisfies the standard hypotheses.
Then the embedding $j:{\mathfrak c}\hookrightarrow{\mathfrak g}(1)$ induces an isomorphism of varieties $j':{\mathfrak c}/W_{\mathfrak c}\rightarrow {\mathfrak g}(1)\quot G(0)$.
\end{theorem}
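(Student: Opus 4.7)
The plan is to apply a version of Zariski's Main Theorem: a bijective morphism between irreducible varieties, with target normal, is an isomorphism if and only if it is birational. First, $j'$ is a bijective finite morphism between irreducible varieties, with ${\mathfrak c}/W_{\mathfrak c}$ normal (quotient of a smooth affine variety by a finite group) and ${\mathfrak g}(1)\quot G(0)$ normal (categorical quotient of a smooth affine variety by the reductive group $G(0)$). Bijectivity was established in the discussion preceding the statement using Cor.~\ref{semicart} and Lemma~\ref{conj}, and finiteness was noted there as well.

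The task therefore reduces to showing that $j'$ is birational, i.e.\ that the injective integral restriction map $\phi:k[{\mathfrak g}(1)]^{G(0)}\to k[{\mathfrak c}]^{W_{\mathfrak c}}$ induces an isomorphism of fraction fields, where we use Lemma~\ref{rationals} to identify $k({\mathfrak g}(1))^{G(0)}$ with $\Frac(k[{\mathfrak g}(1)]^{G(0)})$. Because $j'$ is a bijective finite morphism between irreducible varieties, this induced extension of function fields is purely inseparable, so it suffices to show that it is also separable. Following \cite[11.3]{richinvs}, I would combine the separability of $\pi$ (Cor.~\ref{sepquot}) with the observation that $\pi^{-1}(\pi(c))\cap{\mathfrak c}=W_{\mathfrak c}\cdot c$ for $c\in{\mathfrak c}$ in general position (Lemma~\ref{conj}) to argue that the composition $\pi\circ j:{\mathfrak c}\to{\mathfrak g}(1)\quot G(0)$ is generically separable of degree $|W_{\mathfrak c}|$. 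Since $\pi_{\mathfrak c}$ has degree $|W_{\mathfrak c}|$ and $\pi\circ j=j'\circ\pi_{\mathfrak c}$, this forces $\deg(j')=1$, giving birationality.

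Once birationality is established, the theorem follows: $k[{\mathfrak g}(1)]^{G(0)}$ is integrally closed in its fraction field (being the ring of invariants of a reductive group acting on a smooth affine variety), while $k[{\mathfrak c}]^{W_{\mathfrak c}}$ is integral over it and sits in the same fraction field, so the two rings coincide. I expect the main obstacle to be the intermediate separability step when ${\mathfrak z}_{{\mathfrak g}(1)}({\mathfrak c})\supsetneq{\mathfrak c}$: in that case the semisimple orbit $G(0)\cdot c$ is not of maximal dimension in ${\mathfrak g}(1)$, so smoothness of $\pi$ at $c$ does not follow directly from Cor.~\ref{sepquot}. This will require a careful local analysis at a generic point $c\in{\mathfrak c}$, using the Fitting decomposition (Lemma~\ref{fitting}) together with the factorization of $\pi$ through the Jordan decomposition (which implies ${\mathfrak u}\subseteq\ker d\pi_c$), in the spirit of the zero-semisimple-rank reduction in the proof of Lemma~\ref{rationals}.
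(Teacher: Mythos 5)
Your overall strategy matches the paper's: reduce via normality and bijectivity to showing that $j'$ is separable (equivalently birational), and you correctly cite the same reference \cite[11.3]{richinvs}. You also correctly locate the genuine difficulty, namely that when ${\mathfrak z}_{{\mathfrak g}(1)}({\mathfrak c})\supsetneq{\mathfrak c}$ (i.e.\ the grading is not $S$-regular) generic points of ${\mathfrak c}$ are not generic in ${\mathfrak g}(1)$, so Cor.~\ref{sepquot} alone does not give that $d\pi$ has full rank along ${\mathfrak c}$.

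However, the way you propose to close this gap is not the right mechanism, and as written the argument is incomplete at precisely the step you flag. Your fibre count $\pi^{-1}(\pi(c))\cap{\mathfrak c}=W_{\mathfrak c}\cdot c$ only pins down the \emph{separable} degree of $\pi\circ j$; to conclude $\deg j'=1$ you must independently know $\pi\circ j$ is separable, which is the whole point. The observation ${\mathfrak u}\subseteq\ker d\pi_c$ shows $\ker d\pi_c$ contains $[{\mathfrak g}(0),c]\oplus{\mathfrak u}$, of codimension $r$, so the rank of $d\pi_c$ is \emph{at most} $r$; what is needed is the reverse inequality, and the Fitting decomposition by itself does not supply it. The essential ingredient missing from your sketch is Cor.~\ref{sep}, the separability and dominance of the orbit morphism $G(0)\times{\mathfrak z}_{{\mathfrak g}(1)}({\mathfrak c})\rightarrow{\mathfrak g}(1)$. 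The paper combines this with Cor.~\ref{sepquot} via Richardson's \cite[11.3]{richinvs} argument to conclude that ${\mathfrak l}(1)\rightarrow{\mathfrak g}(1)\quot G(0)$ is separable (where ${\mathfrak l}={\mathfrak z}_{\mathfrak g}({\mathfrak c})$); it then uses $k[{\mathfrak l}(1)]^{L(0)}=k[{\mathfrak c}]$ (which relies on all elements of ${\mathfrak u}$ being unstable, Lemma~\ref{unstable}) to produce an $N_{G(0)}({\mathfrak c})$-equivariant isomorphism ${\mathfrak c}\cong{\mathfrak l}(1)\quot L(0)$, and finally factors the separable morphism ${\mathfrak l}(1)\rightarrow{\mathfrak g}(1)\quot G(0)$ through ${\mathfrak l}(1)\quot N_{G(0)}({\mathfrak c})\cong{\mathfrak c}/W_{\mathfrak c}$ to deduce that $j'$ is separable. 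Without Cor.~\ref{sep} (or an equivalent transversality input), the ``careful local analysis'' you gesture at cannot get off the ground.
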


\begin{proof}
As remarked above, $j'$ is bijective.
Since ${\mathfrak c}/W_{\mathfrak c}$ and ${\mathfrak g}(1)\quot G(0)$ are normal by a standard fact about the categorical quotient, it will suffice to show that $j'$ is separable.
Let $L=Z_G({\mathfrak c})$, a $\theta$-stable Levi subgroup of $G$, and let ${\mathfrak l}=\Lie(L)={\mathfrak z}_{\mathfrak g}({\mathfrak c})={\mathfrak c}\oplus{\mathfrak u}$, where ${\mathfrak u}$ is the set of nilpotent elements of ${\mathfrak g}(1)$ commuting with ${\mathfrak c}$.
By Cor. \ref{sepquot} the quotient morphism $\pi:{\mathfrak g}(1)\rightarrow{\mathfrak g}(1)\quot G(0)$ is separable.
Moreover $\phi:G(0)\times{\mathfrak l}(1)\rightarrow{\mathfrak g}(1)$, $(g,x)\mapsto\Ad g(x)$, is a separable morphism by Cor. \ref{sep}.
Applying the argument in \cite[11.3]{richinvs}, the induced morphism ${\mathfrak l}(1)\rightarrow{\mathfrak g}(1)\quot G(0)$ is separable.
Since $L(0)$ acts trivially on ${\mathfrak c}$, $k[{\mathfrak l}(1)]^{L(0)}=k[{\mathfrak c}\oplus{\mathfrak u}]^{L(0)}=k[{\mathfrak c}]$.
Hence the composition $\sigma$ of the embedding ${\mathfrak c}\rightarrow{\mathfrak l}(1)$ with the quotient morphism ${\mathfrak l}(1)\rightarrow{\mathfrak l}(1)\quot L(0)$ is a $N_{G(0)}({\mathfrak c})$-equivariant isomorphism of varieties.
It follows that there is an isomorphism $\overline\sigma$ making the following diagram commutative:
\begin{diagram}
{\mathfrak c} & \rTo^\sigma & {\mathfrak l}(1)\quot L(0) \\
\dTo & & \dTo \\
{\mathfrak c}/W_{\mathfrak c} & \rTo^{\overline\sigma} & {\mathfrak l}(1)\quot N_{G(0)}({\mathfrak c})
\end{diagram}
On the other hand, the separable morphism ${\mathfrak l}(1)\rightarrow{\mathfrak g}(1)\quot G(0)$ clearly factors through ${\mathfrak l}(1)\rightarrow{\mathfrak l}(1)\quot N_{G(0)}({\mathfrak c})$, and hence ${\mathfrak l}(1)\quot N_{G(0)}({\mathfrak c})\rightarrow {\mathfrak g}(1)\quot G(0)$ is separable.
Thus $j'$ is separable.
By \cite[\S AG. 18.2]{borel}, $j'$ is an isomorphism.
\end{proof}

\begin{rk}
In the case $m=2$, Thm. \ref{chev} appeared in \cite{invs}, but with the requirement that $G$ satisfy the standard hypotheses.
\end{rk}

Following \cite[Cor. 2 to Thm. 4]{vin}, we have:

\begin{corollary}\label{dimcor}
The fibres of $\pi:{\mathfrak g}(1)\rightarrow{\mathfrak g}(1)\quot G(0)$ are equidimensional, of dimension $\dim{\mathfrak g}(1)-r$.
\end{corollary}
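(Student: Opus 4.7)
The plan is to combine Theorem \ref{chev} with Chevalley's theorem on fibre dimensions and standard semi-continuity arguments for orbit dimensions. By Theorem \ref{chev}, ${\mathfrak g}(1)\quot G(0)\cong{\mathfrak c}/W_{\mathfrak c}$ is irreducible of dimension $r$, and the quotient morphism $\pi$ is surjective. Chevalley's theorem on fibre dimensions then yields that every irreducible component of every fibre $\pi^{-1}(\xi)$ has dimension at least $\dim{\mathfrak g}(1)-r$, with equality holding on a non-empty open subset $U\subseteq{\mathfrak g}(1)\quot G(0)$.

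For the reverse inequality I would exploit the connectedness of $G(0)$: every irreducible component of any $G(0)$-stable closed subvariety is itself $G(0)$-stable, and a connected group acting on an irreducible variety contains a unique dense open orbit (by lower semi-continuity of orbit dimension) whose dimension equals that of the variety. Applying this component by component, $\dim\pi^{-1}(\xi)$ equals the maximum dimension of a $G(0)$-orbit contained in $\pi^{-1}(\xi)$. It therefore suffices to bound the dimension of an arbitrary $G(0)$-orbit in ${\mathfrak g}(1)$ above by $\dim{\mathfrak g}(1)-r$.

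To establish this bound, let $d_{\max}$ denote the maximum dimension of a $G(0)$-orbit on ${\mathfrak g}(1)$; by lower semi-continuity, $d_{\max}$ is attained on a non-empty open subset $V\subseteq{\mathfrak g}(1)$. Picking $x\in V\cap\pi^{-1}(U)$ (non-empty by density of both pieces), the inclusion $G(0)\cdot x\subseteq\pi^{-1}(\pi(x))$ gives $d_{\max}=\dim G(0)\cdot x\leq\dim\pi^{-1}(\pi(x))=\dim{\mathfrak g}(1)-r$. Combined with the Chevalley lower bound, every irreducible component of every fibre has dimension exactly $\dim{\mathfrak g}(1)-r$, which is the claimed equidimensionality.

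No serious technical obstacle arises; the argument is essentially formal, relying on Theorem \ref{chev} for the dimension of the categorical quotient and on general invariant-theoretic dimension arguments (Chevalley's theorem and semi-continuity of stabilizer dimension). The only mild subtlety is to invoke the connectedness of $G(0)=(G^\theta)^\circ$ at the right moment, so that irreducible components of fibres are automatically $G(0)$-stable and each carries a dense open orbit.
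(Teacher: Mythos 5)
Your argument follows essentially the same strategy as the paper's proof (lower bound from Chevalley's theorem on fibre dimensions, then forcing equality by locating a point of maximal orbit dimension in a generic fibre), but there is a genuine error in the justification of the key intermediate step. You assert that ``a connected group acting on an irreducible variety contains a unique dense open orbit (by lower semi-continuity of orbit dimension).'' This is false as a general statement: $\SL_2$ acting by conjugation on the irreducible affine variety $\mathfrak{sl}_2$ is a connected group with no dense orbit, and lower semi-continuity of orbit dimension only tells you that the locus of points whose orbit has maximal dimension is open in the variety, not that this locus is a single orbit.

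What actually makes the step work here is the fact — recorded in the paper among the basic properties of the categorical quotient listed just before the corollary — that each fibre $\pi^{-1}(\xi)$ is a \emph{finite} union of $G(0)$-orbits (which in this setting rests on Kawanaka's finiteness result for nilpotent $G(0)$-orbits applied in $Z_G(s)$ for $s$ a semisimple representative of the closed orbit). Given that finiteness, each $G(0)$-stable irreducible component of a fibre must contain a dense open orbit, since one of the finitely many orbits meeting it is dense; this is exactly the paper's ``Since each irreducible component of each fibre of $\pi$ has an open orbit.'' Once that replacement is made, your identification of $\dim\pi^{-1}(\xi)$ with the maximal $G(0)$-orbit dimension inside the fibre, and the subsequent comparison with a generic fibre, go through and match the paper's proof. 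So the overall approach is correct and the same as the paper's; the ingredient to cite is finiteness of orbits per fibre, not lower semi-continuity.
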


\begin{proof}
By standard facts on morphisms (see for example \cite[4.1,4.3]{hum}) each irreducible component of each fibre of $\pi$ has dimension at least $\dim{\mathfrak g}(1)-\dim{\mathfrak g}(1)\quot G(0)=\dim{\mathfrak g}(1)-r$, and there exists an open subset $U$ of ${\mathfrak g}(1)\quot G$ such that the fibre $\pi^{-1}(u)$ is of pure dimension ${\mathfrak g}(1)-r$ for each $u\in U$.
Let $q=\min_{x\in{\mathfrak g}(1)}\dim Z_{G(0)}(x)$.
Then the set $\{ x\in{\mathfrak g}(1)\mid \dim Z_{G(0)}(x)=q\}$ is open in ${\mathfrak g}(1)$ and hence intersects non-trivially with $\pi^{-1}(U)$.
Since each irreducible component of each fibre of $\pi$ has an open orbit, we have $\dim G(0)-q=\dim{\mathfrak g}(1)-r$.
But therefore the fibres of $\pi$ are all of pure codimension $r$ in ${\mathfrak g}(1)$.
\end{proof}

\section{A $\theta$-stable reduction}\label{stand}

Until now we have made only one assumption on $G$:

(A) $p$ is good.

From now on we make the additional assumptions:

(B) $G^{(1)}$ is simply-connected.

(C) There is a non-degenerate symmetric bilinear $G$-equivariant form $\kappa:{\mathfrak g}\times{\mathfrak g}\rightarrow k$.

Let $G_1,G_2,\ldots ,G_r$ be the minimal normal subgroups of $G^{(1)}$ and let ${\mathfrak g}_i=\Lie(G_i)$.
Hence $G^{(1)}=G_1\times \ldots \times G_r$ and ${\mathfrak g}'=\Lie(G^{(1)})={\mathfrak g}_1\oplus\ldots\oplus{\mathfrak g}_r$.
Define groups $\tilde{G}_i$, $1\leq i\leq r$ as follows:

$$\tilde{G}_i=\left\{\begin{array}{ll} \GL(V_i) & \mbox{if $G_i=\SL(V_i)$ and $p|\dim V_i$,} \\
G_i & \mbox{otherwise.} \end{array}\right.$$
Set $\tilde{\mathfrak g}_i=\Lie(\tilde{G}_i)$ ($1\leq i\leq r$), $\tilde{G}=\tilde{G}_1\times\ldots\times\tilde{G}_r$ and $\tilde{\mathfrak g}=\Lie(\tilde{G})$.
Consider ${\mathfrak g}'$ as a Lie subalgebra of both ${\mathfrak g}$ and $\tilde{\mathfrak g}$.

Here we prove a generalization of a reduction theorem of Gordon and Premet \cite[6.2]{gandp}, extended to the case $m=2$ by the author in \cite{invs}.
This can be proved in a similar way to the $m=2$ case, and therefore we refer the reader to \cite[Thm. 3.1]{invs} for some details.
An important corollary is that the non-degenerate form $\kappa$ in (C) may be chosen to be $\theta$-equivariant.

\begin{proposition}\label{reduction}
There exists a torus $T_0$, an automorphism $\hat\theta$ of $\hat{G}=\tilde{G}\times T_0$ and a restricted Lie algebra embedding $\phi:{\mathfrak g}\rightarrow\hat{\mathfrak g}=\Lie(\hat{G})$ such that:

(i) $\hat\theta$ has order $m$,

(ii) $\phi(d\theta(x))=d\hat\theta(\phi(x))$ for all $x\in{\mathfrak g}$,

(iii) There is a $d\hat\theta$-stable toral algebra ${\mathfrak t}_1$ such that $\hat{\mathfrak g}=\phi({\mathfrak g})\oplus{\mathfrak t}_1$ (Lie algebra direct sum),

(iv) $\hat\theta$ stabilizes $\tilde{G}$ and $T_0$, and $\hat\theta(\tilde{G}_i)=\tilde{G}_j$ whenever $\theta(G_i)=G_j$.

\end{proposition}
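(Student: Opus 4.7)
I would follow the strategy of the $m=2$ case \cite[Thm.~3.1]{invs}, splitting the construction into two stages: extending $\theta|_{G^{(1)}}$ to an automorphism $\theta'$ of $\tilde G$, then adjoining a torus $T_0$ to build both $\phi$ and $\hat\theta$.

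For the first stage, $\theta$ permutes the factors $G_1,\ldots,G_r$. On $\theta$-orbits whose factors are not of the form $\SL(V)$ with $p\mid\dim V$ we have $\tilde G_i=G_i$, and there is nothing to do. On an orbit $\{G_{i_1},\ldots,G_{i_l}\}$ of $\SL$-factors with $p\mid\dim V_{i_j}$, apply Lemma~\ref{GLnautos} to the order-$m/l$ automorphism $\theta^l|_{G_{i_1}}$ to produce a unique extension $\hat\eta\in\Aut\tilde G_{i_1}$ of the same order. Freely extend $\theta|_{G_{i_j}}$ to isomorphisms $\psi_j:\tilde G_{i_j}\to\tilde G_{i_{j+1}}$ for $j<l$ (again using Lemma~\ref{GLnautos} applied to isomorphisms of $\SL$'s), then define $\psi_l:\tilde G_{i_l}\to\tilde G_{i_1}$ as the unique map with $\psi_l\circ\psi_{l-1}\circ\cdots\circ\psi_1=\hat\eta$; by uniqueness this $\psi_l$ restricts to $\theta|_{G_{i_l}}$. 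The resulting $\theta'\in\Aut\tilde G$ extends $\theta|_{G^{(1)}}$, and its order equals that of $\theta|_{G^{(1)}}$ (which divides $m$).

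For the second stage, Lemma~\ref{redcase}(a) produces a $d\theta$-stable toral algebra ${\mathfrak s}\subset{\mathfrak g}$ with ${\mathfrak g}={\mathfrak g}'\oplus{\mathfrak s}$ and, applied to $(\tilde G,\theta')$, a $d\theta'$-stable toral algebra $\tilde{\mathfrak z}\subset\tilde{\mathfrak g}$ with $\tilde{\mathfrak g}={\mathfrak g}'\oplus\tilde{\mathfrak z}$. Fix a $\theta$-stable maximal torus $T$ of $G$ with ${\mathfrak s}\subset\Lie(T)$ and a $\theta'$-stable maximal torus $\tilde T$ of $\tilde G$ with $\tilde T\supseteq T\cap G^{(1)}$. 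Comparing root-character data on $\tilde{\mathfrak z}\cap\Lie(\tilde T)$ with that on ${\mathfrak s}$ gives a linear map $\iota:{\mathfrak s}\to\tilde{\mathfrak z}$ satisfying $[\iota(s),x]=[s,x]$ for all $x\in{\mathfrak g}'$; since $p\nmid m$ and ${\mathbb F}_p[\mu_m]$ is therefore semisimple, $\iota$ may be chosen $d\theta'$-equivariant. Let $T_0$ be a torus with $\Lie(T_0)\cong{\mathfrak s}$ as $\mu_m$-modules via a fixed equivariant isomorphism $j$, set $\hat G=\tilde G\times T_0$, extend $\theta'$ to $\hat\theta$ by the corresponding $\mu_m$-action on $T_0$, and define $\phi|_{{\mathfrak g}'}=\mathrm{id}$, $\phi(s)=\iota(s)+j(s)$.

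The verification is then routine. Injectivity of $\phi$ and its being a restricted Lie algebra map follow from $j$ being an injection of toral algebras and from the defining property of $\iota$ (together with the Jacobson formula, since $\phi$ is a Lie homomorphism and preserves the $[p]$-map on the spanning set ${\mathfrak g}'\cup{\mathfrak s}$); (ii) follows from the equivariance of $\iota$ and $j$; faithfulness of the $\mu_m$-action on $T_0$ forces $\hat\theta$ to have order exactly $m$, yielding (i); and (iv) holds by construction of $\theta'$. For (iii), one may simply take ${\mathfrak t}_1=\tilde{\mathfrak z}$ regarded inside $\hat{\mathfrak g}$: it is $d\hat\theta$-stable (since $\hat\theta$ restricts to $\theta'$ on $\tilde G$), it has trivial intersection with $\phi({\mathfrak g})$ (an element of $\phi({\mathfrak g})$ has vanishing $\Lie(T_0)$-component only if its ${\mathfrak s}$-preimage is zero), and a dimension count shows $\phi({\mathfrak g})\oplus\tilde{\mathfrak z}=\hat{\mathfrak g}$. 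The principal obstacle is constructing the equivariant map $\iota$; this is where the hypothesis $p\nmid m$ (via semisimplicity of ${\mathbb F}_p[\mu_m]$-modules) is essential.
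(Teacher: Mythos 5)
Your proposal takes a genuinely different route from the paper's, and while the overall skeleton (extend $\theta|_{G^{(1)}}$ to $\tilde G$, adjoin a torus, glue) is the same, the step you describe as ``the principal obstacle'' --- constructing $\iota$ --- is precisely where the argument has a gap.

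The map $\iota:{\mathfrak s}\to\tilde{\mathfrak z}$ with $[\iota(s),x]=[s,x]$ for all $x\in{\mathfrak g}'$ cannot simply be read off by ``comparing root-character data''. Unwinding the condition, you need $d\tilde\alpha(\iota(s))=d\alpha(s)$ for every root $\alpha$, i.e.\ the linear map $s\mapsto(d\alpha(s))_\alpha$ on ${\mathfrak s}$ must factor through the analogous map on $\tilde{\mathfrak z}$ (or at least on $\Lie(\tilde T)$). These two maps live on toral algebras sitting inside two \emph{different} ambient tori $T\subset G$ and $\tilde T\subset\tilde G$, and there is no a priori reason their images in $\prod_\alpha k$ agree. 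When $p\nmid\#(G^{(1)}\cap Z(G)^\circ)$ one may take ${\mathfrak s}=\Lie(Z(G)^\circ)\subset{\mathfrak z}({\mathfrak g})$ and $\iota=0$; but the interesting case is exactly the inseparable one, where ${\mathfrak s}\not\subset{\mathfrak z}({\mathfrak g})$ and a nonzero $\iota$ is forced. This is not a bookkeeping step: it is the content of Premet's embedding theorem (\cite[Lemma 4.1]{premcomp}), which together with the Gordon--Premet toral complement \cite[6.2]{gandp} is the very starting point of the paper's proof. As written, your $\iota$ is asserted, not proved. Semisimplicity of ${\mathbb F}_p[\mu_m]$ (which you invoke) only lets you average an $\iota$ that already exists; it does not produce one.

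A second, smaller omission: you posit a torus $T_0$ with $\Lie(T_0)\cong{\mathfrak s}$ $\mu_m$-equivariantly without justification. The paper is at pains to emphasize (in a remark midway through the proof) that for $m>2$ a toral algebra with a restricted-Lie-algebra automorphism need not arise as $\Lie(T_0)$ for a torus $T_0$ with a rational automorphism --- this is why it resorts to a characteristic-polynomial computation involving $Y(Z(G)^\circ)$. Your particular ${\mathfrak s}$ does happen to be realizable: since $Y(T)/Y(T')$ is torsion-free and ${\mathfrak s}^{\mathrm{tor}}\cong\bigl(Y(T)/Y(T')\bigr)\otimes{\mathbb F}_p$ as $\mu_m$-modules, you may take $T_0\cong T/T'\cong G/G^{(1)}$ with the induced $\theta$-action. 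But this needs saying; as stated the existence of $T_0$ is a gap, and the order-$m$ claim for $\hat\theta$ should not be derived from ``faithfulness of the $\mu_m$-action on $T_0$'' (which can fail when $\theta$ acts trivially on $G/G^{(1)}$) but rather from the fact that $d\hat\theta$ restricts on $\phi({\mathfrak g})$ to a conjugate of $d\theta$, which has order exactly $m$.
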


\begin{proof}
The existence of a toral algebra ${\mathfrak s}_0$ and an injective restricted Lie algebra homomorphism $\eta:{\mathfrak g}\rightarrow\tilde{\mathfrak g}\oplus{\mathfrak s}_0$ such that $\eta({\mathfrak g}_i)={\mathfrak g}_i\subseteq\tilde{\mathfrak g}_i$ was proved by Premet \cite[Lemma 4.1]{premcomp}.
(This holds without the assumption (C).)
Moreover, by Gordon-Premet \cite[6.2]{gandp} there exists a toral algebra ${\mathfrak t}_1\subset\hat{\mathfrak g}$ such that $\hat{\mathfrak g}=\eta({\mathfrak g})\oplus{\mathfrak t}_1$.
Identify ${\mathfrak g}$ with its image $\eta({\mathfrak g})$, and define a restricted Lie algebra automorphism $\phi$ of $\hat{\mathfrak g}$ by $\phi(x)=d\theta(x)$ ($x\in{\mathfrak g}$), $\phi(t)=t$ ($t\in{\mathfrak t}_1$) and linear extension to all of $\hat{\mathfrak g}$.
The essential idea is to find $\phi$-stable subalgebras $\overline{\mathfrak g}$ and ${\mathfrak t}_0$ of ${\mathfrak g}$ such that $\overline{\mathfrak g}$ contains ${\mathfrak g}$ and is isomorphic to $\tilde{\mathfrak g}$, ${\mathfrak t}_0$ is a toral algebra and $\hat{\mathfrak g}=\overline{\mathfrak g}\oplus{\mathfrak t}_0$.

Let $(B,T)$ be a fundamental pair in $G$ for $\theta$, let ${\mathfrak h}=\Lie(T)$, $T'=T\cap G^{(1)}$, ${\mathfrak h}'=\Lie(T')={\mathfrak h}\cap{\mathfrak g}'$, $T_i=T\cap G_i$, let $\tilde{T}_i$ (resp. $\tilde{T},\hat{T}$) be the unique maximal torus of $\tilde{G}_i$ (resp. $\tilde{G}$, $\hat{G}$) containing $T_i$ (resp. $T'$) and let ${\mathfrak h}_i={\mathfrak h}\cap{\mathfrak g}_i=\Lie(T_i)$, $\tilde{\mathfrak h}_i=\Lie(\tilde{T_i})$, $\tilde{\mathfrak h}=\Lie(\tilde{T})$, $\hat{\mathfrak h}=\Lie(\hat{T})$.
Let $\Phi=\Phi(G,T)$ be the roots of $G$ relative to $T$, let $\Phi_i=\Phi(G_i,T\cap G_i)\subset\Phi$ and let $\Delta$ (resp. $\Delta_i$) be the basis of $\Phi$ (resp. $\Phi_i$) corresponding to $B$ (resp. $B\cap G_i$).
Clearly $\Delta=\cup_{i=1}^r\Delta_i$, and any element of $\Phi_i$ can be considered as an element of $X(\hat{T})$ (hence also $X(T)$, $X(\tilde{T})$, $X(T')$).

We first construct the $\phi$-stable toral algebra ${\mathfrak t}_0$.
Let ${\mathfrak z}={\mathfrak z}({\mathfrak g})$, $\tilde{\mathfrak z}={\mathfrak z}(\tilde{\mathfrak g})$, ${\mathfrak z}_i={\mathfrak z}({\mathfrak g}_i)$, $\hat{\mathfrak z}={\mathfrak z}(\hat{\mathfrak g})$.
Clearly $\hat{\mathfrak z}={\mathfrak z}\oplus{\mathfrak s}_0=\tilde{\mathfrak z}\oplus{\mathfrak t}_1$ and $\tilde{\mathfrak z}=\sum{\mathfrak z}_i$, thus $\tilde{\mathfrak z}\subseteq\hat{\mathfrak z}$ are $\phi$-stable toral algebras.
It follows by Maschke's theorem that there is a $\phi$-stable toral algebra ${\mathfrak t}_0^{tor}$ such that $\hat{\mathfrak z}^{tor}={\mathfrak t}_0^{tor}\oplus\tilde{\mathfrak z}^{tor}$.
Let ${\mathfrak t}_0$ be the (toral) subalgebra of $\hat{\mathfrak h}$ generated by ${\mathfrak t}_0^{tor}$.
The problem at this point (which does not arise for $m=2$) is that a toral algebra endowed with an arbitrary (restricted Lie algebra) automorphism cannot in general be described as the Lie algebra of a torus with algebraic automorphism.
Let $Z=Z(G)^\circ$ and let $Y(Z)$ be the group of cocharacters of $Z$.
The action of $\theta$ on $Z$ induces a ${\mathbb Z}$-module automorphism $\theta_Z:Y(Z)\rightarrow Y(Z)$.
Let $c(t)\in{\mathbb F}_p[t]$ be the reduction modulo $p$ of the characteristic polynomial of $\theta_Z$ and let $\tilde{c}(t)\in{\mathbb F}_p[t]$ be the characteristic polynomial of $\phi|_{{\tilde{\mathfrak z}^{tor}}}$.
Then (since ${\mathfrak z}=\Lie(Z)$ \cite[4.1]{comms}) the characteristic polynomial of $\phi|_{{\mathfrak t}_0^{tor}}$ is $(t-1)^{{\dim{\mathfrak t}_1}}c(t)/\tilde{c}(t)$.
Define a restricted Lie algebra automorphism of (the Lie algebra direct sum) $\hat{\mathfrak g}\oplus\tilde{\mathfrak z}$ by $(x,y)\mapsto (\phi(x),d\theta(y))$.
Clearly $\hat{\mathfrak g}\oplus\tilde{\mathfrak z}={\mathfrak g}\oplus{\mathfrak t}_1\oplus\tilde{\mathfrak z}=\tilde{\mathfrak g}\oplus{\mathfrak t}_0\oplus\tilde{\mathfrak z}$.
Hence, replacing $\hat{\mathfrak g}$ by $\hat{\mathfrak g}\oplus\tilde{\mathfrak z}$ and ${\mathfrak t}_0$ by ${\mathfrak t}_0\oplus\tilde{\mathfrak z}$, we may assume that $\phi|_{{{\mathfrak t}_0}^{tor}}$ has characteristic polynomial $(t-1)^{\dim{\mathfrak t}_1}c(t)$.
It is now clear that there exists a torus $T_0$ and a rational automorphism $\psi$ of $T_0$ such that $\Lie(T_0)={\mathfrak t}_0$ and $d\psi=\phi|_{{\mathfrak t}_0}$.

Denote by $\sigma$ the permutation of the set $\{Ê1,2,\ldots ,r\}$ such that $\sigma({\mathfrak g}_i)={\mathfrak g}_{\sigma(i)}$.
Next, we construct subalgebras $\overline{\mathfrak g}_i$ containing ${\mathfrak g}_i$ such that $\overline{\mathfrak g}_i\cong\tilde{\mathfrak g}_i$, $\phi(\overline{\mathfrak g}_i)=\overline{\mathfrak g}_{\sigma(i)}$ and $\sum_{i=1}^r\overline{\mathfrak g}_i\oplus{\mathfrak s}_0=\hat{\mathfrak g}$.
There is nothing to do unless $G_i=\SL(V_i)$, where $p|\dim V_i$.
Hence assume that $\tilde{G}_i=\GL(V_i)$ and $p|\dim V_i$.
After renumbering we can clearly assume that $i=1$, and that for some $l|m$, $\theta(G_i)=G_{i+1}$ ($1\leq i<l$) and $\theta(G_l)=G_1$.
By the argument in \cite[Step 2, Pf. of Thm. 3.1]{invs} it is straightforward to construct a $\phi^l$-stable toral subalgebra $\overline{\mathfrak h}_1$ of $\hat{\mathfrak h}$ which contains ${\mathfrak h}_1$ and such that $\cap_{\alpha\in\Delta\setminus\Delta_1}\ker d\alpha=\overline{\mathfrak h}_1\oplus\sum_{i>2}{\mathfrak z}_i\oplus{\mathfrak t}_0$.
Moreover, by linear independence of the differentials $d\alpha$, $\alpha\in\Delta$ \cite[4.2]{comms}, the restricted Lie subalgebra $\overline{\mathfrak g}_1=\overline{\mathfrak h}_1+{\mathfrak g}_1$ of $\hat{\mathfrak g}$ is isomorphic to $\tilde{\mathfrak g}_1$ (as a restricted Lie algebra).
It suffices now to take $\overline{\mathfrak g}_i=\phi^i(\overline{\mathfrak g}_1)$, $2\leq i\leq l$.
This construction (applied to all minimal $\phi$-stable summands in the expression ${\mathfrak g}'=\oplus{\mathfrak g}_i$) provides the required decomposition $\hat{\mathfrak g}=\oplus_{i=1}^r\overline{\mathfrak g}_i\oplus{\mathfrak t}_0$ which is preserved by the action of $\phi$.
Replacing $\tilde{\mathfrak g}$ by $\overline{\mathfrak g}$ in the obvious way (see the argument at the end of \cite[Pf. of Thm. 3.1]{invs}) we may assume that $\phi(\tilde{\mathfrak g}_i)=\tilde{\mathfrak g}_{\sigma(i)}$.
We claim that the restriction $\phi|_{\tilde{\mathfrak g}}$ is the differential of a rational automorphism $\tilde\theta$ of $\tilde{G}$.
Indeed, we need clearly only prove this for the restriction of $\phi$ to the sum of $\tilde{\mathfrak g}_i$ satisfying $\tilde{\mathfrak g}_i\neq{\mathfrak g}_i$, and hence we may assume as above that $\tilde{G}_1=\GL(V_1)$, that $\phi({\mathfrak g}_i)={\mathfrak g}_{i+1}$ ($1\leq i< l$) and $\phi({\mathfrak g}_l)={\mathfrak g}_1$.
Let $m'$ be the order of $\phi^l|_{\tilde{\mathfrak g}_1}$.
By Lemma \ref{GLnautos} there exists a unique automorphism $\psi_1$ of $\GL(V_1)$ of order $m'$ such that $d\psi_1=\phi^l|_{\tilde{\mathfrak g}_1}$.
Hence let $\tilde\theta$ act on $\tilde{G_1}\times \ldots \times \tilde{G}_l$ via $(g_1,\ldots ,g_l)\mapsto (\psi_1(g_l),g_1,\ldots ,g_{l-1})$.
Extending $\tilde{\theta}$ to $\tilde{G}\times T_0$ by $(g,t)\mapsto(\tilde\theta(g),\psi(t))$ gives the required automorphism of $\hat{G}$.
\end{proof}

As a consequence, we have:

\begin{corollary}
There exists a non-degenerate symmetric bilinear form $\kappa:{\mathfrak g}\times{\mathfrak g}\rightarrow k$.
\end{corollary}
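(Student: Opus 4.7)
The plan is to apply Proposition \ref{reduction} to transport the problem into $\hat{\mathfrak{g}}$, construct there a non-degenerate symmetric bilinear form invariant under both $\hat{G}$ and $\hat\theta$, and pull it back to $\mathfrak{g}$ via $\phi$. (The statement as written lacks the words ``$G$-equivariant and $\theta$-equivariant'' promised in the paragraph preceding Proposition \ref{reduction}, and this is what I aim to produce.)

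On $\tilde{\mathfrak{g}}=\bigoplus_i\tilde{\mathfrak{g}}_i$ I would take the orthogonal sum of canonical $\tilde{G}_i$-invariant forms: the trace form $\tr(XY)$ in the defining representation when $\tilde{G}_i=\GL(V_i)$ or a classical almost simple group, and an appropriate multiple of the Killing form when $\tilde{G}_i$ is of exceptional type, each non-degenerate in good characteristic and preserved by every algebraic Lie algebra automorphism of $\tilde{G}_i$. Since $\hat\theta$ permutes the $\tilde{G}_i$ as an algebraic group automorphism (Proposition \ref{reduction}(iv)), within each cyclic $\hat\theta$-orbit of factors I would fix the canonical form on one representative and transport it around the cycle by $\hat\theta$; the cycle closes up because $\hat\theta^l|_{\tilde{\mathfrak{g}}_{i_1}}$ preserves the canonical form. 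This yields a non-degenerate $\tilde{G}$- and $\hat\theta$-invariant form $\kappa_{\tilde{\mathfrak{g}}}$. On the central toral algebra $\mathfrak{t}_0=\Lie(T_0)$ I would choose a $\hat\theta$-invariant non-degenerate symmetric form $\kappa_{\mathfrak{t}_0}$; existence requires some care since the $\mu_m$-module $\mathfrak{t}_0$ need not be self-dual, but $T_0$ is not canonical in Proposition \ref{reduction} and can be enlarged to make the isotypic decomposition self-dual, after which $\kappa_{\mathfrak{t}_0}$ pairs each $V_\chi$ with $V_{\chi^{-1}}$. Setting $\hat\kappa=\kappa_{\tilde{\mathfrak{g}}}\oplus\kappa_{\mathfrak{t}_0}$ with respect to $\hat{\mathfrak{g}}=\tilde{\mathfrak{g}}\oplus\mathfrak{t}_0$ gives a non-degenerate $\hat{G}$- and $\hat\theta$-invariant form on $\hat{\mathfrak{g}}$ (using that $T_0$ is a central factor acting trivially via the adjoint representation).

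Pulling back via $\phi$ then yields a $G$- and $\theta$-equivariant symmetric bilinear form on $\mathfrak{g}$. The main obstacle is non-degeneracy of this pullback. Write $\mathfrak{g}=\mathfrak{g}^{(1)}\oplus\mathfrak{s}$ for a $\theta$-stable toral complement $\mathfrak{s}$ (Lemma \ref{redcase}). Since $\phi(\mathfrak{g}^{(1)})=\bigoplus_i\mathfrak{g}_i\subseteq\tilde{\mathfrak{g}}$ and each $\mathfrak{g}_i\subseteq\tilde{\mathfrak{g}}_i$ inherits a non-degenerate form in good characteristic, the restriction of $\phi^*\hat\kappa$ to $\mathfrak{g}^{(1)}$ is non-degenerate; any radical of $\phi^*\hat\kappa$ therefore lies in $\mathfrak{s}$. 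I would kill this radical by exploiting the freedom in choosing $T_0$ and $\kappa_{\mathfrak{t}_0}$, arranging that the pairing of $\phi(\mathfrak{s})$ against itself modulo $\phi(\mathfrak{g}^{(1)})$ under $\hat\kappa$ is non-degenerate; this reduces to an explicit check on the projections of $\phi(\mathfrak{s})$ onto the $\tilde{\mathfrak{z}}$- and $\mathfrak{t}_0$-components of $\hat{\mathfrak{z}}$, which can be made to cover enough of the dual isotypic content by suitable enlargement of $T_0$.
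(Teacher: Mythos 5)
The paper's own proof is a one-line citation (``The argument of \cite[Cor. 3.2]{invs} applies verbatim''), so you are reconstructing an argument the paper does not display. Your overall architecture — build a $\hat{G}$- and $\hat\theta$-invariant non-degenerate form $\hat\kappa$ on $\hat{\mathfrak g}$ from the canonical forms on the $\tilde{\mathfrak g}_i$ plus a form on ${\mathfrak t}_0$, then pull back along $\phi$ — is the right shape, and transporting the canonical form around each $\hat\theta$-cycle of factors is a clean way to avoid an averaging argument. (A small simplification: no enlargement of $T_0$ is needed, since $d\hat\theta|_{{\mathfrak t}_0}$ is the reduction of an integral matrix of finite order prime to $p$, so the $\zeta^i$- and $\zeta^{-i}$-eigenspaces automatically have equal dimension and a $\hat\theta$-invariant non-degenerate pairing exists.)

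The genuine gap is in the non-degeneracy of the pullback, and it sits exactly where the reduction is designed to do work. You assert that each ${\mathfrak g}_i\subseteq\tilde{\mathfrak g}_i$ ``inherits a non-degenerate form,'' and conclude that the radical of $\phi^*\hat\kappa$ lies in ${\mathfrak s}$. This is false precisely when $G_i=\SL(V_i)$ with $p\mid\dim V_i$: the restriction of the trace form of $\mathfrak{gl}(V_i)$ to $\mathfrak{sl}(V_i)$ is degenerate, with $I_{V_i}$ in its radical, because $\tr(I_{V_i}Y)=\tr(Y)=0$ for all $Y\in\mathfrak{sl}(V_i)$ — this failure is the whole reason for enlarging $\SL(V_i)$ to $\GL(V_i)$ in the first place. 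Consequently a radical vector of $\phi^*\hat\kappa$ may a priori lie in ${\mathfrak g}'$, and your proposed repair — adjusting $T_0$ and $\kappa_{{\mathfrak t}_0}$ to control the ${\mathfrak s}$-against-${\mathfrak s}$ block — cannot touch it: ${\mathfrak t}_0$ is orthogonal to $\tilde{\mathfrak g}$ by construction, and $I_{V_i}$ is also orthogonal to $\tilde{\mathfrak z}_i=kI_{V_i}$ since $\tr(I_{V_i}\cdot\lambda I_{V_i})=\lambda\dim V_i=0$. What actually rescues non-degeneracy is the cross pairing that your block-triangular reasoning discards: because $\hat{\mathfrak g}=\phi({\mathfrak g})\oplus{\mathfrak t}_1$ with ${\mathfrak t}_1\subseteq{\mathfrak z}(\hat{\mathfrak g})$, one has $\phi({\mathfrak g})+{\mathfrak z}(\hat{\mathfrak g})=\hat{\mathfrak g}$, so $\phi({\mathfrak g})$ contains an element whose $\tilde{\mathfrak g}_i$-component has non-zero trace, and that element pairs non-trivially with $I_{V_i}$. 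A correct argument must therefore work with the complement ${\mathfrak t}_1$ (e.g.\ show $\phi({\mathfrak g})\cap\phi({\mathfrak g})^\perp=0$ using ${\mathfrak t}_1\subseteq{\mathfrak z}(\hat{\mathfrak g})$ and invariance of $\hat\kappa$), not with the decomposition ${\mathfrak g}={\mathfrak g}'\oplus{\mathfrak s}$; as written, your final step is a statement of intent resting on a false premise rather than a proof.
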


\begin{proof}
The argument of \cite[Cor. 3.2]{invs} applies verbatim.
\end{proof}

\section{The Little Weyl Group}

It was proved in \cite{vin} for the case $k={\mathbb C}$ that the `little Weyl group' $W_{\mathfrak c}=N_{G(0)}({\mathfrak c})/Z_{G(0)}({\mathfrak c})\hookrightarrow\GL({\mathfrak c})$ is generated by pseudoreflections.
(Recall that an element $g\in\GL(V)$ of finite order is a pseudoreflection if the space of fixed points $V^g$ is of codimension 1 in $V$.)
It follows that the ring of invariants $k[{\mathfrak c}]^{W_{\mathfrak c}}$ is a polynomial ring.
In the modular case $W_{\mathfrak c}$ may have order divisible by the characteristic of the ground field.
On the other hand, we show in this section that it is sufficiently `nice' for the invariants to be polynomial, at least under the assumptions of the standard hypotheses.
Prop. \ref{reduction} essentially reduces us to the case that $G$ is almost simple, not of type $A_{ip-1}$, or that $G$ is isomorphic to $\GL(V)$ for a vector space $V$ of dimension divisible by $p$.
For $G$ of classical type Vinberg \cite{vin} has described the little Weyl group for all automorphisms.
One could use the same approach to verify that Vinberg's description holds in good characteristic.
(Most calculations are omitted in \cite{vin}.)
However, we provide a slightly different perspective here that also makes clear the precise relationship between the Weyl group of $G$ and $W_{\mathfrak c}$.
For $G$ of exceptional type we apply a result of Panyushev \cite{panorbits} and an inspection of orders of centralizers in the Weyl group (classified in \cite{carter}) to deduce the required result.
We assume $p>2$ from now on.

\begin{lemma}\label{treg}
Let ${\mathfrak c}$ be a Cartan subspace of ${\mathfrak g}(1)$, let $T_1$ be the unique minimal $\theta$-stable torus whose Lie algebra contains ${\mathfrak c}$ (Lemma \ref{thetasplit}), and let $T_m$ be a maximal torus of $(Z_G({\mathfrak c})^\theta)^\circ$.
Then $T_mT_1$ is regular in $G$.
Moreover, if $T=Z_G(T_1T_m)$ then $T_1$ and $T_m$ are the subtori of $T$ constructed before Lemma \ref{stabletori}.
\end{lemma}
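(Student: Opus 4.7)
The plan is to show that $M := Z_G(T_mT_1)^\circ$ is itself a torus—whence regularity of $T_mT_1$ follows at once—and then to apply Lemma \ref{stabletori} to $T=M$ to identify $T_1$ and $T_m$ with the named subtori.

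I would first set $L = Z_G({\mathfrak c})$, a $\theta$-stable Levi subgroup in which $T_1$ is central (Lemma \ref{thetasplit}). Since ${\mathfrak c} \subseteq \Lie(T_1)$, any element centralising $T_1$ centralises ${\mathfrak c}$, giving $Z_G(T_1) \subseteq L$; the reverse inclusion is automatic from $T_1 \subseteq Z(L)^\circ$, so $Z_G(T_1T_m) = Z_L(T_m)$. Writing $M = Z_L(T_m)^\circ$, this is a $\theta$-stable Levi subgroup of $L$ (centraliser of a subtorus), hence a Levi of $G$, so $p$ is good for $M$ and $p \nmid m$. Both $T_1$ (central in $L$) and $T_m$ (central in $M$ by construction) lie in $M$. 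One checks that $T_m$ remains a maximal torus of $M(0) = (M^\theta)^\circ$: any strictly larger torus there would also lie in $L(0)$ and contradict maximality of $T_m$ there.

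The decisive step is to apply Lemma \ref{reg0} to the pair $(M,\theta|_M)$: it says $T_m$ is regular in $M$, i.e.\ $Z_M(T_m)^\circ$ is a torus. But $T_m$ is central in $M$, so $Z_M(T_m) = M$; hence $M$ itself is a torus. Consequently $T_mT_1$ is regular in $G$ with $T := Z_G(T_mT_1)^\circ = M$.

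For the identification, decompose $T = M = \prod_{d\mid m}M_d$ by Lemma \ref{stabletori}. Since $M$ is a torus, $M_m = (M^\theta)^\circ = M(0)$, and maximality of $T_m$ in $M(0)$ forces $T_m = M_m$. For $M_1$: any $x \in \Lie(M)\cap{\mathfrak g}(1)$ is semisimple (being in a toral algebra) and commutes with ${\mathfrak c}\subseteq\Lie(M)$, so $x \in {\mathfrak c}$ by maximality of the Cartan subspace; hence ${\mathfrak m}(1) = \Lie(M_1)(1) = {\mathfrak c}$. As $M_1$ is $\theta$-split, $\dim M_1 = \varphi(m)\dim{\mathfrak c} = \dim T_1$ by Lemma \ref{thetasplit}, and combined with $T_1 \subseteq M_1$ (since $T_1$ is connected, $\theta$-split and contained in $M$) we conclude $T_1 = M_1$. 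The principal subtlety is the middle step: one must recognise that the centrality of $T_m$ in $M$ is precisely what upgrades Lemma \ref{reg0}'s regularity statement to the much stronger conclusion that $M$ is a torus.
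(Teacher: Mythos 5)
Your proof is correct and uses the same key ingredients as the paper: passing to the $\theta$-stable Levi $L=Z_G({\mathfrak c})=Z_G(T_1)$ and invoking Lemma \ref{reg0}, the only (harmless) difference being that you apply Lemma \ref{reg0} to $M=Z_L(T_m)^\circ$ where $T_m$ is central, rather than directly to $L$ where it already yields $T_m$ regular in $L$ and hence $Z_L(T_m)^\circ$ a maximal torus. The paper leaves the identification of $T_1, T_m$ with the subtori of $T$ as "clear," whereas you spell it out via $\dim M_1=\varphi(m)\dim{\mathfrak c}=\dim T_1$; this is a correct elaboration of the same fact.
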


\begin{proof}
Let $L=Z_G(T_1)=Z_G({\mathfrak c})$, a $\theta$-stable Levi subgroup of $G$.
Then $T_m$ is regular in $L$ by Lemma \ref{reg0}.
The final statement is clear.
\end{proof}

From now on ${\mathfrak c}$, $T_m$, $T_1$ will be as in Lemma \ref{treg} and $T$ will be the unique maximal torus of $G$ containing $T_m$ and $T_1$, unless otherwise stated.
Let $W_{\mathfrak c}:=N_{G(0)}({\mathfrak c})/Z_{G(0)}({\mathfrak c})$, let $G^\theta_Z=\{ g\in G\mid g^{-1}\theta(g)\in Z(G)\}$ and let $W_{\mathfrak c}^Z:=N_{G_Z^\theta}({\mathfrak c})/Z_{G_Z^\theta}({\mathfrak c})$.
Clearly both $W_{\mathfrak c}$ and $W^Z_{\mathfrak c}$ are invariant under isogeny.
In general $W_{\mathfrak c}\neq W_{\mathfrak c}^Z$: recall that $\theta$ is {\it saturated} if $W_{\mathfrak c}=W^Z_{\mathfrak c}$ (\cite[\S 5]{vin}).
Let $W=N_G(T)/T$.
Since $T$ is $\theta$-stable, $\theta$ acts on $W$.

\begin{lemma}\label{inclusion}
$W^Z_{\mathfrak c}$ (and hence $W_{\mathfrak c}$) embeds naturally as a subgroup of $W^\theta/Z_{W^\theta}({\mathfrak c})$.
\end{lemma}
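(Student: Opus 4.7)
The plan is to send each class in $W^Z_{\mathfrak c}$ to the class modulo $Z_{W^\theta}({\mathfrak c})$ of a suitably chosen lift in $N_G(T)$, obtained by modifying a representative $g \in N_{G_Z^\theta}({\mathfrak c})$ by an element of $L(0)$, where $L := Z_G({\mathfrak c})$.

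Given $g \in N_{G_Z^\theta}({\mathfrak c})$, the condition $\Ad g({\mathfrak c}) = {\mathfrak c}$ implies $gLg^{-1} = L$, and $\Int g$ commutes with $\theta|_L$: for $\ell \in L$ and $z := g^{-1}\theta(g) \in Z(G)$ one has
\[\theta(g\ell g^{-1}) = gz\,\theta(\ell)\,z^{-1}g^{-1} = g\theta(\ell)g^{-1}.\]
Hence $gT_mg^{-1}$ is another maximal torus of $L(0) = (L^\theta)^\circ$, so by conjugacy of maximal tori in a connected reductive group there is $h \in L(0)$ with $n := hg$ normalising $T_m$. Moreover, since ${\mathfrak c} \subseteq {\mathfrak z}(\Lie L) = \Lie(Z(L)^\circ)$ in good characteristic, the minimal $\theta$-split torus $T_1$ with $\Lie(T_1) \supseteq {\mathfrak c}$ (Lemma \ref{thetasplit}) lies in $Z(L)^\circ$, so $L(0)$ centralises $T_1$; combined with $gT_1g^{-1} = T_1$ (uniqueness of $T_1$ applied to $\Ad g({\mathfrak c}) = {\mathfrak c}$), this gives $nT_1n^{-1} = T_1$. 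Therefore $n$ normalises $T_1T_m$, hence $T = Z_G(T_1T_m)$.

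I would then check that $nT \in W^\theta$: since $\theta(h) = h$ and $\theta(g) = gz$ with $z \in Z(G) \subseteq T$, one obtains $\theta(n) = nz$, so $\theta(n)T = nT$. The resulting class $nT \cdot Z_{W^\theta}({\mathfrak c})$ is independent of the choice of $h$ (another choice $h'$ differs from $h$ by an element of $L(0) \cap N_G(T) \subseteq G^\theta \cap N_G(T)$ whose image in $W$ centralises ${\mathfrak c}$, since $L$ does) and of the representative of the $Z_{G_Z^\theta}({\mathfrak c})$-coset of $g$ (an alteration by $z_0 \in Z_{G_Z^\theta}({\mathfrak c}) \subseteq L$ adjusts $n$ by an element of $L$ centralising ${\mathfrak c}$ after repeating the construction). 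This yields a well-defined map $\Psi: W^Z_{\mathfrak c} \to W^\theta/Z_{W^\theta}({\mathfrak c})$.

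Finally, I would establish that $\Psi$ is a homomorphism and an injection by exhibiting both groups as subgroups of $\GL({\mathfrak c})$: $W^Z_{\mathfrak c}$ and $W^\theta/Z_{W^\theta}({\mathfrak c})$ act faithfully on ${\mathfrak c}$ by their very definitions, and $n$ acts on ${\mathfrak c}$ by the same linear map as $g$ (because $h \in L(0) \subseteq L$ centralises ${\mathfrak c}$), so $\Psi$ is compatible with these faithful actions. The main technical obstacle is the $Z(G)$-ambiguity built into $G_Z^\theta$: this is what forces the $z$ factor to appear when verifying $\theta(n)T = nT$, but it is precisely absorbed by quotienting out $Z_{W^\theta}({\mathfrak c})$, since $Z(G) \subseteq T$.
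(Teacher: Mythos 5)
Your argument is correct and follows the same route as the paper: adjust a representative $g\in N_{G_Z^\theta}({\mathfrak c})$ by an element $h\in L(0)$ (using that $g$ normalises $L=Z_G({\mathfrak c})$ and that conjugation by $g$ commutes with $\theta|_L$) so that $hg$ normalises $T_m$, hence $T$, and observe the resulting Weyl group element lies in $W^\theta$. You merely make explicit a few steps the paper leaves implicit (that $gT_1g^{-1}=T_1$, the well-definedness modulo $Z_{W^\theta}({\mathfrak c})$, and injectivity via the faithful actions on ${\mathfrak c}$), so this is the same proof in slightly more detail.
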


\begin{proof}
Suppose $g\in G^\theta_Z$ normalizes ${\mathfrak c}$.
Then $g$ normalizes $L=Z_G({\mathfrak c})=Z_G(T_1)$ (by Lemma \ref{thetasplit}) and hence $g^{-1}T_mg$ is a maximal torus of $L(0)=(L^\theta)^\circ\subset Z_{G(0)}({\mathfrak c})$.
Therefore, after replacing $g$ by $gh$ for suitable $h\in L(0)$, we may assume that $g$ normalizes $T_m$, hence that $g$ normalizes $T$.
It follows that each element of $W^Z_{\mathfrak c}$ has a representative in $W$.
But such a representative must clearly be in $W^\theta$.
\end{proof}

In general the inclusion in Lemma \ref{inclusion} may be proper.
From now on let $W_1=W^\theta/Z_{W^\theta}({\mathfrak c})$.
It is easy to see that $W_1$ normalizes ${\mathfrak c}$.
As before, $r$ will denote the rank and $m$ the order of $\theta$.
Let $T_i$, $i|d$ be the subtori of $T$ defined in Lemma \ref{stabletori}.

\begin{lemma}\label{criter}
Let $T'_m=\prod_{i\neq 0}T_i=\{ t^{-1}\theta(t)\mid t\in T\}$.

(a) Suppose $\{ t\in T_m\mid t^m=1\}\subset T'_m$.
If $G^\theta=G(0)$ (if, for example, $G$ is semisimple and simply-connected), then $W_{\mathfrak c}=W_1$.

(b) Suppose $\{ t\in T_m\mid t^m=1\}\subset T'_m Z(G)$.
Then $W_{\mathfrak c}^Z=W_1$.
\end{lemma}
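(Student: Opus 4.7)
The plan is to prove the reverse of the inclusion of Lemma \ref{inclusion}, namely that every $w \in W_1 = W^\theta/Z_{W^\theta}({\mathfrak c})$ lifts to an element of $N_{G^\theta_Z}({\mathfrak c})$ (resp. $N_{G^\theta}({\mathfrak c})$ in case (a)). Given such $w$, pick any representative $n \in N_G(T)$; since $w$ normalizes ${\mathfrak c}$ and ${\mathfrak c}\subset\Lie(T_1)\subset\Lie(T)$ depends only on $w$, every such $n$ acts on ${\mathfrak c}$ via $w$. The strategy is to modify $n$ by an element $t\in T$; since $T$ is commutative and ${\mathfrak c}\subset\Lie(T)$, $\Ad(t)|_{\mathfrak c} = \Id$, so $nt$ still normalizes ${\mathfrak c}$ and represents $w$ in the little Weyl group. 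It therefore suffices to arrange $(nt)^{-1}\theta(nt) = e$ for (a) or $(nt)^{-1}\theta(nt) \in Z(G)$ for (b).

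Set $s = n^{-1}\theta(n) \in T$ (an element of $T$ because $\theta(w)=w$). Using commutativity of $T$, a direct computation gives $(nt)^{-1}\theta(nt) = t^{-1}\theta(t)\cdot s$. Since $T'_m = \{t^{-1}\theta(t)\mid t\in T\}$ is a subgroup, the required $t$ exists precisely when $s \in T'_m$ (case (a)) or $s \in T'_m Z(G)$ (case (b)). Now iterating $\theta(n) = ns$ and using $\theta^m(n) = n$ yields the cocycle identity $\prod_{i=0}^{m-1}\theta^i(s) = e$. Decompose $s = \prod_{d|m} s_d$ with $s_d \in T_d$ via Lemma \ref{stabletori}. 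For each $d<m$, the minimal polynomial of $\theta|_{T_d}$ is $p_{m/d}$, which does not vanish at $1$ (since $m/d > 1$); consequently $\theta - 1$ is invertible on $T_d$, giving $T_d \subseteq T'_m$ and so $s_d \in T'_m$ automatically. For $d=m$, $\theta$ acts trivially on $T_m$, and the cocycle identity restricts on $T_m$ to $s_m^m = e$.

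Thus $s \equiv s_m \pmod{T'_m}$ with $s_m \in \{u\in T_m\mid u^m = e\}$. Applying the hypothesis of (b), $s_m \in T'_m Z(G)$, whence $s \in T'_m Z(G)$ and the lift $nt$ lies in $G^\theta_Z$, proving $W_1 \subseteq W^Z_{\mathfrak c}$. In case (a), the hypothesis gives $s_m \in T'_m$, hence $s \in T'_m$, so $nt \in G^\theta$; combined with $G^\theta = G(0)$, the lift lies in $G(0)$ and represents $w$ in $W_{\mathfrak c}$. The main substantive step -- and the only one that is not formal manipulation -- is the norm/decomposition calculation in the second paragraph, which identifies the obstruction to lifting as living entirely in the $T_m$-component and satisfying $s_m^m = e$; once this is isolated, the hypotheses of (a) and (b) are seen to be precisely what is needed.
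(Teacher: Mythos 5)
Your proof is correct and follows essentially the same route as the paper: take a representative $n_w\in N_G(T)$ of $w\in W^\theta$, set $s=n_w^{-1}\theta(n_w)\in T$, use the cocycle relation $\prod_{i=0}^{m-1}\theta^i(s)=e$ together with the decomposition $T=T_m\cdot T'_m$ from Lemma \ref{stabletori} to reduce the obstruction to the $T_m$-component satisfying $s_m^m=e$, and then apply the hypothesis to adjust $n_w$ by an element of $T$ (which acts trivially on ${\mathfrak c}$). Your observation that $\theta-1$ is surjective on each $T_d$ with $d<m$ is a small extra step (it justifies the identity $T'_m=\{t^{-1}\theta(t)\mid t\in T\}$, which the paper asserts without comment), but it does not change the substance of the argument.
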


\begin{proof}
Let ${\cal T}=\{ t\in T_m\mid t^m=1\}$.
We claim that $\{ t\in T\mid t\theta(t)\ldots \theta^{m-1}(t)=1\}={\cal T}\cdot T'_m$.
Indeed, it is clear from Lemma \ref{stabletori} that $t\theta(t)\ldots \theta^{m-1}(t)=1$ for any $t\in T'_m$.
Since $T=T_m\cdot T'_m$, the equality follows.
Thus let $w=n_wT\in W^\theta$.
Then $x=n_w^{-1}\theta(n_w)\in T$.
But clearly $x\theta(x)\ldots\theta^{m-1}(x)=1$, and hence $x\in{\cal T}\cdot T'_m$.
If ${\cal T}\subset T'_m$, then $x$ is contained in the image of the map $T\rightarrow T$, $t\mapsto t^{-1}\theta(t)$.
Thus $x=t\theta(t^{-1})$ for some $t\in T$ and hence $n_wt\in G^\theta$.
If $G^\theta=G(0)$, it follows that $W_{\mathfrak c}=W_1$.
(If $G$ is semisimple and simply-connected then $G^\theta=G(0)$ by \cite[8.1]{steinberg}.)
Similarly, if ${\cal T}\subset T'_m Z(G)$ then $x=t\theta(t^{-1})z$ for some $t\in T$, $z\in Z(G)$ and thus $n_wt\in G_Z^\theta$.
This proves (b).
\end{proof}

With the aid of Lemma \ref{criter}, we now determine the little Weyl group in the case where $G$ is one of the classical groups $\SL(n,k)$, $\SO(n,k)$, $\Sp(2n,k)$.
Following \cite{vin}, we call $({\mathfrak g},d\theta)$ associated to such a group a {\it classical graded Lie algebra}.
One apparent problem here is that $\SO(n,k)$ is not simply-connected.
However, the universal covering $\Spin(n,k)\rightarrow\SO(n,k)$ is separable and hence any classical graded Lie algebra is the Lie algebra of a group (with automorphism) satisfying the standard hypotheses.
On the other hand, all automorphisms of $\Spin(n,k)$ give rise to automorphisms of $\SO(n,k)$ unless $n=8$.
This is obvious if $n$ is odd since then $\SO(n,k)$ is just the quotient of $\Spin(n,k)$ by its centre.
Let $\hat{T}$ be a maximal torus of $\Spin(2n,k)$, let $\Phi(\Spin(2n,k),\hat{T})$ be identified with the root system $\Phi$ of $\SO(2n,k)$, let $\Delta=\{\alpha_1,\ldots ,\alpha_n\}$ be a basis of $\Phi$ (numbered in the standard way) and let $\alpha_i^\vee:k^\times\rightarrow\hat{T}$ be the corresponding coroots.
Let $z_0,z_1\in\Spin(2n,k)$: $$z_0=\alpha^\vee_{n-1}(-1)\alpha_n^\vee(-1),$$  $$z_1=\left\{\begin{array}{ll} \alpha_1^\vee(-1)\alpha_3^\vee(-1)\ldots\alpha_{n-1}^\vee(-1) & \mbox{if $n$ is even,} \\ \alpha_1^\vee(-1)\alpha_3^\vee(-1)\ldots\alpha_{n-1}^\vee(i)\alpha_n^\vee(-i) & \mbox{if $n$ is odd.}\end{array}\right.$$
It is well known (and easy to show) that: $$Z(\Spin(2n,k))=\left\{\begin{array}{ll} \{ 1,z_0,z_1,z_0z_1\}\cong({\mathbb Z}/2{\mathbb Z})^2 & \mbox{if $n$ is even,} \\ \{ 1,z_1,z_1^2=z_0,z_1^{3}\}\cong{\mathbb Z}/4{\mathbb Z} & \mbox{if $n$ is odd,}\end{array}\right.$$ and the kernel of the covering morphism $\Spin(2n,k)\rightarrow\SO(2n,k)$ is generated by $z_0$.

\begin{lemma}
(a) If $n>4$ then any rational automorphism $\theta$ of $\Spin(2n,k)$ satisfies $\theta(z_0)=z_0$.
Hence $\Aut\Spin(2n,k)\cong\Aut\SO(2n,k)\cong\Aut(\SO(2n,k)/\{\pm I\})\cong{\rm O}(2n,k)/\{\pm I\}$.

(b) $\Aut\Spin(8,k)/\Int\Spin(8,k)$ is isomorphic to the symmetric group $S_3$.
If $\theta$ is a rational automorphism of $\Spin(8,k)$ then either $\theta^2$ is inner, in which case some $\Aut\Spin(8,k)$-conjugate of $\theta$ preserves $z_0$, or $\theta^3$ is inner.
\end{lemma}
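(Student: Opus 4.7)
My plan for part (a) is to exploit the known structure of $\Aut\Spin(2n,k)/\Int\Spin(2n,k)$. For $n>4$ the Dynkin diagram $D_n$ has a unique non-trivial automorphism (swapping the forked simple roots $\alpha_{n-1}$ and $\alpha_n$), so $\Out\Spin(2n,k)\cong\mathbb{Z}/2\mathbb{Z}$. Any $\theta$ is thus of the form $\Int g\circ\gamma$ with $\gamma$ either trivial or the standard diagram automorphism. Inner automorphisms fix the (central) element $z_0$ pointwise, so it suffices to check that the standard outer automorphism fixes $z_0$. Since $\gamma$ swaps $\alpha^\vee_{n-1}$ and $\alpha^\vee_n$, $\gamma(z_0)=\alpha_n^\vee(-1)\alpha_{n-1}^\vee(-1)=z_0$, and hence $\theta(z_0)=z_0$ in general.

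Consequently every $\theta$ descends to an automorphism of the quotient $\Spin(2n,k)/\langle z_0\rangle=\SO(2n,k)$. This gives a map $\Aut\Spin(2n,k)\to\Aut\SO(2n,k)$ which is injective (an automorphism of $\Spin(2n,k)$ acting trivially on $\SO(2n,k)$ would give a homomorphism from the connected group $\Spin(2n,k)$ to $\langle z_0\rangle$, hence trivial) and surjective (lift any automorphism of $\SO(2n,k)$ using the universal covering property of $\Spin(2n,k)$, noting $p>2$ so the covering is separable). A parallel argument gives $\Aut\SO(2n,k)\cong\Aut(\SO(2n,k)/\{\pm I\})=\Aut\PSO(2n,k)$, and the latter is the semidirect product $\PSO(2n,k)\rtimes\mathbb{Z}/2\mathbb{Z}$ realized by conjugation by $\O(2n,k)$, giving the classical identification $\O(2n,k)/\{\pm I\}$.

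For part (b) the key fact is that for $D_4$ the Dynkin diagram has automorphism group $S_3$ (triality), so $\Out\Spin(8,k)\cong S_3$. The image of any $\theta$ in $S_3$ has order $1$, $2$ or $3$: in the first two cases $\theta^2\in\Int\Spin(8,k)$, and in the third $\theta^3\in\Int\Spin(8,k)$. This gives the dichotomy stated.

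It remains to treat the case that $\bar\theta\in S_3$ is a non-trivial element of order $\leq 2$, i.e.\ a transposition. The action of $S_3$ on $Z(\Spin(8,k))=(\mathbb{Z}/2\mathbb{Z})^2$ factors through the permutation action on the three non-trivial central elements $z_0,z_1,z_0z_1$ (these are precisely the kernels of the vector and two half-spin representations, which triality permutes). In particular the action is transitive and faithful, so $\bar\theta$ fixes exactly one of the three non-trivial elements, say $z'$. Choose $\phi\in\Aut\Spin(8,k)$ whose image $\bar\phi\in S_3$ sends $z'$ to $z_0$ (such $\phi$ exists by surjectivity of $\Aut\to\Out$). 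Then $\bar\phi\bar\theta\bar\phi^{-1}$ fixes $\bar\phi(z')=z_0$; since the action of any automorphism on the centre is determined by its image in $\Out$, we have $\phi\theta\phi^{-1}(z_0)=z_0$, as required. I do not anticipate a genuine obstacle here; the only subtlety is tracking the exact $S_3$-action on $Z$, which is standard triality.
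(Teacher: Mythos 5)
Your argument for (a) is identical to the paper's: reduce to a single representative outer automorphism, observe that it swaps $\alpha_{n-1}^\vee$ and $\alpha_n^\vee$ and therefore fixes $z_0=\alpha_{n-1}^\vee(-1)\alpha_n^\vee(-1)$; the chain of isomorphisms is then routine, as you note. For (b) the paper simply cites standard facts about automorphisms of reductive groups, whereas you spell out the content: $\Out\Spin(8,k)\cong S_3$ acts faithfully on the three nontrivial central elements (triality permutes the vector and half-spin kernels), a transposition fixes exactly one of them, and conjugating by a suitable $\phi$ moves that fixed element to $z_0$. Your unpacking is correct and does not introduce a gap; it is a fleshed-out version of the same idea the paper invokes by reference.
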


\begin{proof}
We need only check (a) for outer automorphisms, hence for a particular choice of outer automorphism.
But there exists an outer automorphism $\theta$ which satisfies $\theta(\alpha_i^\vee(t))=\alpha_i^\vee(t)$ ($1\leq i\leq n-2$), $\theta(\alpha_{n-1}^\vee(t))=\alpha_n^\vee(t)$ and $\theta(\alpha_n^\vee(t))=\alpha_{n-1}^\vee(t)$.
Hence (a) follows.
Finally, (b) follows immediately from well-known properties of automorphisms of reductive groups, see for example \cite[27.4]{hum}.
\end{proof}

For $m,r\in{\mathbb N}$ and $q$ dividing $m$ let $G(m,q,r)$ denote the subgroup of $\GL(r,k)$ consisting of all monomial matrices with entries $x_i$ satisfying $x_i^m=1$, $(\prod_{i=1}^rx_i)^{m/q}=1$.
Our description below of $W_{\mathfrak c},W_{\mathfrak c}^Z,W_1$ using this notation refers to the action on ${\mathfrak c}$.

\begin{lemma}\label{sln}
Let $G=\SL(n,k)$, $p\nmid n$ or $G=\GL(n,k)$ and let $\theta$ be inner.
Then $W_{\mathfrak c}=W_{\mathfrak c}^Z=W_1=G(m,1,r)$.
\end{lemma}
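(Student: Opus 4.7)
The plan is to build an explicit Cartan subspace $\mathfrak c$, identify a $\theta$-stable maximal torus $T\supset T_1$ in which $\mathfrak c$ is diagonal, compute $W_1$ as the centraliser of a permutation in $W=S_n$, and finally lift every element of $W_1$ to $G(0)$, thereby closing the chain $W_\mathfrak c\subseteq W_\mathfrak c^Z\subseteq W_1$ supplied by Lemma \ref{inclusion}.

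First I would conjugate so that $\theta=\Int g$ with $g$ diagonal on $V=k^n$, having eigenvalues in $\mu_m(k)$. Let $V=\bigoplus_{i\in{\mathbb Z}/m{\mathbb Z}}V_i$ be the eigenspace decomposition with $g|_{V_i}=\zeta^i\Id$; then $\mathfrak g(j)=\bigoplus_i\Hom(V_i,V_{i+j})$ and $G(0)=\prod_i\GL(V_i)$ (intersected with $\SL$ in the $\SL$ case). Setting $r=\min_i\dim V_i$, I would pick vectors $e_1^{(i)},\ldots,e_r^{(i)}\in V_i$ together with a basis of a complement $V_i'$, and define $X_j\in\mathfrak g(1)$ by $X_j(e_l^{(i)})=\delta_{jl}e_l^{(i+1)}$, $X_j(V_i')=0$. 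The $X_j$ manifestly commute and are semisimple, and the span $\mathfrak c:=kX_1\oplus\cdots\oplus kX_r$ should be a Cartan subspace: this can be checked by comparing dimensions with Lemma \ref{thetasplit}, or by observing that any larger commuting family of semisimple elements of $\mathfrak g(1)$ would generate a $\theta$-split torus of rank exceeding $r\varphi(m)$.

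Next I would introduce simultaneous eigenvectors $f_j^{(i)}:=\sum_s\zeta^{-is}e_j^{(s)}$, so that $X_j\cdot f_l^{(i)}=\delta_{jl}\zeta^i f_l^{(i)}$, making $\mathfrak c$ diagonal in the basis $\{f_j^{(i)}\}$ together with a basis of $V'=\bigoplus_i V_i'$ diagonalising $g|_{V'}$. Take $T$ to be the maximal torus of $G$ diagonal in this basis: then $\mathfrak c\subset\Lie(T)$, and $T$ is $\theta$-stable because $g\cdot f_j^{(i)}=f_j^{(i-1)}$ while $g$ acts diagonally on $V'$. Hence $g\in N_G(T)$ has image $\tau\in W=S_n$ equal to the product of $r$ disjoint $m$-cycles on the $f$-indices and the identity on the $V'$-indices, and $\theta$ acts on $W$ as conjugation by $\tau$, whence $W^\theta=Z_W(\tau)\cong G(m,1,r)\times S_{\dim V'}$. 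Inspecting the induced action on $\mathfrak c\subset\Lie(T)$: the factor $S_{\dim V'}$ only permutes $V'$-coordinates, on which every vector in $\mathfrak c$ vanishes, so it acts trivially; the $j$-th generator of $({\mathbb Z}/m{\mathbb Z})^r$ rotates the indices within the $j$-th $m$-cycle and so multiplies $X_j$ by $\zeta^{-1}$ while fixing $X_l$ for $l\neq j$; and $S_r$ permutes the $X_j$. This will give $W_1=W^\theta/Z_{W^\theta}(\mathfrak c)=G(m,1,r)$.

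It will then remain to realise each generator of $G(m,1,r)$ inside $G(0)$. A permutation $\sigma\in S_r$ is lifted by $(g_i)\in\prod_i\GL(V_i)$ with each $g_i$ permuting $\{e_1^{(i)},\ldots,e_r^{(i)}\}$ via $\sigma$ and fixing $V_i'$; conjugation sends $X_j\mapsto X_{\sigma(j)}$. A scaling $(\mu_1,\ldots,\mu_r)\in\mu_m^r$ is lifted by the diagonal element $g'$ with $g'(e_j^{(i)})=c_j\mu_j^i e_j^{(i)}$ (trivial on $V'$), which conjugates $X_j$ to $\mu_j X_j$. In the $\SL$ case one must choose the $c_j\in k^\times$ so that $\det g'=(\prod_j c_j)^m\prod_j\mu_j^{m(m-1)/2}=1$; this is solvable because $k$ is algebraically closed. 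The only place I expect to need a little care is this determinant bookkeeping in the $\SL$ case; otherwise the proof reduces to the combinatorial identification of $W^\theta$ with $Z_W(\tau)$, which should be routine once the torus $T$ and permutation $\tau$ are in hand.
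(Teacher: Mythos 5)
Your proposal is correct and reaches the same description of $W_1$ as the paper, but the final step—passing from $W_1$ to $W_{\mathfrak c}$—is handled differently, so let me compare the two routes.

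You start from the eigenspace picture (diagonal $g$ with $V=\bigoplus V_i$) and obtain the monomial picture via the discrete Fourier transform $f_j^{(i)}=\sum_s\zeta^{-is}e_j^{(s)}$; the paper starts directly from the monomial picture ($\theta=\Int n_w$ with $n_w\in N_G(T)$, $w$ a product of $r$ disjoint $m$-cycles, established via the abstract choice of $T_m,T_1,T$ in Lemma~\ref{treg} and the argument that $T_m$ being a maximal torus of $L(0)$ forces $w'_m=1$). Up to this change of optic the two computations of $W_1$ coincide: both identify $W^\theta=Z_W(\tau)$ as $G(m,1,r)\times S_{n-rm}$ (with the $S_{n-rm}$ part absorbed into $Z_{W^\theta}(\mathfrak c)$), and the paper's basis $\{c_i\}$ of diagonal matrices is precisely your $\{f_j^{(i)}\}$-diagonal realisation of $\mathfrak c$. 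Where you genuinely diverge is in closing the gap $W_{\mathfrak c}\supseteq W_1$. The paper reduces to $n=rm$ via $Z_G(S)^{(1)}\cong\SL(rm,k)$ and then invokes the general criterion of Lemma~\ref{criter}, verifying $\{t\in T_m\mid t^m=1\}\subset T'_m$ by the single computation $s^{-1}\theta(s)=\zeta I_m$ with $s=\diag(\zeta^{m-1},\ldots,1)$; you instead lift the reflection-group generators to $G(0)$ by hand. Both are valid. Your route is more elementary and self-contained; the paper's route, once Lemma~\ref{criter} is in place, is shorter and, more importantly, is designed to carry over uniformly to the orthogonal and symplectic cases treated in Lemmas~\ref{typec}--\ref{slnouter}.

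Two small points worth tightening if you pursue your version. First, the determinant adjustment in $\SL(n,k)$ is needed not just for the scaling generators but also for the $S_r$ generators: a transposition lifts to $(g_i)$ with $\det=(-1)^m$, which must be corrected by an element of $Z_{G(0)}(\mathfrak c)$ of determinant $-1$ when $m$ is odd (such an element exists since $Z_{G(0)}(\mathfrak c)\supseteq T^\theta$ and $k$ is algebraically closed, but this deserves a line). Second, your lifts land in $G^\theta$; you should record that $G^\theta=G(0)$ here—for $\SL(n,k)$ with $p\nmid n$ because $\SL(n,k)$ is simply connected and $\theta$ is semisimple (Steinberg), and for $\GL(n,k)$ because $Z_{\GL(n,k)}(g)=\prod_i\GL(V_i)$ is connected—so that no connectedness issue arises. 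With those clarifications the argument is complete.
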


\begin{proof}
Since $\theta$ is inner and stabilizes $T$, it equals $\Int n_w$ for some $n_w\in N_G(T)$.
Now since $T_m$ is maximal in $Z_G({\mathfrak c})^\theta$, we claim that $w=n_w T$ is a product of $r$ $m$-cycles.
Indeed, let $w=w_m\cdot w'_m$ be the decomposition of $w$, where $w_m$ is a product of $m$-cycles and $w'_m$ is a product of cycles of order less than $m$.
Let $L=Z_G({\mathfrak c})$.
Then it is easy to see that $L^{(1)}\cong\SL(n-rm,k)$, that $\Lie(L)$ is the span of ${\mathfrak t}$ and all root subspaces ${\mathfrak g}_\alpha$ with $w_m(\alpha)=\alpha$, that $\theta|_{L^{(1)}}$ is an inner automorphism, and hence by our assumption on $T_m$ that $w'_m=1$.
Choosing a suitable $N_G(T)$-conjugate of $n_w$, we may assume that $w$ has the form $$\begin{pmatrix} 1 & \ldots & m \end{pmatrix} \begin{pmatrix} m+1 & \ldots & 2m \end{pmatrix}\ldots \begin{pmatrix} (r-1)m+1 & \ldots & rm \end{pmatrix}$$
Hence it is clear that we can choose a basis $\{ c_1,\ldots,c_r\}$ for ${\mathfrak c}$, where $c_i$ is the diagonal matrix with $j$-th diagonal entry: $\left\{\begin{array}{ll} \zeta^{-j} & \mbox{if $(i-1)m<j\leq im$,} \\ 0 & \mbox{otherwise.}\end{array}\right.$

With this description it is immediate that $W_1=G(m,1,r)$.
Let $L=Z_G({\mathfrak c})$ and let $S$ be a maximal torus of $L^{(1)}$.
Since any element of $W_1$ has a representative in $Z_G(S)$ (hence in $Z_G(S)^{(1)}\cong\SL(rm,k)$), we may assume that $n=rm$.
Now it is clear that any element of $T_m$ has the form $$\begin{pmatrix}Êt_1 I_m & \cdots & 0 \\ \vdots & \ddots & \vdots \\ 0 & \cdots & t_rI_m \end{pmatrix}$$ (where $I_m$ is the $m\times m$ identity matrix and $t_1,\ldots ,t_r\in k^\times$), and that such an element is in ${\cal T}$ if and only if each $t_i$ is a power of $\zeta$.
(Recall that $\zeta$ is a fixed primitive $m$-th root of unity.)
We therefore prove that ${\cal T}\subset T'_m$ in the case $r=1$; this will make it clear that the inclusion holds for arbitrary $r$.
If $m$ is odd, then the matrix $s=\diag (\zeta^{m-1},\zeta^{m-2},\ldots ,1)$ is of determinant 1 and satisfies $s^{-1}\theta(s)=\zeta I_m$.
Hence ${\cal T}\subset \{ t^{-1}\theta(t)\mid t\in T\}=T'_m$.
If $m$ is even, then let $\xi$ be a square-root of $\zeta$.
Then $\xi s$ is of determinant 1 and $\xi s\theta(\xi s)^{-1}=\zeta I_m$.
Thus ${\cal T}\subset T'_m$ in this case as well.
Applying Lemma \ref{criter}, this completes the proof.
\end{proof}

The above result corresponds to the `First case' in Vinberg's classification, \cite[\S 7]{vin}.

\begin{rk}\label{ark}
We recall that the automorphism $\theta$ is {\bf $S$-regular} if ${\mathfrak g}(1)$ contains a regular semisimple element of ${\mathfrak g}$.
It can easily be seen from the proof of Lemma \ref{sln} that here there is a $\theta$-stable Levi subgroup of $G$ such that ${\mathfrak c}$ is contained in the Lie algebra of its derived subgroup $L$, $N_{L(0)}({\mathfrak c})/Z_{L(0)}({\mathfrak c})\cong W_{\mathfrak c}$ and the restriction of $\theta$ to $L$ is $S$-regular.
We have $L\cong \SL(rm,k)$.
In fact, $L=H^{(1)}$, where $H$ is a minimal Levi subgroup of $G$ whose Lie algebra contains $T_1$.
We will see in Sect. 5 that $\theta|_L$ is in fact $N$-regular, that is, ${\mathfrak l}(1)$ contains a regular nilpotent element of ${\mathfrak l}$.
\end{rk}

For the remaining classical cases, we require a little preparation.
Let $J_n$ denote the $n\times n$ matrix with 1 on the antidiagonal and 0 elsewhere and let $\gamma:\GL(n,k)\rightarrow\GL(n,k)$, $g\mapsto {^t}g^{-1}$.
(By abuse of notation we will use $\gamma$ to denote this automorphism for arbitrary $n$.)
In our setting, ${\rm O}(n,k)$ is the group of $n\times n$ matrices which are stable under $\Int J_n\circ\gamma$, $\SO(n,k)$ is the intersection of ${\rm O}(n,k)$ with $\SL(n,k)$ and $\Sp(2n,k)$ is the subgroup of fixed points in $\SL(2n,k)$ under the automorphism $\Int \begin{pmatrix}Ê0 & J_n \\ -J_n & 0 \end{pmatrix}\circ\gamma$.
Until further notice $G$ will be one of $\SO(2n,k)$, $\SO(2n+1,k)$, $\Sp(2n,k)$.
We will choose $T$ to be the maximal torus of diagonal matrices in $G$:
$$T=\left\{\begin{pmatrix} t_1 & \cdots & 0 \\ \vdots & \ddots & \vdots \\ 0 & \cdots & t_1^{-1}\end{pmatrix}\mid t_1,\ldots ,t_n\in k^\times\right\}$$
For the purposes of describing the action of the Weyl group, we identify $T$ with $(k^\times)^n$ via the isomorphism $(k^\times)^n\rightarrow T$, $t=(t_1,\ldots ,t_n)\mapsto\diag(t_1,\ldots ,t_1^{-1})$.
Let $\overline{G}={\rm O}(2n,k)$ if $G=\SO(2n,k)$, and let $\overline{G}=G$ otherwise.
Let $\overline{W}=N_{\overline{G}}(T)/T$.
Then $\overline{W}\cong S_n\ltimes(\mu_2)^n\cong G(2,1,n)$, where $\mu_2$ is the multiplicative group $\{\pm 1\}$.
(If $G=\SO(2n,k)$ then $W\cong G(2,2,n)$.)
Specifically, elements of $S_n$ act as permutations $(t_1,t_2,\ldots ,t_n)\mapsto (t_{\sigma(1)},\ldots ,t_{\sigma(n)})$, and $(\epsilon_1,\ldots ,\epsilon_n)\in{\bf \mu}_2^n$ sends $(t_1,\ldots ,t_n)$ to $(t_1^{\epsilon_1},\ldots ,t_n^{\epsilon_n})$.
There is a classification of the conjugacy classes in $\overline{W}$ by {\it signed cycle types}.
That is, if $w\in \overline{W}$ is conjugate to $\sigma=\begin{pmatrix} 1 & \ldots & l \end{pmatrix}\in S_n$ (resp. to $((-1,1,1,\ldots ,1),\sigma)\in({\bf \mu}_2)^n\ltimes S_n$) then we say that $w$ is a positive (resp. negative) $l$-cycle.
A positive (resp. negative) $l$-cycle is of order $l$ (resp. $2l$).
Extending in the obvious way to products of disjoint cycles one can there associated a (unique) signed permutation type to each $w\in\overline{W}$.
This correspondence is one-to-one between conjugacy classes in $\overline{W}$ and signed cycle types $1^{a_1}\overline{1}^{b_1}\ldots l^{a_l}\overline{l}^{b_l}$ with $\sum_{i=1}^l i(a_i+b_i)=n$ (\cite[Prop. 24]{carter}).
(Here $i$ denotes a positive $i$-cycle and $\overline{i}$ denotes a negative $i$-cycle.)
Let $J'=\begin{pmatrix} I_{n-1} & 0 & 0 \\ 0 & J_2 & 0 \\ 0 & 0 & I_{n-1} \end{pmatrix}\in{\rm O}(2n,k)$ (where $I_{n-1}$ denotes the $(n-1)\times (n-1)$ identity matrix).

\begin{lemma}\label{so}
Any semisimple element of ${\rm O}(2n,k)$ is conjugate to an element of $T\cup J'T$.
\end{lemma}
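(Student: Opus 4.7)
The strategy is to analyze semisimple $s \in {\rm O}(2n,k)$ via its action on the defining representation $V = k^{2n}$ equipped with the symmetric bilinear form $B$ whose Gram matrix is $J_{2n}$. Since $s$ is semisimple, it acts diagonalizably on $V$, giving an eigenspace decomposition $V = \bigoplus_{\lambda} V_\lambda$. The key observation is that because $s$ preserves $B$, the form restricts to a perfect pairing $V_\lambda \times V_{\lambda^{-1}} \to k$; in particular each $V_\lambda$ for $\lambda \neq \pm 1$ is totally isotropic and dual to $V_{\lambda^{-1}}$, while $V_1$ and $V_{-1}$ inherit nondegenerate symmetric forms.

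With this in hand, I would build a basis of $V$ in which the Gram matrix of $B$ is exactly $J_{2n}$ and $s$ has a very controlled shape. For each unordered pair $\{\lambda,\lambda^{-1}\}$ with $\lambda\neq\pm 1$, pick any basis $u_1,\dots,u_{d_\lambda}$ of $V_\lambda$ and let $u_1^*,\dots,u_{d_\lambda}^*$ be the dual basis of $V_{\lambda^{-1}}$ with respect to $B$. For $V_{\pm 1}$, use the hypothesis $p>2$ to decompose each as an orthogonal sum of hyperbolic planes and (if the dimension is odd) a one-dimensional anisotropic line. Arranging the basis vectors so that paired vectors sit in positions $(i, 2n+1-i)$ will ensure the form is $J_{2n}$.

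Now I separate two cases based on $\det s = (-1)^{\dim V_{-1}}$. If $s \in \SO(2n,k)$, then $\dim V_{-1}$ is even; since $\dim V$ is even, $\dim V_1$ is also even, so both $V_1$ and $V_{-1}$ admit full hyperbolic decompositions with no leftover vectors. Ordering the basis appropriately, $s$ becomes a diagonal matrix of the form $\diag(t_1,\dots,t_n,t_n^{-1},\dots,t_1^{-1})$, hence an element of $T$. If $s \notin \SO(2n,k)$, then $\dim V_{-1}$ is odd, forcing $\dim V_1$ to be odd as well; the two leftover anisotropic vectors $v_1 \in V_1$, $v_{-1}\in V_{-1}$ span a nondegenerate two-dimensional subspace on which $s$ acts by $\diag(1,-1)$, which (using $p>2$) is a hyperbolic plane. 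Choosing a hyperbolic basis $(e,f)$ of this plane makes $s|_{\langle e,f\rangle}$ act by $\left(\begin{smallmatrix}0&1\\1&0\end{smallmatrix}\right) = J_2$. Placing this pair in positions $n, n+1$ and the remaining paired eigenvectors symmetrically around it exhibits $s$ as $J' \cdot t$ with $t \in T$.

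The main obstacle is bookkeeping rather than conceptual: one has to order the basis and the reciprocal pairs $\{\lambda,\lambda^{-1}\}$ consistently so that the Gram matrix of $B$ comes out to be $J_{2n}$, and verify that the central $2\times 2$ block in the second case actually equals $J_2$ (not $-J_2$), which depends on choosing the hyperbolic basis of $\langle v_1,v_{-1}\rangle$ with the correct sign normalization using that $2 \in k^\times$. Once the basis is in place, the conjugating element lies in ${\rm GL}(2n,k)$ and preserves $B$ by construction, hence lies in ${\rm O}(2n,k)$, which is what the statement requires.
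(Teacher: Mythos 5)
Your proof is correct, but it takes a genuinely different route from the paper's. The paper first reduces to $s\in J'G$ (the case $s\in G=\SO(2n,k)$ being standard since any semisimple element of a connected reductive group lies in a maximal torus and all maximal tori are conjugate), then invokes Steinberg's theorem that a semisimple automorphism stabilizes a Borel–torus pair: after conjugation, $J'g$ normalizes both $T$ and the standard Borel $B$, and since $N_B(T)=T$ and $\overline W/W$ has order $2$, the coset of $J'g$ in $\overline W$ must be the class of $J'$, so $J'g\in J'T$. You instead work directly with the bilinear form: decompose $V$ into eigenspaces, pair $V_\lambda$ with $V_{\lambda^{-1}}$ via duality, use $p\neq 2$ to split $V_{\pm1}$ into hyperbolic planes plus (possibly) anisotropic lines, observe the parity constraint $\dim V_1\equiv\dim V_{-1}\equiv 0$ or $1\ (\mathrm{mod}\ 2)$ according to $\det s$, and in the odd case pair the two anisotropic vectors into a hyperbolic plane on which $s$ acts as $J_2$. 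This is more hands-on: it avoids appealing to the Steinberg/Borel–torus machinery and produces the conjugating isometry explicitly. The paper's argument is shorter and transfers verbatim to other disconnected reductive groups; yours is more elementary and self-contained, at the cost of the bookkeeping you flag (ordering the basis so the Gram matrix is exactly $J_{2n}$, and the sign normalization in the central $2\times2$ block), all of which you handle correctly.
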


\begin{proof}
Since any semisimple element of $G=\SO(2n,k)$ is conjugate to an element of $T$, it will clearly suffice to show that any semisimple element of $J'G$ is conjugate to an element of $J'T$.
But if $J'g$ is semisimple then $\Int (J'g)$ stabilizes a maximal torus of $G$ and a Borel subgroup containing it.
Let $B$ be the intersection of $G$ with the group of upper-triangular $2n\times 2n$ matrices, a Borel subgroup which contains $T$.
Thus, after conjugating by a suitable element of $G$, we may assume that $J'g\in N_{{\rm O}(2n,k)}(T)$ and that $\Int J'g$ normalizes $B$.
But the result is now clear, since $N_B(T)=T$ and $\overline{W}/W$ is of order 2.
\end{proof}

\begin{lemma}\label{cycles}
Let $G$ be one of $\SO(2n+1,k)$, $\SO(2n,k)$ or $\Sp(2n,k)$ and let $m$ be even.
Then $\theta=\Int n_w$ for $n_w\in N_{\overline{G}}(T)$ where $w=n_w T\in\overline{W}$ is either a product of $r$ positive $m$-cycles, a product of $r$ negative $(m/2)$-cycles, or possibly a product of $r$ negative $(m/2)$-cycles and one negative $1$-cycle if $m>2$ and $G=\SO(2n,k)$.
If $G=\SO(2n+1,k)$ then $w$ is a product of $r$ negative $(m/2)$-cycles.

If $w$ is a product of positive $m$-cycles then $$n_w^m=\left\{\begin{array}{ll} I & \mbox{if $G=\Sp(2n,k)$,} \\ -I & \mbox{if $G=\SO(2n,k)$.}\end{array}\right.$$

If $w$ is a product of negative cycles then $$n_w^m=\left\{\begin{array}{ll} -I & \mbox{if $G=\Sp(2n,k)$,} \\ I & \mbox{otherwise.}\end{array}\right.$$
\end{lemma}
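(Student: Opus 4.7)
The plan is to convert the classification problem into one about signed-cycle decompositions of elements of $\overline W \cong S_n\ltimes\mu_2^n$, using the specific torus $T = Z_G(T_1T_m)$ supplied by Lemma \ref{treg} and the $\theta$-stable subtorus decomposition of Lemma \ref{stabletori}.

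First I would decompose $w=\prod_i w_i$ into disjoint signed cycles acting on the coordinates $(t_1,\dots,t_n)$ of $T$. A positive $l$-cycle produces a rank-$l$ subtorus of $T$ on which $\theta$ acts by cyclic shift, with eigenvalues the $l$-th roots of unity (viewed in $\mu_m\subset k^\times$); so $l\mid m$ is forced, and a direct eigenvalue count shows that this cycle contributes $\varphi(m)$ to $\dim\Lie(T_1)$ exactly when $l=m$, and nothing otherwise. A negative $l$-cycle similarly gives eigenvalues the $l$-th roots of $-1$, forcing $2l\mid m$, and contributes $\varphi(m)$ to $\dim\Lie(T_1)$ precisely when $l=m/2$. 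Since $\dim\Lie(T_1)=r\varphi(m)$ by Lemma \ref{thetasplit}, the total number of positive $m$-cycles and negative $(m/2)$-cycles in $w$ is exactly $r$.

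Next I would rule out any other cycles by using $T=Z_G(T_1T_m)$, which means no root of $G$ is trivial on $T_1T_m$. If $w$ had a positive $l$-cycle with $2\le l<m$, the $l$ coordinates in the cycle would be tied together by the single trace direction sitting inside $T_m$, while contributing nothing to $T_1$; hence the root $e_{i_1}-e_{i_2}$ for two distinct coordinates of the cycle would vanish on $T_1T_m$, a contradiction. A negative $l$-cycle with $2\le l<m/2$ contributes neither to $T_m$ nor to $T_1$ on its coordinates, so $e_{j_1}\pm e_{j_2}$ vanishes on $T_1T_m$, again impossible. A negative $1$-cycle at coordinate $i$ forces $e_i$ (in type $B_n$) or $2e_i$ (in type $C_n$) to be trivial on $T_1T_m$; only in type $D_n$, which has no short root involving one coordinate alone, is this permissible, and this gives the exceptional case (c) of the lemma. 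Ruling out a positive $1$-cycle is the delicate point: such a cycle would create a root of $L = Z_G({\mathfrak c})$ giving an $A_1$-factor of $L^{(1)}$ on which $\theta$ acts trivially on the torus, contradicting (via Lemma \ref{zerorank} together with the uniqueness of the maximal $\theta$-split torus inside $L$ from Lemma \ref{thetasplit}) that $T_m$ is a maximal torus of $L^\theta$ with $T=Z_G(T_1T_m)$. Combined, these constraints leave exactly the three cycle types claimed, with the $\SO(2n+1)$ case further forbidding positive $m$-cycles because the corresponding $n_w$ is incompatible with the short-root structure of $B_n$.

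Finally, for the computation of $n_w^m$, I would take the standard matrix lift: for a positive $m$-cycle, $n_w$ cyclically permutes basis vectors $v_1\mapsto v_2\mapsto\dots\mapsto v_m\mapsto v_1$ and, forced by the bilinear form, acts accordingly on $v_{-1},\dots,v_{-m}$. In $\Sp(2n,k)$ the sign convention in the symplectic form is preserved by the cycle and one checks $n_w^m=I$; in $\SO(2n,k)$ the analogous lift must be normalized so that $n_w$ has determinant $1$, and this normalization produces $n_w^m=-I$. The negative $(m/2)$-cycle case is handled similarly: its order on $T$ is $m$, so $n_w^m$ is forced into $Z(G)=\{\pm I\}$, and a direct check on the standard lift gives $n_w^m=I$ for $\SO(2n+1,k)$ and $\SO(2n,k)$, and $n_w^m=-I$ for $\Sp(2n,k)$. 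The main obstacle is the exclusion of positive $1$-cycles in Step 3, which forces one to combine the regularity of $T_1T_m$ with the zero-rank condition on $\theta|_{L^{(1)}}$; the remaining verifications are routine matrix manipulations with the classical groups in their standard realizations.
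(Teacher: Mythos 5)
Your proposal takes a genuinely different route from the paper, but the central step is missing, so as written the argument does not prove the lemma. The paper's proof is organized around computing $n_w^m\in Z(G)\subset\{\pm I\}$ for the relevant single-cycle building blocks and then using these signs to constrain the cycle decomposition; your argument instead works from the regularity of $T_1T_m$ (Lemma \ref{treg}) to constrain the signed-cycle lengths, and only at the end touches on $n_w^m$. The eigenvalue/dimension count showing that the positive $m$-cycles and negative $(m/2)$-cycles together number exactly $r$, and the regularity argument ruling out positive $l$-cycles with $2\le l<m$ and negative $l$-cycles with $2\le l<m/2$, are both sound. But this does not suffice.

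The most serious gap is mutual exclusivity: nothing in your argument shows that $w$ cannot simultaneously contain a positive $m$-cycle and a negative $(m/2)$-cycle. Both contribute $\varphi(m)$ to $\dim\Lie(T_1)$, both have order dividing $m$, and neither causes a root to vanish on $T_1T_m$, so the regularity argument cannot distinguish them. The paper's proof establishes precisely that the two cycle types force $n_w^m$ to take opposite values in $\{\pm I\}$, which (since $n_w^m\in Z(G)$ is uniquely determined by $\theta$, being independent of the choice of $n_w$ modulo $Z(G)$) immediately shows $r_1=0$ or $r_2=0$. This is the heart of the lemma and is absent from your proof. The assertion for type $B_n$ — that $w$ is always a product of negative cycles — is a corollary of the same computation, since $-I\notin\SO(2n+1,k)$ and a positive $m$-cycle would force $n_w^m=-I$; the phrase "incompatible with the short-root structure of $B_n$" does not identify this. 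Finally, the "delicate point" you spend a paragraph on — ruling out positive $1$-cycles — is misdirected: the lemma's phrase "a product of $r$ positive $m$-cycles" implicitly allows $w$ to act trivially on the remaining $n-rm$ (or $n-rm/2$) coordinates, and indeed it must, since those coordinates support $Z_G({\mathfrak c})^{(1)}$, which plays an essential role in Lemmas \ref{typec}--\ref{typed}. In short, the regularity-of-$T_1T_m$ idea is a pleasant complement to the paper's approach for bounding cycle lengths, but the $n_w^m$ sign computation cannot be avoided.
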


\begin{proof}
Suppose that $G=\SO(m,k)$ or $G=\Sp(m,k)$ and that $w$ is a single negative $(m/2)$-cycle.
Then $n_w^m=\pm I$ and the characteristic polynomial of $n_w$ is, correspondingly, $T^m\mp I$.
If $G=\SO(m,k)$ (resp. $G=\Sp(m,k)$) then $n_w\not\in G$ (resp. $n_w\in G$) and hence $\det n_w=-1$ (resp. $\det n_w=1$), from which it follows that $n_w^m=I$ (resp. $n_w^m=-I$).
Suppose now that $G=\SO(2m,k)$ or $G=\Sp(2m,k)$ and that $w$ is a single positive $m$-cycle.
Then $n_w^m=\pm I$.
Note that $n_w$ is a monomial matrix in $\SL(2m,k)$ which corresponds to a product of two $m$-cycles.
Let $\xi$ be a square-root of $\zeta$.
Suppose that $n_w^m=I$ (resp. $n_w^m=-I$) and $G=\SO(2m,k)$ (resp. $G=\Sp(2m,k)$).
Then $n_w$ has eigenvalues $\zeta^i$ (resp. $\xi^{2i+1}$), $0\leq i<m$ and each eigenvalue is of multiplicity two.
But then $n_w$ is $G$-conjugate to an element of $N_G(T)$ which acts as a product of two negative $m/2$-cycles on $T$, by the above.
This contradicts maximality of ${\mathfrak c}$, and therefore $n_w^m=-I$ (resp. $n_w^m=I$) if $G=\SO(2m,k)$ (resp. $G=\Sp(2m,k)$).
On the other hand, it is easy to check that if $G=\SO(2m,k)$ (resp. $G=\Sp(2m,k)$) and $g\in G$ is conjugate to $\diag (\xi^{2m-1},\xi^{2m-3},\ldots ,\xi)$ (resp. $\diag (\zeta^{-1},\ldots ,\zeta,1,1,\zeta^{-1},\ldots ,\zeta)$ then $\Int g$ is a rank one automorphism of $G$.

We have therefore proved that there is a unique conjugacy class of automorphism of order $m$ of $\SO(m,k)$ (resp. $\Sp(m,k)$, $\SO(2m,k)$. $\Sp(2m,k)$) which acts as a negative $m/2$-cycle (resp. negative $m/2$-cycle, positive $m$-cycle, positive $m$-cycle).
Let us therefore consider the general case of the lemma.
Let $w=w^+_mw^-_mw'_m$, where $w^+_m$ is a product of $r_1$ positive $m$-cycles, $w^-_m$ is a product of $r_2=r-r_1$ negative $(m/2)$-cycles, and $w'_m$ is a product of signed cycles of order less than $m$.
(Hence $w'_m=1$ if $m=2$.)
Let $w^+_m=w_1\ldots w_{r_1}$ where the $w_i$ are disjoint positive $m$-cycles and let $w^-_m=w_{r_1+1}\ldots w_{r}$, where the $w_i$ are disjoint negative $(m/2)$-cycles.
Let $\{ c_1,\ldots ,c_r\}$ be a basis for ${\mathfrak c}$ such that $w_i(c_j)=\zeta^{\delta_{ij}}c_j$.
It is easy to see that there exist $\theta$-stable subgroups $L_1,\ldots ,L_r$ of $\overline{G}$ such that $c_i\in \Lie(L_i)$ and $$L_i\cong \left\{\begin{array}{ll} {\rm O}(m,k) & \mbox{if $G=\SO(n,k)$ and $1\leq i\leq r_1$,} \\ {\rm O}(2m,k) & \mbox{if $G=\SO(n,k)$ and $r_1+1\leq i\leq r$,} \\ \Sp(m,k) & \mbox{if $G=\Sp(2n,k)$ and $1\leq i\leq r_1$,} \\ \Sp(2m,k) & \mbox{if $G=\Sp(2n,k)$ and $r_1+1\leq i\leq r$.}Ê\end{array}\right.$$
Then $\theta|_{L_i}=\Int x_i$ for some $x_i\in L_i$ and $x_i^m=\pm I$ according to the criteria given in the paragraph above.
Thus $n_w=x_1 z$, where $z\in Z_{\overline G}(L_1)$.
But then if $r_1>0$, $n_w^m=x_1^m z^m$, and therefore $n_w^m=-I$ (resp. $I$) if $G$ is of orthogonal (resp. symplectic) type.
Similarly, if $r_2>0$ then $n_w=x_{r}z$ for $z\in Z_G(L_r)$) and therefore $n_w^m=I$ (resp. $-I$) if $G$ is of orthogonal (resp. symplectic) type.
It follows that either $r_1=0$ or $r_2=0$, and if $G=\SO(2n+1,k)$ then $w$ is a product of negative cycles.
Moreover, $n_w\prod_1^r x_i^{-1}\in Z_{\overline G}({\mathfrak c})$ and represents $w'_m$ as an element of $\overline W$.
Thus $w'_m$ is trivial if $G=\Sp(2n,k)$ or $G=\SO(2n+1,k)$, and by Lemma \ref{so} $w'_m$ is either trivial or a single negative 2-cycle if $G=\SO(2n,k)$.
On the other hand, if $w'_m$ is a negative 2-cycle then clearly $n_w^m=I$ and thus $w$ is a product of negative cycles.
\end{proof}

\begin{lemma}\label{typec}
Let $G=\Sp(2n,k)$.

(a) If $m$ is odd then $W_{\mathfrak c}=W_{\mathfrak c}^Z=W_1=G(2m,1,r)$.

(b) If $m$ is even then $W_{\mathfrak c}=W_{\mathfrak c}^Z=W_1=G(m,1,r)$.
\end{lemma}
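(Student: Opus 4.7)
The proof will follow the model of Lemma \ref{sln}, adapted to the symplectic setting. I would begin by applying Lemma \ref{treg} and Steinberg's theorem to write $\theta = \Int n_w$ for some $n_w \in N_G(T)$; set $w = n_w T \in W$. Since $\overline{W} = W$ in type $C$, Lemma \ref{cycles} together with the maximality of $T_m$ (by an argument analogous to the one in Lemma \ref{sln} that excludes cycles of order strictly less than $m$) shows that, up to $W$-conjugacy, $w$ is either a product of $r$ disjoint positive $m$-cycles, or --- possible only when $m$ is even --- a product of $r$ disjoint negative $(m/2)$-cycles.

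Next I would fix a basis $c_1,\ldots,c_r$ of $\mathfrak{c}$ with $c_i$ supported in the $i$-th cycle block of $w$ and compute $W_1 = Z_W(w)/Z_{Z_W(w)}(\mathfrak{c})$ using the hyperoctahedral structure $W \cong S_n \ltimes \mu_2^n$. For $w$ a product of $r$ positive $m$-cycles, the image of $Z_W(w)$ in $\GL(\mathfrak{c})$ is generated by $S_r$ permuting the $c_i$, the cycles themselves (each acting on the corresponding $c_i$ by $\zeta$), and the total sign change on each cycle (acting by $-1$); the remaining generators act trivially on $\mathfrak{c}$. The subgroup of $k^\times$ generated by $\zeta$ and $-1$ is cyclic of order $2m$ when $m$ is odd and of order $m$ when $m$ is even, so $W_1 = G(2m,1,r)$ in case (a) and $W_1 = G(m,1,r)$ in case (b). For $w$ a product of $r$ negative $(m/2)$-cycles (only in case (b)), the centralizer of a single such cycle in its ambient hyperoctahedral factor is the cyclic group of order $m$ generated by the cycle itself (since its eigenvalues on the permutation representation are distinct), so $Z_W(w) \cong S_r \wr C_m$ and again $W_1 = G(m,1,r)$.

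Finally, I would apply Lemma \ref{criter}(a): since $\Sp(2n,k)$ is semisimple and simply connected, $G^\theta = G(0)$, so it suffices to verify $\mathcal{T} = \{t \in T_m \mid t^m = 1\} \subseteq T'_m$. A typical $t \in \mathcal{T}$ is constant with value $s_i$ (satisfying $s_i^m = 1$) on the support of the $i$-th cycle of $w$. In the positive-cycle case, I would set $u \in T$ to have coordinates $1,s_i^{-1},\ldots,s_i^{-(m-1)}$ on the $i$-th cycle (with the mirror entries forced by $u \in T$), and verify $u^{-1}\theta(u) = t$ by direct computation. The negative-cycle case is handled by a similar construction after extracting a square root of $s_i$, which exists since $k$ is algebraically closed. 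The main obstacle is the compatibility of this lift with the symplectic form --- particularly in the negative-cycle case where the sign twist in $n_w$ makes the lift less transparent --- but the explicit $u$ above automatically lies in $T \subset \Sp(2n,k)$ by the symmetry of its coordinates, so the verification reduces to a routine check.
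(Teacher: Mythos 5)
Your overall approach matches the paper's, and the $W_1$ computation and the positive-cycle lift are correct. However, there are two issues with the final step where you apply Lemma \ref{criter}.

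First, you apply Lemma \ref{criter}(a) directly to $G = \Sp(2n,k)$ without first reducing to the smaller group $Z_G(S)^{(1)}$, where $S$ is a $\theta$-stable maximal torus of $Z_G({\mathfrak c})^{(1)}$. This reduction is essential. When $n > rm$ (positive case) or $n > rm/2$ (negative case), the permutation $w$ has fixed coordinates, and $T_m = (T^\theta)^\circ$ contains a factor of dimension $n - rm$ (resp.\ $n - rm/2$) supported on those coordinates. On those fixed coordinates, $\theta$ acts trivially, so every element of $T'_m = \{t^{-1}\theta(t) : t \in T\}$ is $1$ there; meanwhile ${\cal T}$ contains elements that are nontrivial $m$-th roots of unity on the fixed coordinates. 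Thus ${\cal T} \not\subset T'_m$ for $G$ itself, and the criterion simply does not apply. Your description of a ``typical $t\in{\cal T}$'' as being determined by constants $s_i$ on the cycle blocks only makes sense once the reduction has been made. The paper does exactly this: any element of $W_1$ has a representative in $Z_G(S)^{(1)} \cong \Sp(2rm,k)$ (resp.\ $\Sp(rm,k)$), so one checks the criterion there.

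Second, after the reduction, your treatment of the negative-cycle case is overcomplicated and in fact inaccurate. For a product of $r$ negative $(m/2)$-cycles on $\Sp(rm,k)$, any $t \in T^\theta$ is forced to satisfy $t_j^2 = 1$ on each cycle block (go once around the cycle and pick up the sign flip), so $(T^\theta)^\circ = T_m$ is trivial. Hence ${\cal T} = \{1\}$ and the inclusion ${\cal T} \subset T'_m$ is automatic; there is no lift to construct, no $s_i$, and no square root to extract. Your proposed explicit construction addresses a situation that does not arise.
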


\begin{proof}
Since any automorphism of $G$ is inner, $\theta=\Ad n_w$ for some $n_w\in N_G(T)$.
If $m$ is odd, then $w=n_wT$ is a product of $r$ positive $m$-cycles by the argument in Lemma \ref{cycles}.
After conjugating by a suitable element of $N_G(T)$, we may assume that $$w=\begin{pmatrix} 1 & \ldots & m \end{pmatrix} \begin{pmatrix} m+1 & \ldots & 2m \end{pmatrix}\ldots \begin{pmatrix} (r-1)m+1 & \ldots & rm \end{pmatrix}$$
We can construct a basis $\{ c_1,\ldots ,c_r\}$ for ${\mathfrak c}$ in the same way as in the proof of Lemma \ref{sln}.
Then it is immediate that $W_1=G(2m,1,r)$.
Let $S$ be a $\theta$-stable maximal torus of $Z_G({\mathfrak c})^{(1)}$.
Then any element of $W_1$ has a representative in $Z_G(S)$, and hence in $Z_G(S)^{(1)}\cong\Sp(2rm)$.
Thus we may assume that $n=rm$.
As in the proof of Lemma \ref{sln}, it will clearly suffice to prove that ${\cal T}\subset T'_m$ in the case $r=1$.
Here $T_m$ consists of matrices of the form $\begin{pmatrix} tI_m & 0 \\ 0 & t^{-1}I_m \end{pmatrix}$.
But if $t=\diag(\zeta^{m-1},\zeta^{m-2},\ldots,\zeta,1,1,\zeta^{-1},\ldots,\zeta^2,\zeta)$ then $t^{-1}\theta(t)=\begin{pmatrix} \zeta I_m & 0 \\ 0 & \zeta^{-1}I_m \end{pmatrix}$.
By Lemma \ref{criter}, $W_{\mathfrak c}=W_1$.

For (b), Lemma \ref{cycles} shows that $w$ is either a product of $r$ positive $m$-cycles or a product of $r$ negative $m/2$-cycles.
It is easy to see by a similar argument to that used above that $W_1=G(m,1,r)$ in either case.
Let $S$ be a $\theta$-stable maximal torus of $Z_G({\mathfrak c})^{(1)}$: then $Z_G(S)^{(1)}$ is $\theta$-stable, isomorphic to $\Sp(2mr,k)$ (if $w$ is a product of positive $m$-cycles) or $\Sp(mr,k)$ (if $w$ is a product of negative $(m/2)$-cycles) and contains $T_1$ and a representative of each element of $W_1$.
Hence it will suffice to prove the equality $W_{\mathfrak c}=W_1$ in the case $n=mr$ ($w$ a product of positive $m$-cycles), $n=mr/2$ ($w$ a product of negative $(m/2)$-cycles).
For this we apply Lemma \ref{criter}.
If $w$ is a product of positive $m$-cycles then after conjugating by a suitable element of $N_G(T)$ we may assume that $w=\begin{pmatrix} 1 & \ldots & m \end{pmatrix} \ldots \begin{pmatrix}Ê(r-1)m+1 & \ldots & rm \end{pmatrix}$.
In these circumstances $T_m$ is the set of matrices of the form $$\begin{pmatrix}Êt_1I_m & \cdots & 0 \\ \vdots & \ddots & \vdots \\ 0 & \cdots & t_1^{-1}I_m \end{pmatrix}$$ where $t_1,\ldots ,t_r\in k^\times$.
Such an element is in ${\cal T}$ if and only if $t_i$ is a power of $\zeta$ for each $1\leq i\leq r$.
The inclusion ${\cal T}\subset T'_m$ can now be proved in exactly the same way as in Lemma \ref{sln}.
On the other hand, if $w$ is a product of negative $(m/2)$-cycles then it is easy to see that $T_m$ is trivial.
\end{proof}

In Vinberg's classification \cite[\S 7]{vin}, this is the `Third case': $m$ odd is Type III; $m$ even, $w$ a product of negative $(m/2)$-cycles is Type I, and $m$ even, $w$ a product of positive $m$-cycles is Type II.

\begin{rk}\label{rkc}
As for inner automorphisms in type $A$, it is easy to see from the proof of Lemma \ref{typec} that there is a $\theta$-stable Levi subgroup of $G$ such that ${\mathfrak c}$ is contained in the Lie algebra of its derived subgroup $L$, $N_{L(0)}({\mathfrak c})/Z_{L(0)}({\mathfrak c})\cong W_{\mathfrak c}$ and $\theta|_{L}$ is $S$-regular.
We have $L\cong\Sp(rm,k)$ if $m$ is even and $w$ is a product of negative $(m/2)$-cycles, $L\cong\SL(rm,k)$ if $m$ is even and $w$ is a product of positive $m$-cycles and $L\cong\Sp(2rm,k)$ if $m$ is odd.
(If $m$ is even and $w$ is a product of positive $m$-cycles then we can easily reduce to a subgroup isomorphic to $\Sp(2rm,k)$ whose Lie algebra contains ${\mathfrak c}$.
But now any element of ${\mathfrak c}$ is fixed by $\gamma$ (defined after Rk. \ref{ark}) and we can see by Lemma \ref{sln} that the little Weyl group for the restriction of $\theta$ to $G^\gamma\cong\GL(rm,k)$ is $G(m,1,r)$.
Here we restrict to $\SL(2rm,k)$ in order to ensure $N$-regularity.)
In common with type $A$, $L$ is the derived subgroup of a Levi subgroup $H$ of $G$, and $H$ is a minimal Levi subgroup whose derived subgroup contains $T_1$.
\end{rk}

\begin{lemma}\label{typeb}
Let $G$ be semisimple of type $B_n$.

(a) If $m$ is odd then $W_{\mathfrak c}=W_1=G(2m,1,r)$.

(b) If $m$ is even then $W_{\mathfrak c}=W_1=G(m,1,r)$.
\end{lemma}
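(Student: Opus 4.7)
The proof should follow the template of Lemma \ref{typec}, adapted to the absence of outer automorphisms in type $B$. My plan is as follows.

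Since $G$ is semisimple of type $B_n$, every rational automorphism of $G$ is inner, so $\theta=\Int n_w$ for some $n_w\in N_G(T)$, where $w=n_wT\in W\cong\overline{W}$. I will first pin down the cycle structure of $w$. For $m$ even this is given immediately by Lemma \ref{cycles}: since $G=\SO(2n+1,k)$ (or $\Spin(2n+1,k)$) the element $w$ must be a product of $r$ negative $(m/2)$-cycles. For $m$ odd, elements of order $m$ in $W(B_n)$ are pure (unsigned) permutations, and by an argument parallel to the one in Lemma \ref{sln} (using that the restriction of $\theta$ to the $\SL$-factor of $Z_G({\mathfrak c})^{(1)}$ corresponding to any cycle of length less than $m$ would violate maximality of ${\mathfrak c}$), $w$ must be a product of exactly $r$ positive $m$-cycles. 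In either case, after conjugating by an element of $N_G(T)$ we obtain a standard basis $\{c_1,\ldots,c_r\}$ for ${\mathfrak c}$, where $c_i$ is the $\zeta$-eigenvector of $w$ in the $i$-th cycle block.

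I will then identify $W_1=W^\theta/Z_{W^\theta}({\mathfrak c})$ by computing the action of $Z_W(w)$ on $\{c_1,\ldots,c_r\}$. For case (b) the centralizer in $W(B_{m/2})$ of a single negative $(m/2)$-cycle is $\langle w\rangle\cong\mu_m$ (acting on $c_i$ by $\zeta$), and the $S_r$ permuting the cycles contributes the complementary factor, giving $W_1\cong G(m,1,r)$. For case (a) the centralizer in $W(B_m)$ of a positive $m$-cycle contains both $\langle w\rangle$ (acting on $c_i$ by $\zeta$) and the full sign-flip $\epsilon=(-1,\ldots,-1)$ on the cycle (acting on $c_i$ by $-1$); since $m$ is odd these generate a cyclic group of order $2m$. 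Combined with $S_r$, this gives $W_1\cong G(2m,1,r)$. Note that the extra factor of $2$ is present here but absent in the analogous type-$C$ case of Lemma \ref{typec}(b) precisely because for even $m$ the involution $w^{m/2}$ already acts as $-1$ on ${\mathfrak c}$, making $\epsilon$ redundant in $W_1$.

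To upgrade $W_1$ to $W_{\mathfrak c}$, I take $G=\Spin(2n+1,k)$ so that $G^\theta=G(0)$ by \cite[8.1]{steinberg}, and apply Lemma \ref{criter}(a); it suffices to check ${\cal T}\subset T'_m$. As in Lemma \ref{typec}, let $S$ be a $\theta$-stable maximal torus of $Z_G({\mathfrak c})^{(1)}$; then every element of $W_1$ has a representative in $Z_G(S)^{(1)}$, which is isomorphic to $\Spin(2rm+1,k)$ (case (a)) or $\Spin(rm+1,k)$ (case (b), since the negative $(m/2)$-cycles cover $rm$ of the first $rm/2$ pairs of coordinates --- actually reducing to $n=rm/2$ in the appropriate sense). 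In case (b) the reduction makes $T_m$ finite of order prime to $p$ (each negative cycle contributes only $\mu_2$ to $T^\theta$), so ${\cal T}\subset T_m$ is automatically contained in $T'_m$ after recognising that the relevant elements arise as $t^{-1}\theta(t)$ by direct inspection. In case (a), $T_m\cong(k^\times)^r$ acting by $(t_1,\ldots,t_r)\mapsto\diag(t_1I_m,\ldots,t_rI_m)$ on the cycle blocks, and a cycle-by-cycle computation identical to the one in Lemma \ref{typec}(a) (take $u=(1,a^{-1},a^{-2},\ldots,a^{-(m-1)})$ on a block with $a\in\mu_m$, yielding $u^{-1}\theta(u)=(a,\ldots,a)$) shows ${\cal T}\subset T'_m$.

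The main technical obstacle is the bookkeeping around the switch from $\SO(2n+1,k)$ to its simply-connected cover $\Spin(2n+1,k)$ and ensuring the explicit preimage $u$ of a given element of ${\cal T}$ lifts to the cover; this is routine because $\pi:\Spin(2n+1,k)\to\SO(2n+1,k)$ restricts to a surjection of maximal tori, so any computation inside the $\SO$-level torus lifts, possibly up to a harmless central sign.
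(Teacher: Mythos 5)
Your plan broadly mirrors the paper's: use Lemma \ref{cycles} (or maximality of ${\mathfrak c}$, for $m$ odd) to pin down the cycle structure of $w$, compute $W_1$ from $Z_W(w)$, reduce to $Z_G(S)^{(1)}$ so every element of $W_1$ has a representative there, and conclude via Lemma \ref{criter}(a). The description of $W_1$ via the centraliser acting on the basis $\{c_1,\ldots,c_r\}$ and the reduction step are both fine. The gap lies in how you arrange $G^\theta = G(0)$ and then verify ${\cal T}\subset T'_m$.

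You pass to $\Spin(2n+1,k)$ from the outset so that Steinberg's theorem gives $G^\theta=G(0)$ for free. But then your explicit verification of ${\cal T}\subset T'_m$ in case (a) is carried out in diagonal-matrix coordinates valid only on the $\SO$-torus, and lifting it to $\Spin$ is not the ``routine'' matter you claim. Lifting $u\in T_{\SO}$ to $\hat u\in T_{\Spin}$, the element $\hat u^{-1}\hat\theta(\hat u)$ certainly lies in $\hat T'_m$, but it projects to $u^{-1}\theta(u)\in{\cal T}$ under a two-to-one covering and could a priori equal $z\hat t$ (with $z$ the central involution of $\Spin(2n+1,k)$) rather than the distinguished preimage $\hat t\in\hat{\cal T}=\{t\in\hat T_m\mid t^m=1\}$; one must separately argue the twist by $z$ drops out. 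The paper avoids this entirely by proving $G^\theta$ connected directly in $\SO(2n+1,k)$ when $m$ is odd: any $h=x^{-1}\hat n_w x\hat n_w^{-1}\in Z(\Spin(2n+1,k))\cong\mu_2$ satisfies $h\hat\theta(h)\cdots\hat\theta^{m-1}(h)=1$, hence $h^m=1$, forcing $h=1$ since $\gcd(m,2)=1$. It only passes to $\Spin$ for $m$ even, and only after the reduction to $n=rm/2$ has made $T_m$ trivial, so that the ${\cal T}\subset T'_m$ check is vacuous. Incidentally, in case (b) you describe $T_m$ as ``finite of order prime to $p$''; a torus is either trivial or infinite, and what you actually mean is that $T^\theta$ is finite (each negative cycle contributing a $\mu_2$), hence $T_m=(T^\theta)^\circ=\{1\}$ -- after which no further ``inspection'' of $t^{-1}\theta(t)$ is needed.
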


\begin{proof}
Let $G=\SO(2n+1,k)$. While it is practical to work with $\SO(2n+1,k)$, things are slightly more diffficult than for $\Sp(2n,k)$ since centralizers are not in general connected.
If $m$ is odd, on the other hand, we claim that $G^\theta$ is connected.
Indeed, since all rational automorphisms of $G$ are inner, $\theta=\Ad n_w$ for some $n_w\in N_G(T)$.
Let $\pi:\hat{G}=\Spin(2n+1,k)\rightarrow G$ be the universal covering of $G$ and let $\hat{n}_{w}\in \hat{G}$ be such that $\pi(\hat{n}_{w})=n_w$.
Since the kernel of $\pi$ is just $Z(\hat{G})$, $G^\theta$ is disconnected if and only if there exists $x\in \hat{G}$ such that $x^{-1}\hat{n}_wx\hat{n}_w^{-1}$ is the non-identity element of $Z(\hat{G})$.
But $x^{-1}\hat{n}_wx\hat{n}_w^{-1}\in \{ h\in \hat{G}\mid h\theta(h)\ldots\theta^{m-1}(h)=1\}$ and hence $(x^{-1}\hat{n}_wx\hat{n}_w^{-1})^m=1$.
Thus $G^\theta=G(0)$ if $m$ is odd.
Since $w=n_wT$ contains $r$ positive $m$-cycles, it is straightforward to check that $Z_G({\mathfrak c})\cong (k^\times)^{rm}\times\SO(2(n-rm)+1,k)$.
Applying the argument in the proof of Lemma \ref{typec} and Lemma \ref{criter}(a), we deduce that $w$ is a product of $r$ positive $m$-cycles and that $W_1=W_{\mathfrak c}=G(2m,1,r)$.

Suppose therefore that $m$ is even.
By Lemma \ref{cycles}, $\theta=\Int n_w$, where $n_w^m=I$ and $w=n_wT$ is a product of $r$ negative $(m/2)$-cycles.
Using the same argument as in Lemma \ref{sln}, it follows that $W_1=G(m,1,r)$.
Let $S$ be a $\theta$-stable maximal torus of $Z_G({\mathfrak c})^{(1)}\cong \SO(2n+1-rm,k)$ and let $L=Z_G(S)^{(1)}$.
Then it is easy to see that $L\cong \SO(rm+1,k)$, that ${\mathfrak c}\subset\Lie(L)$ and that any element of $W_1$ has a representative in $L$.
Hence it will suffice to prove (b) under the assumption that $n=rm/2$.
But now $T_m$ is trivial.
Lifting $\theta$ (uniquely) to an automorphism of the universal covering $\hat{G}$ of $G$ (Lemma \ref{cover}), we can apply Lemma \ref{criter}.
\end{proof}

This is half of Vinberg's `Second case' (the other half being $\SO(2n,k)$): $m$ even, $w$ a product of negative $m/2$-cycles is Type I; $m$ odd is Type III.
(Type II, where $m$ is even and $w$ is a product of positive $m$-cycles does not occur by Lemma \ref{cycles}.)

\begin{rk}\label{rkb}
Once again, it is clear from the proof of Lemma \ref{typeb} that if $H$ is a minimal Levi subgroup of $G$ whose derived subgroup $L$ contains $T_1$ then $N_{L(0)}({\mathfrak c})/Z_{L(0)}({\mathfrak c})\cong W_{\mathfrak c}$ and $\theta|_{L}$ is $S$-regular.
We have $L\cong\SO(rm+1,k)$ if $m$ is even and $w$ is a product of negative $(m/2)$-cycles, and $L\cong\SO(2rm+1,k)$ if $m$ is odd.
\end{rk}

\begin{lemma}\label{typed}
Let $G=\SO(2n,k)$.
Then $\theta=\Int n_w$ for some $n_w\in N_{{\rm O}(2n,k)}(T)$.

(a) If $m$ is odd then:

(i) $W_1=\left\{\begin{array}{ll} G(2m,1,r) & \mbox{if $n>mr$,} \\ G(2m,2,r) & \mbox{if $n=mr$.}\end{array}\right.$

(ii) $W^Z_{\mathfrak c}=W_{\mathfrak c}=\left\{\begin{array}{ll} G(2m,1,r) & \mbox{if $n>mr$ and $Z_{{\rm O}(2n,k)}({\mathfrak c})^\theta\neq Z_G({\mathfrak c})^\theta$,} \\ G(2m,2,r) & \mbox{otherwise.}\end{array}\right.$

(b) If $m$ is even and $n_w^m=-I$, then $W_{\mathfrak c}=W_1=G(m,1,r)$.
If $m$ is even and $n_w^m=I$, then 

(i) $W_1=\left\{\begin{array}{ll} G(m,1,r) & \mbox{if $n>mr/2$,} \\ G(m,2,r) & \mbox{if $n=mr/2$.} \end{array}\right.$

(ii) $W_{\mathfrak c}^Z=W_{\mathfrak c}=\left\{\begin{array}{ll} G(m,1,r) & \mbox{if $n>mr/2$ and $Z_{{\rm O}(2n,k)}({\mathfrak c})^\theta\neq Z_G({\mathfrak c})^\theta$,} \\ G(m,2,r) & \mbox{otherwise.} \end{array}\right.$
\end{lemma}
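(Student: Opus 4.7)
The plan is to follow the template of Lemmas \ref{typec} and \ref{typeb}, proving the cycle structure of $w=n_wT\in\overline W$, writing down an explicit basis of ${\mathfrak c}$, reading $W_1$ off that basis, and then comparing $W_1$ with $W_{\mathfrak c}$ and $W_{\mathfrak c}^Z$ by means of Lemma \ref{criter}. Two genuinely new features must be handled: first, the Weyl group of $\SO(2n,k)$ is the index $2$ subgroup $G(2,2,n)\subset G(2,1,n)=\overline W$, so a parity condition appears; second, centralizers in $\SO(2n,k)$ need not be connected, so the distinction between $G^\theta$, $G(0)$ and $G_Z^\theta$ is genuine.

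First I would determine the cycle type of $w$. Lemma \ref{so} lets us arrange $n_w\in N_{{\rm O}(2n,k)}(T)$. If $m$ is odd, the argument of Lemma \ref{cycles} together with Lemma \ref{reg0} forces $w$ to be a product of $r$ positive $m$-cycles; if $m$ is even, Lemma \ref{cycles} gives the trichotomy stated there (positive $m$-cycles with $n_w^m=-I$, negative $(m/2)$-cycles with $n_w^m=I$, or negative $(m/2)$-cycles plus one negative $1$-cycle with $n_w^m=I$). In each case I would mimic Lemma \ref{sln} to exhibit an explicit basis $c_1,\ldots,c_r$ of ${\mathfrak c}$ consisting of eigenvectors on the relevant coordinates of $\Lie(T)$.

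Second I would compute $W_1=W^\theta/Z_{W^\theta}({\mathfrak c})$ directly from that basis. The natural diagonal action on $(c_1,\ldots,c_r)$ realises $W_1$ inside $G(2m,1,r)$ (odd case) or $G(m,1,r)$ (even case). When $\mathfrak{c}$ does not exhaust $\Lie(T)$, i.e.\ $n>mr$ (odd $m$) or $n>mr/2$ (even $m$ with $n_w^m=I$), a free coordinate outside $\mathfrak{c}$ absorbs the $\SO$-sign constraint and all of $G(2m,1,r)$ (resp.\ $G(m,1,r)$) is realised; when equality holds the determinant condition defining $W\subset\overline W$ restricts the signs to have product $+1$, yielding $G(2m,2,r)$ (resp.\ $G(m,2,r)$). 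The case $n_w^m=-I$ is special because there the minus sign itself is absorbed by $n_w^{m/2}$, which centralises ${\mathfrak c}$ but already lies in $W\setminus W^{\text{sign-}+}$; this kills the parity restriction and gives $G(m,1,r)$ regardless.

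Third, to descend to $W_{\mathfrak c}$ and $W_{\mathfrak c}^Z$ I would reduce $n$ as in Lemmas \ref{typec} and \ref{typeb} to a minimal $\theta$-stable Levi subgroup whose derived group contains ${\mathfrak c}$, lift $\theta$ through Lemma \ref{cover} to the simply connected cover $\Spin(2n,k)$, and verify the hypothesis ${\cal T}\subset T'_mZ(G)$ of Lemma \ref{criter}(b) by the explicit square-root construction used in Lemmas \ref{sln} and \ref{typec}. This gives $W_{\mathfrak c}^Z=W_1$. Finally $W_{\mathfrak c}\subseteq W_{\mathfrak c}^Z$ with equality iff every $w\in W_1$ has a representative in $G(0)$; the missing representatives are precisely the sign changes whose overall sign is $-1$, and such a representative can be produced inside $\SO(2n,k)$ exactly when there is an element of $Z_{{\rm O}(2n,k)}({\mathfrak c})^\theta\setminus Z_G({\mathfrak c})^\theta$ to multiply against a lift from ${\rm O}(2n,k)$. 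This yields the stated trichotomy in (a)(ii) and (b)(ii).

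The main obstacle will be the bookkeeping in Step 3: disentangling $W_{\mathfrak c}$, $W_{\mathfrak c}^Z$ and $W_1$ when $\mathfrak{c}$ exhausts $\Lie(T)$ and simultaneously tracking the sign condition coming from $W\subset\overline W$ and the component condition coming from $\SO(2n,k)$ vs ${\rm O}(2n,k)$. The case distinction $n_w^m=\pm I$ for even $m$ is what makes the $D_n$ statement messier than the $B_n$ and $C_n$ analogues; once one realises that an $m/2$-th power of $n_w$ plays the role of a built-in ``$-I$ correction'' the third case of the even branch collapses into the others.
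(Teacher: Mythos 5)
Your overall template (cycle type via Lemmas \ref{so} and \ref{cycles}, explicit basis of ${\mathfrak c}$, read off $W_1$, then compare with $W_{\mathfrak c}$) is the paper's, and your Step 2 computation of $W_1$ — including the observation that a spare coordinate outside ${\mathfrak c}$ absorbs the $\SO$ sign constraint and that $n_w^{m/2}$ kills the parity restriction when $n_w^m=-I$ — is correct. But Step 3 contains a genuine error. You propose to verify the hypothesis ${\cal T}\subset T'_mZ(G)$ of Lemma \ref{criter}(b) in all cases, conclude $W^Z_{\mathfrak c}=W_1$, and then carve $W_{\mathfrak c}$ out of $W^Z_{\mathfrak c}$ by the determinant condition. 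This cannot work: the statement asserts $W^Z_{\mathfrak c}=W_{\mathfrak c}$ throughout, and in the cases ($m$ odd, $n>mr$, $Z_{{\rm O}(2n,k)}({\mathfrak c})^\theta=Z_G({\mathfrak c})^\theta$) and ($m$ even, $n_w^m=I$, $n>mr/2$, same centralizer condition) both equal $G(2m,2,r)$ (resp.\ $G(m,2,r)$), a \emph{proper} subgroup of $W_1$. So the hypothesis of Lemma \ref{criter}(b) must in fact fail there, your proposed verification is impossible, and your architecture would output $W^Z_{\mathfrak c}\neq W_{\mathfrak c}$, contradicting the lemma. The paper proceeds differently: for $m$ odd it shows $G^\theta_Z=G^\theta=G(0)$ directly (the covering argument of Lemma \ref{typeb} applied both to $\Spin(2n,k)\rightarrow G$ and to $G\rightarrow G/Z(G)$, using $|Z(G)|=2$ and $m$ odd), so $W^Z_{\mathfrak c}=W_{\mathfrak c}$ is automatic, and Lemma \ref{criter} is invoked only after reducing to $n=mr$ (resp.\ $n=mr/2$) to get the containment $G(2m,2,r)\subseteq W_{\mathfrak c}$; for $m$ even with $n_w^m=I$ it checks separately that a representative $gx_1\in G^\theta_Z$ of the odd-sign element would force $g\theta(g^{-1})=-I$, which is impossible, so again $W^Z_{\mathfrak c}=W_{\mathfrak c}$.

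A second, smaller gap: when $G^\theta$ is disconnected ($m$ even, $n_w^m=I$), multiplying the determinant $-1$ representative of the odd-sign element by an element of $Z_{{\rm O}(2n,k)}({\mathfrak c})^\theta\setminus Z_G({\mathfrak c})^\theta$ only lands you in $G^\theta$, whereas $W_{\mathfrak c}$ is defined via $G(0)=(G^\theta)^\circ$. You flag the disconnectedness at the outset but never discharge it. The paper resolves this by choosing such an element $h$ semisimple, normalising $T$ and acting as a single negative $1$-cycle, so that $L=Z_G(h)^\circ\cong\SO(2n-1,k)$ is $\theta$-stable with ${\mathfrak c}\subset\Lie(L)$ and $\theta|_L$ a product of $r$ negative $(m/2)$-cycles; Lemma \ref{typeb} then supplies representatives of all of $G(m,1,r)$ in $L(0)\subset G(0)$.
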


\begin{proof}
Suppose $m$ is odd.
Then the kernel of the universal covering $\Spin(2n,k)\rightarrow G$ contains two elements, and hence we can apply the argument from Lemma \ref{typeb} to deduce that $G^\theta=G(0)$.
Since $Z(G)$ also has two elements, we can apply the same argument to the map $G\rightarrow G/Z(G)$ to deduce that $G_Z^\theta=G(0)$.
Since ${\rm O}(2n,k)/G$ has order 2, clearly $n_w\in G$.
Moreover, $w=n_wT$ contains $r$ positive $m$-cycles and hence it is straightforward to check that $Z_G({\mathfrak c})\cong (k^\times)^{mr}\times\SO(2(n-mr),k)$.
Since any odd-order automorphism of $\SO(2(n-mr),k)$ is inner, it follows by our choice of $T_m$ that $w$ is equal to a product of $r$ positive $m$-cycles.
After conjugating by a suitable element of $N_G(T)$, we may assume that $$w=\begin{pmatrix} 1 & \cdots & m\end{pmatrix}\ldots \begin{pmatrix} (m-1)r+1 & \cdots & mr \end{pmatrix}$$
Thus let $c_i$, $1\leq i\leq r$ be the diagonal matrix with $j$-th entry $\left\{\begin{array}{ll} \zeta^{-j} & \mbox{if $(i-1)m<j\leq im$,} \\ 0 & \mbox{otherwise.}\end{array}\right.$
Then $\{ c_1,\ldots ,c_r\}$ is a basis for ${\mathfrak c}$, and the description of $W_1$ follows immediately.
We claim first of all that $W_{\mathfrak c}\supset G(2m,2,r)$.
For this we may clearly assume that $n=mr$.
But now we can apply the argument in Lemma \ref{sln} to show that ${\cal T}\subset T'_m$, and hence by Lemma \ref{criter}, $W_{\mathfrak c}=W_1$.
This proves (a) if $n=mr$.
Suppose therefore that $n>mr$.
Since an element of $N_{{\rm O}(2m,k)}(T)$ which corresponds to a product of $m$ negative 1-cycles in $\overline{W}$ has determinant $(-1)$, it is easy to see that an element of $W_1$ which acts as $-1$ on $c_1$ and $1$  on all $c_i$, $i\geq 2$ has a representative in $W_{\mathfrak c}$ if and only if $Z_{{\rm O}(n,k)}({\mathfrak c})^\theta$ contains an element of determinant $-1$.
This proves (a).

Suppose therefore that $m$ is even.
By Lemma \ref{cycles}, $\theta=\Int n_w$, where $n_w\in N_{{\rm O}(2n,k)}(T)$ and $w=n_wT\in{\overline{W}}$ is either a product of $r$ positive $m$-cycles, a product of $r$ negative $m/2$-cycles, or a product of $r$ negative $m/2$-cycles and one negative 1-cycle.
Constructing a basis for ${\mathfrak c}$ as above, it is easy to see that $W_1=G(m,1,r)$ unless $n=mr/2$ and $w$ is a product of $r$ negative $(m/2)$-cycles, in which case $W_1=G(m,2,r)$.
If $w$ is a product of positive $m$-cycles then $n_w^m=-I$ and hence $\theta$ is $\Aut G$-conjugate to $\Int t$ for a diagonal matrix $t$ with entries of the form $\xi^l$, where $\xi$ is a square-root of $\zeta$ and $l$ is odd.
It follows that $G^\theta$ is isomorphic to a product of subgroups of the form $\GL(r_i)$ ($r_i\geq r$), and hence is connected.
Thus we can apply Lemma \ref{criter} in this case.
Reducing to the case $n=mr$, the argument in the proof of Lemma \ref{sln} shows that $W_{\mathfrak c}=W_1$.

For the case $n_w^m=I$, $G^\theta$ has two irreducible components and therefore we cannot apply Lemma \ref{criter} directly.
We claim first of all that $W_{\mathfrak c}\supset G(m,2,r)$.
For this, we can clearly reduce to the case $n=mr/2$.
But now $T_m$ is trivial, and hence we can lift $\theta$ to an automorphism of $\Spin(2n,k)$ (by Lemma \ref{cover}) and apply Lemma \ref{criter}.
Thus there remains only the case $n>mr/2$ to deal with.
As for the case of $m$ odd above, we associate a vector $c_i\in{\mathfrak c}$ to each negative $m/2$-cycle $w_i$ in the expression for $w$ such that $w_i(c_j)=\zeta^{\delta_{ij}} c_i$; then $\{ c_i:1\leq i\leq r\}$ forms a basis for ${\mathfrak c}$.
Let $L_1$ and $x_1\in L_1$ be as in the proof of Lemma \ref{cycles}.
Then $x_1$ normalizes $T$ and corresponds to $w_1$.
Conjugating $x_1$ by a suitable element of $t$, we may assume that $\theta(n_1)=n_1$.
Let $\overline{w}_1$ be the image of $w_1$ in $W_1$.
Clearly $x_1$ has determinant $(-1)$, hence $\overline{w}_1$ has a representative in $G^\theta$ if and only if there is some element of $Z_{{\rm O}(2n,k)}({\mathfrak c})^\theta$ of determinant $(-1)$.
This proves that $W_{\mathfrak c}=G(m,2,r)$ if $Z_{{\rm O}(2n,k)}({\mathfrak c})^\theta=Z_G({\mathfrak c})^\theta$.
But suppose there exists some $g\in Z_{{\rm O}(2n,k)}({\mathfrak c})$ such that $gx_1\in G_Z^\theta$.
Then $gx_1\theta(gx_1)^{-1}=g\theta(g^{-1})$.
Since it is evidently impossible that $g\theta(g^{-1})=-I$, $W_{\mathfrak c}^Z=W_{\mathfrak c}$.

Suppose therefore that there exists some element $h\in Z_{{\rm O}(2n,k)}({\mathfrak c})^\theta\setminus Z_G({\mathfrak c})^\theta$.
We note that $H=Z_G({\mathfrak c})^{(1)}\cong \SO(2n-rm,k)$.
Replace $h$ by its semisimple part, which is also in $Z_{{\rm O}(2n,k)}({\mathfrak c})^\theta\setminus Z_G({\mathfrak c})^\theta$.
Thus by Lemma \ref{so}, after multiplying $h$ by some element of $H$ we may assume that $h$ normalizes $T$ and acts on $T\cap H$ as a single negative 1-cycle.
Then $L=Z_G(h)^\circ$ is $\theta$-stable, isomorphic to $\SO(2n-1,k)$, and $\Lie(L)$ contains ${\mathfrak c}$.
Moreover, it is easy to see that $\theta$ acts on $T\cap L$ as a product of $r$ negative $(m/2)$-cycles.
Hence $N_{L(0)}({\mathfrak c})/Z_{L(0)}({\mathfrak c})\cong G(m,1,r)$ by Lemma \ref{typeb}.
We deduce that each element of $W_1$ has a representative in $L(0)\subset G(0)$.
\end{proof}

This is the rest of Vinberg's `Second class', $m$ even, $w$ a product of negative cycles is Type I; $m$ even, $w$ a product of positive $m$-cycles is Type II; $m$ odd is Type II.

\begin{rk}\label{rkd}
(a) Our condition on $Z_{{\rm O}(2n,k)}({\mathfrak c})^\theta$ in (a)(ii) is equivalent to the condition given in \cite{vin}.
There Vinberg determines the properties of an automorphism of $\SO(2n,k)$ of the form $\Int g$ by considering the eigenvalues of $g$.
Note that $g^m=\pm I$; let ${\cal S}$ be the set of $m$-th roots of 1 (resp. $-1$) if $g^m=I$ (resp. $-I$).
Suppose $\theta=\Int n_w$ is of odd order.
After replacing $n_w$ by $-n_w$, if necessary, we may assume that $n_w^m=I$ and hence that ${\cal S}=\{Ê\zeta^i:i\in{\mathbb Z}\}$.
Here $r$ is the integer part of half the minimum multiplicity of an eigenvalue of $g$.
Vinberg's condition for $W_{\mathfrak c}$ to be equal to $G(2m,2,r)$ is that the multiplicity of 1 is exactly equal to $2r$.
Let the multiplicity of $\zeta^i$ be $(2r+s_i)=(2r+s_{m-i})$.
Let $L=Z_G({\mathfrak c})^{(1)}\cong\SO(2(n-rm),k)$; then it is easy to see that $Z_L(n_w)\cong \prod_{i=1}^{(m-1)/2}\GL(s_i,k)\times\SO(s_0,k)$, and hence that there is some element of $Z_{{\rm O}(2n,k)}({\mathfrak c})^\theta\setminus Z_G({\mathfrak c})^\theta$ if and only if $s_0>0$.
Similarly, if $m$ is even and $n_w^m=I$ (Type I) then Vinberg's condition for $W_{\mathfrak c}$ to be equal to $G(m,2,r)$ is that the multiplicity of both 1 and $-1$ in $n_w$ is equal to $r$; this is equivalent to the condition in Lemma \ref{typed}(b)(ii).

(b) In Remarks \ref{ark}, \ref{rkc} and \ref{rkb} we pointed out that if $H$ is a minimal Levi subgroup of $G$ whose derived subgroup $L$ contains $T_1$ then any element of $W_{\mathfrak c}$ has a representative in $L(0)$ and $\theta|_L$ is S-regular.
In fact, in each of those cases the restriction $\theta|_{L}$ is also $N$-regular, that is, ${\mathfrak g}(1)$ contains a regular nilpotent element of ${\mathfrak g}$.
(This will be proved in Sect. 6.)
While here we can always find some Levi subgroup such that the restriction of $\theta$ to the derived subgroup $L$ is $S$-regular, it is not in general true that $\theta|_{L}$ is $N$-regular.
In addition, not every element of $W_{\mathfrak c}$ has a representative in $L$.
The exceptions are the cases (i) $m$ is odd and $W_{\mathfrak c}=G(2m,1,r)$ and (ii) $m$ is even, $w$ is a product of negative cycles and $W_{\mathfrak c}=G(m,1,r)$.
On the other hand, in the second case there is a reductive subgroup which contains ${\mathfrak c}$ in the right way.
To see this, let $L=Z_G(h)^\circ\cong\SO(2n-1,k)$ be as constructed in the final paragraph of the proof of Lemma \ref{typed}.
We can then reduce further to a subgroup isomorphic to $\SO(rm+1,k)$, see Rk. \ref{rkb}.
In fact there is a similar construction for the first case as well: let $h\in Z_{{\rm O}(2n,k)}({\mathfrak c})^\theta\setminus Z_G({\mathfrak c})^\theta$, which we can assume to be semisimple, to normalize $T$ and to act on $Z_G({\mathfrak c})^{(1)}\cap T$ as a single negative 1-cycle by the same argument as at the end of the proof above.
Then we also have $Z_G(h)^\circ\cong\SO(2n-1,k)$, ${\mathfrak c}\subset{\mathfrak g}^h$ and each element of $W_{\mathfrak c}$ has a representative in $Z_G(h)^\circ(0)$.
Thus we can take $L$ to be the subgroup of $Z_G(h)^\circ$ constructed as in Rk. \ref{rkb}.
Here we have $L\cong\SO(2rm+1,k)$.
For the other cases we can take $L$ to be $H^{(1)}$, where $H$ is the minimal Levi subgroup of $G$ whose derived subgroup contains $T_1$.
If $m$ is odd and $W_{\mathfrak c}=G(2m,2,r)$ then we have $L\cong\SO(2rm,k)$; if $m$ is even and $w$ is a product of positive $m$-cycles (that is, $n_w^m=-I$) then $L\cong\SL(rm,k)$; if $m$ is even, $n_w^m=I$ and $W_{\mathfrak c}=G(m,2,r)$ then $L\cong \SO(rm,k)$.
\end{rk}

Before we complete the final (classical) case, we require a preparatory lemma.
Let $\gamma:\SL(n,k)\rightarrow\SL(n,k)$, $g\mapsto {^t}g^{-1}$ and let $\psi=\Int J_n\circ\gamma$.
(Hence $\psi(T)=T$ and $G^\psi=\SO(n,k)$.)
Let $T_+=\{ t\in T\mid \psi(t)=t\}^\circ$ and let $T_-=\{ t\in T\mid \psi(t)=t^{-1}\}^\circ$.

\begin{lemma}\label{outersln}
Let $G=\SL(n,k)$, $n>2$, $\charac k\neq 2$.

(a) Any semisimple outer automorphism of $G$ is conjugate to one of the form $\Int t\circ\psi$, where $t\in T_+$.

(b) Two semisimple outer automorphisms $\theta=\Int g\circ\gamma$, $\sigma=\Int h\circ\gamma$ of $G$ are $\Int G$-conjugate if and only if $g\gamma(g)$ and $h\gamma(h)$ are $G$-conjugate.
\end{lemma}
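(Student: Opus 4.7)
For part (a), my plan is to apply Steinberg's theorem on $\theta$-stable Borel pairs (valid since $\theta$ is semisimple) to reduce, after an initial $\Int G$-conjugation, to the case where $\theta$ stabilizes the standard pair $(B,T)$. Since $\psi$ also stabilizes $(B,T)$, the composition $\theta\psi^{-1}$ is inner and $(B,T)$-stable, hence equals $\Int t$ for some $t\in T$; thus $\theta = \Int t\circ\psi$. Decomposing $t = t_+t_-$ via $T = T_+\cdot T_-$ and conjugating by $\Int s$ for $s\in T_-$, a direct calculation using $\psi|_{T_+} = \mathrm{id}$, $\psi|_{T_-} = (\,\cdot\,)^{-1}$, and commutativity of $T$ yields
\[
\Int s\circ(\Int t\circ\psi)\circ\Int s^{-1} = \Int(s^2 t)\circ\psi.
\]
Since $\charac k\neq 2$, squaring is surjective on the torus $T_-$, so one can solve $s^2 = t_-^{-1}$, bringing $\theta$ to the desired form with $t\in T_+$.

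For the forward direction of (b), if $\Int x\circ\theta\circ\Int x^{-1} = \sigma$ then $xg\gamma(x)^{-1} = hz$ for some $z\in Z(G)$, and
\[
h\gamma(h) = xg\gamma(x)^{-1}z\cdot\gamma(xg\gamma(x)^{-1}z) = xg\gamma(g)x^{-1}\cdot z\gamma(z).
\]
The crucial identity is $z\gamma(z) = 1$ for any $z\in Z(\SL(n,k)) = \mu_n\cdot I$, since $\gamma(\zeta I) = \zeta^{-1}I$ gives $\zeta I\cdot\gamma(\zeta I) = I$. Therefore $h\gamma(h) = xg\gamma(g)x^{-1}$.

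For the reverse direction of (b), I first use the forward identity in reverse: from $g\gamma(g) = yh\gamma(h)y^{-1}$, conjugating $\sigma$ by $\Int y$ transforms $h\gamma(h)$ to $yh\gamma(h)y^{-1} = g\gamma(g)$, reducing us to the case $g\gamma(g) = h\gamma(h)$. I then apply part (a) to put both $\theta$ and $\sigma$ into the normal form $\Int(tJ_n)\circ\gamma$, $\Int(t'J_n)\circ\gamma$ with $t,t'\in T_+$. A short calculation using $\psi(t) = t$ (equivalently $J_n\gamma(t)J_n = t$) gives $g\gamma(g) = t^2$ and $h\gamma(h) = (t')^2$; these remain $G$-conjugate after the further conjugations (both being conjugates of the common original value), hence $W$-conjugate since both lie in $T$. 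To lift this $W$-conjugacy to an $\Int G$-conjugacy between the normal forms, I conjugate by a signed-permutation-matrix representative $n_w\in N_G(T)$ of a $w$ with $w(t^2) = (t')^2$ (for which $\gamma(n_w) = n_w$) and renormalize via (a); the residual $2$-torsion discrepancy between the resulting $T_+$-element and $t'$ is killed by a further conjugation by an element $x = x_+x_-\in T$, using that such a conjugation shifts the $T_+$-component of $t$ by $x_-^2\in T_-$, and that squaring remains surjective on $T_-$.

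The principal technical obstacle is this final step. A general $w\in W$ achieving $w(t^2) = (t')^2$ need not lie in the stabilizer $Z_W(w_0)$ of $T_+$, so conjugation by $n_w$ may push us outside the normal form and necessitate re-applying part (a). Moreover, $t\in T_+$ is determined by $t^2$ only modulo the finite $2$-torsion subgroup $T_+\cap\mu_2^n$; the hypothesis $\charac k\neq 2$ is essential both to guarantee surjectivity of squaring on the relevant subtori and to ensure that any remaining sign ambiguity can be absorbed into $Z(\SL(n,k))$ via the identity $z\gamma(z) = 1$ used once more.
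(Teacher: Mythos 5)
Your argument tracks the paper's closely. Part (a) is handled identically (Steinberg's theorem, then absorbing the $T_-$-part by conjugation, using $\charac k\neq 2$ for surjectivity of squaring). For the forward direction of (b), your use of $z\gamma(z)=1$ for $z\in Z(\SL(n,k))$ is a genuine simplification over the paper's device of rescaling the conjugating element by a square root $\xi$ (so that $(\xi x)g\gamma(\xi x)^{-1}=h$); both are correct. For the reverse direction both proofs perform the same reduction: normalize $\theta$ and $\sigma$ via (a), then absorb the residual $2$-torsion $s\in T_+$ (with $s^2=1$) into a square from $T_-$. You are right to flag this as the ``principal technical obstacle'': the paper's assertion that $s\in T_+\cap T_-$ is not literally correct when $n$ is even, because $s=\diag(\epsilon_1,\ldots,\epsilon_{n/2},\epsilon_{n/2},\ldots,\epsilon_1)$ lies in $T_-=T_-^\circ$ only when $\prod\epsilon_i=1$; for odd parity $s$ lies in $\{t:\psi(t)=t^{-1}\}$ but not in its identity component. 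Your instinct to absorb the remaining sign into $Z(\SL(n,k))$ via $z\gamma(z)=1$ is exactly the right fix: one chooses $\zeta\in\mu_n(k)$ with $\zeta^{n/2}=-1$ (possible since $\charac k\neq 2$, even when $\charac k$ divides $n$), so that $\zeta s\in T_-$ and one conjugates by $v\in T_-$ with $v^2=\zeta s$, the factor $\zeta I$ being harmless because $\Int(\zeta I)=\Id$. (For $n$ odd every $2$-torsion element of $T_+$ already lies in $T_-$ and no sign issue arises.) So your proof is the paper's, with a cleaner forward direction and a more honest acknowledgement of the sign ambiguity in the reverse direction --- though to close the argument that last step still needs to be written out along the lines just indicated rather than merely gestured at.
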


\begin{proof}
By Steinberg's result on semisimple automorphisms \cite[7.5]{steinberg} any semisimple outer automorphism $\theta$ of $G$ stabilizes a maximal torus of $G$ and a Borel subgroup containing it.
After conjugation we may therefore assume that $\theta(T)=T$ and $\theta(B)=B$, where $B$ is the group of upper triangular matrices of determinant 1.
Since $\theta$ is outer, it follows at once that $\theta=\Int t\circ\psi$ for some $t\in T$.
Moreover, if $s\in T_-$ then $\Int s\circ\psi\circ\Int s^{-1}=\Int s^2\circ\psi$.
Thus we may assume that $t\in T_+$.
This proves (a).
For (b), suppose $\theta$ and $\sigma$ are conjugate.
Then $xg\gamma(x^{-1})=\xi^{-2} h$ for some $x\in G$, $\xi\in k^\times$.
Thus $(\xi x)g\gamma((\xi x)^{-1})=h$, hence we may assume that $\xi=1$.
It follows that $xg\gamma(g)x^{-1}=h\gamma(h)$.
Suppose on the other hand that $g\gamma(g)$ and $h\gamma(h)$ are conjugate.
After conjugating by inner automorphisms of $G$ if necessary we may assume by (a) that $g,h\in T_+$.
But now $g\gamma(g)=g^2$ and $h\gamma(h)=h^2$, and with these assumptions $g^2$ and $h^2$ are in fact ${\rm O}(n,k)$-conjugate.
Now it is easy to see that $g=sh$ for some $s\in T_+\cap T_-$.
Thus $\theta$ is $\Int G$-conjugate to $\sigma$.
\end{proof}

\begin{lemma}\label{mcycles}
(a) Let $G=\GL(l,k)$ and let $g\in N_G(T)$ represent an $l$-cycle in $W$.
Then there exists $t\in T$ such that all but one of the non-zero entries of $tg\gamma(t^{-1})$ is equal to 1.

(b) Suppose $g$ is as in (a), that all but one of the non-zero entries of $g$ is equal to 1 and that $l$ is even.
Then the remaining entry is $-\det g$ and $(g\gamma(g))^{l/2}$ is a diagonal matrix with $l/2$ entries equal to $-\det g$ and $l/2$ entries equal to $-1/\det g$.
\end{lemma}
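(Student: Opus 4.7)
The plan is to reduce both parts to explicit computations after choosing coordinates adapted to the $l$-cycle, exploiting the identity $\gamma(t^{-1})=t$ for diagonal $t\in T$.

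For (a), I would first conjugate $g$ by a permutation matrix (note that the Weyl group of $\GL(l,k)$ acts on all such $l$-cycles transitively by conjugation, and conjugation by permutation matrices commutes with $\gamma$) so that $g$ represents the standard cycle $(1\,2\,\cdots\,l)$, and hence has nonzero entries $g_{i,i+1}=a_i$ for $1\le i\le l-1$ and $g_{l,1}=a_l$ for some $a_i\in k^\times$. For $t=\diag(t_1,\ldots,t_l)\in T$ one computes $\gamma(t^{-1})={}^tt=t$, so $tg\gamma(t^{-1})=tgt$, whose nonzero entries are $t_ia_it_{i+1}$ in the same positions (indices mod $l$). The system $t_ia_it_{i+1}=1$ for $i=1,\ldots,l-1$ can then be solved by choosing $t_1\in k^\times$ freely and defining recursively $t_{i+1}=(t_ia_i)^{-1}$; this kills every nonzero entry except possibly $t_la_lt_1$.

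For (b), with $g$ normalized so that $g_{i,i+1}=1$ for $i<l$ and $g_{l,1}=c$, the cofactor expansion along the first column gives $\det g=(-1)^{l-1}c$, which equals $-c$ when $l$ is even; hence $c=-\det g$. Then $g^{-1}$ is supported on the inverse cycle with $(g^{-1})_{i+1,i}=1$ for $i<l$ and $(g^{-1})_{1,l}=c^{-1}$, so $\gamma(g)={}^tg^{-1}$ has the same shape as $g$ but with $c$ replaced by $c^{-1}$: namely $\gamma(g)_{i,i+1}=1$ for $i<l$ and $\gamma(g)_{l,1}=c^{-1}$.

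The matrix $g\gamma(g)$ is then a monomial matrix implementing the index shift $j\mapsto j-2 \pmod l$, with all weights equal to $1$ except for the two ``wrap-around'' positions: $e_2\mapsto c\,e_l$ and $e_1\mapsto c^{-1}e_{l-1}$. Because $l$ is even, this shift-by-$2$ permutation has exactly two orbits on $\{1,\ldots,l\}$, the even indices and the odd indices, each of length $l/2$. Hence $(g\gamma(g))^{l/2}$ is diagonal, and the diagonal entry at index $j$ is the product of the weights encountered as one traverses the orbit of $j$. The even orbit $2\to l\to l-2\to\cdots\to 4\to 2$ picks up exactly one factor of $c$ and no factors of $c^{-1}$, while the odd orbit $1\to l-1\to l-3\to\cdots\to 3\to 1$ picks up exactly one factor of $c^{-1}$ and no factors of $c$. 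Using $c=-\det g$ and $c^{-1}=-1/\det g$, this gives the stated $l/2$ diagonal entries $-\det g$ and $l/2$ diagonal entries $-1/\det g$. The only potential obstacle is the bookkeeping of which index produces which wrap-around factor, but it is essentially a one-line verification once $g$ and $\gamma(g)$ are written in the above form.
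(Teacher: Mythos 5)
Your argument is correct: after reducing by a permutation conjugation (valid since $\gamma$ fixes permutation matrices) and using $\gamma(t^{-1})=t$ for diagonal $t$, the recursive choice of $t_i$ handles (a), and the determinant and orbit bookkeeping for the shift-by-two permutation gives exactly the claimed diagonal form in (b). The paper's own proof is just ``a straightforward calculation,'' so you have carried out precisely the intended computation.
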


\begin{proof}
A straightforward calculation.
\end{proof}

Note that if in Lemma \ref{mcycles}(a) all of the non-zero entries of $g$ are equal to $\pm 1$ then $\gamma(g)=g$.
This observation will be useful in the proof of Lemma \ref{slnouter} below.

\begin{lemma}\label{formsln}
Suppose $G=\SL(n,k)$ and $\theta$ is outer.
Then $\theta=\Int n_w\circ\gamma$ for some $n_w\in N_G(T)$.

(a) If $m/2$ is even then $w=n_wT$ is a product of $r$ $m$-cycles and $[(n-rm)/2]$ 2-cycles.

(b) If $m/2$ is odd then either:

(i) $(n_w\gamma(n_w))^{m/2}=-I$ and $w$ is a product of $r$ $m$-cycles and $[(n-rm)/2]$ 2-cycles, or;

(ii) $(n_w\gamma(n_w))^{m/2}=I$ and $w$ is a product of $r$ $(m/2)$-cycles and $[(n-rm/2)/2]$ 2-cycles.
\end{lemma}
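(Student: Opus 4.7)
The plan is to determine the cycle structure of $w \in W = S_n$ by combining three constraints: $\theta$ has order $m$, the Cartan subspace ${\mathfrak c}$ has maximal dimension $r$, and $T$ is chosen so that $T = Z_G(T_m T_1)$. After conjugation I may assume $T$ is the diagonal torus, so that $\gamma$ literally acts on $T$ by inversion and $d\theta|_{\mathfrak t}$ acts as $h \mapsto -w(h)$.

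Decomposing $w$ into disjoint cycles of lengths $l_i$, on the $l_i$-dimensional sub-torus $V_i$ corresponding to each cycle, the action of $d\theta$ has order $l_i$ if $l_i$ is even and $2l_i$ if $l_i$ is odd, with eigenvalues $-\zeta_{l_i}^j = \zeta_{2l_i}^{l_i + 2j}$ for $j = 0,\ldots, l_i - 1$. Identifying these with primitive $m$-th roots of unity, I would verify that an $l_i$-cycle contributes a one-dimensional subspace to ${\mathfrak t}(1)_\theta$ exactly when $l_i = m$, or when $l_i = m/2$ and $m/2$ is odd (case (b) only), and contributes zero otherwise. Since $\dim{\mathfrak c} = \dim{\mathfrak t}(1)_\theta = r$ by maximality, exactly $r$ cycles of $w$ are of this ``long'' type.

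Next I would compute $(n_w\gamma(n_w))^{m/2}$ block-by-block: by Lemma \ref{mcycles}(b) normalized $m$-cycles contribute $-I_m$ and $2$-cycles contribute $\pm I_2$ (sign depending on the block determinant), while a short analogous calculation shows that normalized $(m/2)$-cycles (which are odd in case (b)) contribute $I_{m/2}$ and a fixed point contributes $1$. The condition $\theta^m = 1$ forces $(n_w\gamma(n_w))^{m/2} \in Z(\SL(n,k))$, hence to be a scalar matrix. In case (a) the long cycles must be $m$-cycles and the remaining blocks are $2$-cycles (with at most one fixed point), determinants chosen to produce a uniform scalar. In case (b), mixing $m$-cycles (contributing $-I$) with $(m/2)$-cycles (contributing $+I$) violates scalarity, yielding the dichotomy: (b)(i) consists of $r$ $m$-cycles with $\xi = -1$, and (b)(ii) consists of $r$ $(m/2)$-cycles with $\xi = +1$.

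The main obstacle will be ruling out cycles of length $l$ with $2 < l$ and $l \notin \{m, m/2\}$. For this I plan to invoke the regularity of $T_m T_1$ implicit in $T = Z_G(T_m T_1)$ from Lemma \ref{treg}: for such an $l$-cycle the joint dimension of $(T_m T_1)|_{V_i}$ is strictly smaller than $l - 1$, so some root $\alpha_{ij}$ relating two coordinates within the cycle vanishes on $T_m T_1|_{V_i}$, and the corresponding root subgroup enlarges $Z_G(T_m T_1)$ beyond $T$, contradicting the choice of $T$. This leaves only cycles of length in $\{1, 2, m/2, m\}$; a coordinate count then yields the stated number of $2$-cycles together with at most one fixed point.
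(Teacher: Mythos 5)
Your approach is genuinely different from the paper's. The paper decomposes $w$ into a product of $m$-cycles times a remainder $w'_m$, computes $Z_G({\mathfrak c})^{(1)} \cong \SL(n-rm,k)$, observes that $\theta$ restricts to a zero-rank outer automorphism of that subgroup, and invokes Lemma \ref{outersln} to pin down $w'_m$ as a product of $2$-cycles. You instead carry out a direct eigenvalue analysis of $d\theta = -w$ on ${\mathfrak t}$ to count which cycle lengths contribute to ${\mathfrak c} = {\mathfrak t}(1)$, and then propose to rule out the remaining intermediate cycle lengths via the regularity of $T_mT_1$. The eigenvalue count and the scalar constraint on $(n_w\gamma(n_w))^{m/2}$ are handled correctly, and give the right dichotomy in case (b); the route is a legitimate alternative to the paper's reduction.

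However, there is a genuine gap in the final paragraph. You assert that $\dim (T_mT_1)|_{V_i} < l - 1$ forces some root $\alpha_{ij}$ inside the cycle to vanish on $T_mT_1$. That implication is false, and the simplest counterexample is an $m$-cycle itself: there $(T_mT_1)|_{V_i}$ has dimension $1 + \varphi(m)$, which is strictly smaller than $m-1$ as soon as $m \geq 5$, yet no root inside the block vanishes — exactly as must be the case, since $m$-cycles are allowed. So a bare dimension count cannot separate the $m$-cycles you must keep from the intermediate cycles you must discard. The argument can be repaired only by using the explicit shape of $(T_mT_1)|_{V_i}$. For an intermediate $l$ with $2 < l$ and $l$ neither $m$ nor an odd $m/2$, one first checks $T_1 \cap V_i$ is trivial: the eigenvalue $\zeta$ appears among $\zeta^{m/2 + j(m/l)}$ only if $(m/l) \mid (1-m/2)$, and since $m/l$ also divides $m$ it must divide $2(1-m/2) + m = 2$, forcing $l \geq m/2$, and $l = m/2$ even is excluded because $1 - m/2$ is then odd. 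Thus $(T_mT_1)|_{V_i}$ is exactly $\{(t,\,t^{-1},\,t,\,t^{-1},\ldots)\}$ when $l$ is even and trivial when $l$ is odd; on this specific subtorus $\alpha_{13}$ vanishes (using $l \geq 3$), contradicting $T = Z_G(T_mT_1)$. A similar explicit check, not a dimension bound, rules out two fixed points. You need this structural identification in place of the implication you stated.
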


\begin{proof}
Since $\Aut G$ is generated over $\Int G$ by $\gamma$, clearly $\theta=\Int n_w\circ\gamma$ for some $n_w\in N_G(T)$.
If $m/2$ is even then $w=w_m\cdot w'_m$, where $w_m$ is a product of $r$ $m$-cycles and $w'_m$ is a product of cycles of length less than $m$.
Since $\gamma$ acts trivially on $W$, we may conjugate $\theta$ by $\Int g$ for a suitable element $g\in N_G(T)$ such that $$w_m=\begin{pmatrix} 1 & \ldots & m \end{pmatrix} \ldots \begin{pmatrix} (r-1)m+1 & \ldots & rm \end{pmatrix}$$
Hence ${\mathfrak c}$ has a basis $\{ c_i: 1\leq i\leq r\}$, where $c_i$ is the matrix with $j$-th diagonal entry: $$\left\{\begin{array}{ll}Ê(-\zeta)^{-j} & \mbox{if }(i-1)m<j\leq im, \\ 0 & \mbox{otherwise.}\end{array}\right.$$
Since $m/2$ is even, the $(-\zeta)^{-j}$ are distinct for distinct $j\in{\mathbb Z}/m{\mathbb Z}$ and hence $Z_G({\mathfrak c})\cong (k^\times)^{rm-1}\times\GL(n-rm,k)$.
Now $L=Z_G({\mathfrak c})^{(1)}\cong\SL(n-rm,k)$ and hence by Lemma \ref{outersln} $\Int n_w$ acts on $T\cap L$ as a product of $[(n-rm)/2]$ 2-cycles.
This proves (a).

Let $m/2$ be odd.
If $g\in \GL(m/2,k)$ represents an $m/2$-cycle then $(g\gamma(g))$ also represents an $m/2$-cycle and is of determinant 1, hence (since $m/2$ is odd) $(g\gamma(g))^{m/2}=(\det g\gamma(g))I_{m/2}=I_{m/2}$.
Returning to the general case, $w=w_m\cdot w_{m/2}\cdot w'_m$, where $w_m$ is a product of $r_1$ $m$-cycles, $w_{m/2}$ is a product of $r_2=r-r_1$ $(m/2)$-cycles, and $w'_m$ is a product of cycles of length less than $m/2$.
Write $w_m=w_1\ldots w_{r_1}$ and $w_{m/2}=w_{r_1+1}\ldots w_r$ and let $c_i\in{\mathfrak c}$ be such that $w_i(c_j)=(-\zeta)^{\delta_{ij}}c_j$.
Suppose $r_2>0$: we claim that $(n_w\gamma(n_w))^{m/2}=I$.
Indeed, we can easily construct a $\theta$-stable subgroup $L_r$ of $G$ which is isomorphic to $\SL(m/2,k)$ and such that $c_r\in\Lie(L_r)$.
Then since $\theta|_{L_r}=\Int n_r\circ\gamma$, where $n_r\in L_r$ and $(n_r\gamma(n_r))^{m/2}=I_{m/2}$, we must have $\theta=\Int xn_r\circ\gamma$, where $x\in Z_G(L_r)$ and therefore $(xn_r\gamma(xn_r))^{m/2}=(x\gamma(x))^{m/2}$ must be equal to the identity matrix.
Suppose on the other hand that $w_m$ is non-trivial.
We claim that in this case $(n_w\gamma(n_w))^{m/2}=-I$.
It will clearly suffice to prove this claim when $r=r_1=1$ and $n=m$.
In this case $n_w\gamma(n_w)$ represents a product of two $(m/2)$-cycles in $N_G(T)$.
Thus $(n_w\gamma(n_w))^{m/2}=\pm I$ by Lemma \ref{mcycles}(b).
But if $(n_w\gamma(n_w))^{m/2}=I$ then $n_w\gamma(n_w)$ is conjugate to $\diag (\zeta^{m-1},\zeta^{m-1},\zeta^{m-2},\ldots,1)$ and therefore by Lemma \ref{outersln}(b) $\theta$ is conjugate to an automorphism $\Int g\circ\gamma$ for some $g\in N_G(T)$ which acts on $T$ as a product of two $(m/2)$-cycles.
Since in this case the rank of $\theta$ is 2, this contradicts the assumption that ${\mathfrak c}$ is maximal.
Thus either $w=w_m\cdot w'_m$ or $w=w_{m/2}\cdot w'_m$.
In either case one can apply the argument used in the first paragraph to show that $w'_m$ is a product of 2-cycles as indicated in the Lemma.
\end{proof}

\begin{rk}
(a) If $m/2$ is odd and $w$ is a product of $r$ $m$-cycles and $[(n-rm)/2]$ 2-cycles, the argument in the first part of the proof shows that $Z_G({\mathfrak c})^{(1)}=(\SL(2,k)^{m/2})^r\times\SL(n-rm,k)$.
It is an easy exercise to check in this case that the condition $(n_w\gamma(n_w))^{m/2}=-I$ implies that $\theta$ acts as a zero rank automorphism on the part which is isomorphic to $(\SL(2,k)^{m/2})^r$.

(b) In Vinberg's classification, this is the Fourth case: $m/2$ even is Type III; $m/2$ odd, $w$ a product of $m$-cycles and 2-cycles is Type II, and $m/2$ odd, $w$ a product of $(m/2)$-cycles and 2-cycles is Type I.
\end{rk}

We recall that an automorphism of $\SL(n,k)$ has a unique extension to an automorphism of $\GL(n,k)$ unless $n=2$ (\cite[Lemma 1.4(ii)]{invs}).
In the following lemma, we abuse notation and use $\theta$ to denote the automorphism of $\GL(n,k)$ induced by the action of $\theta$ on $\SL(n,k)$.
(This only appears here for $n>2$ unless $\theta$ is of zero rank.)

\begin{lemma}\label{slnouter}
Let $G=\SL(n,k)$ and let $\theta$ be outer.

(a) If $m/2$ is odd then $W_{\mathfrak c}=W_{\mathfrak c}^Z=W_1=G(m/2,1,r)$.

(b) If $m/2$ is even then $\theta=\Int n_w\circ\gamma$ where $n_w\in N_G(T)$.
We have $W^Z_{\mathfrak c}=W_1=G(m,1,r)$ and $$W_{\mathfrak c}=\left\{\begin{array}{ll} G(m,1,r) & \mbox{if $(n_w\gamma(n_w))^{m/2}=-I$ or $n>mr$ and $Z_{\GL(n,k)}({\mathfrak c})^\theta\neq Z_G({\mathfrak c})^\theta$,} \\ G(m,2,r) & \mbox{otherwise.}\end{array}\right.$$
\end{lemma}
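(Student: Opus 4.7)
My plan is to follow the template of Lemmas \ref{sln}, \ref{typec}--\ref{typed}: apply Lemma \ref{formsln} to normalize $w = n_w T$ up to $N_G(T)$-conjugacy, write down an explicit basis of ${\mathfrak c}$, read off $W_1$ by inspecting the action on this basis, reduce via a $\theta$-stable maximal torus of $Z_G({\mathfrak c})^{(1)}$ to the case where the ``inert'' factor of the centralizer disappears, and then invoke Lemma \ref{criter} to lift $W_1$ to $W_{\mathfrak c}^Z$ and $W_{\mathfrak c}$. Since $G=\SL(n,k)$ is simply connected, $G^\theta=G(0)$ by \cite[8.1]{steinberg}, so $W_{\mathfrak c}=W_{\mathfrak c}^Z$ throughout.

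For part (a), Lemma \ref{formsln}(b) gives two possibilities for $w$: a product of $r$ $m$-cycles and $[(n-rm)/2]$ 2-cycles in subcase (i), or a product of $r$ $(m/2)$-cycles and $[(n-rm/2)/2]$ 2-cycles in subcase (ii). In either case the cycle $w_i$ scales the basis vector $c_i$ by $-\zeta$, and since $m/2$ is odd a direct computation shows that $-\zeta$ has order exactly $m/2$ (whereas if $m/2$ is even it has order $m$). Hence $W_1 \subseteq G(m/2,1,r)$, and inspection of explicit $N_G(T)$-representatives of permutations and diagonal scalings shows that equality holds. After reducing to $n=rm$ or $n=rm/2$ respectively via a $\theta$-stable maximal torus of $Z_G({\mathfrak c})^{(1)}$, I will mimic the diagonal-matrix construction from Lemma \ref{sln} to exhibit any torsion element of $T_m$ as $t^{-1}\theta(t)$, so that ${\cal T}\subset T'_m$, and Lemma \ref{criter}(a) then yields $W_{\mathfrak c}=W_1=G(m/2,1,r)$.

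For part (b), Lemma \ref{formsln}(a) forces $w$ to be a product of $r$ $m$-cycles plus $[(n-rm)/2]$ 2-cycles, and since $m/2$ is even, $-\zeta$ now has order exactly $m$. The same construction gives $W_1 = G(m,1,r)$. Reducing to $n=rm$, the square-root-of-$\zeta$ trick from Lemma \ref{sln} establishes ${\cal T}\subset T'_m\cdot Z(G)$ (but not in general ${\cal T}\subset T'_m$), so Lemma \ref{criter}(b) gives $W_{\mathfrak c}^Z = G(m,1,r)$. The remaining dichotomy---whether $W_{\mathfrak c}$ equals $G(m,1,r)$ or the index-$2$ subgroup $G(m,2,r)$---reduces to whether certain specific elements of $G(m,1,r)\setminus G(m,2,r)$ have representatives in $G(0)$ rather than only in $G_Z^\theta$. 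The two listed sufficient conditions are the natural analogues of those in Lemma \ref{typed}(b)(ii): if $(n_w\gamma(n_w))^{m/2}=-I$, then by the remark following Lemma \ref{formsln} the centralizer $Z_G({\mathfrak c})^{(1)}$ contains $\SL(2)$-blocks on which $\theta$ acts as a zero-rank automorphism, and these blocks supply the required representative in $G(0)$; if instead there exists $h\in Z_{\GL(n,k)}({\mathfrak c})^\theta\setminus Z_G({\mathfrak c})^\theta$, then replacing $h$ by its semisimple part and normalizing $T$, the $\theta$-stable subgroup $Z_G(h)^\circ$ produces the required representative by the construction at the end of the proof of Lemma \ref{typed}.

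The main obstacle will be completing the $G(m,1,r)$ versus $G(m,2,r)$ determination in part (b): one must verify that the two listed sufficient conditions are also necessary, i.e., that when $(n_w\gamma(n_w))^{m/2}=I$ and $Z_{\GL(n,k)}({\mathfrak c})^\theta = Z_G({\mathfrak c})^\theta$, no element of $N_{G(0)}({\mathfrak c})$ lifts the missing coset. This requires careful determinant bookkeeping across the outer involution $\gamma$: the upshot should be that any element of $N_{\GL(n,k)}({\mathfrak c})$ representing the missing coset necessarily has determinant $-1$, hence cannot lie in $G=\SL(n,k)$ in the absence of a compensating element of $Z_{\GL(n,k)}({\mathfrak c})^\theta\setminus Z_G({\mathfrak c})^\theta$.
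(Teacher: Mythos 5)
Your general template (normalize $w$, build an explicit basis of ${\mathfrak c}$, read off $W_1$, then lift via Lemma \ref{criter}) correctly handles the easy subcases, but it does not survive two of the subcases, and the paper's proof is structured quite differently in those places.

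\textbf{Part (a), the subcase $(n_w\gamma(n_w))^{m/2}=-I$.} Here $w$ is a product of $r$ $m$-cycles, but after reducing to $n=rm$ the centralizer is $Z_G({\mathfrak c})^{(1)}\cong(\SL(2,k)^{m/2})^r$ (each eigenvalue of a generic element of ${\mathfrak c}$ has multiplicity $2$ because $-\zeta$ has order $m/2$), so $T_m=(T^\theta)^\circ$ is an $r$-dimensional torus and is emphatically not trivial. Moreover, the hypothesis of Lemma \ref{criter}(a) actually fails: for $t\in{\cal T}$ on a single $m$-cycle, say $t=(\zeta,\zeta^{-1},\ldots)$, any $s\in(k^\times)^m$ with $s^{-1}\theta(s)=t$ has $\prod_j s_j=\zeta^{m/2}=-1$, and since $\det u=1$ for every $u\in T^\theta$ (the entries on each cycle pair off as $u_i,u_i^{-1}$), no translate of $s$ has determinant $1$. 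So ${\cal T}\not\subset T'_m$ and the ``diagonal-matrix construction from Lemma \ref{sln}'' cannot produce a preimage inside $\SL(rm,k)$. The paper deals with this by an entirely different device: after arranging $\gamma(n_w)=n_w$ (Lemma \ref{mcycles}), it passes to $L=Z_{\GL(n,k)}(n_w^{m/2})\cong\GL(rm/2,k)\times\GL(rm/2,k)$, on which $\theta$ swaps the two factors, and identifies the little Weyl group of $\theta|_L$ with that of the \emph{inner} automorphism $\theta^2|_{L_1}$, which is $G(m/2,1,r)$ by Lemma \ref{sln}.

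\textbf{Part (b), the $G(m,1,r)$ versus $G(m,2,r)$ dichotomy.} The $\SL(2)$-block/zero-rank explanation you give for the $(n_w\gamma(n_w))^{m/2}=-I$ case is imported from the remark after Lemma \ref{formsln}, but that remark concerns $m/2$ \emph{odd}; in part (b) $m/2$ is even and $Z_G({\mathfrak c})^{(1)}\cong\SL(n-rm,k)$ has no $\SL(2)$-blocks at all (the eigenvalues $(-\zeta)^{-j}$ are pairwise distinct since $-\zeta$ has order $m$). What the paper actually does is again pass to a fixed-point subgroup: with $\gamma(n_w)=n_w$ and $n=rm$, set $\phi=\Int n_w^{m/2}\circ\gamma$ and $L=G^\phi$, which is $\Sp(rm,k)$ when $n_w^m=-I$ and $\SO(rm,k)$ when $n_w^m=I$; $\theta|_L$ is a product of negative $(m/2)$-cycles, and Lemmas \ref{typec}, \ref{typed} then give $N_{L(0)}({\mathfrak c})/Z_{L(0)}({\mathfrak c})=G(m,1,r)$ (symplectic) or $G(m,2,r)$ (orthogonal), which forces the lower bound on $W_{\mathfrak c}$. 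Only after that does the determinant bookkeeping you describe at the end (the $\pm1$ determinant of the $m\times m$ blocks of $n_w$ and the existence of $h\in Z_{\GL(n,k)}({\mathfrak c})^\theta\setminus Z_G({\mathfrak c})^\theta$) settle the remaining case $n>rm$, $(n_w\gamma(n_w))^{m/2}=I$; that part of your sketch is in the right spirit.

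In short, the missing idea in your proposal is the repeated use of auxiliary fixed-point (pseudo-Levi) subgroups $Z_{\GL(n,k)}(n_w^{m/2})$ and $G^{\Int n_w^{m/2}\circ\gamma}$ to reduce the outer $\SL(n)$ computation to the inner type-$A$, type-$C$ and type-$D$ cases already done; the purely torus-theoretic criterion of Lemma \ref{criter} is not strong enough by itself here.
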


\begin{proof}
Suppose first of all that $m/2$ is odd.
By Lemma \ref{formsln}, $\theta=\Int n_w\circ\gamma$, where $w=n_wT$ is either a product of $r$ $m$-cycles and $[(n-rm)/2]$ 2-cycles, or a product of $r$ $(m/2)$-cycles and $[(n-rm/2)/2]$ 2-cycles.
After conjugation we may assume that $w=w_m\cdot w_2$, where $$w_m=\left\{\begin{array}{ll} \begin{pmatrix} 1 & \cdots & m \end{pmatrix} \ldots \begin{pmatrix} (r-1)m+1 & \cdots & rm \end{pmatrix} & \mbox{if $(n_w\gamma(n_w))^{m/2}=I$,} \\ \begin{pmatrix} 1 & \cdots & m/2 \end{pmatrix} \ldots \begin{pmatrix} (r-1)m/2+1 & \cdots & rm/2 \end{pmatrix} & \mbox{if $(n_w\gamma(n_w))^{m/2}=-I$.}\end{array}\right.$$
We can therefore choose a basis $\{ c_1,\ldots ,c_r\}$ for ${\mathfrak c}$: let $c_i$ be the diagonal matrix with $j$-th entry equal to $\left\{ \begin{array}{ll} (-\zeta)^{-j} & \mbox{if $(i-1)m'<j\leq im'$,} \\ 0 & \mbox{otherwise,} \end{array}\right.$ where $m'=m$ if $w_m$ is a product of $m$-cycles, and $m'=m/2$ if $w_m$ is a product of $m/2$-cycles.
With this description it is clear that $W_1=G(m/2,1,r)$ in either case.
It will therefore suffice to prove that $W_1=W_{\mathfrak c}$ when $n=mr$ for the first case, or $n=mr/2$ for the second.
The second case is a trivial application of Lemma \ref{criter} since here $T_m$ is trivial.
Hence suppose $n=mr$ and $w$ is a product of $r$ $m$-cycles.
It follows from Lemma \ref{mcycles} that after conjugating we may assume $\gamma(n_w)=n_w$.
(This is no longer true if $n>rm$.)
But then $n_w^{m/2}\in G^\theta$, $\Ad n_w^{m/2}$ is trivial on ${\mathfrak c}$ and $L=Z_{\GL(n,k)}(n_w^{m/2})=L_1\times L_2$, where $L_1\cong L_2\cong \GL(rm/2,k)$.
Since $n_w^m=-I$, $n_w^{m/2}$ defines a non-degenerate skew-symmetric form on $k^n$ and hence $H=G^{\Int n_w^{m/2}\circ\gamma}\cong\Sp(rm,k)$.
Thus $H^\gamma=L^\gamma\cong\GL(rm/2,k)$.
We deduce that $\theta|_L$ maps $L_1$ isomorphically onto $L_2$ and vice versa.
We shall show that the little Weyl group for $\theta|_L$ is equal to $G(m/2,1,r)$, hence the same is true for $G$ by the description of $W_1$ above.
But here it is easy to see that the little Weyl group for $\theta|_L$ is isomorphic to the little Weyl group for $\theta^2|_{L_1}$.
We have $n_w\in L$ and therefore we can define the projection of $n_w^2$ onto $L_1$.
Then, since $n_w$ is conjugate to a diagonal matrix with $r$ entries equal to $\xi^{2i-1}$ for each $i\in{\mathbb Z}/m{\mathbb Z}$ (where $\xi$ is a square-root of $\zeta$), the projection of $n_w^2$ onto $L_1$ is conjugate to a diagonal matrix with $r$ entries equal to $\zeta^{2i+1}$ for each $i$, $0\leq i\leq m/2-1$.
It follows that $\Int n_w^2|_{T\cap L_1}$ acts as a product of $r$ $m/2$-cycles.
Let ${\mathfrak c}_1$ be the projection of ${\mathfrak c}\subset\Lie(L_1)\oplus\Lie(L_2)$ onto $\Lie(L_1)$: then by the above remarks $N_{L(0)}({\mathfrak c})/Z_{L(0)}({\mathfrak c})\cong N_{L_1^{\theta^2}}({\mathfrak c}_1)/Z_{L_1^{\theta^2}}({\mathfrak c}_1)$.
(We remark that $L_1^{\theta^2}$ is connected since $\theta^2|_{L_1}=\Int n_w^2|_{L_1}$.)
But the latter group is equal to $G(m/2,1,r)$ by Lemma \ref{sln}.
Hence $W_{\mathfrak c}=G(m/2,1,r)$.

Suppose therefore that $m/2$ is even, hence $\theta=\Int n_w\circ\gamma$ where $w=n_wT$ is a product of $r$ $m$-cycles and $[(n-rm)/2]$ 2-cycles.
By a similar argument to that above, it is straightforward to see that $W_1=G(m,1,r)$.
We claim first of all that $W_{\mathfrak c}^Z=W_1$.
For this we will apply the criterion of Lemma \ref{criter}(b), for the purposes of which we can reduce to the case $r=1$.
We may assume after suitable conjugation that $w=\begin{pmatrix} 1 & \ldots & m \end{pmatrix}$.
Now $T_m$ is the set of matrices of the form $\diag (t,t^{-1},\ldots ,t^{-1})$ and thus ${\cal T}$ is generated by $\diag (\zeta,\zeta^{-1},\ldots ,\zeta^{-1})$.
Let $s=\diag (\zeta,\zeta^{-1},\zeta^3,\zeta^{-3},\ldots ,\zeta^{-1},\zeta)$; then $s\theta(s^{-1})=\diag(\zeta^2,1,\ldots ,\zeta^2,1)$ and hence, by Lemma \ref{criter}, $W_{\mathfrak c}^Z=W_1$.

We now claim that $W_{\mathfrak c}\supset G(m,2,r)$ if $(n_w\gamma(n_w))^{m/2}=I$ and $W_{\mathfrak c}=G(m,1,r)$ if $(n_w\gamma(n_w))^{m/2}=-I$.
It will clearly suffice to prove this when $n=rm$.
Hence by Lemma \ref{mcycles} we may assume that $n_w\in N_G(T)^\gamma$.
Then $\phi=\Int n_w^{m/2}\circ\gamma$ commutes with $\theta$ and therefore $L=G^\phi$ is $\theta$-stable (and connected, by a result of Steinberg \cite[8.1]{steinberg}).
Moreover, it is easy to see that ${\mathfrak c}\subset{\mathfrak l}=\Lie L$ and that $L\cong\SO(rm,k)$ (if $n_w^m=I$) or $L\cong\Sp(rm,k)$ (if $n_w^m=-I$).
Examining the possibilities in Lemma \ref{cycles}, we see that $\theta$ must act on $T\cap L$ as a product of $r$ negative $(m/2)$-cycles.
Thus $$N_{L(0)}({\mathfrak c})/Z_{L(0)}({\mathfrak c})=\left\{\begin{array}{ll} G(m,1,r) & \mbox{if $L=\Sp(rm,k)$,} \\ G(m,2,r) & \mbox{if $L=\SO(rm,k)$,} \end{array}\right.$$ by Lemma \ref{typec} and Lemma \ref{typed}.
This proves our claim.

Suppose therefore that $(n_w\gamma(n_w))^{m/2}=I$ and that $n>mr$.
Let $w_i$ ($1\leq i\leq r$) be the distinct $m$-cycles in the expression for $w=n_wT$ and let $\{ c_i:1\leq i\leq r\}$ be basis for ${\mathfrak c}$ such that $w_i(c_j)=(-\zeta)^{\delta_{ij}}c_j$.
After conjugation we may assume that $w_i=\begin{pmatrix} (i-1)m+1 & \cdots & im \end{pmatrix}$.
But now, since $(n_w\gamma(n_w))^{m/2}=I$ each $m\times m$ submatrix of $n_w$ corresponding to one of the $w_i$ has determinant $-1$.
It is therefore easy to see that $W_{\mathfrak c}=G(m,1,r)$ if and only if there is some element of $Z_{\GL(n,k)}({\mathfrak c})^\theta$ of determinant $-1$.
(Since any element of $\GL(n,k)^\theta$ is of determinant $\pm 1$, this is equivalent to the statement in the Lemma.)
\end{proof}

\begin{rk}\label{rka}
(a) Our condition on $Z_{\GL(n,k)}({\mathfrak c})^\theta$ in (b) is equivalent to that given by Vinberg in \cite[\S 7]{vin}.
This is the Fourth case of Vinberg's classification; $m/2$ even is `Type III'.
Vinberg determines properties of an outer automorphism of $\SL(n,k)$ of the form $\Int g\circ\gamma$ by consideration of the eigenvalues of $g\gamma(g)$ (cf. Lemma \ref{outersln}).
The condition $(n_w\gamma(n_w))^{m/2}=I$ implies that the eigenvalues of $n_w\gamma(n_w)$ are contained in the set ${\cal S}=\{ \zeta^{2i}:i\in{\mathbb Z}\}\supset\{\pm 1\}$.
(This explains the condition $\pm 1\in{\cal S}$ in \cite[p.485]{vin}.)
Here $r$ is the integer part of half the minimal multiplicity of $\lambda\in{\cal S}$ in $n_w\gamma(n_w)$.
Then Vinberg's condition for $W_{\mathfrak c}$ to be equal to $G(m,2,r)$ is that the multiplicity of $1$ is exactly $2r$.
For $i\in{\mathbb Z}/(m/2){\mathbb Z}$ let $2r+s_i$ be the multiplicity of $\zeta^{2i}$ in $n_w\gamma(n_w)$.
(Then $s_{m/2-i}=s_i$ and $s_{m/4}$ is even.)
A direct calculation shows that $Z_G({\mathfrak c})^\theta=\prod_{i=1}^{m/4-1}\GL(s_i,k)\times\Sp(s_{m/4},k)\times\SO(s_0,k)\times (k^\times)^{rm}$ and $Z_{\GL(n,k)}({\mathfrak c})^\theta=\prod_{i=1}^{m/4-1}\GL(s_i,k)\times\Sp(s_{m/4},k)\times{\rm O}(s_0,k)\times(k^\times)^{rm}$, hence $W_{\mathfrak c}=G(m,2,r)$ if and only if $s_0=0$.

(b) In common with type $D$, there is in general no Levi subgroup of $G$ whose derived subgroup $L$ contains $T_1$, such that each element of $W_{\mathfrak c}$ has a representative in $L(0)$ and $\theta|_{L^{(1)}}$ is $N$-regular.
In fact, there is only such a Levi subgroup if $m/2$ is odd: if $w$ is a product of $r$ $m/2$-cycles then one can take the derived subgroup $L$ of a standard Levi subgroup such that $L\cong\SL(rm/2,k)$; if $w$ is a product of $r$ $m$-cycles then the derived subgroup of the group $L$ constructed in the first paragraph is the required group.
Moreover, the above proof does show that if $m/2$ is even then there is a reductive subgroup of $G$ which has the properties we desire.
If $(n_w\gamma(n_w))^{m/2}=-I$ or if $W_{\mathfrak c}=G(m,2,r)$ then let $L$ be the subgroup constructed in the third paragraph of the proof; if $(n_w\gamma(n_w))^{m/2}=I$ then $L\cong\SO(rm,k)$, if $(n_w\gamma(n_w))^{m/2}=-I$ then $L\cong\Sp(rm,k)$.
Suppose that $(n_w\gamma(n_w))^{m/2}=I$ and $W_{\mathfrak c}=G(m,1,r)$.
By the discussion in (a) (following Vinberg) this is true if and only if the multiplicity of $1$ in $n_w\gamma(n_w)$ is greater than $2r$.
But then, since $w$ is a product of $r$ $m$-cycles and $[(n-rm)/2]$ 2-cycles, we can clearly reduce to one of two cases: that $n=rm+2$ and the multiplicity of $1$ is $2r+2$, or that $n=rm+1$ and the multiplicity of $1$ is $2r+1$.
In the first case we can now choose an element $g\in Z_{\GL(n,k)}({\mathfrak c})^\theta\setminus Z_G({\mathfrak c})$ of order 2 such that any element of $W_{\mathfrak c}$ has a representative in $Z_G(g)^{(1)}\cong\SL(rm+1,k)$.
(We can assume $g\in N_G(T)$ and that $g$ represents a 2-cycle in $W$.)
Thus we are reduced to the case $n=rm+1$, where the multiplicity of $1$ in $n_w\gamma(n_w)$ is $2r+1$.
By Lemma \ref{mcycles} we may assume that $\gamma(n_w)=n_w$.
Hence $L=G^{\Int n_w^{m/2}\circ\gamma}\cong\SO(rm+1,k)$.
But ${\mathfrak c}\subset\Lie(L)$ and by Lemma \ref{cycles}, $\theta$ acts on $T\cap L$ as a product of $r$ negative $m/2$-cycles.
It follows by Lemma \ref{typeb} that $N_{L(0)}({\mathfrak c})/Z_{L(0)}({\mathfrak c})=G(m,1,r)$.
Now $L=\SO(rm+1,k)$ is a subgroup of $G$ whose Lie algebra contains ${\mathfrak c}$, which has the same little Weyl group as $G$, and such that $\theta|_L$ is $S$-regular.
These subgroups $L$ constructed in this way will be very useful to us in Sect. \ref{kw}.
\end{rk}

Lemmas \ref{sln}-\ref{slnouter} provide a new proof of Vinberg's description of the little Weyl group for classical graded Lie algebras \cite[Prop.s 15 \& 16]{vin}.
We deduce from Lemmas \ref{sln}, \ref{typec}, \ref{typed} and \ref{slnouter} that any classical graded Lie algebra is saturated except for an outer automorphism of $\SL(n,k)$ of order divisible by 4 for which $W_{\mathfrak c}=G(m,2,r)$ (cf. \cite[Prop. 16]{vin}).
Moreover, we remark that $W_{\mathfrak c}^Z=W_1$ unless (i) $\theta$ is an odd order automorphism of $\SO(2n,k)$, $n>mr$ and $Z_{{\rm O}(2n,k}({\mathfrak c})^\theta=Z_G({\mathfrak c})^\theta$ or (ii) $\theta=\Int g$ is an even order automorphism of $\SO(2n,k)$, $g^m=I$, $n>mr/2$ and $Z_{{\rm O}(2n,k)}({\mathfrak c})^\theta=Z_G({\mathfrak c})^\theta$.
(In the notation of \cite{vin}, these cases are: (i) the Second case, Type III where $n>mr$ and $V'(\pm 1)=0$, (ii) the Second case, Type I where $n>mr/2$ and $V'(\pm 1)=0$.)

Our proof of the description of $W_{\mathfrak c}$ is significantly longer than that in \cite{vin}.
However, as indicated by Remarks \ref{ark}, \ref{rkc}, \ref{rkb}, \ref{rkd} and \ref{rka}, this alternative perspective on the little Weyl group provides a relatively easy way to establish the existence of a KW-section for all classical graded Lie algebras (see Sect. \ref{kw}).

To prove that $k[{\mathfrak c}^W]$ is generated by pseudoreflections we apply the following result of Panyushev \cite[Thm. \& Prop. 2]{panorbits}.

{\it - Let $U\subset V$ be vector spaces, let $G\subset\GL(V)$ be a connected reductive group, let $W\subset\GL(U)$ be a finite group of order coprime to $\charac k$ such that $V\quot G\cong U/W$.
Then $W$ is generated by pseudoreflections.}

To prove that the order of $W_{\mathfrak c}$ is coprime to $\charac k$, we apply Carter's results on conjugacy classes in Weyl groups.
Let us briefly recall the set-up.
Thus let $W$ be an arbitrary Weyl group with natural complex representation $V$.
Assume that the root system associated to $W$ is irreducible.
Any element $w\in W$ can be expressed as a product $w=w_1w_2$, where $w_1^2=w_2^2=1$ and $\{ v\in V\,|\, w_1\cdot v=-v\}\cap\{ v\in V\,|\, w_2\cdot v=-v\}=\{0\}$.
Moreover, any involution $w'$ in $W$ can be expressed as a product of reflections corresponding to $l(w')$ orthogonal roots.
Thus the expression $w=w_1w_2$ gives subsets $I_1,I_2$ of the root system $\Phi$ such that $w_i=\prod_{\alpha\in I_i}s_\alpha$ for $i=1,2$.
Moreover, $\#(I_i)=l(w_i)$ for $i=1,2$ and $l(w_1)+l(w_2)=l(w)$.
One associates a graph to $w$ with one node for each $\alpha\in I_1$ and one node for each $\beta\in I_2$, with $\langle\alpha,\beta\rangle\langle\beta,\alpha\rangle$ edges between nodes corresponding to {\it distinct} roots $\alpha,\beta\in I_1\cup I_2$ (since $I_1$ and $I_2$ may not be disjoint).
The graph $\Gamma$ so constructed is uniquely defined by $w$.
For example, if $\Gamma$ is the Dynkin diagram on the root system associated to $W$ then $w$ is a Coxeter element of $W$; if $\Gamma$ is the trivial graph then $w=1$.
If $\Gamma=\Gamma'\cup \Gamma''$, where $\Gamma'$ and $\Gamma''$ are orthogonal subgraphs, then there is a corresponding decomposition $w=w'w''$ and orthogonal root subsystems $\Phi',\Phi''\subset\Phi$ such that $w'\in W(\Phi')$, $w''\in W(\Phi'')$.
Thus the decomposition of $\Gamma$ into irreducible components gives a corresponding decomposition of $w$ as a product of commuting elements.
The irreducible graphs $\Gamma$ which can appear via this construction are listed in \cite[Table 2, p.10]{carter}; the characteristic polynomials (in the natural representation), and hence the orders of such elements are given in \cite[Table 3, p.23]{carter}.
Tables 7-11 at the end of \cite{carter} give a classification of conjugacy classes in the exceptional type Weyl groups.
A few words of explanation of the symbols which we use in the proof below: $\Gamma$ is the graph associated to $w$ as detailed above, which we refer to as the type of $w$; $W_1$ is the (Weyl group of the) minimal root subsystem of $\Phi$ containing all roots $\alpha$ associated to nodes in $\Gamma$ and $W_2$ is the (Weyl group of the) subsystem of all roots in $\Phi$ which are orthogonal to $W_1$.
If $w\in W$ is of order $m$, then it is easy to see that the reflection corresponding to an element of $W_2$ acts trivially on $\{ v\in V\,| w\cdot v=\zeta v\}$.

\begin{proposition}\label{poly}
Let $G$ be almost simple.

(a) $W_{\mathfrak c}$ is generated by pseudoreflections.

(b) $k[{\mathfrak c}]^{W_{\mathfrak c}}$ is a polynomial ring.
\end{proposition}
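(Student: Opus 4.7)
My plan is to split the verification of (a) and (b) into the classical-type cases and the exceptional-type cases (together with the triality situations in type $D_4$), and in each to exploit the preceding identification of $W_{\mathfrak c}$.

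For $G$ of classical type, Lemmas \ref{sln}, \ref{typec}, \ref{typeb}, \ref{typed}, and \ref{slnouter} identify $W_{\mathfrak c}$, as a subgroup of $\GL({\mathfrak c})$ in its natural monomial action, with one of the imprimitive complex reflection groups $G(m',1,r)$ or $G(m',2,r)$ for some $m' \in \{m/2, m, 2m\}$. The standard generators of these groups --- diagonal elements of the form $\diag(\lambda, 1, \ldots, 1)$ with $\lambda^{m'} = 1$, and the signed transpositions exchanging two basis vectors --- each fix a hyperplane in ${\mathfrak c}$. Since $p \nmid m$ and $p > 2$, we have $p \nmid m'$, so the diagonal generators remain of order $>1$ in characteristic $p$ and thus act as genuine pseudoreflections; this proves (a). For (b), I would exhibit the classical explicit invariants, namely the elementary symmetric polynomials $e_1(x_1^{m'},\ldots,x_r^{m'}),\ldots,e_r(x_1^{m'},\ldots,x_r^{m'})$ in the $G(m',1,r)$ case, or $e_1(x_1^{m'},\ldots,x_r^{m'}),\ldots,e_{r-1}(x_1^{m'},\ldots,x_r^{m'}),(x_1\cdots x_r)^{m'/2}$ in the $G(m',2,r)$ case: these are algebraically independent in any characteristic coprime to $m'$, and the product of their degrees equals $|W_{\mathfrak c}|$, so they generate $k[{\mathfrak c}]^{W_{\mathfrak c}}$ as a polynomial ring.

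For $G$ of exceptional type with $m > 2$, and for $G$ of type $D_4$ with $p > 3$, the strategy is to show directly that $p \nmid |W_{\mathfrak c}|$, after which Panyushev's theorem, applied to the isomorphism ${\mathfrak c}/W_{\mathfrak c} \cong {\mathfrak g}(1)\quot G(0)$ of Theorem \ref{chev} (with $V = {\mathfrak g}(1)$, $U = {\mathfrak c}$, connected reductive group $G(0) \subset \GL({\mathfrak g}(1))$, and finite subgroup $W_{\mathfrak c} \subset \GL({\mathfrak c})$), yields (a), and the classical Shephard--Todd--Chevalley theorem, valid because $p \nmid |W_{\mathfrak c}|$, yields (b). To bound $|W_{\mathfrak c}|$ one uses Lemma \ref{inclusion}, which embeds $W_{\mathfrak c}$ into $W^\theta/Z_{W^\theta}({\mathfrak c})$ --- a quotient of a subgroup of the centralizer $C_W(w)$ of a suitable element $w \in W$ of order $m$. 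The problem thus reduces to inspecting Carter's tables \cite[Tables 3 and 7--11]{carter}: for each exceptional Weyl group $W(G_2), W(F_4), W(E_6), W(E_7), W(E_8)$ and for $W(D_4)$, enumerate the conjugacy classes of order $m > 2$ and verify that the tabulated centralizer orders are coprime to every good prime of the ambient type ($p \geq 5$ for $G_2, F_4, E_6, E_7, D_4$; $p \geq 7$ for $E_8$). The residual case --- $G$ exceptional and $m = 2$ --- is the involutorial (symmetric-space) situation already treated in the author's earlier paper \cite{invs}.

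The main obstacle is the centralizer inspection in the exceptional cases: it is a finite but laborious case check and contains essentially all the work of the exceptional case. Once it is complete, the combination of Theorem \ref{chev}, Panyushev's theorem, and (for (b)) Shephard--Todd--Chevalley closes the argument cleanly.
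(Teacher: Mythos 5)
Your proposal follows essentially the same route as the paper's proof: explicit identification of $W_{\mathfrak c}$ as a group of the form $G(m',1,r)$ or $G(m',2,r)$ in the classical cases via Lemmas~\ref{sln}--\ref{slnouter} (which directly gives (a) there), a coprimality check against Carter's tables for the exceptional and $D_4$-triality cases combined with Panyushev's theorem (giving (a) there), and Shephard--Todd--Chevalley for (b) in the coprime-order cases. The only place you go slightly beyond the paper's text is in (b) for the classical case, where the paper writes ``it is easily verified'' and you fill in the standard homogeneous system of parameters whose degree product equals $|W_{\mathfrak c}|$ --- a correct and appropriate way to make that step precise, since $r!$ (and hence $|W_{\mathfrak c}|$) can be divisible by $p$ so Shephard--Todd does not apply directly there.
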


\begin{proof}
This is known for $m=2$ by \cite[4.11]{invs} (part (a) also follows from Lemma \ref{thetasplit} and \cite[\S 4]{richinvs}); hence assume $m\geq 3$.
For the classical graded Lie algebras, (a) is true by Lemmas \ref{sln}, \ref{typec}, \ref{typeb}, \ref{typed} and \ref{slnouter}.
If $G$ is of exceptional type, or if $G$ is of type $D_4$ and $\theta$ is an outer automorphism such that $\theta^3$ is inner, then we claim that $W_1$ is of order coprime to $p$.
Indeed, if $G$ is of exceptional type then the assumption that $p$ is good implies that $p$ is coprime to the order of $W$ except in the following cases:

(i) $G$ is of type $E_6$ and $p=5$,

(ii) $G$ is of type $E_7$ and $p=5$,

(iii) $G$ is of type $E_7$ and $p=7$,

(iv) $G$ is of type $E_8$ and $p=7$.

In type $E_6$, $\theta$ could of course be outer.
In this case let $\gamma$ be an involutive automorphism of $G$ satisfying $\gamma(t)=t^{-1}$ for all $t\in T$; then any outer automorphism of $G$ is of the form $\Int g\circ\gamma$ for some $g\in G$, hence $\theta=\Int n_w\circ \gamma$ for some $n_w\in N_G(T)$.
Moreover, the induced action of $\gamma$ on $W$ is trivial and thus $W^\theta=Z_W(w)$, where $w=n_wT$.
Hence in type $E_6$ it will suffice to show that centralizers have order prime to $p$.
An inspection of Tables 9-11 in \cite{carter} shows that $p\nmid Z_W(w)$ except for the following cases:

(i) $w$ is of type $A_1$, $A_4$, or $A_4\times A_1$.
The first case is an involution.
In the other two cases, $w$ is of order divisible by 5 and thus can only appear if $p>5$.

(ii) $w$ is of type $A_1$, $A_2$, $A_4$, $A_4\times A_1$, $A_1^6$, $A_4\times A_2$, $D_6$, $A_1^7$, $D_4\times A_1^3$, $D_6\times A_1$ or $E_7(a_3)$.
In the three cases $A_1$, $A_1^6$, and $A_1^7$, $\theta$ is an involution.
If $w$ is of type $A_4$, $A_4\times A_1$, $A_4\times A_2$, $D_6$, $D_6\times A_1$ or $E_7(a_3)$ then its order is divisible by 5.
If $w$ is of type $A_2$ then $Z_W(w)$ is of order $2^5.3^3.5$.
However, here $W_2=A_5$, and hence $Z_{W^\theta}({\mathfrak c})$ is of order divisible by $6!=2^4.3^2.5$.
It follows that $W_1$ has order dividing $6$.
Finally, if $w$ is of type $A_4\times A_2$ then $\theta$ is of order (divisible by) 15 and hence ${\mathfrak c}$ is trivial.

(iii) $w$ is of type $A_6$, $A_1^7$ or $E_7(a_1)$.
The case $A_1^7$ is an involution.
If $w$ is of type $A_6$ or $E_7(a_1)$ then $\theta$ is of order divisible by 7, which contradicts the assumption that $m$ is coprime to $p$.

(iv) $w$ is of type $A_1$, $A_6$, $A_1^7$, $A_6\times A_1$, $E_7(a_1)$, $A_1^8$ or $D_8$.
The cases $A_1$ and $A_1^7$ are involutions.
If $w$ is of type $A_6$, $A_6\times A_1$, $E_7(a_1)$ or $D_8$ then the order of $\theta$ is divisible by 7, which contradicts the assumption on $m$.

Finally, suppose $G$ is of type $D_4$ and $\theta$ is an outer automorphism such that $\theta^3$ is inner.
Then $p>3$.
But $W(D_4)$ has order $192=2^6.3$.
Hence $W^\theta$ has order coprime to $p$.
Thus $W$ is in all cases generated by pseudoreflections by Panyushev's theorem.
This proves (a).
But (b) now follows since $W_{\mathfrak c}$ has order coprime to $p$ unless ($\theta$ is an involution or) ${\mathfrak g}$ is a classical graded Lie algebra.
In the classical case $W_{\mathfrak c}$ is one of $G(m',1,r)$, $G(m',2,r)$ where $m'$ is coprime to $p$, hence it is easily verified that $k[{\mathfrak c}]^{W_{\mathfrak c}}$ is a polynomial ring.
\end{proof}

\begin{theorem}
Suppose $G$ satisfies the standard hypotheses.
Then $k[{\mathfrak g}(1)]^{G(0)}$ is a polynomial ring.
\end{theorem}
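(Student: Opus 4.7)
The plan is to combine the Chevalley Restriction Theorem (Thm.~\ref{chev}), the structure theorem for the little Weyl group (Prop.~\ref{poly}), and the reduction theorem (Prop.~\ref{reduction}). First, by Thm.~\ref{chev}, the embedding ${\mathfrak c}\hookrightarrow{\mathfrak g}(1)$ induces an isomorphism $k[{\mathfrak g}(1)]^{G(0)}\cong k[{\mathfrak c}]^{W_{\mathfrak c}}$, so it is enough to show that the right-hand side is a polynomial ring.

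Next, I would use Prop.~\ref{reduction} to embed ${\mathfrak g}$ $\theta$-equivariantly in $\hat{\mathfrak g}=\Lie(\hat{G})$, where $\hat{G}=\tilde{G}\times T_0$, with a $\hat\theta$-stable toral complement ${\mathfrak t}_1$ such that $\hat{\mathfrak g}=\phi({\mathfrak g})\oplus{\mathfrak t}_1$. Since this decomposition respects the ${\mathbb Z}/m{\mathbb Z}$-grading and ${\mathfrak t}_1$ consists of semisimple central elements of $\hat{\mathfrak g}$, a Cartan subspace of $\hat{\mathfrak g}(1)$ has the form $\hat{\mathfrak c}={\mathfrak c}\oplus{\mathfrak t}_1(1)$, and $\hat{G}(0)$ acts trivially on the summand ${\mathfrak t}_1(1)$. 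Consequently $W_{\hat{\mathfrak c}}$ is naturally identified with $W_{\mathfrak c}$ acting on the first factor only, so $k[\hat{\mathfrak c}]^{W_{\hat{\mathfrak c}}}\cong k[{\mathfrak c}]^{W_{\mathfrak c}}\otimes k[{\mathfrak t}_1(1)]$; one is polynomial if and only if the other is.

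The group $\tilde{G}$ factors as a direct product $\tilde{G}_1\times\cdots\times\tilde{G}_s$, each factor being almost simple or isomorphic to $\GL(V_i)$ with $p\mid\dim V_i$, and $\hat\theta$ permutes these factors. For each $\hat\theta$-orbit of factors we may apply the device recalled in the paragraph just before Lemma~\ref{first} (replacing the grading by the $\zeta^{(m,i)}$-eigenspace problem for $\hat\theta^{(m,i)}$ acting on a single factor), so that the contribution of each orbit to $\hat{\mathfrak c}$ and $W_{\hat{\mathfrak c}}$ is the Cartan subspace and little Weyl group of a $\theta'$-group on a single almost simple or $\GL(V)$ factor. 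In the almost simple case Prop.~\ref{poly} shows the resulting ring of invariants is polynomial; in the $\GL(V)$ case the description from Lemma~\ref{sln} (and Lemma~\ref{slnouter} via Lemma~\ref{GLnautos} for outer automorphisms of $\SL(V)$) gives $W_{\mathfrak c}\cong G(m',1,r)$ or $G(m',2,r)$ with $m'\in\{m/2,m,2m\}$ coprime to $p$, and these complex reflection groups have polynomial invariant rings whenever the ground-field characteristic is coprime to their order.

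Taking the tensor product over all orbits together with $k[{\mathfrak t}_1(1)]$ gives a polynomial ring, which equals $k[\hat{\mathfrak c}]^{W_{\hat{\mathfrak c}}}\cong k[{\mathfrak c}]^{W_{\mathfrak c}}\otimes k[{\mathfrak t}_1(1)]$, and hence $k[{\mathfrak c}]^{W_{\mathfrak c}}$ is a polynomial ring. The main obstacle is the bookkeeping that the Cartan subspace, little Weyl group, and invariant ring all decompose compatibly under the two reductions (first from $G$ to $\hat{G}$, then from $\tilde{G}$ to its individual $\hat\theta$-orbits of simple/$\GL$ factors); once that compatibility is in place, the theorem follows immediately from results already established in the paper.
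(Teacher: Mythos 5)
Your overall strategy coincides with the paper's: reduce via Thm.~\ref{chev} to $k[{\mathfrak c}]^{W_{\mathfrak c}}$, pass to $\hat{\mathfrak g}={\mathfrak g}\oplus{\mathfrak t}_1=\tilde{\mathfrak g}\oplus{\mathfrak t}_0$ using Prop.~\ref{reduction}, show $k[\hat{\mathfrak c}]^{W_{\hat{\mathfrak c}}}$ is polynomial from the $\tilde{G}\times T_0$ side, and then deduce polynomiality of $k[{\mathfrak c}]^{W_{\mathfrak c}}$ from the $\phi({\mathfrak g})\oplus{\mathfrak t}_1$ side. However, there is a genuine gap at the last step. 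You write that from $k[\hat{\mathfrak c}]^{W_{\hat{\mathfrak c}}}\cong k[{\mathfrak c}]^{W_{\mathfrak c}}\otimes k[{\mathfrak t}_1(1)]$ ``one is polynomial if and only if the other is,'' but the direction you actually need --- that $A\otimes k[y_1,\ldots,y_n]$ a polynomial ring forces $A$ to be a polynomial ring --- is the Zariski cancellation problem, which is false for affine $k$-algebras in general (and in particular fails in positive characteristic). What rescues the situation here is the grading: the isomorphism is a graded isomorphism, the $y_i$ all sit in degree $1$, and hence they can be completed by linearly independent degree-$1$ elements of $k[{\mathfrak c}]^{W_{\mathfrak c}}$ to a homogeneous generating set of the graded polynomial ring $k[\hat{\mathfrak c}]^{W_{\hat{\mathfrak c}}}$, whence the quotient by $(y_1,\ldots,y_n)$ is again a polynomial ring. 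This is precisely the step the paper carries out with the ideal $J_1\subset k[{\mathfrak c}_1]$ generated by the positive-degree part; your proposal asserts it as obvious, which it is not without invoking the grading.

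Two smaller issues. First, in your final paragraph you tensor ``over all orbits together with $k[{\mathfrak t}_1(1)]$'': the $\tilde{G}$-orbits live inside $\tilde{\mathfrak g}$ and the leftover piece of $\hat{\mathfrak c}$ in the decomposition $\hat{\mathfrak g}=\tilde{\mathfrak g}\oplus{\mathfrak t}_0$ is ${\mathfrak t}_0(1)$ (the paper's ${\mathfrak c}_0$), not ${\mathfrak t}_1(1)$; these two toral complements are different, and conflating them collapses the two decompositions of $\hat{\mathfrak c}$ that the argument must play off against each other. Second, the device you cite from just before Lemma~\ref{first} (replacing $({\mathfrak g},\theta)$ by $(\overline{\mathfrak g},\theta)$ with $\overline{\mathfrak g}(1)={\mathfrak g}(i)$) is for passing from ${\mathfrak g}(i)$ to a degree-one problem, not for collapsing a $\theta$-cycle of isomorphic simple factors to a single one; the relevant reduction you want (replace $G_1\times\cdots\times G_l$, $\theta$ by $G_1$, $\theta^l$, $m/l$) is the one used in the proof of Lemma~\ref{zerorank}. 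Neither of these is fatal, but both should be corrected.
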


\begin{proof}
This follows from the construction of $\hat{\mathfrak g}=\tilde{\mathfrak g}\oplus{\mathfrak t}_0={\mathfrak g}\oplus{\mathfrak t}_1$ in Prop. \ref{reduction}.
Let ${\mathfrak c}$ be a Cartan subspace of ${\mathfrak g}$ and let ${\mathfrak c}_1=\{ t\in {\mathfrak t}_1\,|\, d\hat\theta(t)=\zeta t\}$.
Clearly $\hat{\mathfrak c}={\mathfrak c}\oplus{\mathfrak c}_1$ is a Cartan subspace of $\hat{\mathfrak g}$.
In fact it is the unique Cartan subspace of $\hat{\mathfrak g}$ which contains ${\mathfrak c}\cap\Lie(G^{(1)})$.
Let $\tilde{\mathfrak c}=\hat{\mathfrak c}\cap\tilde{\mathfrak g}$.
Then $\hat{\mathfrak c}=\tilde{\mathfrak c}\oplus{\mathfrak c}_0$, where ${\mathfrak c}_0=\{ t\in{\mathfrak t}_0\,|\,d\hat\theta(t)=\zeta t\}$.
Clearly $k[\hat{\mathfrak c}]^{W_{\mathfrak c}}=k[\tilde{\mathfrak c}]^{W_{\mathfrak c}}\otimes k[{\mathfrak c}_0]$.
It is easy to see that Prop. \ref{poly} extends to a product of almost simple groups and groups of the form $\GL(n,k)$, hence to $\tilde{G}$.
Thus $k[\tilde{\mathfrak c}]^{W_{\mathfrak c}}$ is a polynomial ring.
It follows that $k[\hat{\mathfrak c}]^{W_{\mathfrak c}}=k[{\mathfrak c}]^{W_{\mathfrak c}}\otimes k[{\mathfrak c}_1]$ is also a polynomial ring.
Now let $J_1$ be the maximal ideal of all positive degree elements of $k[{\mathfrak c}_1]$: then $J_1$ is generated by elements of degree 1 and hence its set of zeros is a hyperplane in $\hat{\mathfrak c}/{W_{\mathfrak c}}$ (identifying $\hat{\mathfrak c}/W_{\mathfrak c}$ with a vector space of dimension $r+\dim{\mathfrak t}_1$).
But therefore $k[{\mathfrak c}]^{W_{\mathfrak c}}\cong k[\hat{\mathfrak c}]^{W_{\mathfrak c}}/J_1k[\hat{\mathfrak c}]^{W_{\mathfrak c}}$ is a polynomial ring.
Thus the result follows by Thm. \ref{chev}.
\end{proof}

\section{Kostant-Weierstrass slices}\label{kw}

A long-standing conjecture in this field (originally stated in characteristic zero \cite[no. 7]{popov}) is the existence of a KW-section in ${\mathfrak g}(1)$ to the invariants.
(For details on Weierstrass slices see \cite[\S 8]{vinpopov} or \cite{popovsections} for more recent work.
In the case of a periodically graded reductive Lie algebra, Panyushev \cite{panslice} introduced the terminology of Kostant-Weierstrass slice or KW-section because of the analogy with Kostant's slice to the regular conjugacy classes in ${\mathfrak g}$.)

\begin{definition}
A Kostant-Weierstrass slice or KW-section for $\theta$ is a linear subvariety ${\mathfrak v}$ of ${\mathfrak g}(1)$ such that the embedding ${\mathfrak v}\hookrightarrow{\mathfrak g}(1)$ induces an isomorphism of affine varieties ${\mathfrak v}\rightarrow{\mathfrak g}(1)\quot G(0)$.
\end{definition}

The prototype is Kostant's slice $e+{\mathfrak z}_{\mathfrak g}(f)$ in ${\mathfrak g}$, where $\{ h,e,f\}$ is an $\mathfrak{sl}(2)$-triple such that $e$ is a regular nilpotent element.
The case $m=2$ is also known (\cite{kostrall} in characteristic zero, \cite{invs} in positive characteristic).
Essentially, one can reduce the involution case to the $m=1$ case by constructing a reductive subalgebra of ${\mathfrak g}$ for which a Cartan subspace of ${\mathfrak g}(1)$ is a Cartan subalgebra.
One can then apply the usual construction since an involution is $S$-regular if and only if it is $N$-regular.
(Recall that $\theta$ is $S$-regular (resp. $N$-regular) if ${\mathfrak g}(1)$ contains a regular semisimple (resp. nilpotent) element of ${\mathfrak g}$.)
Applying such an argument in the general case is problematic since a general finite-order automorphism can be $S$-regular but not $N$-regular, and vice versa.
On the other hand, it is known due to Panyushev (in characteristic zero) that an $N$-regular automorphism always admits a KW-section \cite{panslice}.
(Earlier Panyushev also showed that if $G(0)$ is semisimple then $\theta$ admits a KW-section \cite{pansemislice}.)
The slice constructed in \cite{panslice} is a natural choice: one chooses $e\in{\mathfrak g}(1)$ to be a regular nilpotent element of ${\mathfrak g}$, embeds $e$ in an $\mathfrak{sl}(2)$-triple $\{ h,e,f\}$ with $h\in{\mathfrak g}(0)$ and $f\in{\mathfrak g}(-1)$, and sets ${\mathfrak v}=e+{\mathfrak z}_{{\mathfrak g}(1)}(f)$.
We will show in this section that Panyushev's theorem can be applied to the case of a classical graded Lie algebra (under the assumption of the standard hypotheses) by fairly straightforward reduction to (certain) $N$-regular cases.
Indeed, almost all of the work required has been carried out in the previous section.
Recall from Remarks \ref{ark}, \ref{rkc}, \ref{rkb}, \ref{rkd} and \ref{rka} the construction of the semisimple subgroup $L$ such that ${\mathfrak c}\subset\Lie(L)$, each element of $W_{\mathfrak c}$ has a representative in $L(0)$, and $\theta|_L$ is $S$-regular.
(The analysis of the Weyl group in Sect. 4 clearly goes through in exactly the same way if the characteristic of $k$ is zero.)
The form of $L$ for each case is summed up in Table 1.
(A few words on the entries in the table.
We use Vinberg's classification, hence, for example, the `Second case', Type II is denote 2II.
Let $m_0=m/2$ in the Fourth case, and let $m_0=m$ otherwise.
The column marked `$\theta$' gives conditions on $g$, where $\theta$ is of the form $\Int g$ or $\Int g\circ\gamma$.
In the column marked $L$ we have placed a star next to the entries of the form $\SL(rm,k)$, $\SL(rm/2,k)$, $\SL(rm/2,k)^2$ since if $p|r$ these should be replaced with the corresponding general linear group.
This is always possible, if we assume $G$ is not equal to $\SL(V)$ where $p|\dim V$.
On the other hand, it is clearly also possible if $G=\GL(V)$.
In the column $\theta|_L$ we have marked the entry for 4II with a double star since here $L\cong\SL(rm/2,k)\times\SL(rm/2,k)$ and the action of $\theta$ is given by: $(g_1,g_2)\mapsto (\sigma(g_2),g_1)$, where $\sigma$ is an inner automorphism of $\SL(rm/2,k)$ of order $m/2$, rank $r$.
Thus, strictly speaking, $\sigma$ is the First case and not $\theta$.)

\begin{table}
\begin{center}
\caption{Reduction to the $S$-regular case}
\vline
\begin{tabular}{ccccccc}
\hline
Case & $G$ & $m_0$ & $\theta$ & $W_{\mathfrak c}$ & $L$ & $\theta|_L$ \\
\hline
1 & $\SL(n,k)$ & - & - & $G(m,1,r)$ & $\SL(rm,k)$* & 1 \\
2I & $\SO(2n+1,k)$ & even & $g^m=I$ & $G(m,1,r)$ & $\SO(rm+1,k)$ & 2 \\
2I & $\SO(2n,k)$ & even & $g^m=I$ & $G(m,1,r)$ & $\SO(rm+1,k)$ & 2 \\
 & & & $Z_{\rm{O}(2n,k)}({\mathfrak c})^\theta\neq Z_G({\mathfrak c})^\theta$ & & \\
2I & $\SO(2n,k)$ & even & $g^m=I$ & $G(m,2,r)$ & $\SO(rm,k)$ & 2 \\
 & & & $Z_{\rm{O}(2n,k)}({\mathfrak c})^\theta=Z_G({\mathfrak c})^\theta$ & & \\
2II & $\SO(2n,k)$ & even & $g^m=-I$ & $G(m,1,r)$ & $\SL(rm,k)$* & 1 \\
2III & $\SO(2n+1,k)$ & odd & - & $G(2m,1,r)$ & $\SO(2rm+1,k)$ & 2 \\
2III & $\SO(2n,k)$ & odd & $Z_{\rm{O}(2n,k)}({\mathfrak c})^\theta\neq Z_G({\mathfrak c})^\theta$ & $G(2m,1,r)$ & $\SO(2rm+1,k)$ & 2 \\
2III & $\SO(2n,k)$ & odd & $Z_{\rm{O}(2n,k)}({\mathfrak c})^\theta= Z_G({\mathfrak c})^\theta$ & $G(2m,2,r)$ & $\SO(2rm,k)$ & 2 \\
3I & $\Sp(2n,k)$ & even & $g^m=-I$ & $G(m,1,r)$ & $\Sp(rm,k)$ & 3 \\
3II & $\Sp(2n,k)$ & even & $g^m=I$ & $G(m,1,r)$ & $\SL(rm,k)$* & 1 \\
3III & $\Sp(2n,k)$ & odd & - & $G(2m,1,r)$ & $\Sp(2rm,k)$ & 3 \\
4I & $\SL(n,k)$ & odd & $(g\gamma(g))^{m/2}=I$ & $G(m/2,1,r)$ & $\SL(rm/2,k)$* & 4 \\
4II & $\SL(n,k)$ & odd & $(g\gamma(g))^{m/2}=-I$ & $G(m/2,1,r)$ & $\SL(rm/2,k)^2$* & 1** \\
4III & $\SL(n,k)$ & even & $(g\gamma(g))^{m/2}=I$ & $G(m,1,r)$ & $\SO(rm+1,k)$ & 2 \\
 & & & $Z_{\GL(n,k)}({\mathfrak c})^\theta\neq Z_G({\mathfrak c})^\theta$ & &  \\
4III & $\SL(n,k)$ & even & $(g\gamma(g))^{m/2}=I$ & $G(m,2,r)$ & $\SO(rm,k)$ & 2 \\
 & & & $Z_{\GL(n,k)}({\mathfrak c})^\theta= Z_G({\mathfrak c})^\theta$ & & \\
4III & $\SL(n,k)$ & even & $(g\gamma(g))^{m/2}=-I$ & $G(m,1,r)$ & $\Sp(rm,k)$ & 3 \\
\hline
\end{tabular}\vline
\end{center}
\end{table}\nopagebreak

In characteristic zero, it therefore remains only to show that the pairs $(L,\theta|_L)$ listed in Table 1 are $N$-regular; Panyushev's theorem on $N$-regular automorphisms \cite[Thm. 3.5]{panslice} then implies that any classical graded Lie algebra admits a KW-section.
In positive characteristic, we provide the following generalization of Panyushev's result.
Our proof is broadly similar, although Cor. \ref{dimcor} allows us to avoid a potentially troublesome argument \cite[3.3]{panslice} involving $\mathfrak{sl}(2)$-triples.

\begin{proposition}\label{kwprop}
Let $G$ be a group satisfying the standard hypotheses and let $\theta$ be an automorphism of $G$ of order $m$, $p\nmid m$.
Suppose that $\theta$ is N-regular.
Then the restriction homomorphism $k[{\mathfrak g}]^G\rightarrow k[{\mathfrak g}(1)]^{G(0)}$ is surjective.
Let $e\in{\mathfrak g}(1)$ be a regular nilpotent element of ${\mathfrak g}$ and let $\lambda:k^\times\rightarrow G(0)$ be an associated cocharacter for $e$ (see Rk. \ref{associated}).
Let ${\mathfrak u}$ be a $\lambda(k^\times)$-stable subspace of ${\mathfrak g}(1)$ such that ${\mathfrak u}\oplus[{\mathfrak g}(0),e]={\mathfrak g}(1)$.
Then $e+{\mathfrak u}$ is a KW-section for $\theta$.
\end{proposition}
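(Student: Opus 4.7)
The plan is to imitate Kostant's classical construction, using the associated cocharacter $\lambda\subset G(0)$ to play the role of the diagonal element of an $\mathfrak{sl}_2$-triple. Write $\psi:e+{\mathfrak u}\to{\mathfrak g}(1)\quot G(0)$ for the composition of the embedding with the quotient map $\pi$. The first preparatory point is a dimension count: since $e$ is regular in ${\mathfrak g}$ and $N$-regularity means $G\cdot e\cap{\mathfrak g}(1)$ is the open piece of the nilpotent variety of ${\mathfrak g}(1)$, Lemma \ref{closed} (applied in nilpotent flavour via Lemma \ref{unstable}) and Corollary \ref{dimcor} give $\dim G(0)\cdot e=\dim{\mathfrak g}(1)-r$ with tangent space $[{\mathfrak g}(0),e]$, whence $\dim{\mathfrak u}=r=\dim{\mathfrak g}(1)\quot G(0)$.

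The second preparatory point is to define a contracting $k^\times$-action on both sides. Put $t\star x:=t^{-2}\Ad\lambda(t)(x)$ on ${\mathfrak g}(1)$. Because $e\in{\mathfrak g}(2;\lambda)$ (by the first defining property of an associated cocharacter), $t\star e=e$; because ${\mathfrak u}$ is $\lambda$-stable and, by the second property ${\mathfrak z}_{\mathfrak g}(e)\subseteq\sum_{i\geq 0}{\mathfrak g}(i;\lambda)$, the $\lambda$-weights on any complement ${\mathfrak u}$ of $[{\mathfrak g}(0),e]$ are all strictly less than $2$ (as the complement injects into ${\mathfrak g}(1)/[{\mathfrak g}(0),e]$, which is dual to ${\mathfrak z}_{{\mathfrak g}(-1)}(e)$), the induced weights on ${\mathfrak u}$ under $\star$ are strictly positive. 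Since $\star$ commutes with $\Ad G(0)$, it descends to ${\mathfrak g}(1)\quot G(0)\cong{\mathfrak c}/W_{\mathfrak c}$ with strictly positive weights (the zero weight piece corresponds to the fibre over $\pi(0)$, which is just $\{\pi(0)\}$), and $\psi$ is $k^\times$-equivariant.

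The main body of the proof is to show $\psi$ is an isomorphism. First, I would establish $\psi^{-1}(\pi(0))=\{e\}$ scheme-theoretically. Set-theoretically, any $x\in\psi^{-1}(\pi(0))$ is nilpotent by Corollary \ref{closedcor}, and the $\star$-action contracts $x$ to $e$, so $x\in\overline{G(0)\cdot e}$; then the transverse intersection $(e+{\mathfrak u})\cap G(0)\cdot e=\{e\}$ (forced by ${\mathfrak u}\oplus[{\mathfrak g}(0),e]={\mathfrak g}(1)$ together with the $\star$-contraction) yields $x=e$. The reducedness will come out of the tangent map at $e$: $d\psi_e$ sends ${\mathfrak u}$ into the cotangent space of ${\mathfrak g}(1)\quot G(0)$ at $\pi(0)$, and injectivity follows from the fact that both sides are graded by strictly positive $\star$-weights of matching dimensions. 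From here the standard graded argument (finite generation plus contracting $k^\times$-action, cf.\ the proof of Kostant's slice theorem) shows $\psi$ is finite, and then that $\psi^*:k[{\mathfrak g}(1)\quot G(0)]\to k[e+{\mathfrak u}]$ is a graded-surjective map between graded polynomial rings of equal Krull dimension $r$, hence an isomorphism.

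For the surjectivity of $k[{\mathfrak g}]^G\to k[{\mathfrak g}(1)]^{G(0)}$, I would exploit Kostant's theorem on ${\mathfrak g}$: homogeneous generators $f_1,\dots,f_{r_G}$ of $k[{\mathfrak g}]^G$ restrict to a basis of $k[e+{\mathfrak z}_{\mathfrak g}(f)]$ for any $\mathfrak{sl}_2$-complement $e+{\mathfrak z}_{\mathfrak g}(f)$; taking the intersection with ${\mathfrak g}(1)$ (after choosing ${\mathfrak u}={\mathfrak z}_{\mathfrak g}(f)\cap{\mathfrak g}(1)$, which is a permissible choice), the composite restriction $k[{\mathfrak g}]^G\to k[e+{\mathfrak u}]\cong k[{\mathfrak g}(1)\quot G(0)]$ hits enough homogeneous elements to generate. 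The main obstacle I anticipate is the positive-weight claim for ${\mathfrak u}$ and the resulting rigidity of $\psi^{-1}(\pi(0))$: in characteristic zero Panyushev uses $\mathfrak{sl}_2$-representation theory to see transversality, whereas here the argument must be made purely from the associated cocharacter property, which is why Corollary \ref{dimcor} (equidimensionality of $\pi$) is the load-bearing ingredient.
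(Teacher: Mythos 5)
Your approach is genuinely different from the paper's, and it is worth being explicit about where the two diverge. You run the argument as a Kostant--Rallis/Panyushev contraction argument: define the dilated cocharacter action $t\star x=t^{-2}\Ad\lambda(t)(x)$, show that it contracts $e+{\mathfrak u}$ onto $e$ and descends to ${\mathfrak g}(1)\quot G(0)$, establish that the fibre of $\psi$ over $\pi(0)$ is a reduced point, and conclude by the standard graded finiteness argument. The paper deliberately avoids this path (see the comment immediately preceding Proposition~\ref{kwprop}: Corollary~\ref{dimcor} is brought in precisely to bypass the $\mathfrak{sl}_2$-triple argument from Panyushev's \cite[3.3]{panslice}). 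Instead, the paper fixes a $\theta$- and $\lambda$-stable complement ${\mathfrak w}$ to $[{\mathfrak g},e]$, feeds the Veldkamp/Premet--Tange slice theorem for ${\mathfrak g}$ into the Jacobian at $e$, observes via Corollary~\ref{dimcor} that $\dim{\mathfrak u}=r$ and hence that $r$ of the $(dF_i)_e|_{\mathfrak u}$ form a basis of ${\mathfrak u}^*$, and then performs a triangular change of generators of $k[{\mathfrak g}(1)]^{G(0)}$ using homogeneity. This is an entirely algebraic argument that produces both the KW-section and the surjectivity of $k[{\mathfrak g}]^G\to k[{\mathfrak g}(1)]^{G(0)}$ in one stroke, and it works for an arbitrary $\lambda$-stable ${\mathfrak u}$, whereas your surjectivity paragraph pins down ${\mathfrak u}={\mathfrak z}_{\mathfrak g}(f)\cap{\mathfrak g}(1)$.

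There is a genuine gap in your proof at the crucial point. You assert that the injectivity of $d\psi_e:{\mathfrak u}\to T_{\pi(0)}({\mathfrak g}(1)\quot G(0))$ --- equivalently, that the differentials of the generators $u_1,\dots,u_r$ of $k[{\mathfrak g}(1)]^{G(0)}$ restrict to a basis of ${\mathfrak u}^*$ --- ``follows from the fact that both sides are graded by strictly positive $\star$-weights of matching dimensions.'' It does not: a graded linear map between equidimensional positively graded spaces can perfectly well have a kernel. What actually forces the spanning is exactly the ${\mathfrak g}$-slice theorem (that $(dF_1)_e,\dots,(dF_n)_e$ span the conormal to $[{\mathfrak g},e]$), combined with $\dim{\mathfrak u}=r$ from Corollary~\ref{dimcor} and the fact that $F_i|_{{\mathfrak g}(1)}$ is a polynomial in the $u_j$ with $u_j(e)=0$. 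You do eventually invoke the ${\mathfrak g}$-slice, but only in the final paragraph and only for the surjectivity statement; the main body of your argument is short one load-bearing input without it. If you move that invocation forward and use it to establish transversality at $e$, your scheme-theoretic fibre claim and the rest of the contraction argument go through. One further caution: the clean dichotomy ``${\mathfrak z}_{\mathfrak g}(f)$ vs.\ $[{\mathfrak g},e]$'' is why the paper works with an abstract $\theta$- and $\lambda$-stable ${\mathfrak w}$ rather than with an $\mathfrak{sl}_2$-triple; in positive characteristic you should be wary of leaning on $\{h,e,f\}$ unless you have checked the relevant direct-sum decomposition is available.
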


\begin{proof}
Let ${\mathfrak w}$ be a $\theta$-stable, $\Ad \lambda(k^\times)$-stable subspace of ${\mathfrak g}$ such that ${\mathfrak w}\oplus[{\mathfrak g},e]={\mathfrak g}$.
Recall (see \cite[6.3-6.5]{veldkamp} and \cite[Pf. of Lemma 1]{premettange} in good characteristic) that the embedding $e+{\mathfrak w}\hookrightarrow{\mathfrak g}$ induces an isomorphism $e+{\mathfrak w}\rightarrow{\mathfrak g}\quot G$.
Let $n=\rank G$ and let $F_1,\ldots ,F_n$ be algebraically independent homogeneous generators of $k[{\mathfrak g}]^G$.
Then the differentials $(dF_i)_e|_{\mathfrak w}$ are linearly independent elements and span ${\mathfrak w}^*$, hence their restrictions $(dF_i)_e|_{\mathfrak u}=(dF_i|_{e+{\mathfrak u}})_e$ span ${\mathfrak u}^*$.
Since $\dim{\mathfrak u}=r$ by Cor. \ref{dimcor} and separability of orbits (see, for example \cite[4.2]{invs}), we may assume after renumbering that $(dF_1|_{e+{\mathfrak u}})_e,\ldots ,(dF_r|_{e+{\mathfrak u}})_e$ span ${\mathfrak u}^*$.
In particular, $(dF_1|_{{\mathfrak g}(1)})_e,\ldots (dF_r|_{{\mathfrak g}(1)})_e$ are linearly independent.
Let $u_1,\ldots ,u_r$ be algebraically independent homogeneous generators of $k[{\mathfrak g}(1)]^{G(0)}$, and let $h$ be a monomial in the $u_i$.
Then, since $u_i(e)=0$ for $1\leq i\leq r$, $dh_e=0$ unless $h=u_i$ for some $u_i$.
Thus we can express $F_i|_{{\mathfrak g}(1)}$, $1\leq i\leq r$ (uniquely) as $f_i+g_i$, where $f_i$ is linear in the $u_j$ ($1\leq j\leq r$) and $g_i$ is in the ideal of $k[u_1,\ldots ,u_r]$ generated by all $u_iu_j$, $1\leq i\leq j\leq r$.
Note that we have $(dg_i)_x=0$ for any nilpotent element $x\in{\mathfrak g}(1)$ by Lemma \ref{unstable}.
Therefore $(dF_i|_{{\mathfrak g}(1)})_e=(df_i)_e$ for each $i$, $1\leq i\leq r$ and the differentials $(df_i)_e$ are linearly independent.
It follows that the $f_i$, $1\leq i\leq r$ are algebraically independent and that $k[u_1,\ldots ,u_r]=k[f_1,\ldots ,f_r]$.
Since the $F_i$ are homogeneous, there exist integers $m_i$, $1\leq i\leq r$ such that $F_i(\eta x)=\eta^{m_i}F_i(x)$ for all $\eta\in k$ and all $x\in{\mathfrak g}$.
But now, since the expression $F_i|_{{\mathfrak g}(1)}=f_i+g_i$ is unique, it follows that $f_i(\eta x)=\eta^{m_i}x$ for any $\eta\in k$, $x\in{\mathfrak g}(1)$ and similarly for $g_i$.
After reordering we may assume that $m_1\leq \ldots \leq m_r$.
It now follows that $F_i|_{{\mathfrak g}(1)}\in f_i+k[f_1,\ldots ,f_{i-1}]$.
Thus the restrictions $F_i|_{{\mathfrak g}(1)}$ (resp. $F_i|_{e+{\mathfrak u}}$), $1\leq i\leq r$ are algebraically independent and generate $k[{\mathfrak g}(1)]^{G(0)}$ (resp. $k[e+{\mathfrak u}]$).
This completes the proof
\end{proof}


\begin{proposition}\label{nreg}
Let $G$ be of classical type, let $L$ be the subgroup as listed in Table 1 and let ${\mathfrak l}=\Lie(L)$.
Then $\theta|_L$ is N-regular.
\end{proposition}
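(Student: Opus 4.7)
My plan is to prove Proposition \ref{nreg} by exhibiting, for each row of Table 1, an explicit regular nilpotent element of $\mathfrak{l}$ that is a $\zeta$-eigenvector of $d\theta|_L$. The subgroup $L$ was constructed in Remarks \ref{ark}, \ref{rkc}, \ref{rkb}, \ref{rkd} and \ref{rka} as the smallest semisimple subgroup whose Lie algebra contains the $\theta$-split torus $T_1$ and which accommodates all representatives of $W_{\mathfrak c}$ in $L(0)$. A direct consequence of this minimality is that the eigenvalue multiplicities of the torus component of $\theta|_L$ on the natural module of $L$ are as balanced as possible, given the constraints (orthogonal/symplectic symmetry, and $g^m = \pm I$). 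This is precisely what is needed to accommodate a single long Jordan block inside $\mathfrak{l}(1)$.

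For the bulk of the rows (Cases 1, 2I, 2III, 3I, 3III and the two sub-cases of 4III), the construction is direct. Pick an ordered basis $v_1,\ldots,v_N$ of the natural module for $L$ in which the diagonal representative of $\theta|_L$ acts with eigenvalues $\xi,\xi\zeta,\xi\zeta^2,\ldots,\xi\zeta^{N-1}$ in cyclic order, where $\xi=1$ in all cases with $g^m=I$ and $\xi$ is a square root of $\zeta$ in the cases with $g^m=-I$ (so that $\xi^m=\pm 1$ comes out correctly). The standard principal nilpotent $e=\sum_{i=1}^{N-1}E_{i,i+1}$ is then automatically a $\zeta$-eigenvector of $d\theta|_L$, and it is regular in $\mathfrak{l}$ of Jordan type $(N)$ (or $(N-1,1)$, after a small adjustment, in the even orthogonal case where $L\cong\SO(rm,k)$). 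The symmetry $\lambda\leftrightarrow\lambda^{-1}$ required by the orthogonal or symplectic form on the eigenvalue multiset $\{\xi\zeta^i\}$ is automatic from the parity choices built into Table 1 — this is the step where each row is being used precisely as constructed in Section 4. Case 4II is immediate from Case 1: $L\cong\SL(rm/2,k)^2$ with $\theta|_L$ interchanging the factors, so any regular nilpotent $e_1\in\mathfrak{l}_1$ which is a $\zeta$-eigenvector for $d(\theta|_L)^2|_{\mathfrak{l}_1}$ (supplied by the Case 1 argument on $\mathfrak{l}_1$) produces the $\zeta$-eigenvector $(e_1,d\theta|_L^{-1}(e_1))\in\mathfrak{l}(1)$, still of principal Jordan type.

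The main obstacle is the outer Case 4I, where $L\cong\SL(rm/2,k)$ and $\theta|_L=\Int n_w\circ\gamma$ with $w=n_wT$ a product of $r$ disjoint $(m/2)$-cycles (Lemma \ref{formsln}(b)(ii)). Here we cannot diagonalize $\theta|_L$, so the eigenvalue bookkeeping above does not apply verbatim. The plan is to choose, after conjugation, $n_w$ so that $\gamma(n_w)=n_w$ (this is legitimate by Lemma \ref{mcycles} since $m/2$ is odd, so $(n_w\gamma(n_w))^{m/2}=I$ and a representative with entries $\pm 1$ is available), and then write down a $\gamma$-skew principal nilpotent of $\mathfrak{sl}(rm/2,k)$ as $e=\sum c_iE_{i,i+1}$ with coefficients $c_i\in k^\times$ that both twist the $\gamma$-action correctly and realize the $\zeta$-eigenvalue of $\Int n_w$. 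The verification that such $c_i$ exist is a direct matrix calculation exploiting $(\Int n_w)$-equivariance on the cyclic basis; this is short but is the only place where genuine computation is required. Once every row of Table 1 is handled, the proposition follows.
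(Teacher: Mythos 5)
Your overall strategy — exhibiting, row by row in Table 1, an explicit regular nilpotent of $\mathfrak l$ lying in $\mathfrak l(1)$ — is precisely the paper's approach, and your treatment of the ``diagonalizable'' rows (Cases 1, 2, 3 and Case 4II via the folding trick) is essentially correct modulo small bookkeeping. But the place you flag as the sole ``main obstacle,'' namely Case 4I, is where your proposed fix cannot work, and this is a genuine gap rather than a routine calculation.

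Concretely: in Case 4I you have $\theta|_L = \Int n_w \circ \gamma$ on $L \cong \SL(rm/2,k)$ with $\gamma(x) = {}^t x^{-1}$ and $n_w$ a monomial matrix representing a product of $(m/2)$-cycles, and you propose to arrange $\gamma(n_w) = n_w$ and then seek $e = \sum_i c_i E_{i,i+1}$ with $d\theta(e) = \zeta e$. This cannot succeed. The map $d\gamma$ sends the superdiagonal to the subdiagonal, $d\gamma(E_{i,i+1}) = -E_{i+1,i}$, and $\Ad n_w$ for a permutation matrix sends $E_{i+1,i}$ to $E_{w(i+1),w(i)}$. For $w$ an $(m/2)$-cycle, say $w = (1\,2\,\cdots\,m/2)$ on a block, $E_{w(i+1),w(i)} = E_{i+2,i+1}$ is again strictly below the diagonal except at the single wrap-around index; so $d\theta(e)$ lives (almost entirely) in the lower-triangular part and can never equal $\zeta e$ for $e$ supported on the superdiagonal. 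The matrix computation you defer to actually produces a contradiction, not a solution. (Already for $m/2=3$, $r=1$, $n_w$ the $3$-cycle: $d\theta(c_1E_{12}+c_2E_{23}) = -c_1E_{32}-c_2E_{13}$, which has no overlap with the span of $E_{12}, E_{23}$.)

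The paper's resolution is a genuinely different normalization: rather than making $n_w$ $\gamma$-fixed, it replaces $\theta$ by a conjugate $\psi = \Int(tJ_n)\circ\gamma$ with $J_n$ the antidiagonal permutation and $t$ an explicit diagonal matrix (three subcases according to $r \bmod 4$). The point is that $\Int J_n\circ\gamma$, unlike $\gamma$ alone, preserves the upper-triangular Borel and permutes the simple root spaces, so $d\psi$ maps the superdiagonal span into itself and one can then solve for sign coefficients. The conjugacy between $\theta$ and $\psi$ is justified via Lemma \ref{outersln}(b) by matching the spectrum of $n_w\gamma(n_w)$ with that of $tJ_n\gamma(tJ_n) = t^2$, and this eigenvalue-matching check is where the minimality built into Table~1 actually enters. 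This renormalization step is the missing idea in your proposal; without it there is no choice of coefficients $c_i$ to find.

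Two smaller points. First, in Case 4II your eigenvalue bookkeeping is off by a factor of $\zeta$: writing $d\theta(x_1,x_2) = (d\sigma(x_2), x_1)$, the $\zeta$-eigenvector condition forces $d\sigma(e_1) = \zeta^2 e_1$, not $\zeta e_1$ (indeed $\zeta$ is not an $(m/2)$-th root of unity when $m/2$ is odd, whereas $\zeta^2$ is, as $\sigma$ has order $m/2$). Second, the $\SO(rm,k)$ row with $r$ odd requires the same kind of repair as Case 4I: the natural diagonal representative of $\theta$ does not lie in $T \subset \SO(rm,k)$, and the paper conjugates to a block matrix involving $J_2$ before writing down $e$. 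Your blanket claim that a diagonal representative with cyclically arranged eigenvalues always exists glosses over both of these.
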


\begin{proof}
We may assume that $G=L$.
We have the following possibilities:

(i) $G=\SL(rm,k)$ or $\GL(rm,k)$ (if $p|r$) and $\theta$ is inner, $w$ a product of $m$-cycles;

(ii) $G=\SO(rm+1,k)$ and $w$ is a product of negative $m/2$-cycles ($m$ even);

(iii) $G=\SO(rm,k)$ and $w$ is a product of negative $m/2$-cycles ($m$ even);

(iv) $G=\SO(2rm+1,k)$ and $w$ is a product of positive $m$-cycles ($m$ odd);

(v) $G=\SO(2rm,k)$ and $w$ is a product of positive $m$-cycles ($m$ odd);

(vi) $G=\Sp(rm,k)$ and $w$ is a product of negative $m/2$-cycles ($m$ even);

(vii) $G=\Sp(2rm,k)$ and $w$ is a product of positive $m$-cycles ($m$ odd);

(viii) $G=\SL(rm/2,k)$ or $\GL(rm/2,k)$ (if $p|r$), $\theta$ is outer and $w$ is a product of $m/2$-cycles;

(ix) $G=\SL(rm/2,k)^2$ or $\GL(rm/2,k)^2$ (if $p|r$) and $\theta$ is an automorphism of the form $(g_1,g_2)\mapsto(\sigma(g_2),g_1)$ where $\sigma$ is of order $m/2$ and acts on the maximal torus of diagonal matrices in $\SL(rm/2,k)$ as a product of $r$ $m/2$-cycles.

The argument for the last case clearly reduces immediately to verifying the lemma for $\sigma$, hence to case (i).
Thus we consider the cases (i)-(viii).
To prove N-regularity we replace $\theta$ by an $\Int G$-conjugate such that $(B,T)$ is a fundamental pair for $\theta$.
If $\theta$ is inner and $G=\SL(n,k)$ then this means that $\theta=\Int t$, where $t\in\GL(n,k)$ is diagonal and has $r$ entries equal to $\zeta^i$ for each $i$, $0\leq i\leq m-1$.
But then we can assume after conjugating by an element of the normalizer of $T$ that $t=\diag (\zeta^{m-1},\zeta^{m-2},\ldots ,1)$, in which case the nilpotent element with 1 on the first upper diagonal and zero elsewhere is clearly in ${\mathfrak g}(1)$.
One can carry out similar calculations for the automorphisms of $\SO(2n+1,k)$, $\SO(2n,k)$ and $\Sp(2n,k)$.
For all cases except (iii) with $r$ odd we may assume after conjugation that $\theta=\Int t$, where $t\in T$ is as given in the following list.
Recall that $\zeta$ is a primitive $m$-th root of unity and $\xi$ is a square root of $\zeta$.

(a) $t=\diag(1,\zeta^{m-1},\ldots ,1)$ and $e=\sum_{i=1}^{n}e_{i,i+1}-\sum_{i=n+1}^{2n}e_{i,i+1}$ in cases (ii) ($n=rm/2$) and (iv) ($n=rm$).

(b) $t=\diag (\zeta^{m-1},\zeta^{m-2},\ldots ,1,1,\zeta^{-1},\ldots ,\zeta)$ and $e=\sum_{i=1}^{n-1}e_{i,i+1}+e_{n-1,n+1}-e_{n,n+2}-\sum_{n+1}^{2n-1}e_{i,i+1}$ in cases (iii) with $r$ even ($n=rm/2$) and (v) ($n=rm$).

(c) $t=\diag (\xi^{2m-1},\xi^{2m-3},\ldots ,\xi)$ and $e=\sum_{i=1}^ne_{i,i+1}-\sum_{i=n+1}^{2n-1}e_{i,i+1}$ in cases (vi) ($n=rm/2$) and (vii) ($n=rm$).

To check $N$-regularity for case (iii) with $r$ odd, let $J_2$ be the $2\times 2$ matrix with 1 on the antidiagonal and 0 on the diagonal, and let $s$ be the diagonal $(rm/2-1)\times (rm/2-1)$ matrix with $j$-th entry $-\zeta^{-j}$.
Then after conjugation we may assume that $\theta=\Int \begin{pmatrix}Ês & 0 & 0 \\ 0 & J_2 & 0 \\ 0 & 0 & -s \end{pmatrix}$.
Now $e=\sum_{i=1}^{n-1}e_{i,i+1}+e_{n-1,n+1}-e_{n,n+2}-\sum_{i=n+1}^{2n-1}e_{i,i+1}$ is a regular nilpotent element of ${\mathfrak g}(1)$, where $n=rm/2$.

This leaves only the case where $G=L=\SL(rm/2,k)$ and $\theta$ is outer.
We have $\theta=\Int g\circ\gamma$ and $(g\gamma(g))^{m/2}=I$.
Let $\psi=\Int (tJ_n)\circ\gamma$, where $\gamma:x\mapsto {^t}x^{-1}$, $J_n$ is the matrix with 1 on the antidiagonal and 0 elsewhere, and $$t=\left\{\begin{array}{ll} \diag (\zeta^{(m-2)/4},\zeta^{(m-6)/4},\ldots ,\zeta^{-(m-2)/4}) & \mbox{if $r$ is odd,} \\ \diag(\zeta^{m-1},\zeta^{m-2},\ldots ,1,-\zeta^{-1},-\zeta^{-2},\ldots ,-1) & \mbox{if $r/2$ is even.} \\ \diag(\zeta^{m-1},\zeta^{m-2},\ldots ,-1,\zeta^{-1},\zeta^{-2},\ldots ,-1) & \mbox{if $r/2$ is odd.} \end{array}\right.$$

It is an easy to calculation to see that $tJ_n\gamma(tJ_n)=tJ_n\gamma(t)J_n^{-1}$ has $r$ entries equal to $\zeta^{2i}$ for each $i$, $0\leq i<m/2$ and hence $\theta$ is conjugate to $\psi$ by Lemma \ref{mcycles}.
But $\psi$ is $N$-regular: $$e=\left\{\begin{array}{ll} \sum_{i=1}^{(n-1)/2}e_{i,i+1}-\sum_{(n+1)/2}^{n-1}e_{i,i+1} & \mbox{if $r$ is odd,}Ê\\ \sum_{i=1}^{n/2}e_{i,i+1}-\sum_{n/2+1}^{n-1}e_{i,i+1} & \mbox{if $r$ is even.} \end{array}\right.$$
where $n=rm/2$.
This proves that $\theta$ is $N$-regular in each of the cases concerned.

\end{proof}

We therefore have:

\begin{theorem}
Let $G$ be one of $\SL(n,k)$ ($p\nmid n$), $\GL(n,k)$, $\SO(n,k)$, $\Sp(2n,k)$.
Then $({\mathfrak g},d\theta)$ admits a KW-section.
\end{theorem}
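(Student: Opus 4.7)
The plan is to combine the reduction to a distinguished $\theta$-stable subgroup $L\subseteq G$ carried out in Sect.~4 (see Remarks~\ref{ark}, \ref{rkc}, \ref{rkb}, \ref{rkd} and \ref{rka}) with Propositions~\ref{kwprop} and~\ref{nreg}. For each classical graded Lie algebra $({\mathfrak g},d\theta)$, the case-by-case analysis of Sect.~4 yields a $\theta$-stable reductive subgroup $L$ of $G$, as listed in Table~1, whose Lie algebra contains the Cartan subspace ${\mathfrak c}$ and such that every element of $W_{\mathfrak c}$ has a representative in $L(0)$. In particular ${\mathfrak c}$ remains a Cartan subspace of ${\mathfrak l}(1):={\mathfrak g}(1)\cap\Lie(L)$, and the natural inclusion $N_{L(0)}({\mathfrak c})/Z_{L(0)}({\mathfrak c})\hookrightarrow W_{\mathfrak c}$ is in fact an equality.

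Next I would apply the Chevalley restriction theorem (Thm.~\ref{chev}) to both $(G,\theta)$ and $(L,\theta|_L)$. Since ${\mathfrak c}$ serves as a Cartan subspace in both settings and the associated little Weyl groups coincide, both ${\mathfrak l}(1)\quot L(0)$ and ${\mathfrak g}(1)\quot G(0)$ are canonically identified with ${\mathfrak c}/W_{\mathfrak c}$; hence the $L(0)$-invariant composition ${\mathfrak l}(1)\hookrightarrow{\mathfrak g}(1)\rightarrow{\mathfrak g}(1)\quot G(0)$ descends to an isomorphism
\[
{\mathfrak l}(1)\quot L(0)\;\xrightarrow{\sim}\;{\mathfrak g}(1)\quot G(0).
\]

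By Proposition~\ref{nreg}, the restriction $\theta|_L$ is $N$-regular, so Proposition~\ref{kwprop} furnishes a KW-section ${\mathfrak v}\subset{\mathfrak l}(1)$ for $({\mathfrak l},d\theta|_L)$: namely, the embedding ${\mathfrak v}\hookrightarrow{\mathfrak l}(1)$ induces an isomorphism ${\mathfrak v}\xrightarrow{\sim}{\mathfrak l}(1)\quot L(0)$. Composing with the isomorphism of the previous paragraph yields an isomorphism ${\mathfrak v}\xrightarrow{\sim}{\mathfrak g}(1)\quot G(0)$ induced by the composite inclusion ${\mathfrak v}\hookrightarrow{\mathfrak l}(1)\hookrightarrow{\mathfrak g}(1)$; thus ${\mathfrak v}$ is the desired KW-section.

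The substantive work has already been done: the case-by-case verification in Sect.~4 that the subgroup $L$ of Table~1 faithfully realises $W_{\mathfrak c}$ (so that the two quotients identify naturally), and the explicit construction of regular nilpotent elements in ${\mathfrak l}(1)$ that underpins Prop.~\ref{nreg}. The main conceptual obstacle to the positive-characteristic version of Panyushev's argument --- failure of the standard separability criterion of Richardson \cite[9.3]{richinvs} --- is already circumvented upstream by the separability of the quotient morphism (Cor.~\ref{sepquot}) used inside the proof of Prop.~\ref{kwprop}, so no further difficulty arises at this last step.
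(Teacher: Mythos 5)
Your proposal is correct and follows essentially the same route as the paper: reduce to the distinguished subgroup $L$ of Table~1 (whose Lie algebra contains ${\mathfrak c}$ and which realizes the full little Weyl group $W_{\mathfrak c}$), apply Prop.~\ref{kwprop} together with the $N$-regularity furnished by Prop.~\ref{nreg}, and transfer the resulting KW-section from ${\mathfrak l}(1)$ to ${\mathfrak g}(1)$ via the Chevalley Restriction Theorem~\ref{chev}. You have made the transfer argument more explicit than the paper's own terse ``follows immediately,'' which is useful. The one step you omit is the one the paper explicitly flags: Prop.~\ref{kwprop} is stated only for groups satisfying the standard hypotheses, and neither $G=\SO(n,k)$ nor the orthogonal subgroups $L\cong\SO(n',k)$ appearing in Table~1 satisfy hypothesis~(B). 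One must therefore observe that the universal covering $\Spin(n',k)\to\SO(n',k)$ is separable for $p>2$, so the graded Lie algebras, Cartan subspaces, connected fixed-point groups and orbit structure all coincide at the Lie-algebra level, and the conclusion of Prop.~\ref{kwprop} carries over to $\SO(n',k)$ via its spin cover. Without this observation the appeal to Prop.~\ref{kwprop} does not go through as stated in the orthogonal cases, so you should add that sentence.
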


\begin{proof}
This follows immediately from Prop. \ref{kwprop} and Prop. \ref{nreg} and the fact that the universal covering of $\SO(n,k)$ is separable.
\end{proof}

\begin{rk}
(a) In the case where $({\mathfrak g},d\theta)$ is $N$-regular but not $S$-regular (and locally free), our construction shows that there are many different KW-sections.
A trivial example is a zero rank $N$-regular grading: applying Panyushev's theorem directly, one obtains $\{ e\}$ as a KW-section; our construction via the subgroup $L$ gives $\{ 0\}$.

(b) These methods can be applied to prove the existence of KW-sections for exceptional type Lie algebras, as well as the remaining outer automorphisms in type $D_4$.
While there are a number of cases to deal with, this approach also provides a fairly straightforward way to determine the little Weyl group.
We will deal with this in subsequent work.
\end{rk}

\bibliography{biblio}

\end{document}